\theoremstyle{plain}
\newtheorem{theoremint}{Theorem}
\title{
\textbf{Foundations for operator algebraic tricategories}}
\author{Giovanni Ferrer}
\begin{document}

\maketitle

\begin{abstract}
An operator algebraic tricategory is a higher categorical analogue of an operator algebra. 
For algebraic tricategories, Gordon, Power, and Street proved that every algebraic tricategory is equivalent to a Gray-category, a result later refined by Gurski.
We adapt this result to the context of functional analysis, showing that every operator algebraic tricategory is equivalent to an operator Gray-category.
We then categorify the Gelfand-Naimark theorem for operator algebras, inductively proving that every (small) operator algebraic tricategory is equivalent to a concrete operator Gray-category.
We also provide several examples of interest for operator algebraic tricategories.
\end{abstract}

\addtocontents{toc}{}
\tableofcontents

\section{Introduction}
Operator algebras were first conceived in the context of quantum mechanics emerging as concretely C*-algebras and von Neumann algebras of operators acting on a Hilbert space \cite{Gelfand_Neumark_1994} \cite{Segal_1947} \cite{Murray_v.Neumann_1936}. 
We have since seen a movement throughout mathematics of categorifying structures and their results. 
Higher operator algebraic categories arise in the study of subfactor theory where, in particular, operator algebras with their bimodules and intertwiners form an operator 2-category. 
Picking a particular operator algebra $A$, i.e a C*/W*-algebra, functors from a unitary tensor category $\cC$ into $\End(A)$ in this operator 2-category are then quantum categorical/non-invertible symmetries.

Recently, even higher (braided) operator algebraic categories have emerged as
% been witnessed arising in non-commutative geometry as categories of bimodules, representation categories, and 
invariants of operator algebraic objects, like that of the categorified Connes' $\chi(M)$ \cite{CJP2021}. 
Moreover, operator algebraic 2-categories are expected to form an operator algebraic 3-category \cite{CP22}.
Higher operator algebraic categories have also seen important applications in topological order.
%quantum field theories and topological order, examples including the Levin-Wen construction \cite{LW05} and the Walker-Wang model \cite{WW11}.
Starting in a 2+1D topological order, that is a unitary modular tensor category $\cC$, the topological domain walls for $\cC$ are classified by the fusion 2-category $\operatorname{Mod}(\cC)$ \cite{Kitaev_Kong_2012}.
Condensed matter physics is manifestly unitary, where time reversal symmetry is taking the Hermitian adjoint.
We thus desire a definition of unitarity for a fusion 2-category, which can also be viewed as a 3-category with 1 object.
Furthermore, physicists want to go even higher --- 3+1D topological order is expected to be described by a unitary modular fusion 2-category, for which there is not yet a formal definition in the literature \cite{Johnson-Freyd_2022}. 

Algebraic models for higher categories do, however, pose difficulties due to their large amount of constraint data. 
Coherence theorems allow us to consider semi-strictified models of algebraic higher categories while retaining full generality.
These semistrictified models are then easier to work with as they require less data. 
Notably, Gordon, Power, and Street proved that every algebraic tricategory is equivalent to a Gray-category, the strictest model of a general algebraic tricategory \cite{GPS95}. The definition of an algebraic category together with this theorem were later refined by Gurski \cite{gurski_2013}.

Operator algebraic tricategories are tricategorical analogues of operator algebras, for which one would desire an analoguous coherence theorem. 
However, constructions from ordinary category theory may fail the principle of equivalence \cite{nLab23} for $\dag$-categories, as noted in \cite{Penneys_2020} for dual functors on a unitary multifusion category. 
Moreover, classical results may run into complications when equipping algebraic objects with topologies. For example, the algebraic tensor product of C*-algebras may be equipped with many possible norms, including the maximal and minimal tensor product norms. Out of these only the maximal tensor product satisfies hom-tensor adjunction, while the minimal tensor product is more concrete and proves to be useful in practice.
% , as described in \cite{Scholze}. 
% \nn{a vague stretch, probably not great to put this without able to make the connection precise}

In this article, we present a definition for operator algebraic tricategories and operator Gray-categories, as well as provide examples of interest. 
This requires us to construct an operator algebraic Gray tensor product, which uses substantial functional analysis, e.g., the notion of biduals for C*-algebras, and its categorification to C*-categories and C*-2-categories.
We then adapt the coherence theorem for algebraic tricategories to the context of functional analysis, obtaining our following main result.

\begin{theoremint}
    Every operator algebraic tricategory is equivalent to an operator Gray-category.
\end{theoremint}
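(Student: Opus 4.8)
\emph{Sketch of the intended proof.} The plan is to run the Yoneda--embedding proof of the Gordon--Power--Street coherence theorem \cite{GPS95}, in Gurski's refined form \cite{gurski_2013}, while carrying the operator-algebraic data through each step. Recall the classical outline: (0) reduce to a \emph{locally strict} tricategory, whose hom-bicategories are $2$-categories, by applying coherence for bicategories hom-wise; (1) for such a $\mathcal{T}$ the representables $\mathcal{T}(-,a)$ are trihomomorphisms $\mathcal{T}^{\mathrm{op}}\to\mathbf{Gray}$ and assemble into a Yoneda trihomomorphism $y\colon\mathcal{T}\to[\mathcal{T}^{\mathrm{op}},\mathbf{Gray}]$ that is injective on objects and a local biequivalence; (2) the functor tricategory $[\mathcal{T}^{\mathrm{op}},\mathbf{Gray}]$ is a Gray-category; (3) the full sub-Gray-category on $\{\mathcal{T}(-,a)\}_a$ is then a Gray-category triequivalent to $\mathcal{T}$.

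First I would fix the operator-algebraic replacement of $\mathbf{Gray}$: the operator Gray-category $\mathbf{C}^{\ast}\mathbf{Gray}$ of (small) operator $2$-categories enriched over the operator algebraic Gray tensor product constructed above. For step (0) I would prove the operator coherence theorem for bicategories --- every operator bicategory is \emph{unitarily} biequivalent to an operator $2$-category --- and apply it to each hom-bicategory of an operator algebraic tricategory $\mathcal{T}$, so that $\mathcal{T}$ is triequivalent, as an operator tricategory, to a locally strict one. This is the point at which the principle of equivalence for $\dag$-categories must be honoured (cf.\ \cite{Penneys_2020}): the strictification has to be performed unitarily, and in the W$^{\ast}$ setting one passes to biduals so that the strictified hom-categories remain W$^{\ast}$-categories.

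For steps (1)--(3), the representables $\mathcal{T}(-,a)$ are now valued in operator $2$-categories, hence are operator trihomomorphisms into $\mathbf{C}^{\ast}\mathbf{Gray}$, and I would check that the operator Yoneda trihomomorphism $\mathcal{T}\to[\mathcal{T}^{\mathrm{op}},\mathbf{C}^{\ast}\mathbf{Gray}]$ is injective on objects and locally a unitary biequivalence. The crux is to show that $[\mathcal{T}^{\mathrm{op}},\mathbf{C}^{\ast}\mathbf{Gray}]$ is an operator Gray-category: its hom-objects, the operator $2$-categories of operator tritransformations, trimodifications and perturbations, must be complete with a pointwise $\dag$ and contractive compositions (and W$^{\ast}$ when the target is), and its enriched composition must be an operator Gray-functor --- this is exactly where the hom-tensor adjunction for the operator Gray tensor product enters, and hence why the ``maximal''-type Gray tensor product, rather than a minimal but more concrete one, is the pertinent choice here. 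Granting this, passing to the full sub-operator-Gray-category of $[\mathcal{T}^{\mathrm{op}},\mathbf{C}^{\ast}\mathbf{Gray}]$ on the objects $\{\mathcal{T}(-,a)\}_{a\in\mathcal{T}}$ --- again an operator Gray-category, since restricting to a full sub-collection of objects preserves the hom-$2$-categories and all operator-algebraic structure verbatim --- produces an operator Gray-category $\mathcal{S}$ together with a triequivalence $\mathcal{T}\simeq\mathcal{S}$.

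The $2$-categorical bookkeeping --- the local biequivalence property of Yoneda, stability of Gray-categories under restriction to a full sub-collection of objects, the Gray structure on functor tricategories --- is classical and transports essentially unchanged; the main obstacle is functional-analytic. Concretely, the two delicate points are: (i) verifying that functor tricategories of operator trihomomorphisms are again operator (and, in the W$^{\ast}$ case, von Neumann) tricategories, i.e.\ that the spaces of higher cells are complete, composition is bounded, and W$^{\ast}$-completeness survives --- which is precisely the work the categorified biduals of C$^{\ast}$-categories and C$^{\ast}$-$2$-categories are built to do; and (ii) performing the bicategorical strictification \emph{unitarily}, so that biequivalences become unitary equivalences and no non-canonical choice silently breaks the principle of equivalence. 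Once these are established, the remaining steps are routine adaptations of \cite{GPS95,gurski_2013}.
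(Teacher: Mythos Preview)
Your overall strategy matches the paper's: reduce to an intermediate strictified form, then Yoneda-embed into a functor tricategory valued in strict operator $2$-categories, which is an operator Gray-category. However, your step~(0) has a genuine gap. Reducing to a \emph{locally strict} tricategory (strict hom-$2$-categories) is not sufficient for the Yoneda argument to go through; the paper, following Gurski, reduces to a \emph{cubical} C*-$3$-category, which additionally requires that each composition $\dag$-$2$-functor $\cccirc\colon\mathcal{T}(A,B)\times\mathcal{T}(B,C)\to\mathcal{T}(A,C)$ be cubical and each unit $I_A$ be strict. Only then do the representables $\mathcal{T}(-,B)$ assemble into $\dag$-$3$-functors into $\mathbf{C}^*\mathbf{2Cat}_{\mathrm{st}}$ with the correct structure, and only then does the target functor tricategory become a Gray-category rather than merely a tricategory.

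Achieving cubicality is not ``coherence for bicategories applied hom-wise.'' The paper uses a three-step procedure (transport of structure, change of composition, change of units) built on the \emph{cofibrant replacement} $\widehat{\mathcal{A}}$ of an operator $2$-category --- the free-path construction, not a Yoneda strictification --- precisely because this replacement admits a cubical comparison $\dag$-$2$-functor $C\colon\widehat{\mathcal{A}}_1\blacktimes\widehat{\mathcal{A}}_2\to\widehat{\mathcal{A}_1\blacktimes\mathcal{A}_2}$ (and a separately normal version in the W* case). This is what allows one to replace the transported composition $\ev^{\sqdot}\circ\cccirc\circ(\ev\blacktimes\ev)$ by the genuinely cubical $\widehat{\cccirc}\circ C$. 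Your sketch does not account for this, and an arbitrary unitary strictification of the hom-bicategories would not supply the needed cubical comparison. Once cubicality is in hand, the remainder of your outline (Yoneda is monic and locally a biequivalence; the functor tricategory is a Gray-category; restrict to the image) agrees with the paper, though note that the paper verifies the Gray structure on $\mathbf{C}^*\mathbf{3Cat}(\mathcal{A}\to\mathcal{B})$ by direct construction of the hom-$2$-categories and cubical composition, not via the hom-tensor adjunction you invoke.
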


Operator categories, and in particular operator algebras, are said to be concrete when they are made up of Hilbert spaces and operators between them.
We inductively extend this notion to higher operator categories, saying that an operator $n$-category is concrete when it is made up of concrete operator $(n-1)$-categories and higher operators between them.
As a corollary of our main result, we provide the following categorification of the Gelfand-Naimark theorem.

\begin{theoremint}
    Every (small) operator algebtraic tricategory is equivalent to a concrete operator Gray-category.
\end{theoremint}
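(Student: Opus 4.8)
The plan is to derive this statement from Theorem A together with an inductive Gelfand--Naimark--Segal argument. By Theorem A every operator algebraic tricategory $\mathcal{T}$ is equivalent to an operator Gray-category, and equivalence is transitive, so it suffices to show that every small operator Gray-category is equivalent to a concrete one. Since concreteness is defined by induction on categorical dimension, I would prove the whole tower of statements at once: that a small C*-algebra, a small C*-category, a small operator $2$-category, and a small operator Gray-category are each equivalent to a concrete operator category of the appropriate level, with each level feeding the next.

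The base of the induction is the classical Gelfand--Naimark theorem: a unital C*-algebra $A$ embeds isometrically and $*$-homomorphically into $B(H)$, where $H=\bigoplus_{\phi}H_\phi$ is the Hilbert space of the universal representation, the sum ranging over the states $\phi$ of $A$; smallness is precisely what makes this direct sum a genuine Hilbert space. One level up, small C*-categories are handled by the Ghez--Lima--Roberts representation theorem: one runs GNS on the hom-algebras, assembles the GNS spaces into a Hilbert space attached to each object, and obtains a fully faithful $*$-functor into the concrete C*-category of Hilbert spaces and bounded operators, hence an equivalence onto its replete image. For the inductive step one is handed a small operator $(n{+}1)$-category $\mathcal{T}$ whose hom-objects $\mathcal{T}(a,b)$ are small operator $n$-categories, so by the inductive hypothesis each comes with an equivalence $\Phi_{ab}\colon\mathcal{T}(a,b)\to\mathcal{T}'(a,b)$ onto a concrete operator $n$-category.

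The substance of the inductive step is to upgrade the pointwise equivalences $\Phi_{ab}$ to an equivalence of operator $(n{+}1)$-categories onto a concrete one. The naive move --- transport the composition cubical functors and the units along the $\Phi_{ab}$ and chosen quasi-inverses --- yields an operator $(n{+}1)$-category with concrete hom-objects, but only a weak one, because quasi-inverses compose merely up to coherent equivalence; re-applying Theorem A re-strictifies it, and one must then check that this strictification can be carried out without leaving the class of concrete hom-objects. This is where the operator algebraic Gray tensor product does real work: I would verify that the Gray product of concrete operator $2$-categories again admits a concrete model, built from the appropriate spatial tensor products of the underlying Hilbert spaces, so that neither the Gray tensor product nor the strictification of \cite{GPS95,gurski_2013} ever exits the world of concrete operator $2$-categories. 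A route that bypasses the re-strictification is to build a universal representation directly: fix a concrete operator Gray-category $\mathcal{U}$ --- Hilbert spaces at the bottom, concrete operator $2$-categories of them as hom-objects, with strictly associative composition from a chosen strict model of the Gray tensor product on the concrete side --- and assemble, inductively from GNS data, a fully faithful Gray-functor $\mathcal{T}\to\mathcal{U}$; then $\mathcal{T}$ is equivalent to its replete image in $\mathcal{U}$, which is concrete by construction.

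I expect the compatibility of concreteness with the Gray tensor product and with the coherence-theoretic strictification to be the main obstacle; everything else is either the classical Gelfand--Naimark and Ghez--Lima--Roberts input or transfer-of-structure bookkeeping. Throughout, smallness of $\mathcal{T}$ is used exactly to guarantee that the direct sums over states, and their higher analogues at each level of the tower, are legitimate.
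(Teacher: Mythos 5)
Your high-level strategy --- deduce the statement from Theorem A, then concretize a small operator Gray-category by an inductive GNS argument whose lower levels are classical Gelfand--Naimark and Ghez--Lima--Roberts --- is exactly the paper's strategy, and your identification of where smallness enters is correct. The gap is in the inductive step at the top level: you correctly flag that transporting structure along the pointwise equivalences $\Phi_{ab}$ and re-strictifying is problematic, and your alternative ``build a fully faithful Gray-functor into a concrete Gray-category $\mathcal{U}$ directly'' is the right idea, but you leave its construction unspecified and, worse, you make it depend on ``a chosen strict model of the Gray tensor product on the concrete side.'' Neither a concrete model of $\maxboxtimes$ nor any compatibility of concreteness with strictification is needed, and establishing them would be a substantial detour.

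The missing device is the coproduct Yoneda embedding. For a small $\mathrm{C}^*$-Gray-category $\cB$ (obtained from Theorem A), the assignment
$$\yo^{\amalg}(B) \;=\; \coprod_{B' \in \cB} \cB(B' \to B)$$
is a \emph{monic} $\dag$-3-functor $\cB \to \CstarTwoCat_{\Strict,\Small}$: objects go to small strict $\mathrm{C}^*$-2-categories, and 1-composition in the target is post-composition of strict $\dag$-2-functors, hence strictly associative with no Gray tensor product of hom-objects ever being formed as a new concrete object. One then only has to extend the already-established 2-categorical GNS construction $\Upsilon_2$ to a monic $\dag$-3-functor $\GNS \colon \CstarTwoCat_{\Strict,\Small} \to \ThreeHilb$ applied hom-2-category by hom-2-category (the nontrivial point being its functoriality on strict $\dag$-2-functors, handled via the bidual $F^{**}$ as in Lemma \ref{lem:GNS''ismonic} one dimension down). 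The image of $\GNS \circ \yo^{\amalg}$ is then a sub-$\mathrm{C}^*$-Gray-category of $\ThreeHilb$ equivalent to $\cB$. In short: the obstacle you name as ``the main obstacle'' is real for your route (a), but the paper dissolves it rather than solves it, by representing $\cB$ inside a concrete Gray-category whose composition is already strict for trivial reasons; your proposal as written does not supply this step.
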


This articles also provides a first stepping stone for future projects in 3-categorical unitary linear algebra. 
In particular, there is relevant interest in a theory of higher Hilbert spaces and manifestly unitary higher condensation, as well as a definition and classification of unitary dual 2-functors on a unitary multifusion 2-category.

The structure of this article is as follows. In Section \S\ref{sec:2cats}, we construct the operator algebraic versions of the Gray-tensor product. In Section \S\ref{sec:3cats}, we define operator 3-categories, operator Gray-categories, and state our main result. We provide background needed for Section \S\ref{sec:2cats} in Appendix \S\ref{1cats}, as well as defer some details from this section to Appendix \S\ref{sec:background2cats}. We then construct in detail the operator algebraic Morita 3-categories $\CHaus$ and $\Meas$ in Appendix \S\ref{sec:examples}. Finally, we provide a detailed account of the Yoneda embedding for so-called cubical operator algebraic 3-categories in Appendix \S\ref{sec:yoneda}.

\paragraph{Acknowledgements.}
The author would like to thank David Penneys and Corey Jones for many useful conversations.
This work was supported by NSF DMS grants 1654159 and 2154389. 

\section{2-categorical results} \label{sec:2cats}

We mimic this section's results one categorical dimension down in Appendix \S\ref{1cats}, and defer some details about operator 2-categories to Appendix \S\ref{sec:background2cats}.

\subsection{Operator 2-categories}

We will follow \cite{CHJP2022} in the following treatment of operator algebraic bicategories, except that we do not assume such 2-categories are unitarily Cauchy complete.

\begin{definition}
A C*-2-category consists of a (linear, weak) 2-category $(\cC,\ccirc,1,\alpha,\lambda,\rho)$ together with:
\begin{itemize}
\item[($\dag$)] a dagger structure, i.e. an involutive contravariant conjugate-linear 2-functor $\dag: \overline{\cC}^{\op} \to \cC$, which is the identity on objects and 1-morphisms, such that the associator, left unitor, and right unitor
\begin{align*}
\alpha_{X,Y,Z}&\colon X \ccirc_B (Y \ccirc_C Z) \tto (X \ccirc_B Y) \ccirc_C Z,\\
 \lambda_X &\colon \id_A \ccirc_A X \tto X,\\
 \rho_X&\colon X \ccirc_B \id_B \tto X
 \end{align*}
are unitary ($u^\dag = u^{-1}$) for composable 1-morphisms $A \xrightarrow{X} B \xrightarrow{Y} C \xrightarrow{Z} D$ in $\cC$ and the composition $\ccirc$ of 1-morphisms in $\cC$ is $\dag$-preserving,
\item[(C*)] 
the hom-category $\cC(A \to B)$ is a C*-category for every pair of objects $A,B \in \cC$.
\end{itemize}
Furthermore, we say that a C*-2-category is a W*-2-category when:
\begin{itemize}
    \item[(W*1)] every hom-category $\cC(A \to B)$ is a W*-category, 
    \item[(W*2)] 1-composition $\ccirc$ is separately normal (weak*-continuous).
\end{itemize}
In \cite{CHJP2022} it is shown that (W*2) is equivalent to the following condition:
\begin{itemize}
    \item[(W*2$'$)] \label{condition:W*2'} For every object $A \in \cC$, the pre-composition and post-composition maps $\id_A \ccirc_A -$ and $- \ccirc_A \id_A$ are normal.
\end{itemize}
We will use the terms operator algebraic bicategory or operator 2-category to refer to C*/W*-2-categories.
Moreover, we say that an operator 2-category is strict if the underlying 2-category is strict. Notice we use the terms 2-category and bicategory interchangeably, specifying whenever they are strict. 
\end{definition}

\begin{example}
Given an operator 2-category $\cA$, we may produce related operator 2-categories $\cA^{\twoop}$ and $\cA^{\op}$ given by reversing the direction of 1-morphisms and 2-morphisms respectively.
\end{example}

\begin{definition}
A $\dag$-2-functor $F \colon \cC \to \cD$ between C*-2-categories is a (linear) 2-functor $(F,F^2,F^0)$ such that
\begin{itemize}
    \item[($\dag$)] $F$ is dagger-preserving, i.e $F(x^\dag) = F(x)^\dag$ for every 2-morphism $x$ in $\cC$,
    \item[(coh)] the compositor and unitor 
    \begin{align*}
    F^2_{X,Y}&\colon F(X) \ccirc_{F(B)} F(Y) \tto F(X \ccirc_B Y),\\
    F^0_A&\colon \id_{F(A)} \tto F(\id_A),
    \end{align*}
    are unitary for $A \xrightarrow{X} B \xrightarrow{Y} C$ in $\cC$.
\end{itemize}
Furthermore, we say that a $\dag$-2-functor $F \colon \cC \to \cD$ between W*-2-categories is normal when
\begin{itemize}
\item[(W*)] it is normal (weak*-continuous) on hom-spaces $\cC(X \tto Y) \to \cD(F(X) \tto F(Y))$.
\end{itemize}
Moreover, we say that a (normal) $\dag$-2-functor is strict if the underlying 2-functor is strict.
\end{definition}

\begin{definition}
A $\dag$-2-natural transformation $\alpha\colon F \tto G$ between (normal) $\dag$-2-functors is a 2-natural transformation $(\alpha,\alpha^n)$ such that
\begin{itemize}
\item[(coh)] The naturator 
$$\alpha^n_{X}\colon F(X) \ccirc_{F(B)} \alpha_B \tto \alpha_A \ccirc_{G(A)} G(X)$$
is a unitary for every 1-morphism $A \xrightarrow{X} B$ in $\cC$.
\end{itemize}
\end{definition}

\begin{definition}
A uniformly bounded modification $m\colon \alpha \ttto \beta$ between $\dag$-2-natural transformations is a modification $m = (m_A \colon \alpha_A \tto \beta_A)_{A \in \cC}$ such that
$$\|m\| \coloneqq \sup_{A \in \cC} \|m_A\|_{\cD} < \infty.$$
\end{definition}

\begin{example}
We denote the strict C*-2-category of C*-categories, $\dag$-functors, and uniformly bounded natural transformations by $\CstarCats$.
Similarly, we denote the W*-2-category of W*-categories, normal $\dag$-functors and uniformly bounded natural transformation by $\WstarCats$.
\end{example}

\begin{example} 
In Proposition 2.13 of \cite{CP22}, it is show that for C*-2-categories $\cA$ and $\cB$, there is a C*-2-category $\CstarTwoCat(\cA \to \cB)$ of $\dag$-2-functors from $\cA$ to $\cB$, $\dag$-2-natural transformations, and uniformly bounded modifications. 
Moreover, $\CstarTwoCat(\cA \to \cB)$ is strict whenever $\cB$ is strict. 
There is an analogous W*-2-category $\WstarTwoCat(\cA \to \cB)$ of normal $\dag$-2-functors between W*-2-categories $\cA$ and $\cB$. 
\end{example}

\begin{definition}
A $\dag$-functor $F \in \CstarTwoCat(\cA \to \cB)$ is said to be a $\dag$-equivalence if it is an equivalence of the underlying 2-categories. This is equivalent to $F$ being unitarily essentially surjective on objects and 1-morphisms, and fully faithful on 2-morphisms.
\end{definition}

\subsection{Coherence and concreteness for operator algebraic bicategories} 

For algebraic 2-categories, it is well-known that every 2-category is equivalent to a strict category. This coherence theorem can be easily adapted to operator 2-categories as follows.

\begin{theorem}[Coherence for operator 2-categories] \label{thm:coherence2cats}
Every operator 2-category is (unitarily) equivalent to a strict operator 2-category. 
In particular, every C*-2-category $\cA$ admits a monic\footnote{By monic, we mean injective on objects, 1-morphisms, and 2-morphisms. 
One can easily show that the image of $\cA$ under such a $\dag$-2-functor into a strict C*-2-category forms a (strict) C*-2-subcategory which is equivalent to $\cA$.} $\dag$-2-functor into a strict C*-2-category afforded by the Yoneda embedding:
$$\yo \colon \cA \to \CstarTwoCat(\cA^{\twoop} \to \CstarCats).$$
In fact, there is a monic $\dag$-2-functor $\yo^\amalg \colon \cA \to \CstarCats$ given on objects by
$$
\yo^\amalg(B) = \coprod_{A \in \cA} \yo^A_B = \coprod_{A \in \cA} \cA(A \to B).\footnote{Here we use the disjoint union notation $\cA = \coprod_{i \in I} \cA_i$ to denote the C*-category with objects $\mathsf{Ob} \cA$ being the disjoint union $\coprod \mathsf{Ob} \cA_i$, which admits fully-faithful inclusions of each $\cA_i$ in the obvious way, and with trivial hom-spaces between objects from different components.}
$$
When $\cA$ is a W*-2-category, the Yoneda embedding lands in the strict W*-2-category $\WstarTwoCat(\cA^{\op} \to \WstarCats)$.
\end{theorem}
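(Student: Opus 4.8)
The plan is to mirror, in the operator-algebraic setting, the classical proof that every bicategory embeds into a strict 2-category via the Yoneda embedding into a functor 2-category. First I would verify that the target $\CstarTwoCat(\cA^{\twoop} \to \CstarCats)$ is a \emph{strict} C*-2-category: this follows from the cited Proposition 2.13 of \cite{CP22} since $\CstarCats$ is strict, so all coherence isomorphisms in the functor 2-category are identities. The nontrivial content is then to build the $\dag$-2-functor $\yo$ and to check it is monic and a local (unitary) equivalence onto its image. On objects, $\yo$ sends $A$ to the representable $\dag$-2-functor $\yo^A = \cA(A \to -)$; on a 1-morphism $f\colon A \to B$ it sends $f$ to the $\dag$-2-natural transformation given by post-composition $- \ccirc f$, whose naturator components are the associators $\alpha$ of $\cA$ (these are unitary by the ($\dag$) axiom, which is exactly what is needed for the (coh) condition on a $\dag$-2-natural transformation); on a 2-morphism it sends it to the evident modification by whiskering, which is uniformly bounded since whiskering in a C*-category is norm-non-increasing after accounting for the fixed 1-morphism. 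One must check the compositor $\yo^2$ and unitor $\yo^0$ of $\yo$ are unitary: these are built from $\alpha$, $\lambda$, $\rho$ of $\cA$, which are unitary by hypothesis, so $\yo$ is genuinely a $\dag$-2-functor, not merely a linear 2-functor.

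Next I would check monic-ness and local full faithfulness. Injectivity on objects is immediate because distinct objects $A, A'$ give representables that differ on objects; injectivity on 1- and 2-morphisms, together with local fully faithfulness on 2-morphisms, follows from the Yoneda lemma, using the identity 1-morphism $\id_A$ and the unitors $\lambda, \rho$ to recover $f$ and a 2-cell from its image under $\yo$ — here the C*-structure plays no role beyond what the algebraic argument already uses, so the computation is purely formal. The statement that $\yo$ is a $\dag$-equivalence onto its (strict C*-2-)subcategory image then reduces, by the criterion recalled just before the theorem, to local full faithfulness on 2-morphisms (automatic) plus unitary essential surjectivity onto the image on objects and 1-morphisms (automatic, since the image is \emph{defined} as the essential image). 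The footnote's claim that the image is a C*-2-subcategory needs one to observe that hom-spaces of the image are norm-closed: they are the ranges of the isometric-up-to-the-unitors maps $\cC(f \tto g) \to \CstarCats\text{-level homs}$, hence closed; this is the one place where completeness of C*-categories is genuinely invoked.

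For the coproduct refinement $\yo^\amalg$, I would use the footnoted disjoint-union construction of C*-categories, set $\yo^\amalg(B) = \coprod_{A} \cA(A \to B)$, and define its action on higher cells component-wise via the $\yo^A$. The key point is that $\yo^\amalg$ is still monic: a 2-morphism $x \colon f \tto g$ with $f, g \colon A \to B$ is detected already in the $A$-component of $\yo^\amalg(B)$ by restricting the resulting natural transformation/modification, so nothing is lost by bundling all source objects together; and the C*-category axioms for the disjoint union hold because hom-spaces between different components are zero and within a component are unchanged. The W*-case is handled by replacing $\CstarCats$ with $\WstarCats$, $\CstarTwoCat$ with $\WstarTwoCat$, and $\cA^{\twoop}$ with $\cA^{\op}$ (matching the convention that W*-representables are covariant-contravariant as appropriate); one additionally checks that each $\yo^A$ is a \emph{normal} $\dag$-2-functor, i.e. that post-composition and whiskering are weak*-continuous, which is precisely condition (W*2)/(W*2$'$) of a W*-2-category, and that the functor W*-2-category $\WstarTwoCat(\cA^{\op} \to \WstarCats)$ is strict, again by the W*-analogue of Proposition 2.13.

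The main obstacle I anticipate is \emph{not} any single hard estimate but rather the bookkeeping of verifying that every coherence cell produced by the Yoneda construction — the naturators of $\yo(f)$, the compositor and unitor of $\yo$, and the components of $\yo$ on 2-cells — is simultaneously (i) unitary or uniformly bounded as the operator-algebraic axioms demand and (ii) assembled from the unitary coherence data $\alpha, \lambda, \rho$ of $\cA$ rather than from arbitrary invertible 2-cells. In the purely algebraic proof these are just ``some invertible 2-cells''; here the content is that the $\dag$-structure forces them to be unitary, so the real work is checking that no non-unitary isomorphism sneaks in through, e.g., an interchange or a triangle identity, and that the uniform-boundedness bound for modifications is genuinely uniform in the source object $A$ (which it is, because whiskering by a fixed 1-morphism is a bounded linear map whose norm we do not even need to control, only finiteness of the supremum of $\|m_A\|$, and each $m_A$ equals the original 2-cell up to unitors).
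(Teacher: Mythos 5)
Your proposal is correct and follows essentially the same route as the paper: the explicit Yoneda embedding into the (automatically strict) functor C*-2-category, with all coherence cells built from the unitary $\alpha,\lambda,\rho$ of $\cA$, full faithfulness deferred to the ordinary 2-categorical Yoneda lemma, monicness of $\yo^\amalg$ detected by evaluating at identity 1-morphisms, and the W*-case handled by observing that (W*2)/(W*2$'$) makes the representables normal. The only differences are notational (pre- versus post-composition conventions), not mathematical.
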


For operator 1-categories, it is also well-known that every small operator 1-category can be concretely realized as a category of Hilbert spaces and operators between them \cite{GLR1985}. One can categorify this Gelfand-Naimark-Segal construction to the level of operator 2-categories, obtaining the following result. 

\begin{theorem}[Concreteness for operator 2-categories]
For a small operator 2-category $\cA$, there exists a universal representation   
$$\Upsilon \colon \cA \to \TwoHilb,$$
where $\TwoHilb$ is the strict W*-2-category of 
all W*-subcategories of $\Hilb$, the W*-category of Hilbert spaces. 
Moreover, the image of $\Upsilon$ forms a strict operator 2-category $\GNS(\cA)$ which is equivalent to $\cA$.
\end{theorem}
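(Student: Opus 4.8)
The plan is to combine the coherence theorem for operator 2-categories (Theorem~\ref{thm:coherence2cats}) with a categorified Gelfand--Naimark--Segal construction, carried out hom-category by hom-category. By Theorem~\ref{thm:coherence2cats} we may first replace $\cA$ by an equivalent strict operator 2-category, so it suffices to treat the strict case; passing through the Yoneda embedding $\yo^\amalg$ also lets us assume $\cA$ is a (strict) C*/W*-2-subcategory of $\CstarCats$ (resp.\ $\WstarCats$). The key point is then to represent each hom-C*-category $\cA(A \to B)$ faithfully on Hilbert spaces in a way that is \emph{compatible with 1-composition}. For a single small C*-category this is the GLR theorem \cite{GLR1985}: there is a faithful $\dag$-functor into $\Hilb$. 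The obstruction is that a priori these representations, chosen independently for each pair $(A,B)$, need not interact correctly with the composition functors $\ccirc \colon \cA(B \to C) \times \cA(A \to B) \to \cA(A \to C)$, nor with the units $\id_A$.

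To fix this, I would build a single ``universal'' representation functorially. First assemble, for each object $A \in \cA$, a separating family of states/representations on the C*-algebra $\bigoplus$ (or rather the C*-category) $\bigsqcup_{B} \cA(A \to B)$, and take Hilbert spaces $H_{A,B}$ together with faithful $*$-representations $\pi_{A,B} \colon \cA(A\to B) \to B(H_{A,B})$ obtained by a GNS-type construction applied uniformly. The composition 1-morphism $X \in \cA(A \to B)$ should act by ``left multiplication'', sending $H_{B,C} \to H_{A,C}$ for each $C$; concretely one takes $H_{A,C}$ large enough (e.g.\ a suitable completion of the algebraic span of $\cA(A \to B) \otimes H_{B,C}$ over all $B$, or a bimodule-theoretic interior tensor product) so that $\ccirc$ is implemented by honest bounded operators and the strict associativity and unit laws of $\cA$ become the strict associativity and unit laws of composition of operators between Hilbert spaces. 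This defines a strict $\dag$-2-functor $\Upsilon \colon \cA \to \TwoHilb$, where an object $A$ is sent to the W*-subcategory of $\Hilb$ spanned by the $H_{A,B}$, a 1-morphism $X \colon A \to B$ to the functor ``act by $X$'', and a 2-morphism to the induced intertwiner.

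Next I would verify faithfulness and the equivalence onto the image. Faithfulness on 2-morphisms follows because each $\pi_{A,B}$ is faithful on $\cA(A \to B)$ and the action on the ``free'' Hilbert space $H_{A,B}$ itself (take $C = B$, $X = \id_A$ acting on $\cA(A \to B)$'s GNS space, or include a standard form summand) recovers $\pi_{A,B}$. Injectivity on objects and 1-morphisms can be arranged by the same disjoint-union bookkeeping used in $\yo^\amalg$: distinct objects go to genuinely different subcategories, and distinct 1-morphisms act differently on some $H_{B,C}$. The image is then a strict operator 2-subcategory $\GNS(\cA) \subseteq \TwoHilb$, and the corestriction $\cA \to \GNS(\cA)$ is a monic, fully faithful (on 2-morphisms), bijective-on-objects-and-1-morphisms strict $\dag$-2-functor, hence a $\dag$-equivalence. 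In the W*-case one additionally checks that the GNS representations can be chosen normal (use normal states, which separate points of a W*-category) and that the left-multiplication action is separately normal, so that $\Upsilon$ lands in and is normal into the W*-2-category $\TwoHilb$; condition (W*2$'$) reduces this to normality of pre- and post-composition by identities, which holds by construction.

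The main obstacle I anticipate is the \emph{coherent} choice of Hilbert spaces $H_{A,B}$: one must produce the $H_{A,B}$ and the left actions simultaneously so that strict associativity $(X \ccirc Y)\ccirc Z = X \ccirc (Y \ccirc Z)$ holds \emph{on the nose} as composition of operators, not merely up to unitary. This is where the right construction is an inductive/bimodule interior tensor product (a ``categorified GNS'' in which $H_{A,\bullet}$ is itself a module over the collection of hom-C*-categories out of $A$), and getting the bookkeeping of the disjoint union to respect this without introducing spurious identifications is the delicate part; everything else — faithfulness, the C*- and W*-axioms for $\GNS(\cA)$, normality — is then a routine check from the properties of GNS representations and interior tensor products recorded in Appendix~\ref{1cats}.
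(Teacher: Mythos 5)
Your opening reduction is exactly the right one, and it is the paper's: strictify, then use the monic strict $\dag$-2-functor $\yo^\amalg \colon \cA \to \CstarCats$ to replace $\cA$ by a strict 2-subcategory of $\CstarCats$ whose objects are small C*-categories, whose 1-morphisms are $\dag$-functors, and whose 2-morphisms are uniformly bounded natural transformations. But at that point the problem has already been linearized, and your second half takes a wrong turn by abandoning this reduction: you go back to building Hilbert spaces $H_{A,B}$ from the hom-categories of the original $\cA$ and implementing $\ccirc$ by left multiplication / interior tensor products. This reintroduces precisely the obstruction you name at the end — interior tensor products of GNS bimodules are associative only up to canonical unitary, never on the nose — and your proposal does not resolve it; "take $H_{A,C}$ large enough" does not produce a single well-defined Hilbert space on which $(X \ccirc Y) \ccirc Z$ and $X \ccirc (Y \ccirc Z)$ act by \emph{equal} operators. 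That is a genuine gap, not a routine check.

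The paper's proof avoids the issue entirely. After $\yo^\amalg$, one never tensors: one applies a $\dag$-2-functor $\GNS'' \colon \CstarCats_{\Small} \to \TwoHilb$ which sends each object C*-category $\cB$ to the bicommutant $\GNS(\cB)''$ of its GLR representation, sends a $\dag$-functor $F$ to $\widetilde{\Upsilon}\, F^{**}\, \widetilde{\Upsilon}^{-1}$ using the bidual functor of Definition~\ref{defn:F**} and the Sherman--Takeda isomorphism $\widetilde{\Upsilon}\colon \cB^{**} \xrightarrow{\sim} \GNS(\cB)''$, and sends natural transformations to their componentwise images. Since $(-)^{**}$ is strictly functorial (by the uniqueness clause in its universal property), $\GNS''$ is a \emph{strict} $\dag$-2-functor, so strict associativity and unitality are inherited for free rather than engineered; monicity of $\GNS''$ (Lemma~\ref{lem:GNS''ismonic}) composed with monicity of $\yo^\amalg$ gives the embedding, and normality in the W* case comes from normality of $F^{**}$ and $\widetilde{\Upsilon}$ rather than from a choice of normal states. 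If you replace your bimodule construction by this transport-along-the-bidual argument, the rest of your outline (faithfulness, the disjoint-union bookkeeping, the W* refinements) goes through as you describe.
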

We provide the details of these results in Appendix \S \ref{subsec:coherencesforonecats}.
\subsection{Local tensor product for operator algebraic bicategories}
In this section, we use the tensor product of operator categories to obtain a local tensor product of operator 2-categories.
\begin{definition}
Given C*-2-categories $\cA_1$ and $\cA_2$, we define the
% minimal and 
C*-local tensor product
% $\cA_1 \minblacktimes \cA_2$ and 
$\cA_1 \blacktimes \cA_2$ by:
\begin{itemize}
\item[(0)] Objects are tuples of objects $(A_1,A_2) \in \cA_1 \times \cA_2$; 
\item[(hom)] Hom-spaces are given by:
\begin{align*}
% \cA_1 \minblacktimes \cA_2 \big( (a_1,a_2) \to (b_1,b_2)\big) &\coloneqq \cA_1(a_1 \to b_1) \minboxtimes \cA_2(a_2 \to b_2),\\
\cA_1 \blacktimes \cA_2 \big( (A_1,A_2) \to (A_1,B_2)\big) \coloneqq \cA_1(A_1 \to B_1) \maxtimes \cA_2(A_2 \to B_2),
\end{align*}
where we are using the maximal tensor product of C*-categories (see Appendix \ref{subsec:tensorproductsCstar1cats}).

\item[($\ccirc$)] Composition $\ccirc$ of 1-morphisms is given pointwise and 1-composition $\ccirc$ of 2-morphisms is determined using the universal property of $\maxtimes$. In particular
$$(x \otimes y) \ccirc (x' \otimes y') = (x \ccirc x') \otimes (y \ccirc y'),$$
whenever it makes sense.
\item[($\circ$)] Composition $\circ$ of 2-morphism is defined similarly.
\item[(coh)] Constraint data is given by simple tensors of constraint data in $\cA_1$ and $\cA_2$. 
These are indeed natural by the bilinearity and bicontinuity of $\ccirc$ and $\circ$.
\end{itemize}
Similarly, when $\cA_1$ and $\cA_2$ are W*-2-categories, we define the W*-2-category $\cA_1 \Wblacktimes \cA_2$ by replacing $\maxtimes$ with W*-tensor product $\Wmaxtimes$ (see Appendix \ref{subsec:Wstartensorproducts1cats}).
\end{definition}

\begin{remark}
When $\cA_1$ and $\cA_2$ are strict operator 2-categories, their local tensor product is also strict.
\end{remark}

\subsection{C*-Gray tensor product}

In this section, we adapt \cite[\S 3]{gurski_2013} to the operator algebraic setting.

\begin{definition}
Let $\cA_1$, $\cA_2$, and $\cB$ be strict C*-2-categories. A $\dag$-2-functor $F\colon \cA_1 \maxblacktimes \cA_2 \to \cB$ is \emph{cubical} if it is strictly identity-preserving and the following condition holds:
\begin{itemize}\label{eq:cubical}
\item[(\mancube)] If $(X_1 ,X_2)$ and $(Y_1 ,Y_2)$ are composable 1-morphisms in $\cA_1 \maxblacktimes \cA_2$ such that $Y_2$ or $X_1$ is an identity, then the compositor 2-morphism
\begin{equation*}
F(X_1 , X_2) \ccirc F(Y_1 , Y_2) \Rightarrow F\big((X_1 , X_2) \ccirc (Y_1 , Y_2)\big).
\end{equation*}
is an identity. 
\end{itemize}
When $\cA_1$, $\cA_2$, $\cB$ are W*, we say that a $\dag$-2-functor $F\colon \cA_1 \Wblacktimes \cA_2 \to \cB$ is a \emph{separately normal} cubical functor if it is both normal and cubical. In particular, a cubical functor $F\colon \cA_1 \maxblacktimes \cA_2 \to \cB$ extends to a separately normal cubical functor only when $\alpha^\lambda_i \to \alpha_i$ weak* in $\cA_i$ for $i = 1,2$ implies
    $$F(\alpha^\lambda_1 \otimes \alpha_2) \to F(\alpha_1 \otimes \alpha_2) \text{ and } F(\alpha_1 \otimes \alpha^\lambda_2) \to F(\alpha_1 \otimes \alpha_2) \text{ weak* in } \cB.$$ 
\end{definition}

% \begin{remark}
% By convention, a cubical $\dag$-2-functor for $n = 1$ is simply a strict $\dag$-2-functor.
% \end{remark}

\begin{proposition}\label{prop:dataofcubicals}
The data of a (separately normal) cubical $\dag$-2-functor $F\colon \cA_1 \maxblacktimes \cA_2 \to \cB$ is determined uniquely by:
\begin{enumerate}
\item[$(1)$] For each object $A_1 \in \cA_1$, a strict (normal) $\dag$-2-functor $F_{A_1}\colon \cA_2 \to \cB$;
\item[$(2)$] For each object $A_2 \in \cA_2$, a strict (normal) $\dag$-2-functor $F_{A_2}\colon \cA_1 \to \cB$;
\end{enumerate}
such that
$$F_{A_1}(A_2) = F_{A_2}(A_1) \coloneqq F(A_1,A_2);$$
\begin{enumerate}
\item[$(\Sigma)$] For each pair of 1-morphisms $X_i\colon A_i \to A_i'$ in $\cA_i$ for $i = 1,2,$ an ``interchanger'' unitary 2-morphism 
\[
\begin{tikzcd}[row sep=40pt]
F(A_1,A_2) \arrow[d,"F_{A_2}(X_1)"'] \arrow[r,"F_{A_1}(X_2)"] & \arrow[dl,"\Sigma_{X_1,X_2}" description, Rightarrow] F(A_1,A_2') \arrow[d,"F_{A_2'}(X_1)"] \\
F(A_1',A_2) \arrow[r,"F_{A_1'}(X_2)"'] & F(A_1',A_2')
\end{tikzcd}
\]
which is an identity 2-morphism whenever $X_1$ or $X_2$ is an identity 1-morphism;
\end{enumerate}
satisfying the following three axioms:
\begin{itemize}
\item[$(\Sigma1)$] Naturality:
For each pair of 2-morphisms $\alpha_1,\alpha_2$ in $\cA_1$ and $\cA_2$ respectively
\[
\begin{tikzcd}[row sep=40pt]
{F(A_1,A_2)} & {F(A_1,A_2')} \\
	{F(A_1',A_2)} & {F(A_1',A_2')}
	\arrow[""{name=0, anchor=center, inner sep=0}, from=1-1, to=2-1]
	\arrow[from=1-1, to=1-2]
	\arrow[from=1-2, to=2-2]
	\arrow[""{name=1, anchor=center, inner sep=0}, from=2-1, to=2-2]
	\arrow["\Sigma"{description}, Rightarrow, from=1-2, to=2-1]
	\arrow[""{name=2, anchor=center, inner sep=0}, shift right=9, curve={height=24pt}, from=1-1, to=2-1]
	\arrow[""{name=3, anchor=center, inner sep=0}, shift right=3, curve={height=18pt}, from=2-1, to=2-2]
	\arrow["F_{A_2}\alpha_1"{description}, shorten <=6pt, shorten >=6pt, Rightarrow, from=0, to=2]
	\arrow["F_{A_1'}\alpha_2"{description}, shorten <=3pt, shorten >=3pt, Rightarrow, from=1, to=3]
\end{tikzcd}
=
\begin{tikzcd}[row sep=40pt]
	{F(A_1,A_2)} & {F(A_1,A_2')} \\
	{F(A_1',A_2)} & {F(A_1',A_2')}
	\arrow[from=1-1, to=2-1]
	\arrow[""{name=0, anchor=center, inner sep=0}, from=1-1, to=1-2]
	\arrow[""{name=1, anchor=center, inner sep=0}, from=1-2, to=2-2]
	\arrow[from=2-1, to=2-2]
	\arrow["\Sigma"{description}, Rightarrow, from=1-2, to=2-1]
	\arrow[""{name=2, anchor=center, inner sep=0}, shift left=3, curve={height=-18pt}, from=1-1, to=1-2]
	\arrow[""{name=3, anchor=center, inner sep=0}, shift left=11, curve={height=-18pt}, from=1-2, to=2-2]
	\arrow["\scriptsize{F_{A_1}\alpha_2}"{description}, shorten <=3pt, shorten >=3pt, Rightarrow, from=2, to=0]
	\arrow["\scriptsize{F_{A_2'}\alpha_1}"{description}', shorten <=5pt, shorten >=5pt, Rightarrow, from=3, to=1]
\end{tikzcd}
\]
\item[$(\Sigma2)$] Composability in the first entry:
\[\begin{tikzcd}[row sep=40pt]
	{F(A_1,A_2)} & {F(A_1,A_2')} \\
	{F(A_1',A_2)} & {F(A_1',A_2')} \\
	{F(A_1'',A_2')} & {F(A_1'',A_2')}
	\arrow[from=1-1, to=2-1]
	\arrow[from=1-1, to=1-2]
	\arrow[from=1-2, to=2-2]
	\arrow[from=2-1, to=2-2]
	\arrow["\Sigma"{description}, Rightarrow, from=1-2, to=2-1]
	\arrow[from=2-1, to=3-1]
	\arrow[from=2-2, to=3-2]
	\arrow[from=3-1, to=3-2]
	\arrow["\Sigma"{description}, Rightarrow, from=2-2, to=3-1]
\end{tikzcd}
=
\begin{tikzcd}[row sep=40pt]
	{F(A_1,A_2)} & {F(A_1,A_2')} \\
	{F(A_1',A_2)} & {F(A_1',A_2')} \\
	{F(A_1'',A_2')} & {F(A_1'',A_2')}
	\arrow[from=1-1, to=2-1]
	\arrow[from=1-1, to=1-2]
	\arrow[from=1-2, to=2-2]
	\arrow[from=2-1, to=3-1]
	\arrow[from=2-2, to=3-2]
	\arrow[from=3-1, to=3-2]
	\arrow["\Sigma"{description}, Rightarrow, from=1-2, to=3-1]
\end{tikzcd}
\]
\item[$(\Sigma3)$] Composability in the second entry: 
\[
\begin{tikzcd}[row sep=40pt]
	{F(A_1,A_2)} & {F(A_1,A_2')} & {F(A_1,A_2'')} \\
	{F(A_1',A_2)} & {F(A_1',A_2')} & {F(A_1',A_2'')}
	\arrow[from=1-1, to=2-1]
	\arrow[from=1-1, to=1-2]
	\arrow[from=1-2, to=2-2]
	\arrow[from=2-1, to=2-2]
	\arrow[from=1-2, to=1-3]
	\arrow[from=2-2, to=2-3]
	\arrow[from=1-3, to=2-3]
	\arrow["\Sigma"{description}, Rightarrow, from=1-2, to=2-1]
	\arrow["\Sigma"{description}, Rightarrow, from=1-3, to=2-2]
\end{tikzcd}
=
\begin{tikzcd}[row sep=40pt]
	{F(A_1,A_2)} & {F(A_1,A_2')} & {F(A_1,A_2'')} \\
	{F(A_1',A_2)} & {F(A_1',A_2')} & {F(A_1',A_2'')}
	\arrow[from=1-1, to=2-1]
	\arrow[from=1-1, to=1-2]
	\arrow[from=2-1, to=2-2]
	\arrow[from=1-2, to=1-3]
	\arrow[from=2-2, to=2-3]
	\arrow[from=1-3, to=2-3]
	\arrow["\Sigma"{description}, Rightarrow, from=1-3, to=2-1]
\end{tikzcd}
\]
\end{itemize}
\end{proposition}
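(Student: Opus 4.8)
The claim is that $F\mapsto\big((F_{A_1})_{A_1},(F_{A_2})_{A_2},\Sigma\big)$ is a bijection between (separately normal) cubical $\dag$-2-functors $\cA_1\maxblacktimes\cA_2\to\cB$ and tuples of data satisfying $(\Sigma1)$--$(\Sigma3)$, so the plan is to construct this assignment, construct a candidate inverse, and check the two composites are identities; essentially everything is an unpacking of the definitions of a $\dag$-2-functor and of $\maxblacktimes$, parallel to \cite[\S 3]{gurski_2013}. Given a cubical $F$, I would let $F_{A_1}\colon\cA_2\to\cB$ be the composite of $F$ with the evident (normal) $\dag$-2-functor $\cA_2\to\cA_1\maxblacktimes\cA_2$ sending $A_2\mapsto(A_1,A_2)$, $X_2\mapsto(\id_{A_1},X_2)$, and dually for $F_{A_2}$. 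Composites of 1-morphisms in the image of this inclusion have first coordinate an identity, so the corresponding $F$-compositors are identities by (\mancube); together with strict identity-preservation of $F$ this makes each $F_{A_1}$ a \emph{strict} $\dag$-2-functor, normal whenever $F$ is, and $F_{A_1}(A_2)=F(A_1,A_2)=F_{A_2}(A_1)$ by construction.

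For $X_i\colon A_i\to A_i'$ I would factor $(X_1,X_2)$ in two ways, through $(A_1,A_2')$ and through $(A_1',A_2)$. One factorization --- say $(\id_{A_1},X_2)\ccirc(X_1,\id_{A_2'})$ --- involves only the coordinates-of-factors that (\mancube) asks about, hence has trivial $F$-compositor, identifying $F(X_1,X_2)$ with one edge-composite of the interchanger square; the other factorization has a generally nontrivial $F$-compositor, which, transported along this identification, is taken as $\Sigma_{X_1,X_2}$. It is unitary because compositors of $\dag$-2-functors are, and it is an identity whenever $X_1$ or $X_2$ is, since then both factorizations become composites involving an identity 1-morphism. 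Axiom $(\Sigma1)$ then comes out as naturality of the compositor $F^2$ against the simple-tensor 2-morphisms $\alpha_1\otimes\id$ and $\id\otimes\alpha_2$, using the interchange law and 2-functoriality of the $F_{A_i}$; axioms $(\Sigma2)$ and $(\Sigma3)$ are the associativity (``cocycle'') coherence axiom for $F^2$ applied to triples of 1-morphisms of the form $(X_1,\id)$, $(\id,X_2)$, $(\id,Y_2)$, once every (\mancube)-trivial compositor has been cancelled.

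Conversely, given such data I would put $F(A_1,A_2)\coloneqq F_{A_1}(A_2)$, let $F(X_1,X_2)$ for $(X_1,X_2)\colon(A_1,A_2)\to(B_1,B_2)$ be ``$F_{A_1}(X_2)$ followed by $F_{B_2}(X_1)$'', and let $F$ on a simple-tensor 2-morphism $m_1\otimes m_2$ be the corresponding whiskered composite of $F_{A_1}(m_2)$ and $F_{B_2}(m_1)$. On simple tensors this is bilinear, $\dag$-preserving, functorial for vertical composition, and bounded by the maximal C*-norm, so by the universal property of $\maxtimes$ it extends uniquely to a $\dag$-functor on each hom-category; in the W* case one instead uses the universal property of $\Wmaxtimes$ with separate normality of $\ccirc$ and normality of the $F_{A_i}$ to obtain a separately normal functor. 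Set $F^0\coloneqq\id$ and let $F^2_{(X_1,X_2),(Y_1,Y_2)}$ be a suitable whiskering of the single interchanger $\Sigma_{X_1,Y_2}$ (all other would-be compositors being identities by strictness of the $F_{A_i}$). Then $F^2$ is unitary since $\Sigma$ is, natural by $(\Sigma1)$, and satisfies the compositor-coherence axiom by $(\Sigma2)$ and $(\Sigma3)$ (with $(\Sigma1)$ used to slide interchangers past images of 2-morphisms); $F$ is $\dag$-preserving since the $F_{A_i}$ are and $\Sigma$ is unitary; $F^0=\id$ makes $F$ strictly identity-preserving; and $F$ is cubical since $F^2$ is an identity exactly when $X_1$ or $Y_2$ is, matching (\mancube). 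Normality is built in in the W* case.

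Finally, the two assignments are mutually inverse by a direct comparison of the formulas. The bookkeeping I expect to require the most care is keeping the two factorizations of $(X_1,X_2)$ and the orientation of $\ccirc$ straight, so that the vanishing condition of (\mancube) matches exactly the prescribed vanishing of $\Sigma$; the one genuinely substantive computation is the verification of the compositor-coherence axiom for the reconstructed $F$ out of $(\Sigma1)$--$(\Sigma3)$, which requires expanding a triple composite of general 1-morphisms and shuttling interchangers around using naturality. The functional-analytic content --- that a rule on simple tensors really does define a bounded (resp.\ normal) $\dag$-functor on the completed hom-categories --- is genuine but is already packaged into the universal properties of $\maxtimes$ and $\Wmaxtimes$ from Appendix~\ref{subsec:tensorproductsCstar1cats} and Appendix~\ref{subsec:Wstartensorproducts1cats}, so it is not the main difficulty.
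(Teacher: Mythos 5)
Your proposal is correct and follows essentially the same route as the paper: extract $F_{A_1},F_{A_2}$ by restricting along the slice inclusions (strictness from (\mancube) and strict unit-preservation), define $\Sigma$ as the nontrivial compositor of the two factorizations of $(X_1,X_2)$ transported along the (\mancube)-trivial one, derive $(\Sigma1)$--$(\Sigma3)$ from naturality and associativity of $F^2$, and reconstruct $F$ on hom-categories via the universal property of $\maxtimes$ (resp.\ $\Wmaxtimes$) with the compositor a whiskering of a single interchanger. The only deviation is a mirrored ordering convention for the edge-composite defining $F(X_1,X_2)$, which is immaterial.
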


\begin{proof}
First suppose we have a (separately normal) cubical $\dag$-2-functor $F$. For an object $A_1 \in \cA_1$ we define 
\begin{align*}
F_{A_1}(A_2) &\coloneqq F(A_1,A_2),\\
F_{A_1}(X) &\coloneqq F(\id_{A_1},X),\\
F_{A_1}(\alpha) &\coloneqq F(\id_{\id_{A_1}} \otimes \alpha).
\end{align*}
Then $F_{A_1}\colon \cA_2 \to \cB$ strictly preserves identity 1-morphisms since $F$ does. Moreover, 
$$F_{A_1}( Y_1 \ccirc Y_2) = F(\id_{A_1}, Y_1 \ccirc Y_2) \underset{\left(\mbox{\mancube}\right)}{=} F(\id_{A_1},Y_1) \ccirc F(\id_{A_1},Y_2) = F_{A_1}(Y_1) \ccirc F_{A_1}(Y_2),$$
so $F_{A_1}$ is a (normal) strict $\dag$-2-functor. We define $F_{A_2}\colon \cA_1 \to \cB$ similarly for an object $A_2 \in \cA_2$. We define the unitary $\Sigma$ as follows:
\[
\begin{tikzcd}[row sep=60pt]
F(A_1,A_2) \arrow[d,"F_{A_2}(X_1)"'] \arrow[r,"F_{A_1}(X_2)"] & \arrow[dl,"\Sigma_{X_1,X_2}" description, Rightarrow] F(A_1,A_2') \arrow[d,"F_{A_2'}(X_1)"] \\
F(A_1',A_2) \arrow[r,"F_{A_1'}(X_2)"'] & F(A_1',A_2')
\end{tikzcd}
=
\begin{tikzcd}[row sep=60pt]
	{F(A_1,A_2)} & {F(A_1,A_2')} \\
	{F(A_1',A_2)} & {F(A_1',A_2')}
	\arrow[""{name=0, anchor=center, inner sep=0}, "{F(X_1,X_2)}"{description}, from=1-1, to=2-2]
	\arrow["{F(\id,X_2)}", from=1-1, to=1-2]
	\arrow["{F(X_1,\id)}", from=1-2, to=2-2]
	\arrow["{F(X_1,\id)}"', from=1-1, to=2-1]
	\arrow["{F(\id,X_2)}"', from=2-1, to=2-2]
	\arrow["{F^2}"{description}, shorten >=6pt, Rightarrow, from=1-2, to=0]
	\arrow["F^2"{description}, shorten >=6pt, Rightarrow, from=2-1, to=0]
\end{tikzcd}
\underset{\left(\mbox{\mancube}\right)}{=}
\begin{tikzcd}[row sep=60pt]
	{F(A_1,A_2)} & {F(A_1,A_2')} \\
	{} & {F(A_1',A_2')}
	\arrow[""{name=0, anchor=center, inner sep=0}, "{F(X_1,X_2)}"', from=1-1, to=2-2]
	\arrow["{F(\id,X_2)}", from=1-1, to=1-2]
	\arrow["{F(X_1,\id)}", from=1-2, to=2-2]
	\arrow["{F^2}"{description}, shorten >=6pt, Rightarrow, from=1-2, to=0]
\end{tikzcd}
\]
which is an identity 2-morphism when $X_1$ or $X_2$ is an identity by the cubicality (\mancube) of $F$ again. Axiom ($\Sigma1$) follows by the naturality of the compositor $F^2$, and axioms $(\Sigma2)$ and $(\Sigma3)$ follow by the associativity axiom $F^2$ satisfies.

Conversely, suppose we are given such collections of (normal) $\dag$-2-functors and unitaries $\Sigma$. We construct $F\colon \cA_1 \maxblacktimes \cA_2 \to \cB$ as follows.
\begin{itemize}
\item[(0)] For objects, $F(A_1,A_2) \coloneqq F_{A_1}(A_2) = F_{A_2}(A_1)$;
\item[({\scriptsize \textsf{hom}})] We first define $F_{\mathsf{hom}}: \cA_1(A_1 \to A_1') \times \cA_2(A_2 \to A_2') \to \cB\big((F(A_1,A_2) \to F(A_1',A'_2)\big)$ by 
\begin{align*}
F(X_1,X_2) &\coloneqq F_{A_2}(X_1) \ccirc F_{A'_1}(X_2)\\
F(\alpha_1 \otimes \alpha_2) 
&\coloneqq  F_{A_2}(\alpha_1) \ccirc F_{A'_1}(\alpha_2).
\end{align*}
Since $F_{\mathsf{hom}}$ is (separately normal) $\dag$-bilinear and $\cB\big((F(A_1,A_2) \to F(A_1',A'_2)\big)$ is C*/W*, we may uniquely extend $F_{\mathsf{hom}}$ to $\cA_1(A_1 \to A_1') \maxtimes \cA_2(A_2 \to A_2') = \cA_1 \maxblacktimes \cA_2 \big((A_1,A_2) \to (A_1',A_2')\big)$ by the universal property of the maximal C*-tensor product $\maxtimes$ (see Appendix \ref{subsec:tensorproductsCstar1cats}).
\end{itemize}
We set the unitor $F^0$ to be identity and define the compositor component 
$$F^2\colon F(X_1,X_2) \ccirc F(X'_1,X'_2) \Rightarrow F(X_1 \ccirc X'_1, X_2 \ccirc X'_2)$$
for $A_i \xRightarrow{X_i} A'_i \xRightarrow{X'_i} A''_i$ in $\cA_i$ to be
$$
F_{A_2}(X_1) \ccirc F_{A_1'}(X_2) \ccirc F_{A'_2}(X'_1) \ccirc F_{A''_2}(X'_2) \xRightarrow{\id \ccirc \Sigma_{X_2,X'_1} \ccirc \id}
F_{A_2}(X_1) \ccirc F_{A_2}(X'_1) \ccirc F_{A''_1}(X_2) \ccirc F_{A''_2}(X'_2).$$

The naturality of $F^2$ is guaranteed by $(\Sigma1)$ and the associativity axiom is given by $(\Sigma2)$ and $(\Sigma3)$. Finally, $F$ is cubical since $\Sigma_{X_1,X'_2}$ is trivial whenever $X_1$ or $X'_2$ is the identity.
\end{proof}

We now explicitly construct the Gray tensor product for strict C*-$2$-categories.

\begin{definition}
The algebraic Gray tensor product $\cA \boxtimes \cB$ of strict C*-2-categories $\cA$ and $\cB$ is composed of tuples of objects $(A,B) \in \cA \times \cB$. The 1-morphisms of $\cA \boxtimes \cB$ are produced by two kinds of generators:
\begin{itemize}
    \item $(X,\id_B): (A,B) \to (A',B)$ for $X\colon A \to A'$ in $\cA$,
    \item $(\id_A,Y): (A,B') \to (A,B')$ for $Y: B \to B'$ in $\cB$.
\end{itemize}
The 1-morphisms in $\cA \boxtimes \cB$ are equivalence classes of composable strings of generators. The equivalence relation is the smallest such that:
\begin{itemize}
    \item $(X,\id_B) (X',\id_B) \thicksim (X \ccirc X',\id_B)$ for $A \xrightarrow{X} A' \xrightarrow{X'} A''$ in $X$ and $B \in \cB$,
    \item $(\id_A,Y) (\id_A,Y') \thicksim (\id_A, Y \ccirc Y')$ for $A \in X$ and $B \xrightarrow{Y} B' \xrightarrow{Y'} B''$ in $Y$,
    \item If $W \thicksim W'$, then $W V \thicksim W' V$ and $V W \thicksim V W'$ whenever they make sense.
\end{itemize}
We define the composition $\ccirc$ of 1-morphisms to be string concatenation. Notice $W \thicksim W'$ only when $W,W'$ have the same source and target, and $(\id_A,\id_B) = \id_{(A,B)}$.\medskip

\noindent The 2-morphisms of $\cA \boxtimes \cB$ are generated by three kinds of 2-morphisms:
\begin{itemize}
    \item $x \otimes \id_{\id_B}: (X,\id_B) \Rightarrow (X',\id_B)$ for $x: X \Rightarrow X'$ in $\cA$ and $B \in \cB$,
    \item $\id_{\id_A} \otimes y: (\id_A,Y) \Rightarrow (\id_A,Y')$ for $A \in \cA$ and $y: Y \Rightarrow Y'$ in $\cB$,
    \item $\Sigma_{X,Y}: (X,\id_B) \ccirc (\id_A,Y) \xRightarrow{\thicksim} (\id_A,Y) \ccirc (X,\id_B)$ for 1-morphisms $X\colon A \to A'$ in $\cA$ and $Y \colon B \to B'$ in $Y$, such that $\Sigma_{\id_A,Y} = \id_{(\id_A,Y)}$ and $\Sigma_{X,\id_B} = \id_{(X,\id_B)}$.
\end{itemize}
We will omit the subscripts on $\id$'s for simplicity. The 2-morphisms in $\cA \boxtimes \cB$ include equivalence classes of strings of formal $\ccirc$-composites of generators. The equivalence relation is first defined horizontally (in terms of $\ccirc$-composites), then vertically (as strings). We first define $\thicksim$ as the smallest equivalence relation such that
\begin{itemize}
    \item $(x \otimes \id) \ccirc (x' \otimes \id) \thicksim (x \ccirc x') \otimes \id$ and $(\id \otimes y) \ccirc (\id \otimes y') \thicksim \id \otimes (y \ccirc y')$,
    \item If $s \thicksim s'$ then $s \ccirc t \thicksim s' \ccirc t$ and $t \ccirc s \thicksim t \ccirc s'$ whenever they make sense.
\end{itemize}
Notice $s \thicksim s'$ only when $s$ and $s'$ have the same source and target morphisms. In what follows, we will denote the equivalence class of $s$ by $[s]$. A 2-morphism in $\cA \boxtimes \cB$ is then an equivalence class of formal linear combinations of vertically composable strings $\sum_k \lambda_k [w_{1,k}] \cdots [w_{n,k}]$, where the equivalence relation is the smallest such that:
\begin{itemize}
\item
$\lambda [x \otimes \id] + [\widetilde{x} \otimes \id] \thicksim [(\lambda x + \widetilde{x}) \otimes \id]$ and $\lambda[\id \otimes y] + [\id \otimes \widetilde{y}] \thicksim [\id \otimes (\lambda y + \widetilde{y})]$ for $\lambda \in \bbC$,
\item 
$[x \otimes \id][x' \otimes \id] \thicksim [(x \circ x' ) \otimes \id]$,
and
$[\id \otimes y][\id \otimes y'] \thicksim [\id \otimes (y \circ y')]$
\item[($\Sigma$1)]
$[(\id \otimes y) \ccirc (x \otimes \id)] [\Sigma_{X,Y}] \thicksim [\Sigma_{X',Y'}] [(x \otimes \id) \ccirc (\id \otimes y)]$
\item[($\Sigma$2)]
$ [(\id_{X'} \otimes \id) \ccirc \Sigma_{X,Y}]  [\Sigma_{X',Y} \ccirc (\id_X \otimes \id)] \thicksim [\Sigma_{X \ccirc X', Y}]$
\item[($\Sigma$3)]
$[\Sigma_{X,Y'} \ccirc (\id \otimes \id_Y)][(\id \otimes \id_{Y'}) \ccirc \Sigma_{X,Y}] \thicksim [\Sigma_{X,Y \ccirc Y'}]$
\item For general 2-morphisms $w,v$,
$(w + \widetilde{w}) v \thicksim w v + \widetilde{w} v$, and $v (w + \widetilde{w})\thicksim v w  + v \widetilde{w}$ whenever they make sense, and similar relations for $\ccirc$; and
\item if $w \thicksim w',$ then 
$wv \thicksim w' v$, $vw \thicksim vw'$, and $\lambda w + v \thicksim \lambda w' + v$ for $\lambda \in \bbC$ whenever they make sense. 
\end{itemize}
Vertical composition $\circ$ of 2-morphisms is given by the bilinear extension of concatenation of strings.
For horizontal composition of strings $w,v$, we can always express $w$ and $w'$ by the sum of strings $[w_{1,k}]\cdots[w_{n,k}]$ and $[v_{1,k}]\cdots[v_{n,k}]$ of the same length (by adding identities) and then define 
$$w \ccirc v= \sum_{k,\ell} [w_{1,k} \ccirc v_{1,\ell}] \cdots [w_{n,k} \ccirc v_{n,\ell}].$$ 
We define $\dag$ on generator 2-morphisms as follows:
\begin{itemize}
    \item $(x \otimes \id)^\dag = x^\dag \otimes \id$,
    \item $(\id \otimes y)^\dag = \id \otimes y^\dag$,
    \item $\Sigma_{X,Y}^\dag = \Sigma_{X,Y}^{-1}$.
\end{itemize}
We then extend $\dag$ to sums, tensors, and composites by:
\begin{itemize}
    \item $(\sum_k \lambda_k [w_k])^\dag = \sum \overline{\lambda}_k [w_k]^\dag$
    \item $(w \circ v)^\dag = v^\dag \circ w^\dag$
    \item $(w \ccirc v)^\dag = w^\dag \ccirc v^\dag$.
\end{itemize}
We then define the C*-Gray tensor product $\cA_1 \maxboxtimes \cA_2$ to be the completion of $\cA_1 \boxtimes \cA_2$ on each hom-space with respect to the following maximal C*-norm
$$\|w\|_\mu \coloneqq \sup \sset{\|F(w)\|}{F\colon \cA \boxtimes \cB \to \CstarCats \text{ is a } \dag\text{-2-functor}},$$
after quotienting out by 2-morphisms $w$ such that $\|w\|_\mu = 0$.
\end{definition}

\begin{remark}
There indeed exists a $\dag$-2-functor 
$$F \colon \cA \boxtimes \cB \to \CstarCats.$$
For example, we may use the universal representations $\yo^\amalg_\cA \colon \cA \to \CstarCats$ and $\yo^\amalg_\cB \colon \cB \to \CstarCats$ in Theorem \ref{thm:coherence2cats} to determine a strict $\dag$-2-functor
\begin{enumerate}
    \item[(0)] on objects by
    $$(A,B) \mapsto \yo^\amalg_\cA(A) \maxtimes \yo^\amalg_\cB(B);$$
    \item[(1)] on 1-morphisms by
    \begin{align*}
(X,\id) &\mapsto \yo^\amalg_\cA(X) \maxtimes \id,\\ 
(\id,Y) &\mapsto \id \maxtimes \yo^\amalg_\cB(Y);
    \end{align*}
    \item[(2)] on 2-morphisms by
    \begin{align*}
x \otimes \id &\mapsto \yo^\amalg_\cA(x) \maxtimes \id,\\ 
\id \otimes y &\mapsto \id \maxtimes \yo^\amalg_B(y), \\
\Sigma_{X,Y} &\mapsto \id_{\yo^\amalg_\cA(X) \maxtimes \yo^\amalg_\cB(Y)}.
    \end{align*}
\end{enumerate}
\end{remark}

\begin{remark}
We expect there to exist a ``minimal'' Gray-tensor product $\cA \underset{\min}{\boxtimes} \cB$ of strict C*-2-categories $\cA$ and $\cB$, given by completing $\cA \boxtimes \cB$ via a monic $\dag$-2-functor
$$\cA \boxtimes \cB \to \CstarCats.$$
The existence of such a monic representation would imply that there do not exist negligible 2-morphisms in $\cA \boxtimes \cB$. However, this will not be necessary for our main coherence and concreteness results.   
\end{remark}

\begin{remark}\label{rem:functorsintogray}
For each object $B \in \cB$, there is an organic strict $\dag$-2-functor
$$- \boxtimes B: \cA \to \cA \boxtimes \cB$$
given by
\begin{align*}
    A &\mapsto (A,B),\\
    X &\mapsto (X,\id_B),\\
    x &\mapsto x \otimes \id_{\id_B},
\end{align*}
on objects, 1-morphisms, and 2-morphisms in $\cA$ respectively.
Similarly, each object $A \in \cA$ induces a strict $\dag$-2-functor
$$A \boxtimes - \colon \cB \to \cA \boxtimes \cB.$$  
\end{remark}

\begin{proposition}
For strict C*-2-categories $\cA$ and $\cB$, $\|\cdot\|_\mu$ is a norm on each hom-space of $\cA \boxtimes \cB$. Furthermore, $\cA \maxboxtimes \cB$ is a strict C*-2-category.
\end{proposition}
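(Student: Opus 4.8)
The plan is to show that $\|\cdot\|_\mu$ is a finite-valued C*-seminorm satisfying the C*-identity on each hom-space of $\cA\boxtimes\cB$, and then to transport the strict 2-categorical structure through the quotient-completion. First I would check that the defining supremum runs over a nonempty set and is finite. Non-emptiness is one of the preceding remarks, which exhibits a $\dag$-2-functor $\cA\boxtimes\cB\to\CstarCats$ (for instance the one assembled from the Yoneda embeddings of Theorem~\ref{thm:coherence2cats}). For finiteness, the key observation is that the strict $\dag$-2-functors $-\boxtimes B\colon\cA\to\cA\boxtimes\cB$ and $A\boxtimes-\colon\cB\to\cA\boxtimes\cB$ of Remark~\ref{rem:functorsintogray} uniformly control the images of the generating 2-morphisms: any $\dag$-2-functor into $\CstarCats$ is norm-nonincreasing on 2-morphisms, since its restriction to the endomorphism C*-algebra of each 1-morphism is a unital $*$-homomorphism, so applying this to $F\circ(-\boxtimes B)$ and to $F\circ(A\boxtimes-)$ gives
$$\|F(x\otimes\id_{\id_B})\|\le\|x\|_\cA \qquad\text{and}\qquad \|F(\id_{\id_A}\otimes y)\|\le\|y\|_\cB,$$
while $\|F(\Sigma_{X,Y})\|\le 1$ because $\dag$-2-functors preserve unitaries. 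Since every 2-morphism of $\cA\boxtimes\cB$ is a finite linear combination of $\circ$-composites of $\ccirc$-composites of these generators, submultiplicativity of $\circ$ and of $\ccirc$ in $\CstarCats$ together with the triangle inequality bound $\|F(w)\|$ by a quantity that depends on $w$ but not on $F$; hence $\|w\|_\mu<\infty$.

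Next I would dispatch the C*-seminorm axioms. Homogeneity, the triangle inequality, submultiplicativity under both $\circ$ and $\ccirc$, $\dag$-invariance, and the C*-identity $\|w^\dag\circ w\|_\mu=\|w\|_\mu^2$ each follow by applying the corresponding relation in the target C*-category to $F(w)$ and taking the supremum over $F$ (for the C*-identity one uses $\sup_F\|F(w)\|^2=(\sup_F\|F(w)\|)^2$). Quotienting each hom-space of $\cA\boxtimes\cB$ by its kernel $\{\,w:\|w\|_\mu=0\,\}$ --- a linear subspace closed under $\circ$ and $\ccirc$ against arbitrary composable 2-morphisms and under $\dag$, by these same estimates --- turns $\|\cdot\|_\mu$ into a genuine norm, and the operations $+,\circ,\ccirc,\dag$ extend continuously to the Banach-space completions, yielding $\cA\maxboxtimes\cB$ with the same objects and 1-morphisms as $\cA\boxtimes\cB$. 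On the completion, the endomorphism algebra of every 1-morphism is a C*-algebra and every 2-morphism space is a Banach space on which $f\mapsto f^\dag\circ f$ lands in the positive cone and satisfies $\|f^\dag\circ f\|=\|f\|^2$, so every hom-category is a C*-category; strictness, unitarity of the (identity) associator and unitors, and $\dag$-compatibility of $\ccirc$ are inherited verbatim from $\cA\boxtimes\cB$, whose 1-composition is string concatenation. Hence $\cA\maxboxtimes\cB$ is a strict C*-2-category.

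The step I expect to be the real obstacle is the bare assertion that $\|\cdot\|_\mu$ is a \emph{norm} on $\cA\boxtimes\cB$, and not merely a seminorm --- equivalently, that $\cA\boxtimes\cB$ has no negligible 2-morphisms, i.e.\ admits a faithful $\dag$-2-functor into $\CstarCats$, which is exactly the point raised in the remark on a minimal Gray tensor product. Everything else is routine once the uniform bounds of the first step are in hand. A full treatment of positive-definiteness would require exhibiting a separating family of representations of the hom-$*$-algebras of $\cA\boxtimes\cB$ --- plausibly by a Yoneda-style construction that keeps track of the $\Sigma$-relations, in the spirit of Theorem~\ref{thm:coherence2cats} --- or else reading the statement on the quotient that defines $\cA\maxboxtimes\cB$, which is all the subsequent coherence and concreteness results require.
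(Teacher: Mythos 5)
Your treatment of finiteness and of the C*-seminorm axioms coincides with the paper's: reduce to the three kinds of generators via submultiplicativity of $\ccirc$ (which the paper justifies using naturality and unitality of the compositors $F^2$), bound $\|F(x\otimes\id)\|$ by $\|x\|$ through the composite $F\circ(-\boxtimes B)$, and use unitarity of $\Sigma_{X,Y}$. Your point that definiteness of $\|\cdot\|_\mu$ on $\cA\boxtimes\cB$ itself, rather than on the quotient, is the delicate issue is well taken: the paper asserts the norm statement after the finiteness estimates, but its definition of $\cA\maxboxtimes\cB$ quotients by the negligibles and a subsequent remark concedes that their absence is not established, so reading the claim on the quotient, as you propose, is what the later results actually use.

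The genuine gap is in your final step. You write that on the completion ``$f\mapsto f^\dag\circ f$ lands in the positive cone'' and conclude that each hom-category is a C*-category. In this paper's definition of a C*-category, however, the positivity axiom $(\geq 0)$ --- that $f^\dag\circ f=g^\dag\circ g$ for some endomorphism $g$, i.e., that algebraically positive elements are spectrally positive --- is an additional hypothesis, not a consequence of the C*-identity on hom-spaces, and it is precisely the part of the proposition that the paper devotes an argument to. The paper's argument: for a morphism $\tau$ in a hom-category $\fH$ of $\cA\maxboxtimes\cB$, the element $\tau^*\circ\tau\circ\tau^*\circ\tau$ is positive in the endomorphism C*-algebra $\cE$, hence has a positive fourth root $\sigma\in\cE$; for every $\dag$-2-functor $F\colon\cA\boxtimes\cB\to\CstarCats$, extended to the completion by continuity, both $F(\sigma^2)$ and $F(\tau^*\tau)=F(\tau)^*F(\tau)$ are positive with equal squares, hence equal by uniqueness of positive square roots in the C*-algebra $F\cE$; since the family of all such $F$ separates points of the quotient by the very definition of $\|\cdot\|_\mu$, it follows that $\sigma^2=\tau^*\tau$, which is the axiom $(\geq 0)$. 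Without this step (or an equivalent one, e.g.\ verifying that $\cA\maxboxtimes\cB$ admits direct sums, which it has not been shown to do) your proof does not establish that the hom-categories of $\cA\maxboxtimes\cB$ are C*-categories in the sense used here.
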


\begin{proof}
A simple verification reveals that $\| \cdot\|_\mu$ is a C*-norm, possibly taking value $\infty$. For example,
$$
\|w\|_\mu = \sup_{F} \|F(w)\| = \sup_F \|F(w) F(w)^\dag\|^{1/2} = \sup_F \|F(ww^\dag)\|^{1/2} 
= \|ww^\dag\|_\mu^{1/2}
$$
for every 2-morphism $w$ in $\cA \boxtimes \cB$.
Furthermore, $\| \cdot \|_\mu$ is sub-cross by the naturality and unitality of compositors. Indeed, if $\sigma\colon \phantom{}_AX_B \to \phantom{}_AY_B$ and $\sigma'\colon \phantom{}_BX'_C \to \phantom{}_BY'_C$, then
$$\| \sigma \ccirc \sigma'\|_\mu 
=
\sup_F \| F(\sigma \ccirc \sigma')\|
=
\sup_F \| F^2 (F\sigma \ccirc F\sigma') F^{-2} \|
\leq 
\sup_F \| F\sigma \ccirc F\sigma' \|
\leq
\sup_F \| F\sigma\| \|F\sigma' \|
\leq 
\|\sigma\|_\mu \|\sigma'\|_\mu.
$$
Thus, to prove $\| \cdot \|_\mu < \infty$ on each hom-set, it suffices to verify that this is the case for generator 2-morphisms $\alpha \otimes \id$, $\id \otimes \beta$, and $\Sigma_{f,g}$.
First observe that, for every object $B \in \cB$ and 2-morphism $\alpha$ in $\cA$, we have
$$\|F(\alpha \otimes \id_{\id_B})\| = \|F \circ (- \boxtimes B) (\alpha)\| \leq \|\alpha\| \quad\text{for every } F: \cA \boxtimes \cB \to \CstarCats.$$
Therefore $\|\alpha \otimes \id\|_\mu \leq \|\alpha\|$.
Similarly, one shows $\|\id \otimes \beta \|_\mu \leq \|\beta\|$. 
Finally, each $\|\Sigma_{f,g}\|_\mu = 1$ since $\Sigma_{f,g}$ is unitary and $\dag$-2-functors preserve unitaries.
Therefore $\|\cdot\|_\mu$ is a norm on each hom-space. 
It remains to show that each hom-category 
$$\fH \coloneqq \cA \maxboxtimes \cB\big((A_1,A_2) \to (B_1,B_2)\big)$$
satisfies the positivity condition required of C*-categories (see Appendix \ref{subsec:operator1categories}).
We already know that each endomorphism algebra in $\fH$ is a C*-algebra.
So for any morphism $\tau$ in $\fH$ (which is a 2-morphism in $\cA \maxboxtimes \cB$), $\tau^* \circ \tau$ is contained in such an endomorphism C*-algebra $\cE$. 
So there exists a positive $\sigma \in \cE$ such that $\sigma^4 = \tau^* \circ \tau \circ \tau^* \circ \tau$. 
We claim that $\sigma^2 = \tau^* \circ \tau$. 
Let $F\colon \cA_1 \boxtimes \cA_2 \to \CstarCats$ be a $\dag$-2-functor, which we may extend to a representation of $\cA_1 \maxboxtimes \cA_2$ by continuity.
The fact that $\sigma \geq 0$ implies that $F(\sigma^2) \geq 0$.
Now notice $F(\sigma^2)^2 = F(\tau^* \tau)^2$. 
Since $\CstarCats$ is a C*-2-category, $F(\tau^* \tau) = F(\tau)^* F(\tau) \geq 0$.
Since $F\cE$ is a C*-algebra, the uniqueness of positive square roots yields $F(\sigma^2) = F(\tau^*\tau)$.
Since $F$ was arbitrary, we conclude $\sigma^2 = \tau^* \tau$. 
Therefore $\fH$ is a C*-category and we conclude that $\cA_1 \maxboxtimes \cA_2$ is a C*-2-category.
\end{proof}

\begin{universalprop}
For strict C*-2-categories $\cA_1$ and $\cA_2$, there is a natural cubical $\dag$-2-functor
$$
\cA_1 \maxblacktimes \cA_2
\to
\cA_1 \maxboxtimes \cA_2.$$
For any cubical functor $F: \cA_1 \maxblacktimes \cA_2 \to \cB$ into a strict C*-2-category $\cB$, there exists a unique strict $\dag$-2-functor
\[\begin{tikzcd}
\cA_1 \maxblacktimes \cA_2 \arrow[d] \arrow[r,"F"] \arrow[d] & \cB\\
\cA_1 \maxboxtimes \cA_2 \arrow[ur,"F"',dashed] &
\end{tikzcd}\]
\end{universalprop}
\begin{verification}
We provide the data of the cubical functor $C: \cA_1 \maxblacktimes \cA_2
\to
\cA_1 \maxboxtimes \cA_2$ using Proposition \ref{prop:dataofcubicals}. For objects $A_1 \in \cA_1$ and $A_2 \in \cA_2$, we define the strict $\dag$-2-functors 
\begin{align*}
C_{A_2}&\colon \cA_1 \to \cA_1 \maxboxtimes \cA_2\\
C_{A_1}&\colon \cA_2 \to \cA_1 \maxboxtimes \cA_2
\end{align*}
to be $- \boxtimes A_2$ and $A_1 \boxtimes -$ as in Remark \ref{rem:functorsintogray}.
% \begin{align*}
% C_{A_1}(A_2) &\coloneqq (A_1,A_2)\\
% C_{A_1}(Y) &\coloneqq (\id_{A_1}, Y)\\
% C_{A_1}(y) &\coloneqq \id_{\id_{A_1}} \otimes y.
% \end{align*}
% For an object $A_2 \in \cA_2$, the strict $\dag$-2-functor $C_{A_2}\colon \cA_1 \to \cA_1 \maxtimes \cA_2$ is defined similarly.
For a pair 1-morphisms $X_1$ and $X_2$ in $\cA_1$ and $\cA_2$ respectively, the unitary 2-morphism $\Sigma_{X_1,X_2}$ in $\cA_1 \maxboxtimes \cA_2$ serves the role of $\Sigma_{X_1,X_2}$ in Proposition \ref{prop:dataofcubicals}. This data satisfies all of the desired axioms by construction of the algebraic Gray tensor product. So we have successfully defined $C\colon \cA_1 \maxblacktimes \cA_2 \to \cA_1 \maxboxtimes \cA_2$.

Let $F\colon \cA_1 \maxblacktimes \cA_2 \to \cB$ be a cubical $\dag$-2-functor, with data of $F$ as in Proposition \ref{prop:dataofcubicals}. In what follows, we denote the interchanger of $F$ by $\Sigma^F$. We first determine a strict $\dag$-2-functor $F\colon \cA_1 \boxtimes \cA_2 \to \cB$:
\begin{itemize}
\item[(0)] on objects,
$$F(A_1,A_2) \coloneqq F_{A_1}(A_2) = F_{A_2}(A_1);$$
\item[(1)] on generator 1-morphisms, 
\begin{align*}
 F(X,\id_{A_2}) & \coloneqq F_{A_2}(X),\\
 F(\id_{A_1}, Y) &\coloneqq F_{A_1}(Y);
\end{align*}
\item[(2)] on generator 2-morphisms by
\begin{align*}
F(x \otimes \id_{\id_{A_2}}) &\coloneqq F_{A_2}(x)\\
F(\id_{\id_{A_1}} \otimes y) &\coloneqq F_{A_1}(y)\\
F(\Sigma_{X,Y}) &\coloneqq \Sigma^F_{X,Y}.
\end{align*}
We extend $F$ to all of $\cA_1 \boxtimes \cA_2$ as a strict $\dag$-2-functor, which is well-defined by construction.
\end{itemize}
Now consider some arbitrary 2-morphism $w$ in $\cA_1 \boxtimes \cA_2$. Let $\yo^\amalg\colon \cB \to \CstarCats$ be the universal representation on $\cB$, which is faithful on all levels. Notice that $\yo^\amalg$ is isometric on 2-morphisms by the C*-identity and the fact that injective maps between (endomorphism) C*-algebras are isometric. Since $\yo^\amalg \circ F\colon \cA_1 \boxtimes \cA_2 \to \CstarCats$ is a $\dag$-2-functor, we obtain
$$\|F(w)\| = \|(\yo^\amalg \circ F)(w)\| \leq \|w\|_\mu.$$
Therefore, we may uniquely extend $F$ to $\cA_1 \maxboxtimes \cA_2$ by continuity. 
\end{verification}

\begin{proposition}[Unitary hom-tensor Adjunction]\label{prop:CstarCartesianClosed}
For strict C*-2-categories $\cA_1$, $\cA_2$, and $\cB$, we have that the following strict C*-2-categories are unitarily naturally equivalent:
\begin{align*}
\CstarTwoCatst(\bbC \to \cB) &\cong \cB,\\
\CstarTwoCatst(\cA_1 \maxboxtimes \cA_2 \to \cB) &\cong 
\CstarTwoCatst( \cA_1 \to \CstarTwoCatst(\cA_2 \to \cB)).
\end{align*}
\end{proposition}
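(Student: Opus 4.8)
The plan is to obtain the main equivalence by currying, combining the universal property of the C*-Gray tensor product with Proposition~\ref{prop:dataofcubicals}, and to read off the first equivalence by a direct unwinding. For the latter, write $\bbC$ for the one-object strict C*-2-category with a single 1-morphism and endomorphism algebra $\bbC$. A strict $\dag$-2-functor $\bbC \to \cB$ is nothing but the datum of an object of $\cB$: it sends the object to some $B \in \cB$, the 1-morphism to $\id_B$ by strict identity-preservation, and the endomorphism algebra $\bbC$ into $\cB(\id_B \tto \id_B)$ by the canonical unital $\ast$-homomorphism. A $\dag$-2-natural transformation between two such is the datum of its unique component, a 1-morphism of $\cB$, with naturator forced to be an identity by strictness of $\cB$; a uniformly bounded modification between two such is the datum of its unique component, an arbitrary 2-morphism of $\cB$, the boundedness condition being vacuous. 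Tracking composites and the $\dag$- and norm-structures, these bijections assemble into an isomorphism of strict C*-2-categories $\CstarTwoCatst(\bbC \to \cB) \cong \cB$, natural in $\cB$.

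For the main equivalence we first treat objects. By the universal property of $\cA_1 \maxboxtimes \cA_2$, a strict $\dag$-2-functor $\cA_1 \maxboxtimes \cA_2 \to \cB$ is the same as a cubical $\dag$-2-functor $\cA_1 \maxblacktimes \cA_2 \to \cB$, which by Proposition~\ref{prop:dataofcubicals} is the same as a family of strict $\dag$-2-functors $F_{A_1}\colon \cA_2 \to \cB$ and $F_{A_2}\colon \cA_1 \to \cB$ agreeing on objects, together with unitary interchangers $\Sigma_{X_1,X_2}$ subject to $(\Sigma1)$--$(\Sigma3)$. We recognize this as a strict $\dag$-2-functor $\Phi\colon \cA_1 \to \CstarTwoCatst(\cA_2 \to \cB)$: set $\Phi(A_1) \coloneqq F_{A_1}$; for $X_1\colon A_1 \to A_1'$ let $\Phi(X_1)\colon F_{A_1} \tto F_{A_1'}$ be the $\dag$-2-natural transformation with components $\Phi(X_1)_{A_2} = F_{A_2}(X_1)$ and naturator $\Phi(X_1)^n_{X_2} = \Sigma_{X_1,X_2}$; for $\alpha_1\colon X_1 \tto X_1'$ let $\Phi(\alpha_1)\colon \Phi(X_1) \ttto \Phi(X_1')$ be the modification with components $F_{A_2}(\alpha_1)$. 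Here $(\Sigma3)$ is exactly the naturator-coherence of $\Phi(X_1)$; the naturality encoded by $(\Sigma1)$ (which reduces to its two one-variable cases) supplies both naturality of the naturators $\Phi(X_1)^n$ and the modification axiom for each $\Phi(\alpha_1)$; unitarity of $\Sigma$ is the coherence axiom for $\dag$-2-natural transformations; and the uniform boundedness of $\Phi(\alpha_1)$ is automatic because each $\dag$-2-functor $F_{A_2}$ is norm non-increasing on hom-spaces, so $\sup_{A_2}\|F_{A_2}(\alpha_1)\| \le \|\alpha_1\| < \infty$. Strictness, $\dag$-preservation, and linearity of $\Phi$ descend from those of the $F_{A_2}$; $(\Sigma2)$ is precisely the strict preservation of $\ccirc$ of 1-morphisms by $\Phi$; and the conditions $\Sigma_{\id,X_2} = \id = \Sigma_{X_1,\id}$ become $\Phi(\id_{A_1}) = \id_{\Phi(A_1)}$ and a coherence automatic for every 2-natural transformation. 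This assignment is a bijection, inverse to the evident currying of a strict $\dag$-2-functor out of $\cA_1$.

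We then extend to higher cells using the generators-and-relations presentation of $\cA_1 \maxboxtimes \cA_2$. A $\dag$-2-natural transformation $\theta$ between strict $\dag$-2-functors $G, G'\colon \cA_1 \maxboxtimes \cA_2 \to \cB$ is determined by its components $\theta_{(A_1,A_2)}$ and the values $\theta^n_{(X_1,\id)}$, $\theta^n_{(\id,X_2)}$ of its naturator on the generating 1-morphisms, subject to the coherences forced by the defining relations of $\cA_1 \maxboxtimes \cA_2$ (notably those involving the generators $\Sigma_{X_1,X_2}$); reading $(\theta_{(A_1,-)}, \theta^n_{(\id,-)})$ as a $\dag$-2-natural transformation $\mu_{A_1}\colon \Phi(A_1) \tto \Phi'(A_1)$ and $\theta^n_{(X_1,\id)}$ as a unitary modification-valued naturator $\mu^n_{X_1}$ gives a bijection with $\dag$-2-natural transformations $\mu\colon \Phi \tto \Phi'$. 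Likewise a uniformly bounded modification $\widetilde m$ between two such is determined by its components $\widetilde m_{(A_1,A_2)}$, which assemble into modifications $m_{A_1}\colon \mu_{A_1} \ttto \mu'_{A_1}$ and hence a modification $m\colon \mu \ttto \mu'$ with $(m_{A_1})_{A_2} = \widetilde m_{(A_1,A_2)}$; the norms agree since $\|\widetilde m\| = \sup_{(A_1,A_2)}\|\widetilde m_{(A_1,A_2)}\| = \sup_{A_1}\sup_{A_2}\|(m_{A_1})_{A_2}\| = \|m\|$. These bijections respect all horizontal and vertical composites, the $\dag$-structure, and the norms, so they assemble into an isomorphism of strict C*-2-categories $\CstarTwoCatst(\cA_1 \maxboxtimes \cA_2 \to \cB) \cong \CstarTwoCatst(\cA_1 \to \CstarTwoCatst(\cA_2 \to \cB))$, natural in $\cA_1$, $\cA_2$, and $\cB$, and in particular a unitary natural equivalence.

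The substance of the argument is the coherence bookkeeping: verifying that the axioms $(\Sigma1)$--$(\Sigma3)$, and their analogues for the higher cells, translate \emph{exactly} into the 2-naturality coherences, modification axioms, and strict-functoriality equations on the curried side. Since the 1- and 2-morphisms of $\cA_1 \maxboxtimes \cA_2$ are presented by generators and relations, each of these identifications must be carried out on generators and checked to respect the relations; the cleanest organization is to first record a ``two-dimensional'' refinement of Proposition~\ref{prop:dataofcubicals} describing $\dag$-2-natural transformations and uniformly bounded modifications between cubical functors, and then curry uniformly. The analytic inputs --- automatic uniform boundedness from the norm non-increase of $\dag$-functors, the matching of sup-norms via iterated suprema, and the extension of everything across the completion defining $\maxboxtimes$ (already handled in the proof of the universal property) --- are routine by comparison.
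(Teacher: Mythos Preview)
Your proposal is correct and follows essentially the same approach as the paper: both unwind the first isomorphism by hand and obtain the second by currying via the universal property of $\maxboxtimes$ together with Proposition~\ref{prop:dataofcubicals}, with the interchanger $\Sigma$ becoming the naturator and $(\Sigma1)$--$(\Sigma3)$ translating into the required coherences. Your account is in fact more thorough than the paper's sketch, particularly in extending the bijection to 1- and 2-cells of the functor 2-categories and in tracking the analytic bookkeeping (automatic uniform boundedness, iterated suprema).
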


\begin{proof}
We will merely sketch the assignments at the level of objects for both isomorphisms.
For the first isomorphism, note that we are viewing $\bbC = B^2 \bbC$ as a C*-2-category with one object $\bullet$, a single 1-morphism $\id_\bullet$, with 
$$\bbC(\id_\bullet \tto \id_\bullet) \coloneqq \bbC.$$
Composition is given by multiplication and $\dag$ is given by conjugation. 
From this it is easy to see that the assignment 
$$
(F\colon \bbC \to \cB) \mapsto F(\bullet)
$$
extends to a strict $\dag$-2-functor which is bijective on all levels. 
For the latter isomorphism, one uses Proposition \ref{prop:dataofcubicals} to produce a bijective correspondence
$$
(F\colon \cA_1 \maxboxtimes \cA_2 \to \cB) 
\mapsto 
(A_1 \in \cA_1 \mapsto (F_{A_1}\colon \cA_2 \to \cB)).
$$
Here we have omitted the assignment of 1-morphisms and 2-morphisms. We do however note that the interchanger $\Sigma$ is used to form part of the naturator unitary for the $\dag$-2-natural transformation corresponding to a 1-morphism $X_1$ in $\cA_1$. The naturality axiom for this $\dag$-2-natural transformation follows by $(\Sigma 1)$ and $(\Sigma 3)$, while the strictness of this $\dag$-2-functor follows by cubicality and $(\Sigma 2)$. For 2-morphisms in $\cA_1$, the obviously assigned uniformly bounded modification in $\CstarTwoCatst(\cA_2 \to \cB)$ satisfies the modification axiom by $(\Sigma 1)$ as well.
\end{proof}

We obtain the following result as a corollary of \cite[\S II.3,4]{Eilenberg_Kelly_1966}.
\begin{corollary}
$\CstarGray \coloneqq (\CstarTwoCatst, \maxboxtimes)$ forms a closed symmetric monoidal category.
\end{corollary}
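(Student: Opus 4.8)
The plan is to fit the structure assembled in this section into the axiomatics underlying \cite[\S II.3,4]{Eilenberg_Kelly_1966}, so that their coherence machinery supplies the associativity and unit constraints together with the pentagon, triangle, and hexagon identities, none of which we then verify by hand. Most of the ingredients are already present: the internal hom is the strict C*-$2$-category $\CstarTwoCatst(-\to-)$, the unit is $\bbC = B^2\bbC$, and Proposition \ref{prop:CstarCartesianClosed} records the tensor--hom adjunction $\CstarTwoCatst(\cA_1 \maxboxtimes \cA_2 \to \cB) \cong \CstarTwoCatst(\cA_1 \to \CstarTwoCatst(\cA_2 \to \cB))$ together with $\CstarTwoCatst(\bbC \to \cB) \cong \cB$. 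What remains to check is functoriality and symmetry of $\maxboxtimes$, and then to invoke the cited result.

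First I would record functoriality: given strict $\dag$-$2$-functors $P\colon \cA_1 \to \cA_1'$ and $Q\colon \cA_2 \to \cA_2'$, the evident pointwise functor $\cA_1 \maxblacktimes \cA_2 \to \cA_1' \maxblacktimes \cA_2'$ composed with the universal cubical functor $\cA_1' \maxblacktimes \cA_2' \to \cA_1' \maxboxtimes \cA_2'$ is again cubical (its defining data, in the sense of Proposition \ref{prop:dataofcubicals}, still has trivial interchanger whenever a factor is an identity), hence factors uniquely as a strict $\dag$-$2$-functor $P \maxboxtimes Q\colon \cA_1 \maxboxtimes \cA_2 \to \cA_1' \maxboxtimes \cA_2'$; preservation of identities and composites is then immediate from the uniqueness clause of the Universal Property. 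The same recipe applied to the swap $\cA_1 \maxblacktimes \cA_2 \to \cA_2 \maxboxtimes \cA_1$, equipped with the (unitary, identity-on-identities) interchanger inherited from $\cA_2 \maxboxtimes \cA_1$, yields a strict $\dag$-$2$-functor $\beta_{\cA_1,\cA_2}\colon \cA_1 \maxboxtimes \cA_2 \to \cA_2 \maxboxtimes \cA_1$; running the construction in the other order produces its inverse, and naturality again follows from uniqueness. The unit isomorphisms $\bbC \maxboxtimes \cA \cong \cA \cong \cA \maxboxtimes \bbC$ arise the same way (every $1$-morphism of $\bbC$ is an identity, so all interchangers involving $\bbC$ are forced trivial), or may simply be extracted from the Eilenberg--Kelly machinery once the closed structure is installed.

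Finally, with the internal hom $\CstarTwoCatst(-\to-)$, the unit $\bbC$, the adjunction of Proposition \ref{prop:CstarCartesianClosed} (whose unit and counit furnish the requisite structural natural transformations), and the symmetry $\beta$ above, I would check that $(\CstarTwoCatst, \maxboxtimes, \bbC)$ meets the hypotheses under which \cite[\S II.3,4]{Eilenberg_Kelly_1966} produces a symmetric monoidal closed category; that reference then supplies the associativity constraint --- iterating Proposition \ref{prop:CstarCartesianClosed} shows $\CstarTwoCatst((\cA_1\maxboxtimes\cA_2)\maxboxtimes\cA_3 \to \cB)$ and $\CstarTwoCatst(\cA_1\maxboxtimes(\cA_2\maxboxtimes\cA_3) \to \cB)$ are naturally isomorphic, so the associator exists by representability --- together with all the coherence identities, exhibiting $\CstarGray$ as a closed symmetric monoidal category. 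The main obstacle is precisely this last step: matching our constructs line by line to Eilenberg and Kelly's axiomatization (each verification is routine, but there are several and they must be aligned with their formulation), with a secondary subtlety in confirming that the associator obtained by representability is compatible with the two-variable functoriality of $\maxboxtimes$ --- which, once more, reduces to the uniqueness in the Universal Property.
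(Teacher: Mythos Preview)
Your proposal is correct and takes essentially the same approach as the paper: both derive the corollary from \cite[\S II.3,4]{Eilenberg_Kelly_1966} using the hom--tensor adjunction of Proposition \ref{prop:CstarCartesianClosed} as input. The paper simply records this as an immediate corollary without further argument, whereas you spell out the intermediate verifications (functoriality, symmetry, unit isomorphisms via the Universal Property) that the paper leaves implicit.
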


\subsection{W*-completion of a C*-2-category}\label{sec:2}

For a C*-2-category $\cA$, we wish to construct the enveloping W*-2-category $\rmW^*(\cA)$ together with a monic $\dag$-2-functor $\cA \hookrightarrow \rmW^*(\cA)$, which satisfies the following universal property:
\begin{itemize}
\item For every $\dag$-2-functor $F\colon \cA \to \cB$ into a W*-2-category $\cB$, there exists a unique normal extension making the following diagram commute:
\end{itemize}
\[
\begin{tikzcd}
	\cA & \cB \\
	{\rmW^*(\cA)}
	\arrow[hook, from=1-1, to=2-1]
	\arrow["F", from=1-1, to=1-2]
	\arrow["{\exists!\widetilde{F}}"', dashed, from=2-1, to=1-2]
\end{tikzcd}
\]

Consider the C*-category enriched graph $\cA^{**}$ with vertices $\Ob \cA$ and edges $\cA^{**}(A \to B) \coloneqq \cA(A \to B)^{**}$ for each pair $A,B \in \cA$. Here $\cA(A \to B)^{**}$ is the enveloping W*-category described in Appendix \ref{subsec:Wstarcompletion1cats}. We define two Arens 1-compositions on $\cA^{**}$, which serve to equip $\cA^{**}$ with the structure of a $\dag$-2-category.

\begin{definition}
For 1-morphisms in $\cA^{**}$, we define the left and right Arens 1-compositions $\ccirc_\ell$ and $\ccirc_r$ to act as 1-composition $\ccirc$ on $\cA$.

For 1-composable 2-morphisms $\Phi \in \cA^{**}(\morph{A}{X}{B} \tto \morph{A}{Y}{B})$ and $\Psi \in \cA^{**}(\morph{B}{X'\!}{C} \tto \morph{B}{Y'\!}{C})$, we 
define 
$$\Phi \ccirc_\ell \Psi,\, \Phi \ccirc_r \Psi \in \cA^{**}(\morph{A}{X \ccirc_B X'\!}{C} \tto \morph{A}{Y \ccirc_B Y'\!}{C})$$
\begin{itemize}
\item[($\ell$)] 
For $\varphi \in \cA(\morph{A}{X \ccirc X'\!}{C} \tto \morph{A}{Y \ccirc Y'\!}{C})^*$, we set $(\Phi \ccirc_\ell \Psi)(\varphi) \coloneqq \Phi(\varphi \triangleleft \Psi)$ where 
$$\varphi \triangleleft \Psi \in \cA(\morph{A}{X}{B} \tto \morph{A}{Y}{B})^*$$
is given by:
\begin{itemize}
    \item[($\triangleleft$)] For $a \in \cA(\morph{A}{X}{B} \tto \morph{A}{Y}{B})$, we set $(\varphi \triangleleft \Psi)(a) \coloneqq \Psi(a \triangleright \varphi)$ where $$a \triangleright \varphi \in \cA(\morph{B}{X'\!}{C} \tto \morph{B}{Y'\!}{C})^*$$
    is given by: 
\begin{itemize}
    \item[($\triangleright$)] For $b \in \cA(\morph{B}{X'\!}{C} \tto \morph{B}{Y'\!}{C})$, we set $(a \triangleright \varphi)(b) \coloneqq \varphi(a \ccirc b)$. 
\end{itemize}
\end{itemize}
More succinctly, $\Phi \ccirc_\ell \Psi$ is given by the following formula:
\begin{align*}
\Phi \ccirc_\ell \Psi 
&= \varphi \mapsto \Phi\Big(\varphi \triangleleft \Psi\Big),\\
&= \varphi \mapsto \Phi\Big(a \mapsto \Psi\big(a \triangleright \varphi\big)\Big),\\
&= \varphi \mapsto \Phi\Big(a \mapsto \Psi\big(b \mapsto \varphi(a \ccirc b)\big)\Big).
\end{align*}
\item[($r$)] 
For $\varphi \in \cA(\morph{A}{X \ccirc X'\!}{C} \tto \morph{A}{Y \ccirc Y'\!}{C})^*$, we set $(\Phi \ccirc_r \Psi)(\varphi) \coloneqq \Psi(\Phi \triangleright \varphi)$ where 
$$\Phi \triangleright \varphi \in \cA(\morph{B}{X'\!}{C} \tto \morph{B}{Y'\!}{C})^*$$
is given by:
\begin{itemize}
    \item[($\triangleright$)] For $b \in \cA(\morph{B}{X'\!}{C} \tto \morph{B}{Y'\!}{C})$, we set $(\Phi \triangleright \varphi)(b) \coloneqq \Phi(\varphi \triangleleft b)$ where 
    $$\varphi \triangleleft b \in \cA(\morph{A}{X}{B} \tto \morph{A}{Y}{B})^*$$
    is given by:
\begin{itemize}
    \item[($\triangleleft$)] For $a \in \cA(\morph{A}{X}{B} \tto \morph{A}{Y}{B})$, we set $(\varphi \triangleleft b)(a) \coloneqq \varphi(a \ccirc b)$. 
\end{itemize}
\end{itemize}
More succinctly, $\Phi \ccirc_r \Psi$ is given by the following formula:
\begin{align*}
\Phi \ccirc_r \Psi 
&= \varphi \mapsto \Psi\Big(\Phi \triangleright \varphi \Big),\\
&= \varphi \mapsto \Psi\Big(b \mapsto \Phi\big(\varphi \triangleleft b\big)\Big),\\
&= \varphi \mapsto \Psi\Big(b \mapsto \Phi\big(a \mapsto \varphi(a \ccirc b)\big)\Big).
\end{align*}
\end{itemize}
\end{definition}

\begin{proposition}\label{prop:ArensCoincide2Cats}
For a C*-2-category $\cA$, the left and right Arens 1-compositions on $\cA^{**}$ coincide and serve to equip $\cA^{**}$ with the structure of a W*-2-category.
\end{proposition}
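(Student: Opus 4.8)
The plan is to establish Proposition~\ref{prop:ArensCoincide2Cats} by leveraging the corresponding one-categorical fact about biduals of C*-categories (from Appendix~\S\ref{subsec:Wstarcompletion1cats}) and promoting it to the 2-categorical setting one hom-category at a time, then checking that the various structures assemble coherently. First I would recall that for C*-algebras the left and right Arens products on the bidual agree precisely when the algebra is \emph{Arens regular}, which every C*-algebra is; more relevantly, for a C*-category $\cC$ the bidual $\cC^{**}$ carries a canonical composition making it a W*-category, and this composition is the unique weak*-separately-continuous extension of composition in $\cC$. The content of the proposition is the analogous statement for the \emph{external} 1-composition $\ccirc_B$ landing in a different hom-category, so I cannot directly cite the internal-composition case; instead I would argue as follows.

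The key steps, in order. (1) Fix composable 1-morphisms and show that both $\Phi \ccirc_\ell \Psi$ and $\Phi \ccirc_r \Psi$, as defined by the iterated-evaluation formulas, are separately weak*-continuous bilinear maps $\cA^{**}(X\tto Y) \times \cA^{**}(X'\tto Y') \to \cA^{**}(X\ccirc X' \tto Y \ccirc Y')$ extending the original $\ccirc$ on $\cA$ --- this is the standard Arens-product bookkeeping: $\ccirc_\ell$ is weak*-continuous in its first variable for fixed second variable, $\ccirc_r$ in its second, and both are weak*-continuous in the remaining variable when that variable ranges over the canonical image of $\cA$. (2) Deduce equality $\ccirc_\ell = \ccirc_r$ by a double-density argument: for $a \in \cA(X\tto Y)$ and $b \in \cA(X'\tto Y')$ in the images of the original hom-spaces we have $a \ccirc_\ell b = a \ccirc b = a \ccirc_r b$ by construction; taking weak*-limits first in the $\cA^{**}(X\tto Y)$-variable (where $\ccirc_\ell$ is continuous) and then in the $\cA^{**}(X'\tto Y')$-variable (where $\ccirc_r$ is continuous, and where $\ccirc_\ell$ agrees with $\ccirc_r$ by the first limit) forces the two products to coincide everywhere; here one uses Goldstine/Kaplansky-type density of $\cA(X\tto Y)$ in $\cA^{**}(X\tto Y)$ in the weak* topology. (3) With the common product $\ccirc$ in hand, verify that $\cA^{**}$ with $\ccirc$ on 1-morphisms, vertical composition already present in each W*-hom-category, and the identity 1-morphisms of $\cA$ satisfies the axioms of a strict-on-units weak 2-category: associativity of $\ccirc_B$ and the unit laws $\id_A \ccirc X \cong X$, $X \ccirc \id_B \cong X$ follow from the corresponding identities in $\cA$ by separate weak*-continuity and density, and the interchange law between $\ccirc$ and $\circ$ likewise extends from $\cA$. (4) Check the C*/W* structure: each hom-category $\cA^{**}(A\to B)$ is a W*-category by the one-categorical bidual construction, and separate normality of $\ccirc$ is exactly the separate weak*-continuity established in step~(1), which is condition (W*2) (equivalently (W*2$'$)); the dagger on $\cA^{**}$ is the bitranspose of the dagger on $\cA$ and remains involutive, contravariant, conjugate-linear, and compatible with $\ccirc$ by the same density argument.

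The main obstacle I expect is step~(2), and more precisely making the ``take limits in the right order'' argument fully rigorous in the categorical setting: one must be careful that the nets witnessing weak*-density of $\cA(X\tto Y)$ in its bidual can be chosen bounded (so that the separate-continuity hypotheses actually apply --- separate weak*-continuity of a bilinear map does \emph{not} give joint continuity, so one genuinely needs the iterated-limit structure rather than a single net in the product), and that the three-fold nested evaluations defining $\ccirc_\ell$ and $\ccirc_r$ really do unwind to the same expression on the dense subset. This is the categorical analogue of the proof that C*-algebras are Arens regular, so I would model the argument on that proof (e.g.\ as in Palmer or in the bidual discussion already cited in Appendix~\S\ref{subsec:Wstarcompletion1cats}), replacing ``multiplication $A\times A \to A$'' throughout by ``1-composition $\cA(X\tto Y)\times\cA(X'\tto Y') \to \cA(X\ccirc X'\tto Y\ccirc Y')$'' and checking at each step that source/target 1-morphisms match up. The remaining steps (3)--(4) are routine extension-by-continuity verifications once (2) is in place.
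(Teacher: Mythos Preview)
Your approach is viable and your fallback plan would succeed, but you have correctly identified that the double-density manoeuvre in step~(2) does not close on its own. After the first limit you obtain $\Phi \ccirc_\ell \ev_b = \Phi \ccirc_r \ev_b$ for all $\Phi \in \cA^{**}$ and all $b$ in the image of $\cA$, but passing to general $\Psi$ would require weak*-continuity of $\Psi \mapsto \Phi \ccirc_\ell \Psi$ for an \emph{arbitrary} fixed $\Phi$, which is precisely the Arens regularity statement at issue. (Your concern about bounded nets is a side issue; the real obstruction is this missing continuity direction, not joint versus separate continuity.) Porting the C*-algebraic Arens regularity proof, as you propose, does work and is essentially how the paper handles the one-categorical version in Appendix~\S\ref{subsec:Wstarcompletion1cats}.

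The paper's own argument for the 2-categorical case takes a different, shorter route which reverses the order in which ``the two Arens products agree'' and ``the composition is separately normal'' are established. From your step~(1) one already knows that $\Psi \mapsto \ev_a \ccirc_\ell \Psi$ is weak*-continuous for any $a$ in the image of $\cA$, since the formula unwinds to $\varphi \mapsto \Psi(a \triangleright \varphi)$. Specializing to $a = \id_X$ for each 1-morphism $X$, and combining with the automatic first-variable continuity of $\ccirc_\ell$, yields condition~(W*2$'$) for $\cA^{**}_\ell$. The paper then invokes the equivalence (W*2)~$\Leftrightarrow$~(W*2$'$) from \cite{CHJP2022} to conclude that $\ccirc_\ell$ is separately normal in both variables; the symmetric argument handles $\ccirc_r$. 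With separate normality of \emph{both} Arens 1-compositions in hand, weak*-density of $\Im\ev$ then gives $\ccirc_\ell = \ccirc_r$ immediately, and the remaining W*-2-category axioms follow.

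In short: you aim to prove equality first and deduce separate normality; the paper proves separate normality first (via the (W*2$'$) shortcut) and deduces equality. Your route is self-contained and redoes the Arens-regularity analysis for the external bilinear pairing $\ccirc_B$; the paper's route is more economical but leans on the cited (W*2)~$\Leftrightarrow$~(W*2$'$) result as a black box.
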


\begin{proof}
Notice the following facts about $\cA^{**}_\ell \coloneqq (\cA^{**},\ccirc_\ell)$ and $\cA^{**}_r \coloneqq(\cA^{**},\ccirc_r)$.
\begin{itemize}
\item $\ev_a \ccirc_\ell \ev_b = \ev_{a \ccirc b} = \ev_a \ccirc_r \ev_b$ for 1-composable 2-morphisms $a,b$ in $\cA$. 
Therefore, we may upgrade $\ev$ into $\dag$-2-functors $\cA \hookrightarrow \cA^{**}_\ell $ and $\cA \hookrightarrow \cA^{**}_r $ which act as the identity on objects, and as the $\dag$-functor $\ev$ on hom W*-categories. 
\item $\cA^{**}_\ell$ and $\cA^{**}_r$ inherit associators $\ev_{\alpha_{XYZ}}$ which are natural since $\Im \ev$ is dense in $\cA^{**}$ and 2-composition is weak*-continuous in each hom W*-category of $\cA^{**}$.
Similarly, units $\id_A$ and unitors $\ev_{\lambda_X}$, $\ev_{\rho_X}$ are inherited from $\cA$.
\end{itemize}
Therefore $\cA^{**}_\ell$ and $\cA^{**}_r$ are C*-2-categories.
It remains to show that $\ccirc_\ell$ and $\ccirc_r$ are separately normal.
It is clear that $- \ccirc_\ell \Psi$ and $\Phi \ccirc_r -$ are normal for 2-morphisms $\Phi,\Psi$ in $\cA^{**}$.
% By showing that $\ccirc_\ell = \ccirc_r$, we would then conclude that $\cA^{**} = \cA^{**}_\ell = \cA^{**}_r$ is a W*-2-category.

We will now show condition \hyperref[condition:W*2']{(W*2$'$)}, that is $- \ccirc_\ell \ev_{\id_{X'}}$ and $\ev_{\id_{X}} \ccirc_\ell -$ are separately normal for 1-morphisms $X,X'$ in $\cA^{**}$.
For $\Phi_\lambda \to \Phi$ in the weak topology, we have
$$
(\Phi_\lambda \ccirc_\ell \ev_{\id_{X'}})(\varphi)
=
\Phi_\lambda(\varphi \triangleleft \ev_{\id_{X'}})
\to 
\Phi(\varphi \triangleleft \ev_{\id_{X'}}) 
=
(\Phi \ccirc_\ell \ev_{\id_{X'}})(\varphi).
$$
Hence $\Phi_\lambda \ccirc_\ell \ev_{\id_{X'}} \to \Phi \ccirc_\ell \ev_{\id_{X'}}$ weakly. 
Moreover, for $\Psi_\lambda \to \Psi$ in the weak topology, observe
\begin{align*}
(\ev_{\id_X} \ccirc \Psi_\lambda)(\varphi) 
&=    
\ev_{\id_X}(\varphi \triangleleft \Psi_\lambda)\\
&=
\Psi_\lambda(b \mapsto \varphi(\id_X \ccirc b))\\
&\to 
\Psi(b \mapsto \varphi(\id_X \ccirc b))\\
&= 
(\ev_{\id_X} \ccirc \Psi_\lambda)(\varphi).
\end{align*}
Hence $\ev_{\id_X} \ccirc \Psi_\lambda \to \ev_{id_X} \ccirc \Psi$ weakly. Therefore $\ccirc_\ell$ is separately normal, and one similarly shows $\ccirc_r$ is separately normal. Finally, since $\Im \ev$ is dense at the level of two morphisms, $\ccirc_\ell$ and $\ccirc_r$ are separately normal, and they agree on $\Im \ev$, we conclude that $\ccirc_\ell = \ccirc_r$ on $\cA^{**}$.
\end{proof}

\begin{universalprop}
For every $\dag$-2-functor $F\colon \cA \to \cB$ into a W*-2-category $\cB$, there exists a unique normal $\dag$-2-functor $\widetilde{F}\colon \cA^{**} \to \cB$ making the following diagram commute.
\[
\begin{tikzcd}
	\cA & \cB \\
	{\cA^{**}}
	\arrow[hook, from=1-1, to=2-1, "\ev"']
	\arrow["F", from=1-1, to=1-2]
	\arrow["{\exists!\widetilde{F}}"', dashed, from=2-1, to=1-2]
\end{tikzcd}
\]
\end{universalprop}

\begin{verification}
First note that $\widetilde{F}$ must acts on objects as $F$. Then, for every hom C*-category $\cA(A \to B)$, we may extend $F$ to $\cA^{**}(A \to B)$ by the universal property of W*-envelopes or C*-categories. This yields a map $\widetilde{F}: \cA^{**} \to \cB^{**}$ which is locally a normal $\dag$-functor. Then observe that the tensorator and unitor of $F$ equips $\widetilde{F}$ with the structure of a $\dag$-2-functor since $\ccirc$ and $\circ$ are separately weak*-continuous and $\Im \ev$ is weak*-dense in $\cA^{**}$. 
\end{verification}

As a corollary, we obtain another proof of the concreteness theorem for C*-2-categories.
% \todo{When $\cA$ is already W*, are all these functors are normal so that this is a normal embedding? Probably ev is not?}

\begin{theorem}[Gelfand-Naimark for C*-2-categories]
For a small C*-2-category $\cA$, the universal representation
$$\Upsilon_2\colon \cA \to \TwoHilb$$
given by $\cA \xrightarrow{\ev} \cA^{**} \xrightarrow{\yo^\amalg} \WstarCats \xrightarrow{\GNS} \TwoHilb$ 
% $\Upsilon_2 \coloneqq \GNS \circ \yo^\amalg \circ \ev$ 
is monic. Thus, every C*-2-category $\cA$ can be realized as a norm-closed strict $\dag$-2-category $\GNS_2(\cA) \coloneqq \Im \Upsilon_2$ of W*-categories of Hilbert spaces and operators. Moreover, if $\cA$ is strict, $\Upsilon_2$ is a strict $\dag$-2-functor. 
\end{theorem}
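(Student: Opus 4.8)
The plan is to assemble the claimed representation $\Upsilon_2$ from pieces already established and check that each piece is faithful on objects, $1$-morphisms, and $2$-morphisms, since monicity is preserved under composition. First I would recall the three functors in play: $\ev\colon \cA \to \cA^{**}$ is the canonical inclusion into the W*-completion, which by the W*-completion universal property (and the fact that $\cA(A\to B)\hookrightarrow \cA(A\to B)^{**}$ is isometric, hence injective, on hom-spaces) is a monic $\dag$-$2$-functor; $\yo^\amalg\colon \cA^{**}\to \WstarCats$ is the (W*-version of the) Yoneda embedding of Theorem \ref{thm:coherence2cats}, which is stated there to be monic and to land in the strict W*-$2$-category $\WstarTwoCat((\cA^{**})^{\op}\to\WstarCats)$ — but here we further postcompose with the forgetful-style assignment recording only the underlying W*-categories and normal $\dag$-functors, giving $\yo^\amalg\colon \cA^{**}\to\WstarCats$ as in the disjoint-union construction $\yo^\amalg(B)=\coprod_{A}\cA^{**}(A\to B)$; and finally $\GNS\colon \WstarCats \to \TwoHilb$, the Gelfand–Naimark–Segal construction for W*-categories (from \cite{GLR1985} / Appendix \ref{subsec:coherencesforonecats}), which realizes every small W*-category as a W*-subcategory of $\Hilb$ and is monic.

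The key steps, in order, would be: (1) verify $\ev$ is a monic $\dag$-$2$-functor, using Proposition \ref{prop:ArensCoincide2Cats} that $\cA^{**}$ is genuinely a W*-$2$-category and that $\ev$ is the identity on objects and a faithful $\dag$-functor on each hom-category; (2) invoke the W*-version of Theorem \ref{thm:coherence2cats} to get that $\yo^\amalg$ is monic — injective on objects since distinct objects of $\cA^{**}$ give non-isomorphic (indeed distinct) representable presheaves, faithful on $1$-morphisms by the enriched Yoneda lemma applied levelwise, and fully faithful (in particular injective) on $2$-morphisms; (3) apply the $1$-categorical Gelfand–Naimark–Segal theorem for W*-categories to see $\GNS$ is monic on each hom W*-category, hence monic as a $\dag$-$2$-functor into $\TwoHilb$; (4) compose and conclude $\Upsilon_2$ is monic, and take $\GNS_2(\cA)$ to be its image, which — as in the footnote to Theorem \ref{thm:coherence2cats} — is a strict $\dag$-$2$-subcategory of $\TwoHilb$ equivalent to $\cA$, and is norm-closed because $\ev$ has norm-dense image in $\cA^{**}$ on hom-spaces while $\yo^\amalg$ and $\GNS$ are isometric on $2$-morphisms, so the image of $\cA$ sits densely in the image of $\cA^{**}$, which is already a (norm-closed) W*-$2$-category; (5) for the strictness addendum: if $\cA$ is strict then $\ev$ is strict by construction (the associators and unitors of $\cA^{**}$ are $\ev$ of those of $\cA$, which are identities), $\yo^\amalg$ is strict since $\WstarTwoCat(-\to-)$ into $\WstarCats$ is strict, and $\GNS$ is strict, so the composite is strict.

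The main obstacle I expect is step (2)–(3): making precise that the W*-enriched Yoneda embedding is fully faithful on $2$-morphisms \emph{and isometric} there — one needs that for a faithful normal $\dag$-functor between W*-categories the induced maps on hom-spaces are isometric, which follows from the C*-identity together with injectivity of $*$-homomorphisms between endomorphism C*-algebras being isometric, exactly the argument used in the verification of the universal property of $\maxboxtimes$. A secondary subtlety is checking that the composite $\yo^\amalg\circ\ev$ is still injective on objects and $1$-morphisms of $\cA$ (not just of $\cA^{**}$): this is immediate since $\ev$ is the identity on objects and injective on $1$-morphisms, and $\yo^\amalg$ is injective on the objects and $1$-morphisms of $\cA^{**}$. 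Everything else is a routine bookkeeping of compositions of monic $\dag$-$2$-functors, so the proof is short once these faithfulness facts are in hand.
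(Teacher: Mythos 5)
Your overall strategy — factor $\Upsilon_2$ as $\GNS\circ\yo^\amalg\circ\ev$ and check that each factor is monic (and strict when $\cA$ is) so that the composite is — is exactly the paper's argument; the paper's written proof likewise reduces to "both factors are monic, hence the composite is monic," with the monicity of the Yoneda piece and of the GNS piece supplied by the preceding theorem and by Lemma~\ref{lem:GNS''ismonic} respectively.

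One step of your proposal is wrong as stated, though the conclusion survives: you justify norm-closedness of $\GNS_2(\cA)$ by claiming $\ev$ has norm-dense image in $\cA^{**}$ on hom-spaces. By Goldstine, $\ev(\cA(A\to B))$ is only \emph{weak*}-dense in $\cA(A\to B)^{**}$; it is norm-dense only when the hom-space is reflexive. Moreover, even granting density, "dense in a closed set" would not give closedness of the image. The correct (and simpler) argument is that $\ev$, $\yo^\amalg$, and $\GNS$ are all isometric on $2$-morphisms (for $\ev$ this is the isometry of the canonical embedding into the bidual; for the other two it follows from injectivity of $*$-homomorphisms between endomorphism C*-algebras together with the C*-identity, as you note elsewhere), so each hom-space of $\cA$ maps isometrically onto its image; being the isometric image of a complete space, that image is complete and hence norm-closed in $\TwoHilb$. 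With that repair your proof is complete.
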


\begin{remark}
We expect a Sherman-\!Takeda theorem to hold for C*-2-categories.
Namely, for a small C*-2-category $\cA$, the $\dag$-2-functor 
$$\widetilde{\Upsilon}_2\colon \cA^{**} \to \GNS_2(\cA)''$$
given by the universal property of the W*-envelope of a C*-2-category is an equivalence of W*-2-categories extending $\Upsilon_2\colon \cA \to \GNS_2(\cA)$.
\[\begin{tikzcd}
	\cA & {\GNS_2(\cA)} \\
	{\cA^{**}} & {\GNS_2(\cA)''}
	\arrow["\ev"', hook, from=1-1, to=2-1]
	\arrow["\Upsilon_2", from=1-1, to=1-2]
	\arrow["{\widetilde{\Upsilon}}_2"', from=2-1, to=2-2]
	\arrow[hook, from=1-2, to=2-2]
\end{tikzcd}\]
\end{remark}

\subsection{W*-Gray tensor product}\label{sec:W*Graytensor}

For strict W*-2-categories $\cA_1$ and $\cA_2$, we wish to construct a W*-2-category $\cA_1 \Wboxtimes \cA_2$ together with a separately normal cubical $\dag$-2-functor $\cA_1 \times \cA_2 \to \cA_1 \Wboxtimes \cA_2$ satisfying the following universal property:
\begin{itemize}
    \item For every separately normal cubical $\dag$-2-functor $H\colon \cA_1 \maxblacktimes \cA_2 \to \cB$ into a strict W*-2-category $\cB$, there exists a unique normal $\dag$-2-functor $H\colon \cA_1 \Wboxtimes \cA_2 \to \cB$ such that the following diagram commutes:
\[\begin{tikzcd}
\cA_1 \maxblacktimes \cA_2 \arrow[r,"H"] \arrow[d] & \cB\\
\cA_1 \Wboxtimes \cA_2 \arrow[ur,dashed,"H"'] & 
\end{tikzcd}\]
\end{itemize}
\begin{definition}
For strict W*-2-categories $\cA_1$, $\cA_2$, and composible 1-morphisms $X_i, Y_i \in \cA_i$ for $i = 1,2$, we define 
$$
(\cA_1 \Wboxtimes \cA_2)\big((X_1,X_2) \to (Y_1,Y_2)\big)_*
\subset (\cA_1 \maxboxtimes \cA_2)\big((X_1,X_2) \to (X_1,X_2)\big)^*$$
as follows. First, let $\SN\big((X_1,X_2) \to (Y_1,Y_2)\big)$ be the closed subspace consisting of all functionals which are separately normal, i.e. the functionals $\varphi \in (\cA_1 \maxboxtimes \cA_2)\big((X_1,X_2) \to (Y_1,Y_2)\big)^*$ such that 
\begin{align*}
\varphi \circ (\id_{\id_{A_1}} \otimes -) \in \cA_2(X_2 \to Y_2)_*,  \quad&\text{for every } A_1 \in \cA_1, \text{ and}\\
\varphi \circ (- \otimes \id_{\id_{A_2}}) \in \cA_1(X_1 \to X_1)_*,  \quad&\text{for every } A_2 \in \cA_2.
\end{align*}
We then define $(\cA_1 \Wboxtimes \cA_2)_*$ to be the largest closed subspace of $\SN$ invariant under the four actions of $\cA_1 \maxboxtimes \cA_2$ on $(\cA_1 \maxboxtimes \cA_2)^*$ given by precomposing and postcomposing the argument of $\varphi \in \SN$ horizontally (1-composition $\otimes$) and vertically (2-composition $\circ$). 
\end{definition}
\begin{remark}
We are using the suggestive notation $(\cA_1 \Wboxtimes \cA_2)\big((X_1,X_2) \to (Y_1,Y_2)\big)_*$ even though we have not yet defined the W*-category $\cA_1 \Wboxtimes \cA_2$. However, once we do construct this W*-Gray tensor product, we will see that this Banach space is indeed the predual of the hom space $(\cA_1 \Wboxtimes \cA_2)\big((X_1,X_2) \to (Y_1,Y_2)\big)$.
\end{remark}

The following result will serve as motivation for the definition of the W*-Gray tensor product $\Wboxtimes$.

\begin{proposition}\label{prop:IsomorphicW*CategoryConstruction}
For objects $X,Y \in \cB$ in a strict W*-2-category $\cB$, consider the polar
$$\cB(X \tto Y)_*^\perp \coloneqq \sset{\Phi \in \cB^{**}(X \tto Y)}{\Phi(\varphi) = 0 \text{ for all } \varphi \in \cB(X \tto Y)_*}.$$
Then $\cB_*^\perp \subseteq \cB^{**}$ is closed under pre- and post- 1-composition and 2-composition of 2-morphisms in $\cB^{**}$, so that $\cB^{**}/\cB_*^\perp$ is a (strict) W*-2-category. 
Moreover, if $\pi\colon \cB^{**} \to \cB^{**}/\cB^\perp_*$ is the natural quotient $\dag$-2-functor, then the composite
$$\cB \xhookrightarrow{\ev} \cB^{**} \xrightarrow{\pi} \cB^{**}/\cB^\perp_*$$
is an equivalence of (strict) W*-2-categories.
\end{proposition}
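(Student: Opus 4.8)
The plan is to work hom-space by hom-space, leveraging the already-established $\dag$-2-category structure on $\cB^{**}$ from Proposition \ref{prop:ArensCoincide2Cats} together with the classical Sakai/Sherman--Takeda theory at the level of C*-algebras and its categorification to C*-categories in Appendix \ref{subsec:Wstarcompletion1cats}. First I would fix objects $X,Y \in \cB$ and recall that, since $\cB$ is a \emph{strict W*-2-category}, each hom-category $\cB(X \tto Y)$ is a W*-category and in particular each endomorphism algebra $\cB(f \tto f)$ is a von Neumann algebra with predual $\cB(f \tto f)_*$. The polar $\cB(X \tto Y)_*^\perp$ is then, on each endomorphism algebra, exactly the weak*-kernel of the canonical surjection $\cB(f\tto f)^{**} \twoheadrightarrow \cB(f \tto f)$ coming from Sakai's theorem; on general hom-spaces $\cB(f \tto g)$ one uses the C*-category analogue. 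Hence $\cB^{**}(X\tto Y)/\cB(X\tto Y)_*^\perp \cong \cB(X \tto Y)$ as W*-categories, with the isomorphism implemented by $\pi \circ \ev$. This already gives the ``equivalence'' claim at the level of hom-categories, so the real content is that $\cB_*^\perp$ is a 2-sided 1-composition ideal and a 2-sided 2-composition ideal, so that the quotient inherits a W*-2-category structure and $\pi$ is a $\dag$-2-functor.

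Next I would verify the ideal properties. For 2-composition ($\circ$) this is immediate: within a single hom-category $\cB(X \tto Y)$, the quotient by $\cB(X\tto Y)_*^\perp$ is just the Sakai quotient $\cB^{**}(X\tto Y) \to \cB(X\tto Y)$, which is a normal $\dag$-functor, so $\circ$-composing with anything preserves the kernel. For 1-composition ($\ccirc$) I would argue as follows: take $\Phi \in \cB^{**}(X \tto Y)$ with $\Phi \in \cB(X \tto Y)_*^\perp$, i.e.\ $\Phi$ annihilates all normal functionals, and let $\Psi \in \cB^{**}(X' \tto Y')$ be arbitrary with $X,Y$ 1-composable with $X',Y'$. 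I want $\Phi \ccirc \Psi$ (the common Arens product from Proposition \ref{prop:ArensCoincide2Cats}) to annihilate every $\varphi \in \cB(X\ccirc X' \tto Y \ccirc Y')_*$. Using the left Arens formula $(\Phi \ccirc_\ell \Psi)(\varphi) = \Phi(\varphi \triangleleft \Psi)$, it suffices to show that $\varphi \triangleleft \Psi$ is a normal functional on $\cB(X \tto Y)$ whenever $\varphi$ is normal; this follows because $a \mapsto \Psi(a \triangleright \varphi)$ is a weak*-limit of the normal functionals $a \mapsto \varphi(a \ccirc b)$ (for $\ev_b$ in the weak*-dense image of $\cB$), and normality of $\varphi$ together with separate normality of $\ccirc$ on the strict W*-2-category $\cB$ makes $a \mapsto \varphi(a \ccirc b)$ normal. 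A symmetric argument using the right Arens formula handles post-1-composition, and mixed cases ($\Psi$ arbitrary on one side) reduce to the same computation since $\ccirc_\ell = \ccirc_r$. Thus $\cB_*^\perp$ is closed under all four one-sided compositions and $\cB^{**}/\cB_*^\perp$ is a strict W*-2-category by the inherited (Arens) structure, with $\pi$ a normal $\dag$-2-functor since it is locally normal and strictly compatible with the strict $\ccirc$ and $\circ$.

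Finally I would assemble the equivalence. The composite $\pi \circ \ev\colon \cB \to \cB^{**}/\cB_*^\perp$ is the identity on objects and, on each hom-category, is the composite $\cB(X\tto Y) \xhookrightarrow{\ev} \cB^{**}(X\tto Y) \xrightarrow{\text{quot}} \cB^{**}(X\tto Y)/\cB(X\tto Y)_*^\perp$, which is exactly the canonical isomorphism of W*-categories from the categorified Sakai theorem; hence $\pi\circ\ev$ is fully faithful on 2-morphisms and (being the identity on objects and 1-morphisms) unitarily essentially surjective, so it is a $\dag$-equivalence in the sense defined in the excerpt. It is strict because both $\ev$ and $\pi$ are strict. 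The main obstacle I anticipate is the 1-composition ideal check --- specifically, verifying cleanly that $\varphi \triangleleft \Psi$ (and $\Phi \triangleright \varphi$) stays in the predual, which is where the separate normality hypothesis (W*2$'$) on $\cB$ and the coincidence of the two Arens products must be used carefully; everything else is bookkeeping on top of the single-hom-space Sakai/Sherman--Takeda isomorphism.
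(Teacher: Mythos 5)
Your overall architecture matches the paper's: establish the four ideal properties of $\cB_*^\perp$ via the Arens formulas, then use the Banach-space identification $(\cB^{**}/\cB_*^\perp)(X\tto Y) \cong (\cB(X\tto Y)_*)^* \cong \cB(X\tto Y)$ (your ``categorified Sakai'') to conclude that the quotient is W* and that $\pi\circ\ev$ is an isomorphism on each hom-space, hence an equivalence. The problem is the justification you give at the crux step. You claim $\varphi\triangleleft\Psi$ is normal because it is a weak*-limit of the normal functionals $\varphi\triangleleft b_\lambda$ with $\ev_{b_\lambda}\to\Psi$. That principle is false: the convergence $\varphi\triangleleft b_\lambda\to\varphi\triangleleft\Psi$ you actually have is only pointwise on $\cB(X\tto Y)$, i.e.\ convergence in $\sigma(\cB^*,\cB)$, and the predual $\cB(X\tto Y)_*$ is weak*-\emph{dense} in $\cB(X\tto Y)^*$, not weak*-closed, so a $\sigma(\cB^*,\cB)$-limit of normal functionals need not be normal. (The predual is norm-closed, hence closed for the weak topology $\sigma(\cB^*,\cB^{**})$, but you do not have convergence in that topology --- that would require exactly the weak*-continuity of $\ccirc_\ell$ in the $\Psi$-slot which is the point at issue.)

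The conclusion you wanted is nevertheless true, and there are two correct ways to get it. (i) For normal $\varphi$ and any $a$, the functional $a\triangleright\varphi = \varphi(a\ccirc -)$ already lies in the predual by separate normality of $\ccirc$ in $\cB$; hence in $(\varphi\triangleleft\Psi)(a)=\Psi(a\triangleright\varphi)$ the element $\Psi$ is only ever evaluated on the predual, where it agrees with an honest element $c\in\cB$ under $(\cB_*)^*\cong\cB$. Thus $\varphi\triangleleft\Psi=\varphi\triangleleft c$, which is normal. (ii) Alternatively --- this is what the paper does --- do not try to show $\varphi\triangleleft\Psi$ is normal at all: since $\ccirc_\ell=\ccirc_r$ by Proposition \ref{prop:ArensCoincide2Cats}, compute the product using the Arens formula that is weak*-continuous in the slot occupied by the \emph{arbitrary} factor $\Psi$, approximate $\Psi$ by $\ev_{x_\lambda}$, and observe that each $\varphi\triangleleft x_\lambda$ is normal, so every term $\Phi(\varphi\triangleleft x_\lambda)$ vanishes and the limit is $0$. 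Either repair also covers your 2-composition case directly (the paper treats $\circ$ and $\ccirc$ uniformly this way), so you need not invoke normality of the Sakai projection as a separate black box. With that step fixed, the rest of your argument --- the duality $F^*/E^\perp\cong E^*$ on each hom-space and the identification of $\pi\circ\ev$ with the canonical isomorphism --- is exactly the paper's.
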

\begin{proof}
Let us first show that $\cB_*^\perp$ is closed under both pre-compositions and post-compositions with 2-morphisms in $\cB^{**}$. First note that normal functionals on hom spaces of $\cB$ are closed under the four actions of $\cB$ since 1-composition and 2-composition of 2-morphisms are separately normal in a W*-2-category. One can then use the fact that left and right Arens 1-compositions and 2-compositions agree respectively to show that $\cB^\perp_*$ is closed under pre- and post- 1-composing and 2-composing 2-morphisms in $\cB^{**}$. Indeed, if $\Phi \in \cB(X \tto Y)_*^\perp$ and $\Psi \in \cB^{**}(Y \tto Z)$ for parallel 1-morphisms $X,Y,Z$ in $\cB$, there exists some $\ev_{x_\lambda} \to \Psi$ weak* in $\cB^{**}$. So for $\varphi \in \cB(X \tto Z)_*$ we have $\{\varphi \triangleleft x_\lambda\} \subset \cB(X \tto Y)_*$ and hence
$$
(\Psi \circ \Phi)(\varphi) = \lim_\lambda (\ev_{x_\lambda} \circ_r \Phi)(\varphi) =
\lim_\lambda \Phi(\varphi \triangleleft x_\lambda) = 0.
$$
Therefore $\Psi \circ \Phi \in \cB(X \tto Z)_*^\perp$, and one similarly shows our three other claims.
Therefore $\cB^{**}/\cB^\perp_*$ is a C*-2-category. For Banach spaces $E \subseteq F$, it is an elementary fact that
$$F^*/E^\perp \cong E^*$$
via the map $[\Phi] \to \Phi|_E$ for $\Phi \in F^*$.
In particular, we have
$$(\cB^{**}/\cB^\perp_*)(X \tto Y)_* = \cB(X \tto Y)_*,$$
i.e. $(\cB^{**}/\cB_*^\perp)(X \tto Y) \cong \cB(X \tto Y)_*^* \cong \cB(X \tto Y)$. We then conclude $\cB^{**}/\cB^\perp_*$ is a W*-2-category.

It is now easy to see that $\pi \circ \ev\colon \cB \to \cB^{**}/\cB_*^\perp$ is an isomorphism of W*-2-categories.
Indeed, when viewing $\cB(X \tto Y)_* \subseteq \cB(X \tto Y)^*$ as the subspace of normal functionals on $\cB(X \tto Y)$, the isomorphism 
$$\cB(X \tto Y) \to \cB(X \tto Y)_*^*$$
is given by $x \mapsto \ev_x|_{\cB(X \tto Y)_*}$. 
On the other hand, the isomorphism 
$$(\cB^{**}/\cB_*^\perp)(X \tto Y) \to \cB(X \tto Y)_*^*$$
is given by $[\Phi] \to \Phi|_{\cB(X \tto Y)_*}$.
Hence, the map $\pi \circ \ev$ given by $x \mapsto [\ev_x]$ is an isomorphism on each hom-space. Since both $\pi$ and $\ev$ act as the identity on objects and 1-morphisms, we conclude that $\pi \circ \ev$ is an (automatically normal) equivalence of W*-2-categories.\qedhere
\end{proof}

\begin{definition}
For strict W*-2-categories $\cA_1,\cA_2$, we define their W*-Gray tensor product to be
$$\cA_1 \Wboxtimes \cA_2 \coloneqq (\cA_1 \maxboxtimes \cA_2)^{**} / (\cA_1 \Wboxtimes \cA_2)_*^\perp.$$ 
\end{definition}

\begin{lemma}
The W*-Gray tensor product $\cA_1 \Wboxtimes \cA_2$ is a (strict) W*-2-category.
\end{lemma}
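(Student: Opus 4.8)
The plan is to reduce this lemma to a direct application of Proposition \ref{prop:IsomorphicW*CategoryConstruction} together with the structural observations already recorded in the definition of $(\cA_1 \Wboxtimes \cA_2)_*$. Recall that $\cA_1 \maxboxtimes \cA_2$ is a strict C*-2-category, so $(\cA_1 \maxboxtimes \cA_2)^{**}$ is a strict W*-2-category by Proposition \ref{prop:ArensCoincide2Cats}. Thus $\cA_1 \Wboxtimes \cA_2$ is by construction the quotient of a strict W*-2-category by a family of subspaces $(\cA_1 \Wboxtimes \cA_2)_*^\perp$ of its hom W*-categories, and the content of the lemma is precisely that this quotient is again a (strict) W*-2-category.

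First I would verify that $(\cA_1 \Wboxtimes \cA_2)_*^\perp$ is closed under pre- and post-composition by 2-morphisms of $(\cA_1 \maxboxtimes \cA_2)^{**}$ with respect to both $\ccirc$ (horizontal) and $\circ$ (vertical). This is where the defining property of $(\cA_1 \Wboxtimes \cA_2)_*$ does the work: by construction this predual candidate is the \emph{largest} closed subspace of $\SN$ invariant under the four actions of $\cA_1 \maxboxtimes \cA_2$ on $(\cA_1 \maxboxtimes \cA_2)^*$ obtained by pre/post-composing the argument horizontally and vertically. Dualizing, invariance of a subspace $E$ under these actions translates exactly to invariance of its polar $E^\perp$ under the corresponding dual actions on $(\cA_1 \maxboxtimes \cA_2)^{**}$; and then one upgrades from actions of $\cA_1 \maxboxtimes \cA_2$ to actions of the full bidual $(\cA_1 \maxboxtimes \cA_2)^{**}$ exactly as in the proof of Proposition \ref{prop:IsomorphicW*CategoryConstruction}: pick a net $\ev_{x_\lambda} \to \Psi$ weak* in the bidual, observe $\varphi \triangleleft x_\lambda$ (and the three analogous one-sided translates) stays inside the invariant predual subspace, and pass to the limit using separate weak*-continuity of $\ccirc$ and $\circ$ in $(\cA_1 \maxboxtimes \cA_2)^{**}$. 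Hence $\cA_1 \Wboxtimes \cA_2 = (\cA_1 \maxboxtimes \cA_2)^{**}/(\cA_1 \Wboxtimes \cA_2)_*^\perp$ inherits well-defined 1- and 2-composition, associators, units and unitors, making it a strict C*-2-category, with the strictness passing through the quotient since it already holds in $(\cA_1 \maxboxtimes \cA_2)^{**}$.

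Next I would check the W*-conditions (W*1) and (W*2). For (W*1), for parallel 1-morphisms $(X_1,X_2),(Y_1,Y_2)$ the hom-space of the quotient is, by the elementary Banach-space fact $F^*/E^\perp \cong E^*$ used in Proposition \ref{prop:IsomorphicW*CategoryConstruction}, isometrically isomorphic to $(\cA_1 \Wboxtimes \cA_2)((X_1,X_2)\to(Y_1,Y_2))_*^{\,*}$, i.e. it is a dual Banach space; its endomorphism algebras are weak*-closed $*$-subalgebras of the corresponding von Neumann algebras in the bidual, hence von Neumann algebras, so each hom-category is a W*-category. For (W*2), separate normality of $\ccirc$ and $\circ$ on $\cA_1 \Wboxtimes \cA_2$ follows because these operations are induced from the separately normal operations on $(\cA_1 \maxboxtimes \cA_2)^{**}$ via the quotient $\dag$-2-functor $\pi$, which is by design weak*-continuous on each hom-space (the predual of the quotient is, up to the canonical identification, a subspace of the predual of the domain); equivalently one may check condition (W*2$'$) by the same net argument as above.

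The main obstacle — the step requiring the most care — is the passage from invariance under the four actions of $\cA_1 \maxboxtimes \cA_2$ to closure under composition by arbitrary 2-morphisms of the bidual $(\cA_1 \maxboxtimes \cA_2)^{**}$, since $(\cA_1 \Wboxtimes \cA_2)_*$ is only required to be invariant under the image of $\cA_1 \maxboxtimes \cA_2$, not of its bidual. This is resolved exactly as in Proposition \ref{prop:IsomorphicW*CategoryConstruction}: weak*-density of $\ev(\cA_1 \maxboxtimes \cA_2)$ in $(\cA_1 \maxboxtimes \cA_2)^{**}$, combined with the fact that left and right Arens compositions coincide and are separately weak*-continuous, lets one approximate any bidual 2-morphism and conclude. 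Once this is in place, everything else is bookkeeping, and the lemma follows; indeed, as with Proposition \ref{prop:IsomorphicW*CategoryConstruction}, one gets for free that the evident composite $\cA_1 \maxblacktimes \cA_2 \to \cA_1 \maxboxtimes \cA_2 \xhookrightarrow{\ev} (\cA_1 \maxboxtimes \cA_2)^{**} \xrightarrow{\pi} \cA_1 \Wboxtimes \cA_2$ is the separately normal cubical $\dag$-2-functor exhibiting the universal property stated above.
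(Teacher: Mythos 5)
Your proposal is correct and follows essentially the same route as the paper: the quotient of $(\cA_1 \maxboxtimes \cA_2)^{**}$ by the polar is handled exactly as in Proposition~\ref{prop:IsomorphicW*CategoryConstruction}, with the predual of each hom-space identified via $F^*/E^\perp \cong E^*$ and separate normality inherited from the bidual. The only difference is that you spell out the weak*-density/Arens argument upgrading invariance under the four actions of $\cA_1 \maxboxtimes \cA_2$ to closure under composition with arbitrary bidual 2-morphisms, which the paper compresses into ``by definition'' and ``as before.''
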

\begin{proof}
Since $(\cA_1 \Wboxtimes \cA_2)_*^\perp \subseteq (\cA_1 \maxboxtimes \cA_2)^{**}$ is closed under pre- and post- 1-composition and 2-composition of 2-morphisms in $\cA_1 \Wmaxtimes \cA_2$ by definition, it follows that $\cA_1 \Wmaxtimes \cA_2$ is indeed a well-defined C*-2-category. As before, it is an elementary fact that the predual of each hom-space is the previously-defined $(\cA_1 \Wboxtimes \cA_2)_*$. Hence $\ccirc$ is separately normal on $\cA_1 \Wboxtimes \cA_2$ as it is separately normal on $(\cA_1 \maxboxtimes \cA_2)^{**}$, and we conclude that $\cA_1 \Wboxtimes \cA_2$ is indeed a W*-2-category.
\end{proof}

For the remaining portion of this section, we shall fix W*-2-categories $\cA_1,\cA_2,\cB$ and a separately normal cubical $\dag$-2-functor $H\colon \cA_1 \maxblacktimes \cA_2 \to \cB$. 
\begin{lemma}\label{lem:pullbacknormalfunctionaltograytensor}
Consider the induced $\dag$-2-functor $\widetilde{H}\colon \cA_1 \maxboxtimes \cA_2 \to \cB$ on the C*-Gray tensor product. For parallel 1-morphisms $X_i,Y_i$ in $\cA_i$ where $i = 1,2$ and $\psi \in \cB\big(\widetilde{H}(X_1,X_2) \to \widetilde{H}(Y_1,Y_2)\big)_*$, we may define 
$$\widetilde{H}^*\psi \in (\cA_1 \maxboxtimes \cA_2)\big((X_1,X_2) \to (Y_1,Y_2)\big)^* \quad \text{by } (\widetilde{H}^*\psi)(t) \coloneqq \psi(\widetilde{H}t).$$
Then $\widetilde{H}^*\psi \in (\cA_1 \Wboxtimes \cA_2)\big((X_1,X_2) \to (X_1,X_2)\big)_*$. We will denote this by
$$\widetilde{H}^* \cB_* \subseteq (\cA_1 \Wboxtimes \cA_2)_*$$
for simplicity.
\end{lemma}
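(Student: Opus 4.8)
The plan is to verify the two defining conditions for membership in $(\cA_1 \Wboxtimes \cA_2)_*$: first, that $\widetilde{H}^*\psi$ is separately normal (lies in $\SN$), and second, that $\widetilde{H}^*\psi$ spans a closed subspace of $\SN$ invariant under the four actions of $\cA_1 \maxboxtimes \cA_2$. For the separate normality, recall from the universal property of $\maxboxtimes$ that $\widetilde{H}$ restricts on the slices $\cA_1 \boxtimes A_2$ and $A_1 \boxtimes \cA_2$ (as in Remark \ref{rem:functorsintogray}) to the strict normal $\dag$-2-functors $H_{A_2}\colon \cA_1 \to \cB$ and $H_{A_1}\colon \cA_2 \to \cB$ coming from the data of the cubical functor $H$ in Proposition \ref{prop:dataofcubicals}. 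Thus $(\widetilde{H}^*\psi) \circ (\id_{\id_{A_1}} \otimes -) = \psi \circ H_{A_1}(-)$ on $\cA_2(X_2 \to Y_2)$, which is the composite of a normal functional $\psi \in \cB_*$ with the normal $\dag$-functor $H_{A_1}$; hence it is normal, i.e. lies in $\cA_2(X_2 \to Y_2)_*$. The analogous argument with $H_{A_2}$ handles the other entry, so $\widetilde{H}^*\psi \in \SN$.

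Next I would address invariance under the four actions. The key observation is that pulling back along $\widetilde{H}$ intertwines the actions of $\cA_1 \maxboxtimes \cA_2$ on its dual with the actions of $\cB$ on $\cB^*$, restricted along $\widetilde{H}$: for a 2-morphism $t$ in $\cA_1 \maxboxtimes \cA_2$ and $\psi \in \cB_*$, one has $\widetilde{H}^*(\psi \triangleleft \widetilde{H}t) = (\widetilde{H}^*\psi) \triangleleft t$ and the three similar identities for the other three actions (pre/post, horizontal/vertical), which follow directly from functoriality of $\widetilde{H}$ with respect to $\ccirc$ and $\circ$ up to the unitary compositors — and since $\cB$ is a W*-2-category the compositors cause no trouble, as conjugating a normal functional by a unitary keeps it normal. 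Therefore the span $\widetilde{H}^*\cB_*$, or more precisely its closure, is a closed subspace of $\SN$ that is invariant under all four actions. By maximality in the definition of $(\cA_1 \Wboxtimes \cA_2)_*$, this closed invariant subspace is contained in $(\cA_1 \Wboxtimes \cA_2)_*$, which gives both the claimed membership $\widetilde{H}^*\psi \in (\cA_1 \Wboxtimes \cA_2)_*$ and the containment $\widetilde{H}^* \cB_* \subseteq (\cA_1 \Wboxtimes \cA_2)_*$.

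The main obstacle I anticipate is the bookkeeping around the compositors of $\widetilde{H}$: $\widetilde{H}$ is merely a $\dag$-2-functor, not a strict one, so the intertwining identities for the four actions do not hold on the nose but only after inserting the compositor unitaries $\widetilde{H}^2$. One must check that, because these compositors are unitaries in the W*-2-category $\cB$ and because normality of a functional is preserved under pre- and post-composition by a fixed morphism (an elementary consequence of separate normality of $\ccirc$ and $\circ$ in $\cB$), the image $\widetilde{H}^*\cB_*$ still lands inside $\SN$ and the span is still action-invariant. A clean way to sidestep part of this is to note that, by the universal property of $\maxboxtimes$, $\widetilde{H}$ factors as a composite whose relevant slices $H_{A_1}, H_{A_2}$ are \emph{strict} normal $\dag$-2-functors; the separate-normality check then only ever sees these strict functors, and the action-invariance check only needs that $\cB_* \subseteq \cB^*$ is itself closed under $\cB$'s four actions (true since $\cB$ is W*) together with the naturality of the pullback, so the nonstrictness of $\widetilde{H}$ enters only through honest unitaries. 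Everything else is a routine diagram chase, so I would not belabor it.
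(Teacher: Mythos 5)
Your proposal is correct and follows essentially the same route as the paper: separate normality of $\widetilde{H}^*\psi$ comes from composing the normal $\psi$ with the (separately) normal slices of $H$, and invariance under the four actions comes from the intertwining $\widetilde{H}^*(\psi \triangleleft \widetilde{H}t) = (\widetilde{H}^*\psi)\triangleleft t$ together with the fact that $\cB_*$ is itself closed under $\cB$'s four actions; your explicit appeal to maximality of $(\cA_1 \Wboxtimes \cA_2)_*$ just spells out what the paper leaves as ``easy to see.'' One small simplification: the universal property of $\maxboxtimes$ produces a \emph{strict} $\dag$-2-functor $\widetilde{H}$, so the compositor bookkeeping you flag as the main obstacle does not actually arise (though your handling of it would be harmless if it did).
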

\begin{proof}
Let $A_1 \in \cA_1$ and suppose $a_2^\lambda \to a_2$ weak* in $\cA_2(A_2 \to B_2)$. Since $H$ is separately normal, $H(\id_{\id_{A_1}} \otimes a_2^\lambda) \to H(\id_{\id_{A_1}} \otimes a_2)$ $\sigma$-WOT. Since $\psi$ is normal, we conclude
$$(\widetilde{H}^* \psi)(\id_{\id_{A_1}} \otimes a_2^\lambda) = \psi(H(\id_{\id_{A_1}} \otimes a_2^\lambda)) \to \psi(H(\id_{\id_{A_1}} \otimes a_2)) = (\widetilde{H}^* \psi)(\id_{\id_{A_1}} \otimes a_2).$$
A similar argument reveals $(\widetilde{H}^* \psi)(a^\lambda_1 \otimes \id_{\id_{A_2}}) \to (\widetilde{H}^* \psi)(a_1 \otimes \id_{\id_{A_1}})$ when $a_1^\lambda \to a_1$ weak* in $\cA_1$. Therefore $\widetilde{H}^* \psi$ is a separately normal functional. Moreover, since $H$ is a $\dag$-2-functor and both 1-composition and 2-composition of 2-morphisms in $\cB$ is separately normal, it is easy to see that $\widetilde{H}^*\psi \in (\cA_1 \Wboxtimes \cA_2)_*$.
\end{proof}
Note that the cubical $\dag$-2-functor $\cA_1 \maxblacktimes \cA_2 \to \cA_1 \Wboxtimes \cA_2$ given by
$$\cA_1 \maxblacktimes \cA_2 \to \cA_1 \maxboxtimes \cA_2 \xrightarrow{\ev} (\cA_1 \maxboxtimes \cA_2)^{**} \xrightarrow{\pi} \cA_1 \Wboxtimes \cA_2$$
is separately normal since $(\cA_1 \Wboxtimes \cA_2)_*$ consists of functionals on $\cA_1 \maxtimes \cA_2$ which are separately normal.

\begin{universalprop}
For strict W*-2-categories $\cA_1$ and $\cA_2$, if $H\colon \cA_1 \maxblacktimes \cA_2 \to \cB$ is a separately normal cubical $\dag$-2-functor into a strict W*-2-category $\cB$, then there exists a unique normal $\dag$-2-functor $\overline{H}\colon \cA_1 \Wboxtimes \cA_2 \to \cB$ such that the following diagram commutes:
\[\begin{tikzcd}
\cA_1 \maxblacktimes \cA_2 \arrow[r,"H"] \arrow[d] & \cB\\
\cA_1 \Wboxtimes \cA_2 \arrow[ur,dashed,"\overline{H}"'] & 
\end{tikzcd}\]
\end{universalprop}

\begin{proof}
By Lemma \ref{lem:pullbacknormalfunctionaltograytensor}, $\widetilde{H}^*\cB_* \subseteq (\cA_1 \Wboxtimes \cA_2)_*$, so $(\cA_1 \Wboxtimes \cA_2)_*^\perp \subseteq (\widetilde{H}^*\cB)_*^\perp$ and $\widetilde{H}^{**}(\cA_1 \Wboxtimes \cA_2)_*^\perp \subseteq \cB_*^\perp$. We then obtain a morphism of short exact sequences:
\[\begin{tikzcd}[background color = Dandelion!5,row sep = 30pt]
	0 & {(\cA_1 \Wboxtimes \cA_2)_*^\perp} & {(\cA_1 \maxboxtimes \cA_2)^{**}} & {\cA_1 \Wboxtimes \cA_2} & 0 \\
	0 & {\cB_*^\perp} & {\cB^{**}} & {\cB^{**}/\cB_*^\perp} & 0
	\arrow[from=1-1, to=1-2]
	\arrow[from=2-1, to=2-2]
	\arrow[from=2-2, to=2-3]
	\arrow[from=2-3, to=2-4,"\pi"']
	\arrow[from=2-4, to=2-5]
	\arrow[from=1-4, to=1-5]
	\arrow[from=1-3, to=1-4,"\pi"]
	\arrow[from=1-2, to=1-3]
	\arrow[from=1-2, to=2-2]
	\arrow[from=1-3, to=2-3,"\widetilde{H}^{**}"']
	\arrow[from=1-4, to=2-4,dashed,"{[\widetilde{H}^{**}]}"]
\end{tikzcd}\]
We now define the normal functor $\dag$-2-functor $\overline{H}\colon \cA_1 \Wboxtimes \cA_2 \to \cB$ to be $$\overline{H} \coloneqq  \cA_1 \Wboxtimes \cA_2  \xrightarrow{[\widetilde{H}^{**}]} \cB/\cB^{\perp} \xrightarrow{(\pi \circ \ev)^{-1}} \cB.$$ The desired triangle commutes by the commutativity of the following diagram:
\[\begin{tikzcd}[column sep = 30pt]
	{\cA_1 \maxblacktimes \cA_2} & \cB \\
	{\cA_1 \maxboxtimes \cA_2} & \cB \\
	{(\cA_1 \maxboxtimes \cA_2)^{**}} & {\cB^{**}} \\
	{\cA_1 \Wboxtimes \cA_2} & {\cB/\cB_*^\perp}
	\arrow[from=1-1, to=2-1]
	\arrow["\ev"', from=2-1, to=3-1]
	\arrow["\pi"', from=3-1, to=4-1]
	\arrow["{\widetilde{H}}"{description}, from=2-1, to=2-2]
	\arrow["{\widetilde{H}^{**}}"{description}, from=3-1, to=3-2]
	\arrow["{[\widetilde{H}^{**}]}"', from=4-1, to=4-2]
	\arrow["\pi", from=3-2, to=4-2]
	\arrow["\ev", from=2-2, to=3-2]
	\arrow["H", from=1-1, to=1-2]
	\arrow[no head, from=1-2, to=2-2]
	\arrow[shift left=1, no head, from=1-2, to=2-2]
\end{tikzcd}\]
To see that $\overline{H}$ is unique, suppose $K \colon \cA_1 \Wboxtimes \cA_2 \to \cB$ is a normal $\dag$-2-functor making the following diagram commute:
\[\begin{tikzcd}
\cA_1 \maxblacktimes \cA_2 \arrow[r,"H"] \arrow[d] & \cB\\
\cA_1 \Wboxtimes \cA_2 \arrow[ur,"K"'] & 
\end{tikzcd}\]
By the universal property of the C*-Gray tensor product, we have 
$$\widetilde{H} =
\cA_1 \maxboxtimes \cA_2 \xrightarrow{\ev} (\cA_1 \maxboxtimes \cA_2)^{**} \xrightarrow{\pi} \cA_1 \Wboxtimes \cA_2 \xrightarrow{K} \cB.$$
We also know $\ev \circ \widetilde{H} = \widetilde{H}^{**}  \circ \ev$, so by the universal property of W*-completion, we have 
$$
\widetilde{H}^{**} =
(\cA_1 \maxboxtimes \cA_2)^{**} \xrightarrow{\pi} \cA_1 \Wboxtimes \cA_2 \xrightarrow{K} \cB \xrightarrow{\ev} \cB^{**}. 
$$
Finally, we know $\pi \circ \widetilde{H}^{**} = [\widetilde{H}^{**}] \circ \pi$, so the surjectivity of $\pi$ implies
$$[\widetilde{H}^{**}] =
\cA_1 \Wboxtimes \cA_2 \xrightarrow{K} \cB \xrightarrow{\ev} \cB^{**} \xrightarrow{\pi} \cB^{**}/\cB_*^\perp.
$$
From this we conclude that $K = \overline{H}$ and hence $\overline{H}$ is unique.
\end{proof}

\begin{definition}
Let $F_i\colon \cA_i \to \cA_i'$ be normal $\dag$-2-functors between strict W*-2-categories $\cA_i$ for $i = 1,2$. We define $F_1 \Wboxtimes F_2$ to be the normal $\dag$-2-functor induced by the universal property of $\Wboxtimes$ as follows:
\[\begin{tikzcd}[column sep = 30pt]
	{\cA_1 \maxblacktimes \cA_2} & {\cA_1' \maxblacktimes \cA_2'} \\
	{\cA_1 \Wboxtimes \cA_2} & {\cA_1' \Wboxtimes \cA_2'}
	\arrow["{F_1 \maxblacktimes F_2}", from=1-1, to=1-2]
	\arrow[from=1-1, to=2-1]
	\arrow[from=1-2, to=2-2]
	\arrow["{F_1 \Wmaxtimes F_2}"', dashed, from=2-1, to=2-2]
\end{tikzcd}\]
\end{definition}

\begin{remark}
Due to the uniqueness of normal $\dag$-2-functors induced by the universal property of $\Wboxtimes$, it is easy to see that $\Wboxtimes\colon \WstarTwoCatst \times \WstarTwoCatst \to \WstarTwoCatst$ is a functor when viewing $\Wboxtimes$ as a 1-category of strict W*-2-categories and normal $\dag$-2-functors.
\end{remark}

\begin{theorem}
[Unitary hom-Tensor Adjunction] For strict W*-2-categories $\cA_1$, $\cA_2$, and $\cB$, we have that the following W*-2-categories are unitarily naturally isomorphic:
\begin{align*}
\WstarTwoCatst(\bbC \to \cB) &\cong \cB,\\
\WstarTwoCatst(\cA_1 \Wboxtimes \cA_2 \to \cB) &\cong \WstarTwoCatst(\cA_1 \to \WstarTwoCatst(\cA_2 \to \cB)).
\end{align*}
\end{theorem}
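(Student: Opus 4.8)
The plan is to follow the proof of Proposition~\ref{prop:CstarCartesianClosed} essentially verbatim at the level of the underlying bicategorical data, and then to check at each step that normality is preserved and that the weak*-topologies on the relevant hom W*-categories match.

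For the first isomorphism, I view $\bbC = B^2\bbC$ as in the C*-case. Since its unique hom-space is one-dimensional, every $\dag$-2-functor $F\colon \bbC \to \cB$, every $\dag$-2-natural transformation, and every modification between such is automatically normal and uniformly bounded, so the assignment $F \mapsto F(\bullet)$ from the C*-statement restricts to a bijection $\WstarTwoCatst(\bbC \to \cB) \to \cB$ on all three levels; it is isometric on 2-morphism hom-spaces and $\dag$-preserving, hence an isomorphism of W*-2-categories.

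For the second isomorphism, I start with a normal $\dag$-2-functor $G\colon \cA_1 \Wboxtimes \cA_2 \to \cB$. Precomposing with the canonical separately normal cubical $\dag$-2-functor $\cA_1 \maxblacktimes \cA_2 \to \cA_1 \Wboxtimes \cA_2$ gives a separately normal cubical $\dag$-2-functor $H\colon \cA_1 \maxblacktimes \cA_2 \to \cB$. By the separately normal case of Proposition~\ref{prop:dataofcubicals}, $H$ amounts to normal strict $\dag$-2-functors $H_{A_1}\colon \cA_2 \to \cB$ and $H_{A_2}\colon \cA_1 \to \cB$ together with interchanger unitaries $\Sigma$ satisfying $(\Sigma 1)$--$(\Sigma 3)$. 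As in Proposition~\ref{prop:CstarCartesianClosed}, this assembles into a $\dag$-2-functor $\widehat{G}\colon \cA_1 \to \WstarTwoCatst(\cA_2 \to \cB)$ sending $A_1$ to $H_{A_1}$, a 1-morphism $X_1$ to the $\dag$-2-natural transformation whose naturator is built from $\Sigma$ (2-natural by $(\Sigma 1)$ and $(\Sigma 3)$, strict by cubicality and $(\Sigma 2)$), and a 2-morphism of $\cA_1$ to the evident uniformly bounded modification (satisfying the modification axiom by $(\Sigma 1)$). The point beyond the C*-case is that $\widehat{G}$ is normal: on a hom-space $\cA_1(X_1 \tto Y_1)$ its value is computed componentwise by the maps $H(-\otimes \id_{\id_{A_2}})$, each of which is normal since $H$ is separately normal, and weak*-convergence of modifications in $\WstarTwoCatst(\cA_2 \to \cB)$ is detected componentwise, so $\widehat{G}$ is weak*-continuous on hom-spaces. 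Conversely, from a normal $\dag$-2-functor $\Phi\colon \cA_1 \to \WstarTwoCatst(\cA_2 \to \cB)$ I recover the data of Proposition~\ref{prop:dataofcubicals}: the $H_{A_1} \coloneqq \Phi(A_1)$ are normal by hypothesis, the $H_{A_2}$ are assembled from the components of $\Phi$ on 1- and 2-morphisms and are normal because $\Phi$ is normal on hom-spaces, and the $\Sigma$'s are the naturators of the $\Phi(X_1)$; the separate-normality clause of Proposition~\ref{prop:dataofcubicals} is exactly the statement that $\Phi$ is normal on hom-spaces. This produces a separately normal cubical $\dag$-2-functor $\cA_1 \maxblacktimes \cA_2 \to \cB$, whence the universal property of $\Wboxtimes$ yields a unique normal $\dag$-2-functor $\cA_1 \Wboxtimes \cA_2 \to \cB$. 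The round-trip checks are routine and identical to those in Proposition~\ref{prop:CstarCartesianClosed}.

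The main obstacle is upgrading this object-level bijection, together with its evident action on 1- and 2-cells, to an \emph{isomorphism of W*-2-categories} --- i.e. verifying that the correspondence and its inverse are normal and that the weak*-topologies on the hom W*-categories of $\WstarTwoCatst(\cA_1 \Wboxtimes \cA_2 \to \cB)$ and of $\WstarTwoCatst(\cA_1 \to \WstarTwoCatst(\cA_2 \to \cB))$ agree. I would handle this by exhibiting matching preduals: a modification (or $\dag$-2-natural transformation) between $\dag$-2-functors out of $\cA_1 \Wboxtimes \cA_2$ is determined, via the canonical cubical $\dag$-2-functor $\cA_1 \maxblacktimes \cA_2 \to \cA_1 \Wboxtimes \cA_2$ and the weak*-density of $\ev$, by data on $\cA_1 \maxboxtimes \cA_2$, and the normality constraints cut out by the definition of $(\cA_1 \Wboxtimes \cA_2)_*$ as the largest invariant subspace of separately normal functionals translate precisely into the ``separately normal'' description of the corresponding data on the iterated side. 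Hence the two hom W*-categories have canonically isomorphic preduals, the correspondence is isometric and weak*-bicontinuous on all hom-spaces, and naturality in $\cA_1$, $\cA_2$, $\cB$ is forced by the uniqueness clauses in the universal properties of $\Wboxtimes$ and of the W*-completion, exactly as the C*-statement is natural.
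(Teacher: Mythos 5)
Your proposal is correct and follows essentially the same route as the paper: the paper's proof is a one-line reduction to Proposition~\ref{prop:CstarCartesianClosed}, invoking exactly the fact you use — that $\Wboxtimes$ represents separately normal cubical functors, so the correspondence of Proposition~\ref{prop:dataofcubicals} upgrades to normal strict $\dag$-2-functors. Your additional checks of normality on hom-spaces and the matching of preduals go beyond what the paper records, but they fill in the same sketch rather than taking a different path.
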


\begin{proof}
This follows from an identical proof to that of Proposition \ref{prop:CstarCartesianClosed}, using the fact that $\Wboxtimes$ represents separately normal cubical functors to obtain normal strict $\dag$-2-functors as expected.
\end{proof}

As in the C* case, we obtain the following as a corollary of \cite[\S II.3,4]{Eilenberg_Kelly_1966}.

\begin{corollary}
$\WstarGray \coloneqq (\WstarTwoCatst, \Wboxtimes)$ forms a closed symmetric monoidal category.
\end{corollary}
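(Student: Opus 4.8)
The plan is to deduce this entirely from the \emph{Unitary hom-Tensor Adjunction} just proved, by recognizing $\WstarTwoCatst$ as a closed category in the sense of Eilenberg--Kelly and then letting \cite[\S II.3,4]{Eilenberg_Kelly_1966} produce the monoidal structure, with all of Mac Lane's coherence, for free. Concretely, I take the underlying $1$-category to be $\WstarTwoCatst$ (strict W*-2-categories and normal $\dag$-2-functors), the internal hom to be $[\cA_1,\cA_2]\coloneqq\WstarTwoCatst(\cA_1\to\cA_2)$, the unit object to be $\bbC=B^2\bbC$, and the tensor to be $\Wboxtimes$, which is a functor $\WstarTwoCatst\times\WstarTwoCatst\to\WstarTwoCatst$ by the preceding remark. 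The two displayed natural isomorphisms
\[
\WstarTwoCatst(\bbC\to\cB)\cong\cB,\qquad
\WstarTwoCatst(\cA_1\Wboxtimes\cA_2\to\cB)\cong\WstarTwoCatst\bigl(\cA_1\to\WstarTwoCatst(\cA_2\to\cB)\bigr)
\]
exhibit $-\Wboxtimes\cA_2$ as left adjoint to $[\cA_2,-]$ with the unit acting trivially; uniqueness of representing objects then forces $\Wboxtimes$ to agree, up to natural isomorphism, with the tensor of the monoidal closed structure that \cite[\S II]{Eilenberg_Kelly_1966} attaches to this closed category, so that $\Wboxtimes$ inherits a coherent associator and unitors.

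First I would verify the ingredients needed to run Eilenberg--Kelly: that $[-,-]=\WstarTwoCatst(-\to-)$ is a functor $\WstarTwoCatst^{\op}\times\WstarTwoCatst\to\WstarTwoCatst$ (this is where one checks that composition and whiskering of normal $\dag$-2-functors, $\dag$-2-natural transformations, and uniformly bounded modifications behave correctly, so that the hom really lands in $\WstarTwoCatst$ and is appropriately functorial), that the adjunction isomorphisms above are natural in all three of $\cA_1,\cA_2,\cB$, and that from these one extracts the identity-picking maps $\bbC\to[\cA,\cA]$ and the composition maps $[\cB,\cD]\to[[\cA,\cB],[\cA,\cD]]$ satisfying the five closed-category axioms. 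Naturality in $\cA_1$ and $\cB$ is built into the statement of the hom-Tensor Adjunction; naturality in the inner variable $\cA_2$ is the one point that takes real work, since it requires that precomposing a separately normal cubical $\dag$-2-functor out of $\cA_1\maxblacktimes\cA_2$ with a normal $\dag$-2-functor $\cA_2'\to\cA_2$ is compatible, through the universal properties of the C*- and W*-Gray tensor products, with the induced map $[\cA_2,\cB]\to[\cA_2',\cB]$; I would establish this by chasing the defining universal diagrams and appealing to the uniqueness clauses in those universal properties. Granting this, \cite[\S II.3,4]{Eilenberg_Kelly_1966} supplies the associativity constraint $(\cA_1\Wboxtimes\cA_2)\Wboxtimes\cA_3\cong\cA_1\Wboxtimes(\cA_2\Wboxtimes\cA_3)$ and the left and right unit constraints with the pentagon and triangle identities automatic, and normality of every structure map is free because each is induced by a universal property formulated in $\WstarTwoCatst$.

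It remains to produce the symmetry, which I would do directly rather than through Eilenberg--Kelly. By Proposition~\ref{prop:dataofcubicals}, the data of a separately normal cubical $\dag$-2-functor out of $\cA_1\maxblacktimes\cA_2$ is symmetric under interchanging the two slots: the pair of strict normal $\dag$-2-functors swap roles, and the interchanger $\Sigma_{X_1,X_2}$ is replaced by the still-unitary $\Sigma_{X_2,X_1}^{-1}$, under which the axioms $(\Sigma1)$, $(\Sigma2)$, $(\Sigma3)$ are permuted among themselves. Applying this swap to the data of the canonical cubical functor into $\cA_2\Wboxtimes\cA_1$ yields a separately normal cubical $\dag$-2-functor $\cA_1\maxblacktimes\cA_2\to\cA_2\Wboxtimes\cA_1$, hence a normal $\dag$-2-functor $\beta_{\cA_1,\cA_2}\colon\cA_1\Wboxtimes\cA_2\to\cA_2\Wboxtimes\cA_1$; performing the construction twice and invoking uniqueness gives $\beta_{\cA_2,\cA_1}\circ\beta_{\cA_1,\cA_2}=\id$, and the same uniqueness principle yields naturality of $\beta$ and the hexagon identity relating it to the associator above. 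Together these make $\WstarGray=(\WstarTwoCatst,\Wboxtimes,\bbC)$ a symmetric monoidal category, closed by the hom-Tensor Adjunction; the whole argument mirrors the C*-case behind Proposition~\ref{prop:CstarCartesianClosed}. The main obstacle is the bookkeeping making $[-,-]$ a genuine functor on all cells together with the naturality of the adjunction in the inner variable, and once that is in place everything downstream is delegated to Eilenberg--Kelly.
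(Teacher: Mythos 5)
Your proposal is correct and takes essentially the same route as the paper, whose entire proof of this corollary is to invoke the Unitary hom-Tensor Adjunction and cite \cite[\S II.3,4]{Eilenberg_Kelly_1966} exactly as you do. The extra material you supply --- functoriality of the internal hom, naturality in the inner variable, and the symmetry obtained by swapping the two slots of the cubical data from Proposition~\ref{prop:dataofcubicals} --- is a faithful unpacking of what that citation is being asked to deliver, not a different argument.
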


\subsection{Cofibrant replacement for operator 2-categories}

In the following section, we provide cofibrant replacement results for operator 2-categories, (normal) $\dag$-functors, as well as a tensorator for this replacement. This section follows the treatment in \cite[\S 2.2]{gurski_2013} closely, so we have provided the details in Appendix \S \ref{subsec:CofibrantReplacement}.

\begin{proposition}
For every C*-2-category $\cA$, there exists a strict C*-2-category $\widehat{\cA}$ together with an epic 2-equivalence $\ev_{\cA}\colon \widehat{\cA} \to \cA$. Moreover, when $\cA$ is a W*-2-category we have that $\widehat{\cA}$ is a W*-2-category and hence $\ev_{\cA}$ is automatically normal.
\end{proposition}

\begin{proposition}\label{prop:cofibfunctors}
For each $\dag$-2-functor $F\colon \cA \to \cB$ between C*-2-categories, there exists a strict $\dag$-2-functor $\widehat{F}\colon \widehat{\cA} \to \widehat{\cB}$ and a unitary icon\footnote{By an icon, we mean a strictified version of a 2-natural transformation between 2-functors that agree on objects \cite{Lack_2008}.} $u^F$ as follows: 
\[\begin{tikzcd}[row sep = 30pt, column sep = 30pt]
	{\widehat{\cA}} & {\widehat{\cB}} \\
	{\cA_1} & {\cA_2}
	\arrow["{\widehat{F}}", from=1-1, to=1-2]
	\arrow["{\ev_\cA}"', from=1-1, to=2-1]
	\arrow["{\ev_{\cB}}", from=1-2, to=2-2]
	\arrow["F"', from=2-1, to=2-2]
	\arrow["{u^F}"{description}, shorten <=4pt, shorten >=4pt, Rightarrow, from=1-2, to=2-1]
\end{tikzcd}\]
Moreover, when $F$ is a normal $\dag$-2-functor between W*-2-categories, then $\widehat{F}$ is also normal.
\end{proposition}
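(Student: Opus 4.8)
The plan is to adapt Gurski's cofibrant-replacement-of-functors construction, checking that the dagger and C*/W*-structures survive. Recall that $\widehat{\cA}$ has the same objects as $\cA$, that a $1$-morphism $A \to B$ in $\widehat{\cA}$ is a finite string $(X_n,\dots,X_1)$ of $\ccirc$-composable $1$-morphisms of $\cA$ with $1$-composition given \emph{strictly} by concatenation, and that a $2$-morphism between strings $s,t$ is by definition a $2$-morphism $\ev_\cA(s) \Rightarrow \ev_\cA(t)$ in $\cA$, where $\ev_\cA$ evaluates a string to its iterated $\ccirc$-composite with a fixed bracketing; vertical and horizontal composition of $2$-morphisms in $\widehat{\cA}$ are transported from $\cA$ through the canonical unitaries built out of the associators and unitors of $\cA$, and $\ev_\cA\colon \widehat{\cA} \to \cA$ is the identity on objects and on $2$-morphisms. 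The first step is to put $\widehat{F}(A) \coloneqq F(A)$ and $\widehat{F}(X_n,\dots,X_1) \coloneqq (F(X_n),\dots,F(X_1))$; since this sends concatenation to concatenation and the empty string to the empty string, $\widehat{F}$ strictly preserves $1$-composition and $1$-morphism identities, and its compositor and unitor will be trivial.

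Next I would record, for every string $s$ in $\widehat{\cA}$, the canonical unitary
$$\mu_s \colon \ev_\cB(\widehat{F}s) \;\Rightarrow\; F(\ev_\cA s)$$
assembled from iterated instances of the compositor $F^2$ together with the associators and unitors of $\cA$ and $\cB$ (with $\mu$ equal to $F^0$ on an empty string); this is unitary since $F^2$ and $F^0$ are unitary ($F$ being a $\dag$-$2$-functor) and the structure constraints of a C*-$2$-category are unitary. For a $2$-morphism $\alpha\colon s \Rightarrow t$ in $\widehat{\cA}$ — i.e.\ $\alpha\colon \ev_\cA s \Rightarrow \ev_\cA t$ in $\cA$ — I would then define
$$\widehat{F}(\alpha) \coloneqq \mu_t^{-1} \circ F(\alpha) \circ \mu_s,$$
which has source $\ev_\cB(\widehat{F}s)$ and target $\ev_\cB(\widehat{F}t)$, hence is a $2$-morphism $\widehat{F}s \Rightarrow \widehat{F}t$ in $\widehat{\cB}$. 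Preservation of vertical composition and of identity $2$-morphisms is an immediate telescoping computation (insert $\mu_t \circ \mu_t^{-1}$), and $\widehat{F}(\alpha^\dag) = \widehat{F}(\alpha)^\dag$ follows from $\mu_s^\dag = \mu_s^{-1}$ together with $F(\alpha^\dag) = F(\alpha)^\dag$. In the W* case, $\widehat{\cA}$ and $\widehat{\cB}$ are W*-$2$-categories by the preceding proposition, and $\widehat{F}$ is normal on each hom-space because $F$ is normal and vertical composition with the fixed $2$-morphisms $\mu_s,\mu_t^{-1}$ is weak*-continuous in a W*-category.

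The main obstacle is verifying that $\widehat{F}$ preserves \emph{horizontal} composition of $2$-morphisms: this unwinds to a coherence identity comparing the reassociating unitaries that define $\ccirc$ on $\widehat{\cA}$ and $\widehat{\cB}$ with the compositor $F^2$, and it is exactly the computation carried out in \cite[\S 2.2]{gurski_2013}, following from naturality of $F^2$ and the hexagon axiom for $F$; the dagger and norm structures play no role. Granting this, $\widehat{F}$ is a strict $\dag$-$2$-functor. Finally I would take the unitary icon $u^F\colon \ev_\cB \circ \widehat{F} \Rightarrow F \circ \ev_\cA$: the two composites agree on objects (both send $A$ to $F(A)$, as $\ev_\cA$ and $\ev_\cB$ are identities on objects), and the component at a string $s$ is $u^F_s \coloneqq \mu_s$. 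Its naturality with respect to $2$-morphisms is immediate from the definition of $\widehat{F}$ on $2$-morphisms, the unit axiom holds because $\mu$ on an empty string equals $F^0$, which matches the unitor of $F \circ \ev_\cA$, and the composition axiom is again the coherence identity for $\mu$ under concatenation (the same Gurski computation); unitarity of $u^F$ is the unitarity of each $\mu_s$ already noted. For normal $F$ between W*-$2$-categories nothing further is needed, since $\widehat{F}$ is built from $F$ and the automatically normal structure constraints, completing the proof.
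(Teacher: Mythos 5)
Your proposal is correct and follows essentially the same route as the paper's proof: define $\widehat{F}$ componentwise on strings (hence strictly $\ccirc$- and unit-preserving), conjugate $F(\alpha)$ by the canonical coherence unitaries relating $\ev_\cB(\widehat{F}s)$ to $F(\ev_\cA s)$ to define the action on $2$-morphisms, and take those same unitaries as the components of $u^F$, with the icon axioms and horizontal-compatibility delegated to coherence for $2$-functors and the unitarity/normality checks exactly as you give them. The paper phrases the $2$-morphism assignment as ``the unique $2$-morphism making the constraint square commute'' rather than your explicit formula $\mu_t^{-1}\circ F(\alpha)\circ\mu_s$, but these are the same construction.
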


\begin{proposition}\label{prop:cofibtensorator}
For C*-2-categories $\cA_1$ and $\cA_2$, there exists a cubical $\dag$-2-functor
$$C\colon \widehat{\cA}_1 \maxblacktimes \widehat{\cA}_2 \to \widehat{\cA_1 \maxblacktimes \cA_2}$$
which is the identity on objects, and a unitary icon $u$ as follows:
\[\begin{tikzcd}[row sep = 30pt, column sep = 10pt]
	& {\widehat{\cA_1 \maxblacktimes \cA_2}} \\
	{\widehat{\cA}_1 \maxblacktimes \widehat{\cA}_2} && {\cA_1 \maxblacktimes \cA_2}
	\arrow["C", from=2-1, to=1-2]
	\arrow["\ev", from=1-2, to=2-3]
	\arrow[""{name=0, anchor=center, inner sep=0}, "{\ev \maxblacktimes \ev}"', from=2-1, to=2-3]
	\arrow["u^{\ev}"{description}, shorten >=3pt, Rightarrow, from=1-2, to=0]
\end{tikzcd}\]
Moreover, when $\cA_1$ and $\cA_2$ are W*-2-categories, we may upgrade $C$ to a separately normal cubical $\dag$-2-functor
$$\overline{C}\colon \widehat{\cA}_1 \Wblacktimes \widehat{\cA}_2 \to \widehat{\cA_1 \Wblacktimes \cA_2}$$
and $u$ to a unitary icon $\overline{u}$ as follows:
\[\begin{tikzcd}[row sep = 30pt, column sep = 10pt]
	& {\widehat{\cA_1 \Wblacktimes \cA_2}} \\
	{\widehat{\cA}_1 \Wblacktimes \widehat{\cA}_2} && {\cA_1 \Wblacktimes \cA_2}
	\arrow["\overline{C}", from=2-1, to=1-2]
	\arrow["\ev", from=1-2, to=2-3]
	\arrow[""{name=0, anchor=center, inner sep=0}, "{\ev \Wblacktimes \ev}"', from=2-1, to=2-3]
	\arrow["\overline{u}^{\ev}"{description}, shorten >=3pt, Rightarrow, from=1-2, to=0]
\end{tikzcd}\]
\end{proposition}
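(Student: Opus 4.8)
The plan is to adapt the construction of the cubical comparison morphism for cofibrant replacements from \cite[\S 2.2]{gurski_2013}, using Proposition \ref{prop:dataofcubicals} to package the data of $C$ (resp.\ $\overline{C}$) as two families of strict (resp.\ normal strict) $\dag$-2-functors together with an interchanger. For each object $A_1 \in \cA_1$ there is a $\dag$-2-functor $\iota_{A_1}\colon \cA_2 \to \cA_1 \maxblacktimes \cA_2$ sending $B_2 \mapsto (A_1,B_2)$, $Y \mapsto (\id_{A_1},Y)$ and $y \mapsto \id_{\id_{A_1}} \otimes y$, whose coherence data is built from simple tensors of unitors of $\cA_1$; symmetrically one has $\iota_{A_2}\colon \cA_1 \to \cA_1 \maxblacktimes \cA_2$ for each $A_2 \in \cA_2$. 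I would then set $C_{A_1} \coloneqq \widehat{\iota_{A_1}}$ and $C_{A_2} \coloneqq \widehat{\iota_{A_2}}$, which are strict $\dag$-2-functors into $\widehat{\cA_1 \maxblacktimes \cA_2}$ by Proposition \ref{prop:cofibfunctors}, each carrying a unitary icon $u^{\iota_{A_1}}$, $u^{\iota_{A_2}}$ over the counit $\ev$. Since cofibrant replacement of a $\dag$-2-functor leaves its action on objects unchanged, $C_{A_1}(A_2) = (A_1,A_2) = C_{A_2}(A_1)$, so $C$ is forced to be the identity on objects.

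The next step is the interchanger. Since $\ev \coloneqq \ev_{\cA_1 \maxblacktimes \cA_2}$ is an epic 2-equivalence, it is locally a fully faithful $\dag$-functor, so a 2-morphism of $\widehat{\cA_1 \maxblacktimes \cA_2}$ between fixed parallel 1-morphisms is determined uniquely by its image under $\ev$. In $\cA_1 \maxblacktimes \cA_2$ there is a canonical unitary ``interchange'': applying $\ev$ to $C_{A_2}(X_1) \ccirc C_{A_1'}(X_2)$ and to $C_{A_1}(X_2) \ccirc C_{A_2'}(X_1)$ and inserting the icon components $u^{\iota}$, both become unitarily isomorphic, via simple tensors of unitors of $\cA_1$ and $\cA_2$, to the pure tensor $1$-morphism $(\ev_{\cA_1}X_1, \ev_{\cA_2}X_2)$; chaining these isomorphisms gives a canonical unitary $\widetilde{\Sigma}_{X_1,X_2}$, and I would define $\Sigma_{X_1,X_2}$ to be its lift along $\ev$. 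When $X_1$ or $X_2$ is an identity, strictness of $\widehat{\iota_{A_1}}$ and $\widehat{\iota_{A_2}}$ makes the two composites coincide on the nose, and --- arranging the chained unitors suitably --- $\widetilde{\Sigma}$ reduces to an identity, so that $\Sigma_{X_1,X_2}$ is an identity as the proposition requires.

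With this data I would verify the axioms $(\Sigma 1)$, $(\Sigma 2)$, $(\Sigma 3)$ by applying the locally faithful $\ev$ and reducing them to identities among simple tensors of the coherence data of $\cA_1$ and $\cA_2$, naturality of unitors, and the icon coherence of the $u^{\iota}$ --- that is, to instances of the triangle and pentagon relations. By Proposition \ref{prop:dataofcubicals} the triple $(C_{A_1}, C_{A_2}, \Sigma)$ then assembles into a cubical $\dag$-2-functor $C\colon \widehat{\cA}_1 \maxblacktimes \widehat{\cA}_2 \to \widehat{\cA_1 \maxblacktimes \cA_2}$, identity on objects. For the unitary icon $u$ of the statement, its component on a $1$-morphism $(Z_1,Z_2)$ of $\widehat{\cA}_1 \maxblacktimes \widehat{\cA}_2$ is obtained by composing $u^{\iota_{A_2}}_{Z_1} \ccirc u^{\iota_{A_1'}}_{Z_2}$ with the simple tensor of unitors identifying the result with $(\ev_{\cA_1} Z_1, \ev_{\cA_2} Z_2) = (\ev \maxblacktimes \ev)(Z_1,Z_2)$; the icon coherence axioms for $u$ follow from those for $u^{\iota_{A_1}}, u^{\iota_{A_2}}$ together with $(\Sigma 1)$--$(\Sigma 3)$.

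For the W* statement I would run the identical construction with $\Wblacktimes$ in place of $\maxblacktimes$: here $\cA_1 \Wblacktimes \cA_2$ is a W*-2-category, so $\widehat{\cA_1 \Wblacktimes \cA_2}$ is one too, and each $\iota_{A_i}$ acts on hom-W*-categories by $b \mapsto \id_{\id_{A_i}} \otimes b$, which is normal, so $\iota_{A_i}$ is a normal $\dag$-2-functor and $\widehat{\iota_{A_i}}$ is normal by Proposition \ref{prop:cofibfunctors}. By the W* form of Proposition \ref{prop:dataofcubicals} --- according to which normal strict $\dag$-2-functors $C_{A_i}$ together with an interchanger satisfying $(\Sigma 1)$--$(\Sigma 3)$ are precisely the data of a separately normal cubical functor --- this assembles into $\overline{C}$, whose separate normality is then automatic, and the normal icons $u^{\iota_{A_i}}$ give $\overline{u}$ as before. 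The main obstacle throughout is the coherence bookkeeping for the interchangers: checking $(\Sigma 1)$--$(\Sigma 3)$ and confirming that the chained unitors defining $\widetilde{\Sigma}$ collapse to an identity whenever an input $1$-morphism is an identity, so that $\Sigma$ is genuinely trivial on identities after strictification --- the delicate point handled in \cite[\S 2.2]{gurski_2013} --- while in the W* case one has in addition the routine-but-careful task of matching the biduals and preduals entering $\Wblacktimes$ and $\widehat{(-)}$ to this cubical structure.
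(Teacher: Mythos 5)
Your proposal is correct and follows essentially the same route as the paper: both package $C$ via Proposition \ref{prop:dataofcubicals} as a pair of strict $\dag$-2-functors $C_{A_1}, C_{A_2}$ together with an interchanger $\Sigma$ given by the unique coherence constraint unitary (which is automatically an identity on identity 1-morphisms and satisfies $(\Sigma 1)$--$(\Sigma 3)$ by naturality and uniqueness of constraints), with the icon $u^{\ev}$ built from simple tensors of constraint unitaries and the W* case handled by separate normality of $\otimes$. The only cosmetic difference is that you obtain $C_{A_i}$ as $\widehat{\iota_{A_i}}$ via Proposition \ref{prop:cofibfunctors}, whereas the paper writes the same functors down directly (sending a string $(X_0,\dots,X_n)$ to $((\id_{A_1},X_0),\dots,(\id_{A_1},X_n))$ and a 2-morphism $a$ to $\gamma_{n+1,m+1}\otimes a$).
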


\section{3-categorical results} \label{sec:3cats}

\subsection{Operator 3-categories}

\begin{definition}
A C*-3-category $\cA$ consists of an algebraic tricategory in the sense of \cite{gurski_2013}, equipped with a C*-2-category structure on each hom 2-category $\cA(A \to B)$, such that the underlying coherence 2-functors, 2-natural transformations, and modifications are $\dag$-2-functors, $\dag$-2-natural transformations, and unitary modifications respectively.
% We say a C*-3-category is a W*-3-category when each hom 2-category is a W*-2-category and all coherence $\dag$-2-functors are normal. In particular, it suffices that 1-composition $\cccirc$ is separately normal for the latter condition to hold.
\end{definition}
\begin{remark}
We unpack the data of a C*-3-category as follows:
\begin{enumerate}
\item[(0)] 
A collection $\Ob T$ of \emph{objects}, or \emph{0-cells}, of $\cA$;
\item[(hom)] 
For $A,B \in \Ob T$, a C*-2-category $\cA(A \to B)$ where:  
\begin{itemize}
\item
The objects of $\cA(A \to B)$ are called \emph{1-cells} of $\cA$ with source $A$ and target $B$,
\item
The arrows of $\cA(A \to B)$ will be referred to as \emph{2-cells} of $\cA$ with their same source and target, and
\item 
The 2-morphisms of $\cA(A \to B)$ are called \emph{3-cells} of $\cA$, also with their same source and target.
    \end{itemize}
As \cite{gurski_2013}, we will denote $\cA(A_0 \to A_1) \maxblacktimes \cdots \maxblacktimes \cA(A_{n-1} \to A_n)$ by the abbreviation
$$\cA^n(A_0 \to \cdots \to A_n).$$

\item[($\cccirc$)] 
For $A,B,C \in \Ob \cA$, a $\dag$-2-functor $\cccirc\colon \cA^2(A \to B \to C) \to \cA(A \to C)$ called \emph{1-composition}. In the graphical calculus for the monoidal 3-category $(\CstarTwoCat, \maxtimes)$, we represent $\cccirc$ by:
$$
\begin{tikzpicture}
    \draw[thick] (0,0) -- (0,.5);
    \draw[thick] (-.5,-.5) arc (180:0:.5);
    \node at (-.75,-.75) {\scriptsize $\cA(A \to B)$};
    \node at (.75,-.75) {\scriptsize $\cA(B \to C)$};
    \node at (0,.75) {\scriptsize $\cA(A \to C)$};
\end{tikzpicture}
$$
\item[(I)] 
For $A \in \Ob \cA$, a $\dag$-2-functor $I_A\colon \bbC \to \cA(A \to A)$ which we represent diagrammatically by:
$$
\begin{tikzpicture}
    \draw[thick] (0,0) -- (0,.5);
    \draw[thick] (0,0) circle (.02);
    \node at (0,.75) {\scriptsize $\cA(A \to A)$};
\end{tikzpicture}
$$
\item[(a)]
For $A,B,C,D \in \Ob \cA$, an \emph{``associator''} unitary adjoint equivalence $\mathbf{a} = (a,a^{\sqdot},\epsilon^a,\eta^a)$ where $a$ is a $\dag$-2-natural transformation 
$$
\begin{tikzpicture}[baseline={([yshift=0]current bounding box.center)}]
    \draw[thick] (-.5,-.5) arc (180:0:.5);
    \draw[thick] (-1,-.5) -- (-1,0) arc (180:0:.5);
    \draw[thick] (-.5,0.5) -- (-.5,1);
    \node at (-1.6,-.75) {\scriptsize $\cA(A \to B)$};
    \node at (-.25,-.75) {\scriptsize $\cA(B \to C)$};
    \node at (1.15,-.75) {\scriptsize $\cA(C \to D)$};
    \node at (-.5,1.25) {\scriptsize $\cA(A \to D)$};
\end{tikzpicture}
\;\xRightarrow{a}\;
\begin{tikzpicture}[baseline={([yshift=0]current bounding box.center)}]
    \draw[thick] (-.5,-.5) arc (180:0:.5);
    \draw[thick] (1,-.5) -- (1,0) arc (0:180:.5);
    \draw[thick] (.5,0.5) -- (.5,1);
    \node at (-1.6+.5,-.75) {\scriptsize $\cA(A \to B)$};
    \node at (-.25+.5,-.75) {\scriptsize $\cA(B \to C)$};
    \node at (1.15+.5,-.75) {\scriptsize $\cA(C \to D)$};
    \node at (-.5+1,1.25) {\scriptsize $\cA(A \to D)$};
\end{tikzpicture}
$$
in $\CstarTwoCat( \cA^3(A \to B \to C \to D) \to \cA(A \to D))$;

\item[(u)] 
For $A,B \in \Ob \cA$, \emph{left and right ``unitor''} unitary adjoint equivalences $\mathbf{l}$ and $\mathbf{r}$ where $l$ and $r$ are $\dag$-2-natural transformations 
$$
\begin{tikzpicture}[baseline={([yshift=0]current bounding box.center)}]
    \draw[thick] (-1,0) circle (0.02);
    \draw[thick] (-1,0) arc (180:0:.5);
    \draw[thick] (0,-.5) -- (0,0);
    \draw[thick] (-.5,0.5) -- (-.5,1);
    \node at (0,-.75) {\scriptsize $\cA(A \to B)$};
    \node at (-.5,1.25) {\scriptsize $\cA(A \to B)$};
\end{tikzpicture}
\;\xRightarrow{l}\;
\begin{tikzpicture}[baseline={([yshift=0]current bounding box.center)}]
    \draw[thick] (-.5,-0.5) -- (-.5,1);
    \node at (-.5,-.75) {\scriptsize $\cA(A \to B)$};
    \node at (-.5,1.25) {\scriptsize $\cA(A \to B)$};
\end{tikzpicture}
\qquad\text{and}\qquad
\begin{tikzpicture}[baseline={([yshift=0]current bounding box.center)}]
    \draw[thick] (0,0) circle (0.02);
    \draw[thick] (-1,-.5) -- (-1,0) arc (180:0:.5);
    \draw[thick] (-.5,0.5) -- (-.5,1);
    \node at (-1,-.75) {\scriptsize $\cA(A \to B)$};
    \node at (-.5,1.25) {\scriptsize $\cA(A \to B)$};
\end{tikzpicture}
\;\xRightarrow{r}\;
\begin{tikzpicture}[baseline={([yshift=0]current bounding box.center)}]
    \draw[thick] (-.5,-0.5) -- (-.5,1);
    \node at (-.5,-.75) {\scriptsize $\cA(A \to B)$};
    \node at (-.5,1.25) {\scriptsize $\cA(A \to B)$};
\end{tikzpicture}
$$
in $\CstarTwoCat(\cA(A \to B) \to \cA(A \to B))$;

\item[($\pi$)] For $A,B,C,D,E \in \Ob \cA$, a \emph{``pentagonator''} unitary modification

$$
\begin{tikzcd}[column sep = -17]
&& \arrow[lld,Rightarrow,blue,"a"'] 
\;
\begin{tikzpicture}[baseline={([yshift=0]current bounding box.center)},scale=.5]
\draw[thick] (0,0) arc (180:0:.5);
\draw[thick] (-.5,0) -- (-.5,.5) arc (180:0:.5);
\draw[thick] (-1,0) -- (-1,1) arc (180:0:.5);
\draw[thick] (-.5,1.5) -- (-.5,2);
\draw[dashed,red,rounded corners = 5] (-1.25,.6) rectangle (.75,2); 
\draw[dashed,blue,rounded corners = 5] (-.75,0) rectangle (1.25,1.25); 
\end{tikzpicture}
\;
\arrow[rrd,Rightarrow,red,"a"]&&\\
\begin{tikzpicture}[baseline={([yshift=0]current bounding box.center)},scale=.5]
\draw[thick] (0,0) arc (180:0:.5);
\draw[thick] (.5,.5) arc (180:0:.5) -- (1.5,0);
\draw[thick] (-.5,0) -- (-.5,1) arc (180:0:.75);
\draw[thick] (.25,1.75) -- (.25,2);
\draw[dashed,blue,rounded corners = 5] (-.75,.6) rectangle (1.75,2); 
\end{tikzpicture}
\arrow[rd,Rightarrow,blue,"a"'] &&\scalebox{1.25}{$\pi \atop \ttto$}&&
\begin{tikzpicture}[baseline={([yshift=0]current bounding box.center)},scale=.5]
\draw[thick] (0,0) -- (0,.5) arc (180:0:.5) -- (1,0);
\draw[thick] (-1.5,0)arc (180:0:.5);
\draw[thick] (-1,.5) -- (-1,1) arc (180:0:.75);
\draw[thick] (-.25,1.75) -- (-.25,2);
\draw[dashed,red,rounded corners = 5] (-1.25,.6) rectangle (1.25,2); 
\end{tikzpicture}
\arrow[ld,Rightarrow,red,"a"]\\
&
\;
\begin{tikzpicture}[baseline={([yshift=0]current bounding box.center)},scale=.5]
\draw[thick] (0,0) arc (180:0:.5);
\draw[thick] (-0.5,0) -- (-.5,.5) arc (180:0:.5);
\draw[thick] (0,1) arc (180:0:.75) -- (1.5,0);
\draw[thick] (.75,1.75) -- (.75,2);
\draw[dashed,blue,rounded corners = 5] (-.75,0) rectangle (1.25,1.25);
\end{tikzpicture}
\;
\arrow[rr,Rightarrow,blue,"a"']&&
\;
\begin{tikzpicture}[baseline={([yshift=0]current bounding box.center)},scale=.5]
\draw[thick] (0,0) arc (180:0:.5);
\draw[thick] (0.5,.5) arc (180:0:.5) -- (1.5,0);
\draw[thick] (1,1) arc (180:0:.5) -- (2,0);
\draw[thick] (1.5,1.5) -- (1.5,2);
\end{tikzpicture}
\;
&
\end{tikzcd}
$$
in $\CstarTwoCat(\cA^4(A \to B \to C \to D \to E) \to \cA(A \to E)).$

\item[(coh)] For $A,B,C \in \Ob \cA$, \emph{middle, left, and right unity coheretor} unitary modification

$$
\begin{tikzcd}[column sep = 5,row sep = 10]
\begin{tikzpicture}[baseline={([yshift=0]current bounding box.center)},scale=.75]
\draw[thick] (0,0) -- (0,.5);
\draw[thick] (-.5,-.5) arc (180:0:.5);
\draw[dashed,blue,rounded corners = 2] (-.75,-.5) rectangle (-.25,-.25); 
\end{tikzpicture}
\arrow[rrr,Rightarrow,blue,"r^{\sqdot}"] \arrow[ddd,Leftarrow,blue,"l"'] & && 
\begin{tikzpicture}[baseline={([yshift=0]current bounding box.center)},scale=.5]
\draw[thick] (0,0) -- (0,.5) arc (180:0:.5) -- (1,.5);
\draw[thick] (1,.5) circle (0.02);
\draw[thick] (.5,1) arc (180:0:.5) -- (1.5,0);
\draw[thick] (1,1.5) -- (1,2);
\draw[dashed,blue,rounded corners = 5] (-.25,.7) rectangle (1.75,2); 
\end{tikzpicture}
\\
& \rotatebox{135}{$\ttto$}\mu & \qquad & \\
&&&\\
\begin{tikzpicture}[baseline={([yshift=0]current bounding box.center)},scale=.5]
\draw[thick] (0,0.5) arc (180:0:.5) -- (1,0);
\draw[thick] (0,.5) circle (0.02);
\draw[thick] (.5,1) arc (0:180:.5) -- (-.5,0);
\draw[thick] (0,1.5) -- (0,2);
\draw[dashed,blue,rounded corners = 5] (-.25,0) rectangle (1.25,1.25); 
\end{tikzpicture}
\arrow[rrruuu,Leftarrow,blue,"a"']&& &
\end{tikzcd}
\quad
\begin{tikzcd}[column sep = 5,row sep = 10]
\begin{tikzpicture}[baseline={([yshift=0]current bounding box.center)},scale=.5]
\draw[thick] (0,0.5) arc (180:0:.5) -- (1,0);
\draw[thick] (0,.5) circle (0.02);
\draw[thick] (.5,1) arc (180:0:.5) -- (1.5,0);
\draw[thick] (1,1.5) -- (1,2);
\draw[dashed,blue,rounded corners = 5] (-.25,0) rectangle (1.25,1.25); 
\draw[dashed,red,rounded corners = 5] (-.25,0.7) rectangle (1.75,2); 
\end{tikzpicture} \arrow[rrr,Rightarrow,blue,"l"] \arrow[ddd,Rightarrow,red,"a"] & && 
\begin{tikzpicture}[baseline={([yshift=0]current bounding box.center)},scale=.75]
    \draw[thick] (0,0) -- (0,.5);
    \draw[thick] (-.5,-.5) arc (180:0:.5);
\end{tikzpicture}
\\
& \raisebox{7pt}{\rotatebox{270}{$\ttto$}} \,\lambda & \qquad & \\
&&&\\
\begin{tikzpicture}[baseline={([yshift=0]current bounding box.center)},scale=.5]
\draw[thick] (0,0) -- (0,.5);
\draw[thick] (-.5,-.5) arc (180:0:.5);
\draw[thick] (-1,.5) arc (180:0:.5);
\draw[thick] (-.5,1) -- (-.5,1.5);
\draw[thick] (-1,.5) circle (0.02);
\draw[dashed,red,rounded corners = 5] (-1.25,0.2) rectangle (.25,1.5); 
\end{tikzpicture}
\arrow[rrruuu,Rightarrow,red,"l"']&& &
\end{tikzcd}
\quad
\begin{tikzcd}[column sep = 5,row sep = 10]
\begin{tikzpicture}[baseline={([yshift=0]current bounding box.center)},scale=.75]
\draw[thick] (0,0) -- (0,.5);
\draw[thick] (-.5,-.5) arc (180:0:.5);
\draw[dashed,red,rounded corners = 2] (-.25,.25) rectangle (.25,.5); 
\draw[dashed,blue,rounded corners = 2] (.75,-.5) rectangle (.25,-.25); 
\end{tikzpicture}
\arrow[rrr,Rightarrow,blue,"r^{\sqdot}"] \arrow[ddd,Rightarrow,red,"r^{\sqdot}"'] & && 
\begin{tikzpicture}[baseline={([yshift=0]current bounding box.center)},scale=.5]
\draw[thick] (0,0) -- (0,.5) arc (180:0:.5) -- (1,.5);
\draw[thick] (1,.5) circle (0.02);
\draw[thick] (.5,1) arc (0:180:.5) -- (-.5,0);
\draw[thick] (0,1.5) -- (0,2);
\end{tikzpicture}
\\
& \raisebox{5pt}{\rotatebox{270}{$\ttto$}} \,\rho & \qquad & \\
&&&\\
\begin{tikzpicture}[baseline={([yshift=0]current bounding box.center)},scale=.5]
\draw[thick] (-1.5,-.5) -- (-1.5,0) arc (180:0:.5) -- (-.5,-.5);
\draw[thick] (-1,.5) arc (180:0:.5) -- (0,0);
\draw[thick] (-.5,1) -- (-.5,1.5);
\draw[thick] (0,0) circle (0.02);
\draw[dashed,red,rounded corners = 5] (-1.75,0.2) rectangle (.25,1.5); 
\end{tikzpicture}
\arrow[rrruuu,Rightarrow,red,"a^{\sqdot}"']&& &
\end{tikzcd}
$$
in $\CstarTwoCat(\cA^2(A \to B \to C) \to \cA(A \to C))$.
\end{enumerate}
Furthermore, we ask that data of objects together with underlying hom-2-categories, functors, adjoint equivalences, and invertible modifications form a 3-category (algebraic tricategory) in the sense of \cite[Definition 4.1]{gurski_2013}. 
More explicitly, these include a 3-dimensional associahedron axiom for each tuple of 5 objects in $\cA$ (non-abelian 4-cocycle condition) and two axioms for each triplet of objects in $\cA$ relating the unity coheretors with the associator and pentagonators (normalized cocycle conditions). 
\end{remark}

\begin{definition}
A W*-3-category $\cA$ is a C*-3-category such that:
\begin{itemize}
    \item Each $\cA(A \to B)$ is a W*-2-category;
    \item The 1-composition $\dag$-2-functor $\cccirc$ is separately normal, hence extending to a normal $\dag$-2-functor
    $$\cccirc\colon \cA(A \to B) \Wblacktimes \cA(B \to C) \to \cA(A \to C).$$
\end{itemize}
Note that the unit $\dag$-2-functor $I_A\colon \cA(A \to A)$ is automatically normal. Using the fact that 2-composition $\ccirc$ and 3-composition $\circ$ are separately normal on each hom-W*-2-category, we may extend the constraint unitary adjoint equivalences so that they all occur in $(\WstarTwoCat, \Wmaxtimes)$.
\end{definition}

\begin{definition}
A $\dag$-3-functor $F \colon \cA \to \cA'$ between C*-3-categories consists of an underlying 3-functor in the sense of \cite{gurski_2013}, such that $F$ is locally a $\dag$-2-functor and the underlying coherence 2-natural transformations and modifications are $\dag$-natural transformations and unitary modifications respectively.
\end{definition}

\begin{remark}
We unpack the data of a $\dag$-3-functor as follows:
\begin{itemize}
    \item[(0)] A function from objects of $\cA$ to objects of $\cA'$;
    \item[(hom)] For $A,B \in \cA$, a $\dag$-2-functor $F \colon \cA(A,B) \to \cA'(FA,FB)$, which we represent diagrammatically by:
$$
\begin{tikzpicture}[baseline={([yshift=0]current bounding box.center)}]
\draw[thick] (0,-.5) -- (0,.5);
\draw[thick,fill=white,rounded corners=3] (-.2,-.2) rectangle (0.2,0.2);
\node at (0,0) {\scriptsize $F$};
\node at (0,-.7) {\scriptsize $\cA(A,B)$};
\node at (0,.7) {\scriptsize $\cA'(FA,FB)$};
\end{tikzpicture}
$$
\item[($\cccirc$)] A unitary adjoint equivalence $\mathbf{F^2}$ where $F^2$ is a $\dag$-2-natural transformation
$$
\begin{tikzpicture}[baseline={([yshift=0]current bounding box.center)}]
\draw[thick] (0,-.5) -- (0,0) arc (180:0:.5) -- (1,-.5);
\draw[thick] (.5,.5) -- (.5,1);
\draw[thick,fill=white,rounded corners=3] (-.2,-.2) rectangle (0.2,0.2);
\node at (0,0) {\scriptsize $F$};
\draw[thick,fill=white,rounded corners=3] (.8,-.2) rectangle (1.2,0.2);
\node at (1,0) {\scriptsize $F$};
\node at (0,-.7) {\scriptsize $\cA(A,B)$};
\node at (1,-.7) {\scriptsize $\cA(B,C)$};
\node at (0.5,1.2) {\scriptsize $\cA'(FA,FC)$};
\end{tikzpicture}
\;\xRightarrow{F^2}\;
\begin{tikzpicture}[baseline={([yshift=0]current bounding box.center)}]
\draw[thick] (0,-.5) -- (0,.5);
\draw[thick,fill=white,rounded corners=3] (-.2,-.2) rectangle (0.2,0.2);
\node at (0,0) {\scriptsize $F$};
\node at (-.5,-1.2) {\scriptsize $\cA(A,B)$};
\node at (.5,-1.2) {\scriptsize $\cA(B,C)$};
\node at (0,.7) {\scriptsize $\cA'(FA,FC)$};
\draw[thick] (-.5,-1) arc (180:0:.5);
\end{tikzpicture}
$$
in $\CstarTwoCat(\cA(A \to B \to C) \to \cA'(FA \to FC))$.
\item[(I)]
A unitary adjoint equivalence $\mathbf{F^0}$ where $F^0$ is a $\dag$-2-natural transformation
$$
\begin{tikzpicture}[baseline={([yshift=0]current bounding box.center)}]
\draw[thick] (0,0.5) -- (0,1);
\draw[thick] (0,0.5) circle (.02);
\draw[thick,white] (0,0) circle (.02);
\node at (0,1.25) {\scriptsize $\cA'(FA \to FA)$};
\end{tikzpicture}
\;\xRightarrow{F^0}\;
\begin{tikzpicture}[baseline={([yshift=0]current bounding box.center)}]
\draw[thick] (0,0) -- (0,1);
\draw[thick] (0,0) circle (.02);
\node at (0,1.25) {\scriptsize $\cA'(FA \to FA)$};
\draw[thick,fill=white,rounded corners=3] (-.2,.3) rectangle (0.2,0.7);
\node at (0,0.5) {\scriptsize $F$};
\end{tikzpicture}
$$
in $\CstarTwoCat(\bbC \to \cA'(FA \to FA))$.
\item[(a)] A unitary modification $F^a$
$$
\begin{tikzcd}[row sep = -2.5]
&  \arrow[ld,Rightarrow,blue,"F^2"'] 
\begin{tikzpicture}[baseline={([yshift=0]current bounding box.center)},scale=.7]
\draw[thick] (-.5,-.5) arc (180:0:.5);
\draw[thick] (-1,-.5) -- (-1,0) arc (180:0:.5);
\draw[thick] (-.5,0.5) -- (-.5,1);
% \node at (-1.6,-.75) {\scriptsize $\cA(A \to B)$};
% \node at (-.25,-.75) {\scriptsize $\cA(B \to C)$};
% \node at (1.15,-.75) {\scriptsize $\cA(C \to D)$};
% \node at (-.5,1.25) {\scriptsize $\cA(A \to D)$};
\draw[thick] (-1,-1) -- (-1,-.5);
\draw[thick] (-.5,-1) -- (-.5,-.5);
\draw[thick] (.5,-1) -- (.5,-.5);
\draw[thick,fill=white,rounded corners=3] (-.2+.5,-.2-.5) rectangle (0.2+.5,0.2-.5);
\node at (0+.5,0-.1-.5) {\scriptsize $F$};
\draw[thick,fill=white,rounded corners=3] (-.2-.5,-.2-.5) rectangle (0.2-.5,0.2-.5);
\node at (0-.5,0-.1-.5) {\scriptsize $F$};
\draw[thick,fill=white,rounded corners=3] (-.2-1,-.2-.5) rectangle (0.2-1,0.2-.5);
\node at (0-1,0-.1-.5) {\scriptsize $F$};
\draw[thick,dashed,blue,rounded corners=5] (-.75,-1) rectangle (.75,0.25);
\draw[thick,dashed,red,rounded corners=5] (-1.25,-.25) rectangle (.75,1);
\end{tikzpicture}
\arrow[rd,Rightarrow,red,"a'"] & \\
\begin{tikzpicture}[baseline={([yshift=0]current bounding box.center)},scale=.7]
\draw[thick] (-.5,-1) arc (180:0:.5);
\draw[thick] (-1,-1) -- (-1,0) arc (180:0:.5);
\draw[thick] (-.5,0.5) -- (-.5,1);
% \node at (-1.6,-.75) {\scriptsize $\cA(A \to B)$};
% \node at (-.25,-.75) {\scriptsize $\cA(B \to C)$};
% \node at (1.15,-.75) {\scriptsize $\cA(C \to D)$};
% \node at (-.5,1.25) {\scriptsize $\cA(A \to D)$};
\draw[thick] (0,0) -- (0,-.5);
\draw[thick,fill=white,rounded corners=3] (-.2,-.2) rectangle (0.2,0.2);
\node at (0,0-.1) {\scriptsize $F$};
\draw[thick,fill=white,rounded corners=3] (-.2-1,-.2) rectangle (0.2-1,0.2);
\node at (0-1,0-.1) {\scriptsize $F$};
\draw[thick,dashed,blue,rounded corners=5] (-1.25,-0.3) rectangle (.25,1);
\end{tikzpicture}
\arrow[dd,Rightarrow,blue,"F^2"'] && 
\begin{tikzpicture}[baseline={([yshift=0]current bounding box.center)},scale=.7]
\draw[thick] (0,0) -- (0,.5);
\draw[thick] (-1,-1) arc (180:0:.5);
\draw[thick] (-.5,-.5) arc (180:0:.5) -- (.5,-1);
% \node at (-1.6,-.75) {\scriptsize $\cA(A \to B)$};
% \node at (-.25,-.75) {\scriptsize $\cA(B \to C)$};
% \node at (1.15,-.75) {\scriptsize $\cA(C \to D)$};
% \node at (-.5,1.25) {\scriptsize $\cA(A \to D)$};
% \draw[thick] (0,0) -- (0,-.5);
\draw[thick] (-1,-1) -- (-1,-1.5);
\draw[thick] (0,-1) -- (0,-1.5);
\draw[thick] (.5,-1) -- (.5,-1.5);
\draw[thick,fill=white,rounded corners=3] (-.2+.5,-.2-1) rectangle (0.2+.5,0.2-1);
\node at (0+.5,0-.1-1) {\scriptsize $F$};
\draw[thick,fill=white,rounded corners=3] (-.2,-.2-1) rectangle (0.2,0.2-1);
\node at (0,0-.1-1) {\scriptsize $F$};
\draw[thick,fill=white,rounded corners=3] (-.2-1,-.2-1) rectangle (0.2-1,0.2-1);
\node at (0-1,0-.1-1) {\scriptsize $F$};
\draw[thick,dashed,red,rounded corners=5] (-1.25,-1.5) rectangle (.25,-0.25);
\end{tikzpicture}
\arrow[dd,Rightarrow,red,"F^2"] \\
& {F^a \atop \ttto}&  \\
\begin{tikzpicture}[baseline={([yshift=0]current bounding box.center)},scale=.7]
\draw[thick] (-.5,-1) arc (180:0:.5);
\draw[thick] (-1,-1) -- (-1,-.5) arc (180:0:.5);
\draw[thick] (-.5,0) -- (-.5,1);
% \node at (-1.6,-.75) {\scriptsize $\cA(A \to B)$};
% \node at (-.25,-.75) {\scriptsize $\cA(B \to C)$};
% \node at (1.15,-.75) {\scriptsize $\cA(C \to D)$};
% \node at (-.5,1.25) {\scriptsize $\cA(A \to D)$};
% \draw[thick] (0,0) -- (0,-.5);
\draw[thick,fill=white,rounded corners=3] (-.2-.5,-.2+.5) rectangle (0.2-.5,0.2+.5);
\node at (0-.5,0-.1+.5) {\scriptsize $F$};
\draw[thick,dashed,blue,rounded corners=5] (-1.25,-1) rectangle (.75,.2);
\end{tikzpicture}
\arrow[rd,Rightarrow,blue,"a"] && 
\begin{tikzpicture}[baseline={([yshift=0]current bounding box.center)},scale=.7]
\draw[thick] (0,0) -- (0,.5);
\draw[thick] (-1,-1.5) arc (180:0:.5);
\draw[thick] (-.5,-1) -- (-.5,-.5) arc (180:0:.5) -- (.5,-1);
% \node at (-1.6,-.75) {\scriptsize $\cA(A \to B)$};
% \node at (-.25,-.75) {\scriptsize $\cA(B \to C)$};
% \node at (1.15,-.75) {\scriptsize $\cA(C \to D)$};
% \node at (-.5,1.25) {\scriptsize $\cA(A \to D)$};
% \draw[thick] (0,0) -- (0,-.5);
\draw[thick] (.5,-1) -- (.5,-1.5);
\draw[thick,fill=white,rounded corners=3] (-.2+.5,-.2-.5) rectangle (0.2+.5,0.2-.5);
\node at (0+.5,0-.1-.5) {\scriptsize $F$};
\draw[thick,fill=white,rounded corners=3] (-.2-.5,-.2-.5) rectangle (0.2-.5,0.2-.5);
\node at (0-.5,0-.1-.5) {\scriptsize $F$};
\draw[thick,dashed,red,rounded corners=5] (-.75,-.75) rectangle (.75,.5);
\end{tikzpicture}
\arrow[ld,Rightarrow,red,"F^2"'] \\
&  
\begin{tikzpicture}[baseline={([yshift=0]current bounding box.center)},scale=.7]
\draw[thick] (0,0) -- (0,1);
\draw[thick] (-1,-1) arc (180:0:.5);
\draw[thick] (-.5,-.5) arc (180:0:.5) -- (.5,-1);
% \node at (-1.6,-.75) {\scriptsize $\cA(A \to B)$};
% \node at (-.25,-.75) {\scriptsize $\cA(B \to C)$};
% \node at (1.15,-.75) {\scriptsize $\cA(C \to D)$};
% \node at (-.5,1.25) {\scriptsize $\cA(A \to D)$};
% \draw[thick] (0,0) -- (0,-.5);
\draw[thick,fill=white,rounded corners=3] (-.2,-.2+.5) rectangle (0.2,0.2+.5);
\node at (0,0-.1+.5) {\scriptsize $F$};
\end{tikzpicture}
& 
\end{tikzcd}
$$
in $\CstarTwoCat(\cA(A \to B \to C \to D) \to \cA'(FA \to FD))$.
\item[(u)] Unitary modifications $F^l$ and $F^r$
$$
\begin{tikzcd}[column sep = 2, row sep = -2]
\begin{tikzpicture}[baseline={([yshift=0]current bounding box.center)},scale=.7]
\draw[thick] (0,0) arc (180:0:.5) -- (1,-1);
\draw[thick] (0,0) circle (0.02);
\draw[thick] (.5,.5) -- (.5,1);
\draw[thick,fill=white,rounded corners=3] (1-.2,-.2-.5) rectangle (1.2,0.2-.5);
\node at (1,0-.1-.5) {\scriptsize $F$};
\draw[thick,dashed,red,rounded corners=5] (-.25,-.2) rectangle (1.25,1);
\draw[thick,dashed,blue,rounded corners=3] (-.25,-.2) rectangle (.25,.3);
\end{tikzpicture}
\arrow[rr,Rightarrow,blue,"F^0"] \arrow[dd,Rightarrow,red,"l'"'] & & 
\begin{tikzpicture}[baseline={([yshift=0]current bounding box.center)},scale=.7]
\draw[thick] (0,-.5) -- (0,0) arc (180:0:.5) -- (1,-1);
\draw[thick] (0,-.5) circle (0.02);
\draw[thick] (.5,.5) -- (.5,1);
\draw[thick,fill=white,rounded corners=3] (1-.2,-.2) rectangle (1.2,0.2);
\node at (1,0-.1) {\scriptsize $F$};
\draw[thick,fill=white,rounded corners=3] (-.2,-.2) rectangle (.2,0.2);
\node at (0,0-.1) {\scriptsize $F$};
\draw[thick,dashed,blue,rounded corners=5] (-.25,-.25) rectangle (1.25,1);
\end{tikzpicture}
\arrow[dd,Rightarrow,blue,"F^2"] \\
& 
{F^l \atop \Lleftarrow} & \\
\begin{tikzpicture}[baseline={([yshift=0]current bounding box.center)},scale=.7]
\draw[thick] (.5,-.25) -- (.5,1.25);
\draw[thick,fill=white,rounded corners=3] (.5-.2,-.2+.5) rectangle (.7,0.2+.5);
\node at (0.5,0.5-.1) {\scriptsize $F$};
\end{tikzpicture}
& & \arrow[ll,Rightarrow,blue,"l"] 
\begin{tikzpicture}[baseline={([yshift=0]current bounding box.center)},scale=.7]
\draw[thick] (0,-.5) arc (180:0:.5) -- (1,-1);
\draw[thick] (0,-.5) circle (0.02);
\draw[thick] (.5,0) -- (.5,1);
\draw[thick,fill=white,rounded corners=3] (.5-.2,-.2+.5) rectangle (.7,0.2+.5);
\node at (0.5,0.5-.1) {\scriptsize $F$};
\draw[thick,dashed,blue,rounded corners=5] (-.25,-1) rectangle (1.25,.125);
\end{tikzpicture}
\end{tikzcd}
\qquad\text{and}\qquad
\begin{tikzcd}[column sep = 2, row sep = -2]
\begin{tikzpicture}[baseline={([yshift=0]current bounding box.center)},scale=.7]
\draw[thick] (.5,-.25) -- (.5,1.25);
\draw[thick,fill=white,rounded corners=3] (.5-.2,-.2+.5) rectangle (.7,0.2+.5);
\node at (0.5,0.5-.1) {\scriptsize $F$};
\draw[thick,dashed,red,rounded corners=3] (.5-.25,-.25) rectangle (.5+.25,.2);
\draw[thick,dashed,blue,rounded corners=3] (.5-.25,.8) rectangle (.5+.25,1.25);
\end{tikzpicture}
\arrow[rr,Rightarrow,blue,"(r')^{\sqdot}"] \arrow[dd,Rightarrow,red,"r^{\sqdot}"'] & & 
\begin{tikzpicture}[baseline={([yshift=0]current bounding box.center)},scale=.7]
\draw[thick] (0,-1) -- (0,0) arc (180:0:.5);
\draw[thick] (1,0) circle (0.02);
\draw[thick] (.5,.5) -- (.5,1);
\draw[thick,fill=white,rounded corners=3] (-.2,-.2-.5) rectangle (.2,0.2-.5);
\node at (0,0-.1-.5) {\scriptsize $F$};
\draw[thick,dashed,blue,rounded corners=3] (1-.25,-.2) rectangle (1.25,.3);
\end{tikzpicture}
\arrow[dd,Rightarrow,blue,"F^0"] \\
& {F^r \atop \Lleftarrow} & \\
\begin{tikzpicture}[baseline={([yshift=0]current bounding box.center)},scale=.7]
\draw[thick] (0,-1) -- (0,-.5) arc (180:0:.5);
\draw[thick] (1,-.5) circle (0.02);
\draw[thick] (.5,0) -- (.5,1);
\draw[thick,fill=white,rounded corners=3] (.5-.2,-.2+.5) rectangle (.7,0.2+.5);
\node at (0.5,0.5-.1) {\scriptsize $F$};
\end{tikzpicture}
& & \arrow[ll,Rightarrow,blue,"F^2"'] 
\begin{tikzpicture}[baseline={([yshift=0]current bounding box.center)},scale=.7]
\draw[thick] (0,-1) -- (0,0) arc (180:0:.5) -- (1,-.5);
\draw[thick] (1,-.5) circle (0.02);
\draw[thick] (.5,.5) -- (.5,1);
\draw[thick,fill=white,rounded corners=3] (1-.2,-.2) rectangle (1.2,0.2);
\node at (1,0-.1) {\scriptsize $F$};
\draw[thick,fill=white,rounded corners=3] (-.2,-.2) rectangle (.2,0.2);
\node at (0,0-.1) {\scriptsize $F$};
\draw[thick,dashed,blue,rounded corners=5] (-.25,-.25) rectangle (1.25,1);
\end{tikzpicture}
\end{tikzcd}
$$
in $\CstarTwoCat(\cA(A \to B) \to \cA'(FA\to FB))$.
\end{itemize}
Furthermore, we ask that the data of the underlying $2$-functors, adjoint equivalences, and invertible modifications form a 3-functor in the sense of \cite[Definition 4.10]{gurski_2013}. More explicitly, these include an associativity axiom relating the pentagonators in $\cA$ and $\cA'$, and a unitality axiom relating the middle unity coheretors in $\cA$ and $\cA'$. 
\end{remark}

\begin{definition}
A $\dag$-3-natural transformation $\theta \colon F \tto F'$ between $\dag$-3-functors $F,F' \colon \cA \to \cA'$ consists of an underlying 3-natural transformation in the sense of \cite{gurski_2013} such that the underlying coherence 2-natural transformations and modifications are $\dag$-2-natural transformations and unitary modifications respectively.   
\end{definition}

\begin{remark}
We unpack the data of a $\dag$-3-natural transformation as follows:
\begin{itemize}
\item[(0)] A family of 1-morphisms $\theta_A \in \cA'(FA \to F'A)$ indexed by the objects of $\cA$, which we represent diagrammatically by
$$
\begin{tikzpicture}[baseline={([yshift=0]current bounding box.center)}]
\draw[thick] (0,0) -- (0,.5);
\draw[thick,fill=white,rounded corners=3] (-.2,-.2) rectangle (0.2,0.2);
\node at (0,0) {\scriptsize $\theta_{\! A}$};
\node at (0,.7) {\scriptsize $\cA'(FA,F'A)$};
\end{tikzpicture}
$$
\item[(hom)] A unitary adjoint equivalence {\boldmath$\mathbf{\theta}$} where $\theta$ is a $\dag$-2-natural transformation
$$
\begin{tikzpicture}[baseline={([yshift=0]current bounding box.center)}]
\draw[thick] (0,0) arc (0:180:0.5) -- (-1,-.5);
\draw[thick] (-.5,.5) -- (-.5,1);
\draw[thick,fill=white,rounded corners=3] (-.2,-.2) rectangle (0.2,0.2);
\node at (0,0) {\scriptsize $\theta_{\! A'}$};
\draw[thick,fill=white,rounded corners=3] (-1.2,-.2) rectangle (-1+0.2,0.2);
\node at (-1,0) {\scriptsize $F$};
\node at (-.5,1.2) {\scriptsize $\cA'(FA,F'A')$};
\node at (-1,-.7) {\scriptsize $\cA(A,A')$};
\end{tikzpicture}
\quad\xRightarrow{\theta}\quad 
\begin{tikzpicture}[baseline={([yshift=0]current bounding box.center)}]
\draw[thick] (0,-.5) -- (0,0) arc (0:180:0.5);
\draw[thick] (-.5,.5) -- (-.5,1);
\draw[thick,fill=white,rounded corners=3] (-1.2,-.2) rectangle (-1+0.2,0.2);
\node at (-1,0) {\scriptsize $\theta_A$};
\draw[thick,fill=white,rounded corners=3] (-.2,-.2) rectangle (0.2,0.2);
\node at (0,0) {\scriptsize $F'$};
\node at (-.5,1.2) {\scriptsize $\cA'(FA,F'A')$};
\node at (0,-.7) {\scriptsize $\cA(A,A')$};
\end{tikzpicture}
$$
in $\CstarTwoCat(\cA(A,A') \to \cA'(FA, F'A'))$.
\item[($\cccirc$)] A unitary modification $\theta^2$
$$
\begin{tikzcd}[column sep = 3,row sep = 1]
& 
\begin{tikzpicture}[baseline={([yshift=0]current bounding box.center)},scale=1]
\draw[thick] (0,0) arc (0:180:0.5) -- (-1,-.5);
\draw[thick] (-.5,.5) arc (0:180:.5) -- (-1.5,-.5);
\draw[thick] (-1,1) -- (-1,1.5);
\draw[thick,fill=white,rounded corners=3] (-.2,-.2) rectangle (0.2,0.2);
\node at (0,0-.05) {\tiny $\theta_{ \tiny \! \! A''}$};
\draw[thick,fill=white,rounded corners=3] (-1.2,-.2) rectangle (-1+0.2,0.2);
\node at (-1,0-.1) {\scriptsize $F$};
\draw[thick,fill=white,rounded corners=3] (-1.2-.5,-.2) rectangle (-1+0.2-.5,0.2);
\node at (-1-.5,0-.1) {\scriptsize $F$};
\draw[thick,dashed,blue,rounded corners=5] (-1.25,-.5) rectangle (.25,.75);
\draw[thick,dashed,red,rounded corners=5] (-1.75,0.25) rectangle (.25,1.5);
\end{tikzpicture}
&& 
\begin{tikzpicture}[baseline={([yshift=0]current bounding box.center)},scale=1]
\draw[thick] (0,-.5) -- (0,0) arc (0:180:0.5) -- (-1,-.5);
\draw[thick] (-.5,.5) arc (180:0:.5) -- (.5,0);
\draw[thick] (0,1) -- (0,1.5);
\draw[thick,fill=white,rounded corners=3] (-.2,-.2) rectangle (0.2,0.2);
\node at (0,0-.1) {\scriptsize $F$};
\draw[thick,fill=white,rounded corners=3] (-1.2,-.2) rectangle (-1+0.2,0.2);
\node at (-1,0-.1) {\scriptsize $F$};
\draw[thick,fill=white,rounded corners=3] (.3,-.2) rectangle (.7,0.2);
\node at (.5,0-.05) {\tiny $\theta_{\tiny \! \! A''}$};
\draw[thick,dashed,red,rounded corners=5] (-1.25,-.5) rectangle (.25,.75);
\end{tikzpicture}
\\
\begin{tikzpicture}[baseline={([yshift=0]current bounding box.center)},scale=1]
\draw[thick] (0,-.5) -- (0,0) arc (0:180:0.5);
\draw[thick] (-.5,.5) arc (0:180:.5) -- (-1.5,-.5);
\draw[thick] (-1,1) -- (-1,1.5);
\draw[thick,fill=white,rounded corners=3] (-.2,-.2) rectangle (0.2,0.2);
\node at (0,0-.1) {\scriptsize $F'$};
\draw[thick,fill=white,rounded corners=3] (-1.2,-.2) rectangle (-1+0.2,0.2);
\node at (-1,0-.05) {\tiny $\theta_{\tiny \!\! A'}$};
\draw[thick,fill=white,rounded corners=3] (-1.2-.5,-.2) rectangle (-1+0.2-.5,0.2);
\node at (-1-.5,0-.1) {\scriptsize $F$};
\draw[thick,dashed,blue,rounded corners=5] (-1.75,0.25) rectangle (.25,1.5);
\end{tikzpicture}
&&&& 
\begin{tikzpicture}[baseline={([yshift=0]current bounding box.center)},scale=1]
\draw[thick] (-1,-.5) arc (180:0:.5);
\draw[thick] (-.5,0) -- (-.5,.5) arc (180:0:.5);
\draw[thick] (0,1) -- (0,1.5);
\draw[thick,fill=white,rounded corners=3] (-.2-.5,-.2+.5) rectangle (0.2-.5,0.2+.5);
\node at (-0.5,0-.1+.5) {\scriptsize $F$};
\draw[thick,fill=white,rounded corners=3] (.3,-.2+.5) rectangle (.7,0.2+.5);
\node at (.5,0-.05+.5) {\tiny $\theta_{\tiny \! \! A''}$};
\draw[thick,dashed,red,rounded corners=5] (-.75,0.25) rectangle (.75,1.5);
\end{tikzpicture}
\\
&& {\theta^2 \atop \ttto} \\
\begin{tikzpicture}[baseline={([yshift=0]current bounding box.center)},scale=1]
\draw[thick] (0,0) arc (0:180:0.5) -- (-1,-.5);
\draw[thick] (-.5,.5) arc (180:0:.5) -- (.5,-.5);
\draw[thick] (0,1) -- (0,1.5);
\draw[thick,fill=white,rounded corners=3] (-.2,-.2) rectangle (0.2,0.2);
\node at (0,0-.05) {\tiny $\theta_{\tiny \! \! A'}$};
\draw[thick,fill=white,rounded corners=3] (-1.2,-.2) rectangle (-1+0.2,0.2);
\node at (-1,0-.1) {\scriptsize $F$};
\draw[thick,fill=white,rounded corners=3] (.3,-.2) rectangle (.7,0.2);
\node at (.5,0-.1) {\scriptsize $F'$};
\draw[thick,dashed,blue,rounded corners=5] (-1.25,-.5) rectangle (.25,.75);
\end{tikzpicture}
&&&& 
\begin{tikzpicture}[baseline={([yshift=0]current bounding box.center)},scale=1]
\draw[thick] (-1,-.5) arc (180:0:.5);
\draw[thick] (-.5,0) -- (-.5,.5) arc (0:180:.5);
\draw[thick] (-1,1) -- (-1,1.5);
\draw[thick,fill=white,rounded corners=3] (-.2-.5,-.2+.5) rectangle (0.2-.5,0.2+.5);
\node at (-0.5,0-.1+.5) {\scriptsize $F'$};
\draw[thick,fill=white,rounded corners=3] (-1.2-.5,-.2+.5) rectangle (-1+0.2-.5,0.2+.5);
\node at (-1-.5,0-.05+.5) {\tiny $\theta_{\tiny \! A}$};
\end{tikzpicture}
\\
& 
\begin{tikzpicture}[baseline={([yshift=0]current bounding box.center)},scale=1]
\draw[thick] (0,-.5) -- (0,0) arc (0:180:0.5);
\draw[thick] (-.5,.5) arc (180:0:.5) -- (.5,-.5);
\draw[thick] (0,1) -- (0,1.5);
\draw[thick,fill=white,rounded corners=3] (-.2,-.2) rectangle (0.2,0.2);
\node at (0,0-.1) {\scriptsize $F'$};
\draw[thick,fill=white,rounded corners=3] (-1.2,-.2) rectangle (-1+0.2,0.2);
\node at (-1,0-.05) {\tiny $\theta_{\! A}$};
\draw[thick,fill=white,rounded corners=3] (.3,-.2) rectangle (.7,0.2);
\node at (.5,0-.1) {\scriptsize $F'$};
\draw[thick,dashed,blue,rounded corners=5] (-1.75+.5,0.25) rectangle (.25+.5,1.5);
\end{tikzpicture}
&& 
\begin{tikzpicture}[baseline={([yshift=0]current bounding box.center)},scale=1]
\draw[thick] (0,-.5) -- (0,0) arc (0:180:0.5) -- (-1,-.5);
\draw[thick] (-.5,.5) arc (0:180:.5) -- (-1.5,0);
\draw[thick] (-1,1) -- (-1,1.5);
\draw[thick,fill=white,rounded corners=3] (-.2,-.2) rectangle (0.2,0.2);
\node at (0,0-.1) {\scriptsize $F'$};
\draw[thick,fill=white,rounded corners=3] (-1.2,-.2) rectangle (-1+0.2,0.2);
\node at (-1,0-.1) {\scriptsize $F'$};
\draw[thick,fill=white,rounded corners=3] (-1.2-.5,-.2) rectangle (-1+0.2-.5,0.2);
\node at (-1-.5,0-.05) {\tiny $\theta_{\! A}$};
\draw[thick,dashed,blue,rounded corners=5] (-1.25,-.5) rectangle (.25,.75);
\end{tikzpicture}
\arrow[Rightarrow, from=1-2, to=2-1,blue,"\theta"']
\arrow[Rightarrow, from=2-1, to=4-1,blue,"a'"']
\arrow[Rightarrow, from=4-1, to=5-2,blue,"\theta"']
\arrow[Rightarrow, from=5-2, to=5-4,blue,"(a')^{\sqdot}"']
\arrow[Rightarrow, from=5-4, to=4-5,blue,"(F')^2"']
\arrow[Rightarrow, from=1-2, to=1-4,red,"a'"]
\arrow[Rightarrow, from=1-4, to=2-5,red,"F^2"]
\arrow[Rightarrow, from=2-5, to=4-5,red,"\theta"]
\end{tikzcd}
$$
in $\CstarTwoCat(\cA(A \to A' \to A'') \to \cA'(FA \to F'A'')).$
\item[(I)] A unitary modification $\theta^0$
\[\begin{tikzcd}[column sep = -10, row sep = 10]
&& 
\begin{tikzpicture}[baseline={([yshift=0]current bounding box.center)}]
\draw[thick] (0,0) -- (0,.7);
\draw[thick,fill=white,rounded corners=3] (-.2,-.2) rectangle (0.2,0.2);
\node at (0,0-.05) {\scriptsize $\theta_{\!A}$};
\draw[thick,red, rounded corners=3] (-.2,-.2+.5) rectangle (0.2,0.2+.5);
\draw[thick,blue,dashed,rounded corners=3] (-.2,-.2+.5) rectangle (0.2,0.2+.5);
\end{tikzpicture}\\
\begin{tikzpicture}[baseline={([yshift=0]current bounding box.center)}]
\draw[thick] (0,0) arc (0:180:0.5);
\draw[thick] (-.5,.5) -- (-.5,1);
\draw[thick,fill=white,rounded corners=3] (-.2,-.2) rectangle (0.2,0.2);
\node at (0,0-.05) {\scriptsize $\theta_{\!\! A'}$};
\draw[thick] (-1,0) circle (0.02);
\draw[thick,blue,dashed,rounded corners=3] (-.2-1,-.2) rectangle (0.2-1,0.2);
\end{tikzpicture}
&& {\theta^0 \atop \ttto} && 
\begin{tikzpicture}[baseline={([yshift=0]current bounding box.center)}]
\draw[thick] (0,0) arc (0:180:0.5);
\draw[thick] (-.5,.5) -- (-.5,1);
\draw[thick,fill=white,rounded corners=3] (-1.2,-.2) rectangle (-1+0.2,0.2);
\node at (-1,0-.05) {\scriptsize $\theta_{\! A}$};
\draw[thick] (0,0) circle (0.02);
\draw[thick,red,dashed,rounded corners=3] (-.2,-.2) rectangle (0.2,0.2);
\end{tikzpicture} \\
& 
\begin{tikzpicture}[baseline={([yshift=0]current bounding box.center)}]
\draw[thick] (0,0) arc (0:180:0.5) -- (-1,-.5);
\draw[thick] (-.5,.5) -- (-.5,1);
\draw[thick,fill=white,rounded corners=3] (-.2,-.2) rectangle (0.2,0.2);
\node at (0,0-.05) {\scriptsize $\theta_{\!\! A'}$};
\draw[thick,fill=white,rounded corners=3] (-1.2,-.2) rectangle (-1+0.2,0.2);
\node at (-1,0-.1) {\scriptsize $F$};
\draw[thick] (-1,-.5) circle (0.02);
\draw[thick,blue,dashed,rounded corners=3] (-.2-1.05,-.2-.125) rectangle (0.25,1);
\end{tikzpicture}
&& 
\begin{tikzpicture}[baseline={([yshift=0]current bounding box.center)}]
\draw[thick] (0,-.5) -- (0,0) arc (0:180:0.5);
\draw[thick] (-.5,.5) -- (-.5,1);
\draw[thick,fill=white,rounded corners=3] (-1.2,-.2) rectangle (-1+0.2,0.2);
\node at (-1,0-.05) {\scriptsize $\theta_{\!A}$};
\draw[thick,fill=white,rounded corners=3] (-.2,-.2) rectangle (0.2,0.2);
\node at (0,0-.1) {\scriptsize $F'$};
\draw[thick] (0,-.5) circle (0.02);
\end{tikzpicture}
\\
& {} && {}
	\arrow[Rightarrow, from=1-3, to=2-1,blue,"l^{\sqdot}"']
	\arrow[Rightarrow, from=2-1, to=3-2,blue,"F^0"']
	\arrow[Rightarrow, from=3-2, to=3-4,blue,"\theta"']
	\arrow[Rightarrow, from=1-3, to=2-5,red,"r^{\sqdot}"]
	\arrow[Rightarrow, from=2-5, to=3-4,red,"(F')^0"]
\end{tikzcd}\]
in $\CstarTwoCat(\bbC \to \cA'(FA \to F'A)).$
\end{itemize}
Furthermore, we ask that the data of the underlying family of 1-morphisms, adjoint equivalences, and invertible modifications form a 3-natural transformation in the sense of \cite[Definition 4.16]{gurski_2013}. More explicitly, these include an associativity axiom relating $F^a$ and $(F')^a$, and two unitality axiom relating $F^l$, $(F')^l$ and $F^r$, $(F')^r$. 
\end{remark}

\begin{definition}
A $\dag$-3-modification $m \colon \theta \ttto \theta'$ between $\dag$-3-natural transformations $\theta,\theta' \colon F \tto F'$ consists of an underlying 3-modification in the sense of \cite{gurski_2013} such that the underlying coherence modifications are unitary.   
\end{definition}

\begin{remark}
We unpack the data of a $\dag$-3-modification as follows:
\begin{itemize}
\item[(0)] A family of 2-morphisms $\theta_A \in \cA'(\theta_A \tto \theta'_A)$ indexed by the objects of $\cA$, which we represent diagrammatically by
$$
\begin{tikzpicture}
\draw[thick] (0,0) -- (0,.5);
\draw[thick,fill=white,rounded corners=3] (-.2,-.2) rectangle (0.2,0.2);
\node at (0,0) {\scriptsize $\theta_{\! A}$};
\node at (0,.7) {\scriptsize $\cA'(FA,F'A)$};
\end{tikzpicture}
\;\xRightarrow{m_A}\;
\begin{tikzpicture}
\draw[thick] (0,0) -- (0,.5);
\draw[thick,fill=white,rounded corners=3] (-.2,-.2) rectangle (0.2,0.2);
\node at (0,0) {\scriptsize $\theta'_{\! A}$};
\node at (0,.7) {\scriptsize $\cA'(FA,F'A)$};
\end{tikzpicture}
$$
\item[(hom)] A unitary modification $m$
% \begin{tikzcd}
% \begin{tikzpicture}[baseline={([yshift=0]current bounding box.center)}]
% \draw[thick] (0,0) arc (0:180:0.5) -- (-1,-.5);
% \draw[thick] (-.5,.5) -- (-.5,1);
% \draw[thick,fill=white,rounded corners=3] (-.2,-.2) rectangle (0.2,0.2);
% \node at (0,0) {\scriptsize $\theta_{\! A'}$};
% \draw[thick,fill=white,rounded corners=3] (-1.2,-.2) rectangle (-1+0.2,0.2);
% \node at (-1,0) {\scriptsize $F$};
% \end{tikzpicture}
% \arrow[rr,Rightarrow,"red"] & &
% \begin{tikzpicture}[baseline={([yshift=0]current bounding box.center)}]
% \draw[thick] (0,-.5) -- (0,0) arc (0:180:0.5);
% \draw[thick] (-.5,.5) -- (-.5,1);
% \draw[thick,fill=white,rounded corners=3] (-1.2,-.2) rectangle (-1+0.2,0.2);
% \node at (-1,0) {\scriptsize $\theta_A$};
% \draw[thick,fill=white,rounded corners=3] (-.2,-.2) rectangle (0.2,0.2);
% \node at (0,0) {\scriptsize $F'$};
% \end{tikzpicture}
% \\
% & Z & \\
% c & & d
% \end{tikzcd}
$$
\begin{tikzcd}[column sep = 2, row sep = 2]
\begin{tikzpicture}[baseline={([yshift=0]current bounding box.center)}]
\draw[thick] (0,0) arc (0:180:0.5) -- (-1,-.5);
\draw[thick] (-.5,.5) -- (-.5,1);
\draw[thick,fill=white,rounded corners=3] (-.2,-.2) rectangle (0.2,0.2);
\draw[thick,red,dashed,rounded corners=3] (-.25,-.25) rectangle (0.25,0.25);
\node at (0,0-.05) {\scriptsize $\theta_{\!\! A'}$};
\draw[thick,fill=white,rounded corners=3] (-1.2,-.2) rectangle (-1+0.2,0.2);
\node at (-1,0-.1) {\scriptsize $F$};
\draw[thick,blue,dashed,rounded corners=3] (-1.3,-.5) rectangle (0.3,1);
\end{tikzpicture}
\arrow[rr,Rightarrow,red,"m_{A'}"] \arrow[dd,Rightarrow,blue,"\theta"'] & & 
\begin{tikzpicture}[baseline={([yshift=0]current bounding box.center)}]
\draw[thick] (0,0) arc (0:180:0.5) -- (-1,-.5);
\draw[thick] (-.5,.5) -- (-.5,1);
\draw[thick,fill=white,rounded corners=3] (-.2,-.2) rectangle (0.2,0.2);
\node at (0,0-.05) {\scriptsize $\theta'_{\!\! A'}$};
\draw[thick,fill=white,rounded corners=3] (-1.2,-.2) rectangle (-1+0.2,0.2);
\node at (-1,0-.1) {\scriptsize $F$};
\draw[thick,red,dashed,rounded corners=3] (-1.3,-.5) rectangle (0.3,1);
\end{tikzpicture}
\arrow[dd,Rightarrow,red,"\theta'"] \\
& {m \atop \ttto} & \\
\begin{tikzpicture}[baseline={([yshift=0]current bounding box.center)}]
\draw[thick] (0,-.5) -- (0,0) arc (0:180:0.5);
\draw[thick] (-.5,.5) -- (-.5,1);
\draw[thick,fill=white,rounded corners=3] (-1.2,-.2) rectangle (-1+0.2,0.2);
\draw[thick,blue,dashed,rounded corners=3] (-1.25,-.25) rectangle (-1+0.25,0.25);
\node at (-1,0-.05) {\scriptsize $\theta_{\! A}$};
\draw[thick,fill=white,rounded corners=3] (-.2,-.2) rectangle (0.2,0.2);
\node at (0,0-.1) {\scriptsize $F'$};
\end{tikzpicture}
\arrow[rr,Rightarrow,blue,"m_A"']& & 
\begin{tikzpicture}[baseline={([yshift=0]current bounding box.center)}]
\draw[thick] (0,-.5) -- (0,0) arc (0:180:0.5);
\draw[thick] (-.5,.5) -- (-.5,1);
\draw[thick,fill=white,rounded corners=3] (-1.2,-.2) rectangle (-1+0.2,0.2);
\node at (-1,0-.05) {\scriptsize $\theta'_{\!A}$};
\draw[thick,fill=white,rounded corners=3] (-.2,-.2) rectangle (0.2,0.2);
\node at (0,0-.1) {\scriptsize $F'$};
\end{tikzpicture}
\end{tikzcd}
$$
in $\CstarTwoCat(\cA(A \to A') \to \cA'(FA \to F'A'))$.
\end{itemize}
Furthermore, we ask that the data of the underlying family of 2-morphisms and invertible modifications form a 3-modification in the sense of \cite[Definition 4.18]{gurski_2013}. More explicitly, these include a monoidality/composability axiom and a unitality axiom.
\end{remark}
\tikzset{Rightarrow/.style={double equal sign distance,>={Implies},->},
triple/.style={-,preaction={draw,Rightarrow}},
quadruple/.style={preaction={draw,Rightarrow,shorten >=0pt},shorten >=1pt,-,double,double
distance=0.2pt}}

\begin{definition}
A uniformly bounded perturbation 
\begin{tikzcd}[baseline={([yshift=0]current bounding box.center)},column sep = 7]
\sigma \colon m \arrow[r,quadruple]& m'
\end{tikzcd}
between $\dag$-3-modifications consists of a family of 3-morphisms $\sigma_A \in \cA'(m_a \ttto n_a)$ indexed by objects in $\cA$, satisfying the obvious compatibility axiom with the higher data for $m$ and $n$, and 
$$
\|\sigma \| \coloneqq \sup_{A \in \cA} \|\sigma_A\| < \infty.
$$
We refer the curious reader to \cite[Definition 4.21]{gurski_2013}.
\end{definition}

\begin{example}
From \cite{CP22} we see that $\CstarTwoCat$ forms a C*-3-category. Hence, the sub-$\dag$-3-category $\CstarTwoCat_{\Strict}$ of strict C*-2-categories, strict $\dag$-2-functors, all $\dag$-2-natural transformations, and uniformly bounded modifications forms a C*-3-category.\\
Analogously, $\WstarTwoCat$ forms a W*-3-category and there is a sub-W*-3-category $\WstarTwoCat_{\Strict}$ of strict W*-2-categories, strict normal $\dag$-2-functors, all $\dag$-2-natural transformations, and uniformly bounded modifications. 

Moreover, there are full sub-W*-3-categories of $\WstarTwoCat$, denoted by $\ThreeHilb$ and $\mathsf{FSSU2C}$, whose objects are concrete W*-2-categories and finitely semisimple unitary 2-categories respectively. 
\end{example}

\begin{example}
A monoidal C*-2-category can be viewed as a C*-3-category with a single object. Similarly, a braided C*-category can be viewed as a C*-3-category with a single object, and a single (identity) 1-morphism. In particular, every unitary braided multifusion category can be viewed as a W*-3-category.
\end{example}

The following examples are the appropriate operator-algebraic analogues for Morita 2-categories. We refer the reader to \cite{CHJP2022} for a detailed exposition.

\begin{example}
There is a C*-3-category $\CHaus$ whose:
\begin{itemize}
\item[(0)] objects are compact Hausdorff spaces;
\item[(1)] 1-morphisms $A\colon X \to Y$ are C*-algebras $A$ equipped with $*$-homomorphisms $C(X) \to Z(A)$ and $C(Y) \to Z(A)$;
\item[(2)] 2-morphisms $H\colon A \tto B$ are $A$-$B$ C*-correspondences compatible with the $C(X)$ and $C(Y)$ actions,
\item[(3)] 3-morphisms are adjointable intertwiners.
\end{itemize}
One defines the analogous W*-3-category $\Meas$ whose:
\begin{itemize}
\item[(0)] objects are $\sigma$-finite measure spaces $(X,\mu$);
\item[(1)] 1-morphisms $A\colon (X,\mu) \to (Y,\nu)$ are W*-algebras $A$ equipped with normal $*$-homomorphisms $L^\infty(X,\mu) \to Z(A)$ and $L^\infty(Y,\nu) \to Z(A)$;
\item[(2)] 2-morphisms $H\colon A \tto B$ are $A$-$B$ W*-correspondences compatible with the $L^\infty(X,\mu)$ and $L^\infty(Y,\nu)$ actions;
\item[(3)] 3-morphisms are adjointable normal intertwiners.
\end{itemize}   
We verify the details in Appendix \S \ref{sec:examples}. 
\end{example}

In what follows we make reference to multifusion categories and their bimodule categories, functors, and natural transformations. We direct the reader to the standard reference \cite{Etingof_Gelaki_Nikshych_Ostrik_2017} for details.

\begin{example}
There is a W*-3-category $\mathsf{UmFC}$ whose:
\begin{enumerate}[(1)]
\item[(0)] objects are unitary multifusion categories;
\item 1-morphisms $M \colon \cC \to \cD$ are unitary $\cC$-$\cD$ bimodule categories;
\item 2-morphisms $F \colon M \tto N$ are unitary $\cC$-$\cD$ bimodule functors;
\item 3-morphisms $\eta \colon F \ttto G$ are $\cC$-$\cD$ bimodule natural transformations.
\end{enumerate}
\end{example}

\begin{remark}
We conjecture there exists an equivalence $\mathsf{Mod}^\dag \colon \mathsf{UmFC} \cong \mathsf{FSSU2C}$, where the latter is the W*-3-category of finitely semisimple unitary 2-categories.
\end{remark}
% \begin{proof}
% Let $\mathsf{FSS2C}_{unitarizable}$ be the 3-category of finitely semisimple unitary 2-categories, all 2-functors, 2-natural transformations, and modifications. 
% Notice this 3-category is equivalent to the full 3-category of unitarizable finitely semisimple 2-categories in $\mathsf{FSS2C}$.
% By \nn{details here}, the inclusion $\mathsf{FSSU2C} \hookrightarrow \mathsf{FSS2C}_{unitarizable}$ is an equivalence.

% Thus, it suffices to construct an equivalence $\mathsf{UmFC} \cong \mathsf{FSS2C}_{unitarizable}$.
% Consider $\mathsf{Mod}^\dag \colon \mathsf{UmFC} \to \mathsf{FSSU2C}$, given by the hom $\dag$-3-functor
% $$
% \yo^{\Hilb} = \mathsf{UmFC}(\Hilb_{f.d} \to -).
% $$
% where $\Hilb_{f.d}$ is the unitary (multi)fusion category of finite-dimensional Hilbert spaces. 
% To see that $\mathsf{Mod}^\dag$ is essentially surjective, recall that every finitely semisimple 2-category admits a generator. 
% In particular, given a finitely semisimple unitary 2-category $\cC$, there exists $C \in \cC$ such that $\mathsf{Mod}(\End_{\cC}(C)) \cong \cC$. 
% Notice that $\End_{\cC}(C)$ is canonically a unitary multifusion category, \nn{with $\mathsf{Mod}(\End_{\cC}(C)) \cong \mathsf{Mod}^\dag(\End_{\cC}(C)) $ as 2-categories}.
% \end{proof}

The following example is the appropriate operator-algebraic analogue of a hom 3-category in the speculated Morita 4-category $\mathsf{UBmFC}$ of unitary braided multifusion categories.
\begin{example}
For unitary braided multifusion categories $\cA$ and $\cB$, there is a W*-3-category $\mathsf{UBmFC}(\cA \to \cB)$ whose:
\begin{enumerate}[(1)]
\item[(0)] Objects are unitary multifusion categories $\cC$ equipped with unitary braided functors $\cA \to Z(\cC) \leftarrow \cB^{\mathsf{rev}}$;
\item 1-morphisms $M \colon \cC \to \cD$ are unitary $\cC$-$\cD$ bimodule categories together with compatibility data;
\item 2-morphisms $F \colon M \tto N$ are unitary $\cC$-$\cD$ bimodule functors satisfying compatibility conditions;
\item 3-morphisms $\eta \colon F \ttto G$ are $\cC$-$\cD$ bimodule natural transformations.
\end{enumerate}
Notice $\mathsf{UBFC}(\bbC \to \bbC) = \mathsf{UmFC}$.
\end{example}

\subsection{Transport and change of structure}

In this section, we provide three results which will allow us to locally strictify an operator 3-category, into what will be known as a ``cubical'' operator 3-category. 
In particular, these three results will allow us to strictify hom-2-categories, composition of objects, and units respectively in a compatible way.

\begin{lemma}[Transport of structure] \label{lem:transportofstructure3cats}
Let $\cA$ be a C*-3-category and let
$$\{\widetilde{\cA}(A,B) \in \CstarTwoCat\}_{A,B \in \cA}$$
be a collection of C*-2-categories indexed by pairs of objects in $\cA$ with $\dag$-2-equivalences 
$$F_{A,B}\colon \cA(A,B) \to \widetilde{\cA}(A,B) \text{ for every } A,B \in \cA.$$
Then we may upgrade this data to a C*-3-category $\widetilde{\cA}$ and a $\dag$-3-functor $F\colon \cA \to \widetilde{\cA}$ such that
\begin{itemize}
\item $\Ob \widetilde{\cA} = \Ob \cA$,
\item $\widetilde{\cA}(A,B)$ is the hom-C*-2-category from $A$ to $B$ in $\widetilde{\cA}$,
\item Choosing biadjoint $\dag$-2-equivalences $(F_{AB},F_{AB}^{\sqdot},\eta^{F_{AB}},\epsilon^{F_{AB}})$ for each $F_{AB}$, 1-composition $\widetilde{\cccirc}$ in $\widetilde{\cA}$ is then defined by
\[\begin{tikzcd}[column sep = 50pt]
	{\widetilde{\cA}^{\,2}(A \to B \to C)} & {\cA^2(A \to B \to C)} \\
	{\widetilde{\cA}(A \to C)} & {\cA(A \to C)}
	\arrow["{F_{AB} \maxblacktimes F_{BC}}", from=1-1, to=1-2]
	\arrow["{\widetilde{\cccirc}}"', dashed, from=1-1, to=2-1]
	\arrow["\cccirc", from=1-2, to=2-2]
	\arrow["{F_{AC}^{\sqdot}}", from=2-2, to=2-1]
\end{tikzcd}\]
\item $F$ is a $\dag$-3-equivalence which acts as the identity on objects and as $F_{A,B}$ on hom-C*-2-categories.
\end{itemize}
Moreover, if $\cA$ is a W*-3-category, each $\widetilde{\cA}(A,B)$ is a W*-2-category, and each $F_{A,B}$ a normal $\dag$-2-equivalence, then $\widetilde{\cA}$ is a W*-3-category.
\end{lemma}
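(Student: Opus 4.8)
The plan is to transport the tricategorical structure of $\cA$ across the $\dag$-2-equivalences $F_{A,B}$, mimicking the algebraic transport-of-structure argument behind the coherence theorem of \cite{GPS95,gurski_2013} while keeping every piece of coherence data unitary. The first and only genuinely $\dag$-flavoured step is to refine each $F_{A,B}$ to a \emph{unitary} biadjoint $\dag$-2-equivalence $\mathbf{F}_{AB} = (F_{AB},F_{AB}^{\sqdot},\eta^{AB},\epsilon^{AB})$ in the C*-3-category $\CstarTwoCat$: any $\dag$-2-equivalence between C*-2-categories admits such a refinement, since one first picks a $\dag$-2-equivalence quasi-inverse together with unitary $\dag$-2-natural unit and counit, and then runs the usual ``two-out-of-three'' adjustment turning a biequivalence into a biadjoint one entirely inside the $\dag$-world --- this uses only that $\ccirc$, $\circ$, whiskering, and $\maxblacktimes$ send unitaries to unitaries and unitary adjoint equivalences to unitary adjoint equivalences.

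Next I would lay down the data of $\widetilde{\cA}$. Take $\Ob\widetilde{\cA} := \Ob\cA$ and the given C*-2-categories as hom-2-categories; define $1$-composition as the transport $\widetilde{\cccirc} := F_{AC}\circ\cccirc\circ(F_{AB}^{\sqdot}\maxblacktimes F_{BC}^{\sqdot})$ of $\cccirc$, and units $\widetilde{I}_A := F_{AA}\circ I_A\colon\bbC\to\widetilde{\cA}(A\to A)$. The associator $\widetilde{\mathbf a}$ is then assembled by pasting the associator $\mathbf a$ of $\cA$ between the two expansions of $\widetilde{\cccirc}(\widetilde{\cccirc}\maxblacktimes\id)$ and $\widetilde{\cccirc}(\id\maxblacktimes\widetilde{\cccirc})$, inserting the units $\eta^{\bullet}$ and counits $\epsilon^{\bullet}$ of the $\mathbf{F}$'s to cancel the internal $F^{\sqdot}F$ and $FF^{\sqdot}$ pairs; it is again a unitary adjoint equivalence because $\mathbf a$ is and the $\eta,\epsilon$ are unitary. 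The unitors $\widetilde{\mathbf l},\widetilde{\mathbf r}$ are transported the same way, and the pentagonator $\widetilde\pi$ and the three unity coheretors $\widetilde\mu,\widetilde\lambda,\widetilde\rho$ --- unitary modifications --- by the analogous pasting manipulation, the zigzag (triangle) identities of the $\mathbf{F}_{AB}$ ensuring the inserted cancellations are compatible; unitarity persists throughout since every operation used preserves unitaries.

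It remains to verify the tricategory axioms for $\widetilde{\cA}$ (the three-dimensional associahedron/non-abelian $4$-cocycle axiom for each quintuple of objects, and the two normalization axioms for each triple) together with the $\dag$/C* conditions; the latter hold by construction. For the former, the point is that each axiom is an equation between $3$-cells of some hom-2-category $\widetilde{\cA}(A\to E)$, and upon substituting the definitions of the transported cells and cancelling zigzags it becomes exactly the corresponding axiom of $\cA$ whiskered by $F$'s and $F^{\sqdot}$'s; since each $F_{AE}$ is in particular a unitary isomorphism on $3$-cell spaces, the $\widetilde{\cA}$-axiom holds iff the $\cA$-one does. The cleanest way to organize the bookkeeping is to build the $\dag$-3-functor $F\colon\cA\to\widetilde{\cA}$ in parallel --- identity on objects, $F_{AB}$ on homs, with $F^2,F^0$ built from the $\eta,\epsilon$ and $F^a,F^l,F^r$ the evident coherence cells --- since the $3$-functor axioms for $F$ are logically equivalent to the tricategory axioms for $\widetilde{\cA}$, and $F$ being locally a $\dag$-2-equivalence then makes it a $\dag$-3-equivalence with the stated action. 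The main obstacle is purely organizational: drawing the pasting diagrams for $\widetilde\pi$, the coheretors, and the associahedron; no difficulty arises beyond the algebraic case of \cite{gurski_2013}.

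For the W* case, suppose $\cA$ is a W*-3-category, each $\widetilde{\cA}(A,B)$ a W*-2-category, and each $F_{A,B}$ a normal $\dag$-2-equivalence. Then each chosen quasi-inverse $F_{AB}^{\sqdot}$ is automatically normal, since a normal $\dag$-2-equivalence between W*-2-categories has a normal quasi-inverse: its action on each hom-W*-category is a normal fully faithful $\dag$-functor, and the quasi-inverse of such is again normal. Hence $\widetilde{\cccirc} = F_{AC}\circ\cccirc\circ(F_{AB}^{\sqdot}\maxblacktimes F_{BC}^{\sqdot})$ is separately normal, $\cccirc$ being separately normal and extending to $\Wblacktimes$, so $\widetilde{\cccirc}$ extends to a normal $\dag$-2-functor $\widetilde{\cA}(A\to B)\Wblacktimes\widetilde{\cA}(B\to C)\to\widetilde{\cA}(A\to C)$; $\widetilde{I}_A$ is normal automatically, and the transported constraint data lives in $(\WstarTwoCat,\Wblacktimes)$ by the same extension. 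Thus $\widetilde{\cA}$ is a W*-3-category and $F$ is a normal $\dag$-3-equivalence.
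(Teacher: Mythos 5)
Your proposal is correct and follows essentially the same route as the paper, which likewise transports Gurski's construction (\cite[\S 7.4, Theorem 7.22]{gurski_2013}) across biadjoint $\dag$-2-equivalences, observes that all inserted units, counits, and constraint cells are unitary, and notes that normality of $\widetilde{\cccirc}$ and of $F$ is automatic in the W* case. Your formula $\widetilde{\cccirc} = F_{AC}\circ\cccirc\circ(F_{AB}^{\sqdot}\maxblacktimes F_{BC}^{\sqdot})$ is the one consistent with the arrow directions in the paper's diagram (whose labels appear to have $F$ and $F^{\sqdot}$ swapped), so this is not a discrepancy of substance.
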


\begin{proof}
The structure equipped on the 3-category $\widetilde{\cA}$ and the 3-functor $F$ are built using the units and counits of the biadjoint $\dag$-2-equivalences (which are unitaries) and the constraint data on $\cA$. From this it is quite easy to see that $\widetilde{\cA}$ is a C*-3-category and $F$ is a $\dag$-3-functor. Furthermore, from the definition of $\widetilde{\cccirc}$ we see that 1-composition is separately normal in $\widetilde{\cA}$ the W*-case, and $F$ is automatically normal as a $\dag$-3-equivalence. We refer the reader to \cite[\S 7.4, Theorem 7.22]{gurski_2013} for the details of this construction for non-linear 3-categories. 
\end{proof}

We will state the next two lemmas without proof, as they are similar in spirit to Lemma \ref{lem:transportofstructure3cats}, and refer the interested reader to \cite[\S 7.4, Theorems 7.23, 7.24]{gurski_2013} for details.

\begin{lemma}[Change of composition] \label{lem:changeofcomposition3cats}
Let $\cA$ be a C*-3-category and let
$$\{\widetilde{\cccirc}\colon \cA^2(A \to B \to C) \to \cA(A \to C)\}_{A,B,C \in \cA}$$
be a collection of $\dag$-2-functors indexed by triples of objects in $\cA$ with $\dag$-2-natural equivalences
$$F^2\colon \cccirc \tto \widetilde{\cccirc} \quad\text{ for every } A,B,C \in \cA.$$
Then we may upgrade this data to a C*-3-category $\widetilde{\cA}$ and a $\dag$-3-functor $F\colon \cA \to \widetilde{\cA}$ such that
\begin{itemize}
    \item $\Ob \widetilde{\cA} = \Ob \cA$,
    \item $\widetilde{\cA}(A \to B) = \cA(A \to B)$ as C*-2-categories,
    \item $\widetilde{\cA}$ has the same units $I_A$ as $\cA$,
    \item 1-composition in $\widetilde{\cA}$ is given by $\widetilde{\cccirc}$,
    \item $F$ is a $\dag$-3-equivalence which as a map acts as the identity, and
    \item Choosing unitary adjoint equivalences for $F^2$, we obtain the 1-tensorator unitary adjoint equivalences for $F$.
\end{itemize}
Furthermore, when $\cA$ is a W*-3-category and each $\widetilde{\cccirc}$ is separately normal, it is immediate that $\widetilde{\cA}$ is a W*-3-category.
\end{lemma}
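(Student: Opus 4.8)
The plan is to carry out the $\dag$-enriched analogue of \cite[\S 7.4, Theorem 7.23]{gurski_2013}, transporting the constraint data of $\cA$ across the given $\dag$-2-natural equivalences $F^2$ and checking at each step that the $\dag$-structure is preserved, so that every transported cell is unitary (and, in the W*-case, the relevant maps are normal).

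First I would set $\Ob\widetilde{\cA} = \Ob\cA$, take the hom-C*-2-categories and unit $\dag$-2-functors $I_A$ of $\widetilde{\cA}$ to be those of $\cA$, and declare $\widetilde{\cccirc}$ to be the $1$-composition of $\widetilde{\cA}$. Since $\CstarTwoCat$ is a C*-3-category, each $F^2\colon \cccirc \tto \widetilde{\cccirc}$ refines to a unitary adjoint equivalence $(F^2,(F^2)^{\sqdot},\eta^{F^2},\epsilon^{F^2})$. The associator $\widetilde{a}$ of $\widetilde{\cA}$ is then obtained by conjugating $a$ with suitable whiskerings of $F^2$ and $(F^2)^{\sqdot}$, and likewise for $\widetilde{l},\widetilde{r}$; the pentagonator $\widetilde{\pi}$ and the unity coheretors $\widetilde{\mu},\widetilde{\lambda},\widetilde{\rho}$ are obtained by pasting $\pi,\mu,\lambda,\rho$ with whiskerings of $F^2$ and the chosen units and counits. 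Because $\dag$ is a strict, involutive, conjugate-linear $2$-functor commuting with $\ccirc$, whiskering, and pasting, and because $a,l,r$ are unitary adjoint equivalences while $F^2,\eta^{F^2},\epsilon^{F^2},\pi,\mu,\lambda,\rho$ are unitary, every transported cell is again a unitary adjoint equivalence or a unitary modification. In the same way $F\colon \cA \to \widetilde{\cA}$ is built: the identity on objects and on each hom-2-category, with $1$-tensorator the chosen unitary adjoint equivalence $F^2$, identity unitor $F^0$, and the canonical comparison modifications $F^a,F^l,F^r$ assembled from $F^2$ and the units and counits.

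The main obstacle is verifying that this transported data satisfies the tricategory axioms --- the $3$-dimensional associahedron (non-abelian $4$-cocycle) condition for each quintuple of objects and the two normalized cocycle conditions for each triple --- together with the $\dag$-3-functor axioms for $F$. Here I would follow \cite[Theorem 7.23]{gurski_2013} essentially verbatim: each pasting equation collapses, after cancelling adjacent composites of $F^2$ with $(F^2)^{\sqdot}$ via the triangle identities and invoking the naturality of $F^2$, to the corresponding axiom already holding in $\cA$. The only additional bookkeeping is that all $2$-cells and modifications involved are unitary, which is automatic since $\dag$ commutes with every pasting operation used, so no genuinely new difficulty appears beyond Gurski's algebraic computation; I expect the eventual write-up to consist mostly of this pointer plus the remark that $\dag$ passes through the construction. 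It then follows that $F$ is locally the identity $\dag$-2-equivalence and a bijection on objects and $1$-morphisms, hence a $\dag$-3-equivalence.

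For the last claim, I would note that when $\cA$ is a W*-3-category and each $\widetilde{\cccirc}$ is separately normal, the hom-2-categories of $\widetilde{\cA}$ are the W*-2-categories $\cA(A\to B)$, the units $I_A$ are automatically normal, and $\widetilde{\cccirc}$ extends to a normal $\dag$-2-functor $\cA(A\to B)\Wblacktimes\cA(B\to C)\to\cA(A\to C)$; since $2$- and $3$-composition are separately normal on each hom-W*-2-category, the transported $\dag$-2-natural transformations and unitary modifications extend to the W*-Gray tensor products, so all the coherence data of $\widetilde{\cA}$ lives in $(\WstarTwoCat,\Wmaxtimes)$. Thus $\widetilde{\cA}$ is a W*-3-category, and $F$, being a $\dag$-3-equivalence, is automatically normal.
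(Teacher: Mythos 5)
Your proposal follows exactly the route the paper intends: the paper states this lemma without proof, pointing to \cite[\S 7.4, Theorem 7.23]{gurski_2013} and noting it is "similar in spirit" to the transport-of-structure lemma, whose proof likewise consists of transporting the constraint data along the chosen unitary adjoint equivalences and observing that unitarity (and, in the W*-case, normality) passes through every pasting. Your write-up is a correct and somewhat more detailed rendering of that same argument, so there is nothing to add.
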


\begin{lemma}[Change of units] \label{lem:changeofunits3cats}
Let $\cA$ be a C*-3-category and let
$$\{\widetilde{I}_A\colon \bbC \to \cA(A,A)\}_{A \in \cA}$$
be a collection of $\dag$-2-functors indexed by the objects in $\cA$ with $\dag$-2-natural equivalences
$$F^1\colon I_A \tto \widetilde{I}_A \quad\text{for every } A \in \cA.$$
Then we may upgrade this data to a C*-3-category $\widetilde{\cA}$ and a $\dag$-3-functor $F\colon \cA \to \widetilde{\cA}$ such that
\begin{itemize}
    \item $\Ob \widetilde{\cA} = \Ob \cA$,
    \item $\widetilde{\cA}(A \to B) = \cA(A \to B)$ as C*-2-categories,
    \item Units in $\widetilde{\cA}$ are given by $\widetilde{I}_A$,
    \item $\widetilde{\cA}$ has the same 1-composition $\cccirc$ as $\cA$,
    \item $F$ is a $\dag$-3-equivalence which as a map acts as the identity, and
    \item Choosing unitary adjoint equivalences for $F^1$, we obtain the 1-unitor unitary adjoint equivalences for $F$.
\end{itemize}
When $\cA$ is a W*-3-category, it is automatic that $\widetilde{\cA}$ is a W*-3-category.
\end{lemma}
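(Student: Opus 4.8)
The plan is to follow the non-linear argument of \cite[\S 7.4, Theorem 7.24]{gurski_2013}, building $\widetilde{\cA}$ by keeping essentially everything in $\cA$ and merely replacing the unit $\dag$-2-functors, while checking at each stage that the new coherence data is unitary (so that it genuinely lives in $\CstarTwoCat$). Concretely, I would set $\Ob\widetilde{\cA}=\Ob\cA$, take $\widetilde{\cA}(A\to B)=\cA(A\to B)$ as C*-2-categories, keep the $\dag$-2-functor $\cccirc$ and the associator adjoint equivalence $\mathbf{a}$ of $\cA$ untouched, and declare the units of $\widetilde{\cA}$ to be the given $\widetilde{I}_A$. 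The first point to record is that $F^1_A\colon I_A\tto\widetilde{I}_A$, being a $\dag$-2-natural equivalence of $\dag$-2-functors out of $\bbC$, amounts to an equivalence $2$-cell $I_A(\bullet)\to\widetilde{I}_A(\bullet)$ in $\cA(A\to A)$, which — exactly as in Lemma \ref{lem:transportofstructure3cats}, using that equivalences in a C*-2-category admit unitary representatives — may be completed to a \emph{unitary} adjoint equivalence $\mathbf{F}^1_A=(F^1_A,(F^1_A)^{\sqdot},\eta^{F^1_A},\epsilon^{F^1_A})$. We fix such a choice once and for all.

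Next I would define the remaining constraint data of $\widetilde{\cA}$. The new left and right unitors $\widetilde{\mathbf{l}},\widetilde{\mathbf{r}}$ are obtained by whiskering the old $\mathbf{l},\mathbf{r}$ of $\cA$ by $\mathbf{F}^1$ along the unit leg (for $\widetilde{\mathbf{l}}$ using $\mathbf{F}^1$ at the source object, for $\widetilde{\mathbf{r}}$ at the target object); being built from $\cccirc$, $\ccirc$, and the unitary $\dag$-2-natural transformation $\mathbf{F}^1$, these are again unitary adjoint equivalences whose underlying transformations are $\dag$-2-natural. The pentagonator is left unchanged, $\widetilde{\pi}=\pi$, which is consistent since neither $\cccirc$ nor $\mathbf{a}$ changed. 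The three unity coheretors $\widetilde{\mu},\widetilde{\lambda},\widetilde{\rho}$ are then forced: each is the corresponding $\mu,\lambda,\rho$ of $\cA$ with the units/counits $\eta^{F^1},\epsilon^{F^1}$ pasted in at precisely the places where $\widetilde{\mathbf{l}},\widetilde{\mathbf{r}}$ were modified, hence unitary modifications by construction. With this data I would verify the tricategory axioms: the $3$-dimensional associahedron (non-abelian $4$-cocycle) axiom involves only $\mathbf{a}$ and $\pi$ and so holds verbatim, and the two normalized-cocycle axioms follow from those of $\cA$ together with the triangle/zig-zag identities of $\mathbf{F}^1$.

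Then I would produce the $\dag$-3-functor $F\colon\cA\to\widetilde{\cA}$: it is the identity on objects, the identity $\dag$-2-functor on each hom-C*-2-category, has compositor $\mathbf{F}^2=\id$ (since $\cccirc$ is unchanged), and unit constraint $\mathbf{F}^0$ given by $(F^1_A)^{\sqdot}\colon\widetilde{I}_A\tto I_A$ with its unitary adjoint-equivalence structure. The coherence $3$-cells $F^a,F^l,F^r$ are then identities or composites of $\eta^{F^1},\epsilon^{F^1}$, hence unitary modifications, and the two $\dag$-3-functor axioms hold: the one relating pentagonators is trivial because $\mathbf{F}^2=\id$ and $\widetilde{\pi}=\pi$, while the one relating middle unity coheretors is exactly what the definition of $\widetilde{\mu}$ was arranged to satisfy. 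Since $F$ is the identity on objects and on every hom-C*-2-category and has unitary adjoint-equivalence constraint data, it is a $\dag$-3-equivalence. Finally, in the W*-case every unit $\dag$-2-functor $\widetilde{I}_A\colon\bbC\to\cA(A\to A)$ is automatically normal and the hom-2-categories and $\cccirc$ are inherited from $\cA$ unchanged, so $\widetilde{\cA}$ is a W*-3-category and $F$ is automatically a normal $\dag$-3-equivalence.

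I expect the main obstacle to be the bookkeeping in the two normalized-cocycle axioms: one must paste the units and counits of $\mathbf{F}^1$ through the surface diagrams defining $\widetilde{\mu},\widetilde{\lambda},\widetilde{\rho}$ and reduce to the axioms of $\cA$ via the zig-zag identities, all while confirming that no non-unitary $2$-cell is ever introduced. This is a lengthy but routine diagram chase, essentially the content of \cite[\S 7.4]{gurski_2013}; the only genuinely new ingredient in the operator-algebraic setting — as already used in Lemma \ref{lem:transportofstructure3cats} and Lemma \ref{lem:changeofcomposition3cats} — is the upgrading of $\dag$-2-natural equivalences to unitary adjoint equivalences, which is what keeps every derived piece of data unitary.
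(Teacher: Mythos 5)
Your proposal is correct and follows exactly the route the paper intends: the paper states this lemma without proof, deferring to \cite[\S 7.4, Theorem 7.24]{gurski_2013} and noting it is similar in spirit to Lemma \ref{lem:transportofstructure3cats}, and your argument is precisely that adaptation — keep $\cccirc$, $\mathbf{a}$, and $\pi$, rebuild $\mathbf{l}$, $\mathbf{r}$, and the unity coheretors by pasting in the unitary adjoint equivalences $\mathbf{F}^1$, and take $F$ to be the identity with $\mathbf{F}^0 = (\mathbf{F}^1)^{\sqdot}$. The only addition beyond Gurski is the observation that every derived constraint remains unitary, which is exactly the point the paper makes for the analogous transport-of-structure argument.
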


\subsection{Operator cubical categories}

In this section, we provide an intermediate strictification step for our main coherence result.
\begin{definition}
We say that a C*-3-category $\cA$ (resp. W*-3-category) if its underlying 3-category is cubical, i.e. for all objects $A,B,C\in \cA$ we have
\begin{itemize}
    \item[(hom)] each $\cA(A \to B)$ is \emph{strict} C*-2-category (resp. W*-2-category);
    \item[($\cccirc$)] composition $\cccirc \colon \cA^2(A \to B \to C) \to \cA(A \to C)$ is a (resp. separately normal) \emph{cubical} $\dag$-2-functor;
    \item[(I)] the unit $\dag$-2-functor $I_A \colon \cA(A \to A)$ is \emph{strict}.
\end{itemize}
\end{definition}

\begin{theorem}\label{thm:everyoneisequivtocubical}
Every C*-3-category $\cA$ is equivalent to a cubical C*-3-category $\cA^{\cubical}$. Furthermore, if $\cA$ is a W*-3-category, then $\cA^{\cubical}$ is a cubical W*-3-category.
\end{theorem}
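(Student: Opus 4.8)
The plan is to build $\cA^{\cubical}$ by applying the three structure-transport lemmas of this section in succession, each one rectifying a single clause in the definition of a cubical operator $3$-category.

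First I would strictify the hom-$2$-categories. For each $A,B\in\cA$, replace $\cA(A\to B)$ by its cofibrant replacement $\widehat{\cA(A\to B)}$, a strict C*-$2$-category (strict W*-$2$-category when $\cA$ is W*) with its epic $2$-equivalence $\ev$; fix a biadjoint $\dag$-$2$-equivalence pseudo-inverse $s_{A,B}\colon\cA(A\to B)\to\widehat{\cA(A\to B)}$. Feeding the family $\{s_{A,B}\}$ into Lemma~\ref{lem:transportofstructure3cats} produces a C*-$3$-category $\cA_1$ with $\Ob\cA_1=\Ob\cA$, strict hom-$2$-categories $\cA_1(A\to B)=\widehat{\cA(A\to B)}$, and a $\dag$-$3$-equivalence $\cA\simeq\cA_1$; the W*-case is the last clause of that lemma. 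So $\cA_1$ satisfies (hom).

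Second I would make composition cubical. By construction the composition $\cccirc_1$ of $\cA_1$ is, up to the choices in Lemma~\ref{lem:transportofstructure3cats}, the $\dag$-$2$-functor $s_{AC}\circ\cccirc\circ(\ev_{AB}\maxblacktimes\ev_{BC})$ between strict C*-$2$-categories. The key point is that this is $\dag$-$2$-naturally equivalent to the genuinely cubical $\dag$-$2$-functor
$$\widehat{\cccirc}\circ C\colon\ \widehat{\cA(A\to B)}\maxblacktimes\widehat{\cA(B\to C)}\xrightarrow{\ C\ }\widehat{\cA^2(A\to B\to C)}\xrightarrow{\ \widehat{\cccirc}\ }\widehat{\cA(A\to C)},$$
where $C$ is the cubical comparison of Proposition~\ref{prop:cofibtensorator} and $\widehat{\cccirc}$ is the strict cofibrant replacement of $\cccirc$ from Proposition~\ref{prop:cofibfunctors}: the composite $\widehat{\cccirc}\circ C$ is cubical because $C$ is cubical and a strict $\dag$-$2$-functor preserves cubicality, and its domain and codomain match those of $\cccirc_1$ exactly because the hom-$2$-categories of $\cA_1$ are the cofibrant replacements. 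The equivalence $\cccirc_1\Rightarrow\widehat{\cccirc}\circ C$ is assembled from the unitary icons $u^{\ev}$ and $u^{\cccirc}$ of Propositions~\ref{prop:cofibtensorator} and~\ref{prop:cofibfunctors} and the unit and counit of $s_{A,B}\dashv\ev$. Feeding $\{\widehat{\cccirc}\circ C\}$ and this equivalence into Lemma~\ref{lem:changeofcomposition3cats} gives a C*-$3$-category $\cA_2\simeq\cA_1$ with the same strict hom-$2$-categories and cubical composition; in the W*-case one uses instead the separately normal cubical $\overline{C}$ of Proposition~\ref{prop:cofibtensorator}, so that $\widehat{\cccirc}\circ\overline{C}$ is separately normal and $\cA_2$ is a W*-$3$-category. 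Thus $\cA_2$ satisfies (hom) and ($\cccirc$).

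Third I would strictify the units. A strict $\dag$-$2$-functor out of $\bbC=B^2\bbC$ is just a choice of object of the target, so let $\widetilde{I}_A\colon\bbC\to\cA_2(A\to A)$ be the strict $\dag$-$2$-functor picking the object $I_A(\bullet)$; the unitor $I_A^0$ furnishes a $\dag$-$2$-natural equivalence $I_A\Rightarrow\widetilde{I}_A$, and Lemma~\ref{lem:changeofunits3cats} produces $\cA^{\cubical}\simeq\cA_2$ with the same strict hom-$2$-categories and cubical composition but now strict (and automatically normal) units, hence a cubical W*-$3$-category in the W*-case. Composing the three $\dag$-$3$-equivalences gives $\cA\simeq\cA^{\cubical}$, and $\cA^{\cubical}$ is cubical by construction. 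The main obstacle is the middle step: checking that the transported composition is $\dag$-$2$-naturally equivalent to $\widehat{\cccirc}\circ C$ and that the latter is genuinely cubical, which is exactly the place where the compatibility of cofibrant replacement with the C*/W*-Gray tensor product (Propositions~\ref{prop:cofibfunctors} and~\ref{prop:cofibtensorator}) does the work; the first and third steps are direct invocations of Lemmas~\ref{lem:transportofstructure3cats} and~\ref{lem:changeofunits3cats}.
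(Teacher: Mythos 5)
Your proposal is correct and follows essentially the same route as the paper: strictify the hom-$2$-categories via cofibrant replacement and transport of structure, replace the transported composition by the cubical composite $\widehat{\cccirc}\circ C$ using the unitary icons of Propositions~\ref{prop:cofibfunctors} and~\ref{prop:cofibtensorator} together with change of composition, and finally strictify the units via change of units. The only difference is that you spell out in slightly more detail why $\widehat{\cccirc}\circ C$ is cubical and how the comparison icons assemble, which the paper leaves to a displayed diagram.
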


\begin{proof}
For a C*-3-category $\cA$, we define $\cA^{\cubical}$ as follows:
\begin{itemize}
\item $\Ob \cA^{\cubical} = \Ob \cA$,
\item $\cA^{\cubical}(A \to B) = \widehat{\cA(A \to B)}$ for every $A , B \in \cA$.
\end{itemize}
Using \hyperref[lem:transportofstructure3cats]{transport of structure}, we obtain a C*-3-category equivalent to $\cA$ with strict hom-C*-2-categories and 1-composition given by
$$\widehat{\cA(A \to B)} \maxblacktimes \widehat{\cA(B \to C)} \xrightarrow{ \ev \maxblacktimes \ev} \cA^2(A \to B \to C) \xrightarrow{\cccirc} \cA(A \to C) \xrightarrow{\ev^{\sqdot}} \widehat{\cA(A \to C)},
$$
where we are choosing unitary adjoint equivalences $(\ev,\ev^{\sqdot},\eta^{\ev},\epsilon^{\ev})$ for $\ev$.
Observe, by Propositions \ref{prop:cofibfunctors} and \ref{prop:cofibtensorator} we have the following $\dag$-2-natural equivalence
\[\begin{tikzcd}[column sep = 50pt, row sep = 50pt]
& {\widehat{\cA^2}} & {\widehat{\cA}} \\
	{(\widehat{\cA}\,)^2} & {\cA^2} & {\cA} & {\widehat{\cA}}
	\arrow["\cccirc"', from=2-2, to=2-3]
	\arrow["{\ev^{\sqdot}}"', from=2-3, to=2-4]
	\arrow["{\widehat{\cccirc}}", from=1-2, to=1-3]
	\arrow["\ev"{description}, from=1-3, to=2-3]
	\arrow["\ev"{description}, from=1-2, to=2-2]
	\arrow[""{name=0, anchor=center, inner sep=0}, "\id", from=1-3, to=2-4]
	\arrow["{u^{\cccirc}}"{description}, shorten <=18pt, shorten >=18pt, Rightarrow, from=1-3, to=2-2]
	\arrow[""{name=1, anchor=center, inner sep=0}, "{\ev \maxblacktimes \ev}"', from=2-1, to=2-2]
	\arrow["C", from=2-1, to=1-2]
	\arrow["{\eta^{\ev}}"{description}, shorten <=4pt, Rightarrow, from=0, to=2-3]
	\arrow["{u^{\ev}}"{description}, shorten >=12pt, shorten <=18pt,Rightarrow, from=1-2, to=1]
\end{tikzcd}\]
between the cubical $\dag$-2-functor $\widehat{\cccirc} \circ C$ and the previously mentioned 1-composition. 
Using \hyperref[lem:changeofcomposition3cats]{change of composition}, we obtain a C*-3-category with strict hom-C*-2-categories and cubical 1-composition. 
Finally, by using \hyperref[lem:changeofunits3cats]{change of units} to replace each $I_A$ with its strictification, we obtain the desired equivalent cubical C*-3-category $\cA^{\cubical}$.
In the case when $\cA$ is a W*-3-category, we use the corresponding results to conclude that $\cA^{\cubical}$ is a W*-3-category. 
\end{proof}

\subsection{Coherence and concreteness for operator algebraic tricategories}

\begin{definition}
A $\CstarGray$-category is a category enriched in $\CstarGray$ in the sense of \cite{kelly_2005}.
A functor of $\CstarGray$-categories is then just a strict $\dag$-3-functor. 
We denote the category of $\CstarGray$-categories and strict $\dag$-3-functors by $\CstarGrayCat$.\\
Similarly, a $\WstarGray$-category is a category enriched in $\WstarGray$.
A functor of $\WstarGray$-categories is then just a strict normal $\dag$-3-functor. 
We denote the category of $\WstarGray$-categories and strict normal $\dag$-3-functors by $\WstarGrayCat$.
\end{definition}

In what remains, we consider the Yoneda embedding for cubical operator 3-categories, referring the interested reader to Appendix \S \ref{sec:yoneda} for the details of this construction. 

\begin{proposition}\label{prop:yonedaformsgraycat}
For $\cA$ a C*-3-category and $\cB$ a C*-Gray-category, $\CstarThreeCat(\cA \to \cB)$ forms a C*-Gray-category of
\begin{itemize}
\item[$(0)$] $\dag$-3-functors from $\cA$ to $\cB$,
\item[$(\text{hom})$] Hom-C*-2-categories $\CstarThreeCat(\morph{\cA}{F}{\cB} \tto \morph{\cA}{G}{\cB})$ as in Appendix \S\ref{sec:yoneda} Lemma \ref{lem:hombetween3cats},
\item[$(\cccirc)$] Cubical 1-composition $\cccirc$ as in Appendix \S\ref{sec:yoneda} Lemma \ref{lem:compositionin3Cats}
\end{itemize}
Moreover, when $\cA$ is a W*-3-category and $\cB$ is a W*-Gray-category, we have that $\WstarThreeCat(\cA \to \cB)$ forms a W*-Gray-category of normal $\dag$-3-functors from $\cA$ to $\cB$.
\end{proposition}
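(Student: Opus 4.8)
The plan is to verify, directly against the definition of a category enriched in $\CstarGray = (\CstarTwoCatst,\maxboxtimes)$, that the data furnished by Lemmas \ref{lem:hombetween3cats} and \ref{lem:compositionin3Cats} assembles into such an enriched category; this is the dagger/C*-refinement of the classical fact that a functor tricategory into a Gray-category is again a Gray-category, mirroring the treatment in \cite{gurski_2013}. Concretely, we must supply: (i) a set of objects --- the $\dag$-3-functors $\cA\to\cB$; (ii) for each pair $F,G$ a \emph{strict} C*-2-category of hom-objects; (iii) for each triple $F,G,H$ a morphism of $\CstarGray$, i.e.\ a strict $\dag$-2-functor $\CstarThreeCat(G\tto H)\maxboxtimes\CstarThreeCat(F\tto G)\to\CstarThreeCat(F\tto H)$; (iv) for each $F$ a strict $\dag$-2-functor $\bbC\to\CstarThreeCat(F\tto F)$; all subject to the associativity and left/right unit equalities. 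For (i)--(ii), Lemma \ref{lem:hombetween3cats} produces the 2-category whose objects are $\dag$-3-natural transformations, whose $1$-morphisms are $\dag$-3-modifications, and whose $2$-morphisms are uniformly bounded perturbations; this is a \emph{strict} C*-2-category because composition of modifications and perturbations is computed componentwise using the strict, complete hom-C*-2-categories of the C*-Gray-category $\cB$, so associativity, unitality and the C* axioms are inherited.

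For (iii), Lemma \ref{lem:compositionin3Cats} provides a cubical $\dag$-2-functor $\cccirc\colon\CstarThreeCat(G\tto H)\maxblacktimes\CstarThreeCat(F\tto G)\to\CstarThreeCat(F\tto H)$; by the universal property of the C*-Gray tensor product proved above, it factors through a unique strict $\dag$-2-functor on $\maxboxtimes$, which is the desired morphism of $\CstarGray$. For (iv), the identity $\dag$-3-natural transformation $\id_F$ is an object of the strict C*-2-category $\CstarThreeCat(F\tto F)$, and under the isomorphism $\CstarTwoCatst(\bbC\to\cC)\cong\cC$ of Proposition \ref{prop:CstarCartesianClosed} such an object is precisely the datum of a strict $\dag$-2-functor $\bbC\to\CstarThreeCat(F\tto F)$; this we take as the unit.

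It remains to check associativity and unitality. Both are equalities of strict $\dag$-2-functors out of iterated $\maxboxtimes$'s; by the universal property of $\maxboxtimes$ together with the density of simple tensors, it suffices to check them on objects and on the generating $1$- and $2$-morphisms of the local tensor products $\maxblacktimes$ (the elementary tensors $x\otimes\id$, $\id\otimes y$ and the interchangers $\Sigma$). On this generating data each side reduces to the corresponding statement about the underlying algebraic tricategories, namely that the functor tricategory is a $\mathbf{Gray}$-category when the target is a Gray-category: since the pentagonator, the unitors, and the interchangers of $\cccirc$ in $\cB$ are all identities, the coherence $3$-cells that would otherwise obstruct strict associativity of composition of transformations vanish, exactly as in \cite{gurski_2013}. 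One then confirms that no new operator-algebraic obstruction arises: $\dag$ is preserved throughout because $\cccirc$ in $\cB$ and all the constraint cells and $\Sigma$'s are $\dag$-preserving (resp.\ unitary), and the suprema defining $\|m\|$ and $\|\sigma\|$ remain finite under composition by sub-crossness of these norms, so every composite again lies in $\CstarThreeCat$ and the enrichment is genuinely in $\CstarGray$. Finally, in the W* case, when $\cB$ is a W*-Gray-category its $1$-composition $\cccirc$ is separately normal, hence the cubical $\dag$-2-functor of Lemma \ref{lem:compositionin3Cats} is \emph{separately normal} and extends across $\Wboxtimes$ to a normal strict $\dag$-2-functor, while the hom-2-categories are W*-2-categories, so $\WstarThreeCat(\cA\to\cB)$ is a $\WstarGray$-category.

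The main obstacle I anticipate is the associativity axiom for composition: one has to chase the interchangers $\Sigma$ of the induced composition functor through a triple composite $\CstarThreeCat(H\tto K)\maxboxtimes\CstarThreeCat(G\tto H)\maxboxtimes\CstarThreeCat(F\tto G)\to\CstarThreeCat(F\tto K)$, and exploit the cubicality of $\cccirc$ in $\cB$ together with the triviality of $\cB$'s pentagonator to see that the two resulting strict $\dag$-2-functors agree on all generators; tracking precisely which $3$-cells cancel, while simultaneously keeping uniform boundedness (and, in the W* case, separate normality) under control, is where the bookkeeping is heaviest.
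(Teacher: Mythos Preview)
Your proposal is correct and follows essentially the same approach as the paper. In fact, the paper treats this proposition as an immediate corollary of Lemmas~\ref{lem:hombetween3cats} and~\ref{lem:compositionin3Cats} (implicitly deferring the underlying algebraic verification to \cite[\S 9]{gurski_2013}) and offers no further proof, whereas you have spelled out the enriched-category axioms and the passage from cubical to strict via the universal property of $\maxboxtimes$ in more detail than the paper does.
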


\begin{theorem}\label{thm:yoneda}
The Yoneda embedding for a cubical C*-3-category $\cB$, given on objects by
$$B \mapsto \cB(- \to B),$$
can be equipped with the structure of monic $\dag$-3-functor
$$\yo\colon \cB \to \CstarThreeCat(\cB^{\oneop} \to \CstarTwoCat_{\Strict}),$$
which is locally a 2-equivalence. When $\cB$ is a W*-3-category, we obtain a monic normal $\dag$-3-functor
$$\yo\colon \cB \to \WstarThreeCat(\cB^{\oneop} \to \WstarTwoCat_{\Strict}).$$
\end{theorem}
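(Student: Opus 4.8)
The plan is to mimic the classical tricategorical Yoneda construction (as in Gurski \cite{gurski_2013}) but carried out internally to the enriched setting over $\CstarGray$ (resp. $\WstarGray$), using the operator-algebraic machinery developed in Sections \ref{sec:2cats}--\ref{sec:3cats}. First I would assemble the target: by Proposition \ref{prop:yonedaformsgraycat}, $\CstarThreeCat(\cB^{\oneop} \to \CstarTwoCat_{\Strict})$ is a C*-Gray-category, i.e.\ a category enriched in $\CstarGray = (\CstarTwoCatst, \maxboxtimes)$, with the cubical composition and strict hom-2-categories from Appendix \S\ref{sec:yoneda}. The representable $\dag$-3-functors $\cB(- \to B)\colon \cB^{\oneop} \to \CstarTwoCat_{\Strict}$ make sense because $\cB$ is cubical: each hom-C*-2-category $\cB(A \to B)$ is strict, and the composition $\cccirc$ being a cubical $\dag$-2-functor is exactly what is needed to upgrade $\cB(- \to B)$ to a genuine $\dag$-3-functor (the functoriality data $F^2, F^0, F^a, F^l, F^r$ is supplied by associators/unitors of $\cB$, which are unitary adjoint equivalences living in the appropriate $\CstarTwoCat$).

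Next I would construct $\yo$ as a $\dag$-3-functor. On objects it is $B \mapsto \cB(- \to B)$. On hom-2-categories it is the assignment $\cB(B \to B') \to \CstarThreeCat(\cB(- \to B) \to \cB(- \to B'))$ given by post-composition $(-) \cccirc (-)$; this is a $\dag$-2-functor because $\cccirc$ is a cubical (hence $\dag$-2-)functor and because the hom-2-category of 3-natural transformations is built so that post-composition lands in it. The compositor $\yo^2$ and unitor $\yo^0$ (which must be $\dag$-2-natural transformations, i.e.\ unitary adjoint equivalences in $\CstarTwoCat$) are extracted from the associator $a$ and unit constraints $l,r$ of $\cB$; the modifications $\yo^a, \yo^l, \yo^r$ come from the pentagonator $\pi$ and unity coheretors $\mu, \lambda, \rho$ of $\cB$, which are unitary modifications by hypothesis. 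The relevant 3-functor axioms for $\yo$ then reduce to the non-abelian $4$-cocycle condition and normalized cocycle conditions that $\cB$ already satisfies as a C*-3-category. Crucially, since $\cB$ is cubical, one checks $\yo$ is in fact \emph{strict} as a 3-functor only after recalling that the codomain is a Gray-category, not a strict 3-category, so one should instead verify that $\yo$ is a morphism of $\CstarGray$-categories in the enriched sense — i.e.\ that the hom-2-functors strictly respect the Gray-composition up to the prescribed interchanger data. This uses the universal property of $\maxboxtimes$ from the Universal Property of the C*-Gray tensor product to factor the relevant cubical functors.

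Then I would prove $\yo$ is \emph{locally a 2-equivalence}: fixing $B, B' \in \cB$, one shows $\cB(B \to B') \to \CstarThreeCat\big(\cB(- \to B) \tto \cB(- \to B')\big)$ is a $\dag$-2-equivalence. This is the enriched Yoneda lemma: a 3-natural transformation $\cB(- \to B) \tto \cB(- \to B')$ is determined by its component at $B$ evaluated on $\id_B$, giving an inverse-up-to-unitary-equivalence on objects, on 1-morphisms, and a bijection on 2-morphisms (3-cells of $\cB$). Unitarity of the relevant isomorphisms is guaranteed because all coherence data ($l, r, a$ and their counits) are unitary, so the naturator and modification data witnessing the equivalence are unitary. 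In particular $\yo$ is unitarily essentially surjective on 1-morphisms and fully faithful on 2-morphisms at the level of each hom-2-category, hence a local $\dag$-2-equivalence. Being locally faithful on all three levels and injective on objects (distinct $B$ give non-isomorphic — indeed distinct — representables, since $\cB(B\to B)$ contains $\id_B$) yields that $\yo$ is monic in the sense of the footnote to Theorem \ref{thm:coherence2cats}: injective on objects, 1-morphisms, 2-morphisms, and 3-morphisms.

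For the W* case I would run the identical argument with $\CstarTwoCat_{\Strict}$ replaced by $\WstarTwoCat_{\Strict}$, $\maxboxtimes$ by $\Wboxtimes$, and checking that $\yo$ is normal: the representables $\cB(- \to B)$ are normal $\dag$-3-functors because $\cccirc$ is separately normal in a W*-3-category, and post-composition by a fixed 1-morphism is normal on each hom-W*-2-category; the rest of the data is unitary hence automatically normal. I expect \textbf{the main obstacle} to be bookkeeping rather than conceptual: verifying that the modification-level coherence data $\yo^a, \yo^l, \yo^r$ extracted from $\pi, \mu, \lambda, \rho$ actually satisfy the two 3-functor axioms, and — more delicately — confirming that in the \emph{cubical} setting the composition $\cccirc$ being a cubical $\dag$-2-functor (so that its associated interchanger $\Sigma$ is an identity when one argument is an identity) is precisely what makes $\yo$ respect the Gray-enriched composition, so that $\yo$ is genuinely a functor of $\CstarGray$-categories and not merely a weak $\dag$-3-functor. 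This is where the universal property of $\maxboxtimes$ and the explicit description of $\CstarThreeCat(\cA \to \cB)$ in Appendix \S\ref{sec:yoneda} must be invoked carefully; I would defer the full verification to that appendix as the theorem statement already indicates.
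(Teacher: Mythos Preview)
Your proposal is essentially correct and follows the same approach as the paper: both lift Gurski's tricategorical Yoneda embedding to the $\dag$/operator-algebraic setting by verifying that all coherence data (built from $a$, $l$, $r$, $\pi$, $\mu$, $\lambda$, $\rho$) is unitary, and both defer the underlying 3-functor axioms and local biequivalence to \cite{gurski_2013}. Two small clarifications: first, the constraint unitaries $\yo_B^\alpha$, $\yo_X^2$, $\yo^\alpha$, $\yo^\lambda$, $\yo^\rho$ are not the coherence cells $\pi$, $\lambda$, $\rho$, $\mu$ themselves but specific \emph{mates} of them under the unitary adjoint equivalences $\mathbf{a}$, $\mathbf{l}$, $\mathbf{r}$ --- unitarity of these mates follows because the units and counits involved are unitary; second, $\yo$ is constructed as a \emph{weak} $\dag$-3-functor (with nontrivial $\yo^2$, $\yo^0$ given componentwise by $a^{\sqdot}$ and $\ell^{\sqdot}$), not as a strict $\CstarGray$-enriched functor, so your digression about invoking the universal property of $\maxboxtimes$ to make $\yo$ strict is unnecessary --- the target being a Gray-category does not force the embedding to be strict.
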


\begin{theorem}[Coherence for operator 3-categories]
Every C*-3-category is 3-equivalent to a C*-Gray-category, and every W*-3-category is 3-equivalent to a W*-Gray-category.
\end{theorem}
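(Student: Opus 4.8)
The plan is to assemble the theorem from results already in hand: cubical replacement, the self-enrichment of $\CstarGray$, the functor-$\CstarGray$-category construction, and the Yoneda embedding for cubical operator $3$-categories. First I would reduce to the cubical case. Given a C*-3-category $\cA$, Theorem \ref{thm:everyoneisequivtocubical} produces a cubical C*-3-category $\cA^{\cubical}$ together with a $3$-equivalence $\cA \simeq \cA^{\cubical}$; since $3$-equivalence is transitive it suffices to produce a C*-Gray-category $3$-equivalent to $\cA^{\cubical}$, and the statement for $\cA$ follows.

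Next I would record that $\CstarTwoCat_{\Strict}$ is itself a C*-Gray-category. Since $\CstarGray = (\CstarTwoCatst, \maxboxtimes)$ is closed symmetric monoidal, it is canonically enriched over itself: the hom-objects are the internal homs $\CstarTwoCatst(\cA \to \cB)$, which are strict C*-2-categories; composition is induced by evaluation; and the units come from the monoidal unit $\bbC$. Unwinding the definitions, this self-enrichment is exactly the C*-3-category $\CstarTwoCat_{\Strict}$, now viewed as a C*-Gray-category. With this in place, Proposition \ref{prop:yonedaformsgraycat} applies to the C*-3-category $(\cA^{\cubical})^{\oneop}$ and the C*-Gray-category $\CstarTwoCat_{\Strict}$, showing that $\CstarThreeCat\big((\cA^{\cubical})^{\oneop} \to \CstarTwoCat_{\Strict}\big)$ is a C*-Gray-category.

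Now I would invoke the Yoneda embedding. By Theorem \ref{thm:yoneda}, since $\cA^{\cubical}$ is cubical,
$$\yo\colon \cA^{\cubical} \to \CstarThreeCat\big((\cA^{\cubical})^{\oneop} \to \CstarTwoCat_{\Strict}\big)$$
is a monic $\dag$-3-functor which is locally a $2$-equivalence. Let $\mathcal{G}$ be the full sub-C*-Gray-category of the target on the object set $\yo(\Ob \cA^{\cubical})$ --- same hom-2-categories, same $\maxboxtimes$-composition and units, restricted to these objects; this restriction is well defined, so $\mathcal{G}$ is again a C*-Gray-category. The corestriction $\cA^{\cubical} \to \mathcal{G}$ is then a $\dag$-3-functor which is bijective on objects (as $\yo$ is injective on objects) and locally a $2$-equivalence. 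A $\dag$-3-functor that is essentially surjective on objects and locally a $2$-equivalence is a $3$-equivalence --- the evident $3$-categorical analogue of the corresponding fact for bicategories, for which I would either cite \cite{gurski_2013} or assemble the pseudo-inverse cell by cell. Hence $\cA^{\cubical} \simeq \mathcal{G}$, and composing with $\cA \simeq \cA^{\cubical}$ gives the C* case. The W* case runs verbatim: replace $\maxboxtimes$, $\CstarGray$, $\CstarTwoCat_{\Strict}$, $\CstarThreeCat$ and the C*-halves of Theorems \ref{thm:everyoneisequivtocubical}, \ref{thm:yoneda} and Proposition \ref{prop:yonedaformsgraycat} by $\Wboxtimes$, $\WstarGray$, $\WstarTwoCat_{\Strict}$, $\WstarThreeCat$ and their W*-counterparts, noting that all the relevant data is then automatically normal.

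The main obstacle is the Whitehead-type step: checking carefully that surjectivity on objects together with local $2$-equivalence upgrades to a genuine $3$-equivalence of operator $3$-categories (with the $\dag$- and, in the W* case, normality bookkeeping carried along), and confirming that the full sub-C*-Gray-category $\mathcal{G}$ genuinely inherits the enriched composition and units. A secondary point needing care is matching the underlying C*-3-category of the self-enrichment of $\CstarGray$ with the previously defined $\CstarTwoCat_{\Strict}$ on the nose, so that Proposition \ref{prop:yonedaformsgraycat} applies as stated. Everything else is a direct appeal to results already established in the article.
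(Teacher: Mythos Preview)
Your proposal is correct and follows essentially the same route as the paper: cubical replacement via Theorem~\ref{thm:everyoneisequivtocubical}, then Yoneda (Theorem~\ref{thm:yoneda}) into the functor category $\CstarThreeCat\big((\cA^{\cubical})^{\oneop} \to \CstarTwoCat_{\Strict}\big)$, which is a C*-Gray-category by Proposition~\ref{prop:yonedaformsgraycat}. The paper's proof is a single line and leaves implicit the steps you spell out (taking the full image sub-Gray-category, the Whitehead-type argument, and identifying $\CstarTwoCat_{\Strict}$ with the self-enrichment of $\CstarGray$); your explicit treatment of these points is a faithful unpacking rather than a different strategy.
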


\begin{proof}
By Theorem \ref{thm:everyoneisequivtocubical} and Theorem \ref{thm:yoneda}
$$\cA \xrightarrow{\sim} \cA^{\cubical} \xhookrightarrow{\yo} \CstarThreeCat\big((\cA^{\cubical})^{\oneop} \to \CstarTwoCat_{\Strict}\big)$$
where the latter is a C*-Gray-category by Proposition \ref{prop:yonedaformsgraycat}.
\end{proof}
 
\begin{theorem}[Gelfand-Naimark for operator 3-categories]
Every small C*-3-category is 3-equivalent to a sub-C*-Gray-category of $\ThreeHilb$.
\end{theorem}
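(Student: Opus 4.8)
### Proof Proposal

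\textbf{Overall strategy.} The plan is to combine the coherence theorem for operator 3-categories (every C*-3-category is 3-equivalent to a C*-Gray-category) with the Gelfand--Naimark-type concreteness result for C*-2-categories (Theorem: every small C*-2-category embeds into $\TwoHilb$ via $\Upsilon_2$), together with the Yoneda embedding for cubical C*-3-categories (Theorem~\ref{thm:yoneda}). The key observation is that $\ThreeHilb$ is the full sub-W*-3-category of $\WstarTwoCat$ on \emph{concrete} W*-2-categories, and that the Yoneda construction lands a cubical C*-3-category inside $\CstarThreeCat(\cB^{\oneop} \to \CstarTwoCat_{\Strict})$, whose hom-2-categories are built out of the target 2-category. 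So if we can arrange for the target to be $\ThreeHilb$ rather than all of $\CstarTwoCat_{\Strict}$, we are done.

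\textbf{Key steps, in order.} First, given a small C*-3-category $\cA$, apply Theorem~\ref{thm:everyoneisequivtocubical} to replace $\cA$ by an equivalent cubical C*-3-category $\cA^{\cubical}$; note $\cA^{\cubical}$ is again small since the cofibrant-replacement and transport-of-structure constructions do not increase cardinality beyond a set. Second, observe that $(\cA^{\cubical})^{\oneop}$ is a small C*-3-category, so each of its hom-2-categories $\cA^{\cubical}(A \to B)$ is a small C*-2-category; by the concreteness theorem for C*-2-categories each such hom-2-category admits a monic $\dag$-2-functor $\Upsilon_2$ into $\TwoHilb \subseteq \ThreeHilb$. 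Third, run the Yoneda embedding but with the representable $\cB(- \to B)$ post-composed pointwise with $\Upsilon_2$: i.e. define the modified presheaf $B \mapsto \big( A \mapsto \Upsilon_2(\cA^{\cubical}(A \to B)) \big)$. Since $\Upsilon_2$ is a $\dag$-2-functor (and strict when the source is strict, which it is here by cubicality of $\cA^{\cubical}$), this assembles into a $\dag$-3-functor
$$
\cA^{\cubical} \longrightarrow \CstarThreeCat\big((\cA^{\cubical})^{\oneop} \to \ThreeHilb\big),
$$
and monicity of $\yo$ together with monicity of $\Upsilon_2$ on each hom-2-category gives that this composite is monic. Fourth, invoke Proposition~\ref{prop:yonedaformsgraycat} to see that $\CstarThreeCat\big((\cA^{\cubical})^{\oneop} \to \ThreeHilb\big)$ is a C*-Gray-category; since $\ThreeHilb$ is a sub-W*-3-category of $\WstarTwoCat$ whose objects are concrete, the hom-2-categories of this functor 3-category are themselves concrete W*-2-categories, hence objects of $\ThreeHilb$, so the functor 3-category is in fact a sub-C*-Gray-category of $\ThreeHilb$ (or at worst equivalent to one — one checks closure of $\ThreeHilb$ under the relevant $\Wblacktimes$ and functor-category constructions). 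Finally, chain the equivalences:
$$
\cA \xrightarrow{\ \sim\ } \cA^{\cubical} \xhookrightarrow{\ \yo \circ \Upsilon_2\ } \CstarThreeCat\big((\cA^{\cubical})^{\oneop} \to \ThreeHilb\big) \subseteq \ThreeHilb,
$$
and take the image, a sub-C*-Gray-category of $\ThreeHilb$ 3-equivalent to $\cA$.

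\textbf{Main obstacle.} The delicate point is verifying that $\ThreeHilb$ (equivalently, the collection of concrete W*-2-categories) is actually closed under the operations used to build $\CstarThreeCat(\cB^{\oneop} \to \ThreeHilb)$ — namely the W*-Gray tensor product $\Wblacktimes$ of hom-2-categories and the W*-2-category of $\dag$-2-functors/transformations/modifications between concrete W*-2-categories (which is how the hom-2-categories of the functor 3-category are assembled in Appendix~\S\ref{sec:yoneda}). One must check that a $\Wblacktimes$-product of concrete W*-2-categories is again concrete (this should follow from the W*-tensor-product construction on hom-W*-categories landing back in subcategories of $\Hilb$), and that $\WstarTwoCat_{\Strict}(\cC \to \cD)$ is concrete when $\cD$ is — for which one would apply $\widetilde{\Upsilon}_2$, or simply note that the functor 2-category's hom-W*-categories are built from products/subobjects of the hom-W*-categories of $\cD$, which are concrete. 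If closure were to fail on the nose, the fallback is to replace ``$\subseteq \ThreeHilb$'' by ``3-equivalent to a sub-C*-Gray-category of $\ThreeHilb$,'' re-applying $\widetilde{\Upsilon}_2$ or the 2-categorical Gelfand--Naimark functor one more time to the hom-2-categories of the functor 3-category; this costs nothing since the statement only claims 3-equivalence.
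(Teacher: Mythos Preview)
Your overall strategy---Yoneda plus a 2-categorical GNS---matches the paper's, but the execution has two gaps that matter.

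First, in step~3 you write that post-composing each representable $\cA^{\cubical}(-\to B)$ with $\Upsilon_2$ ``assembles into a $\dag$-3-functor'' because $\Upsilon_2$ is a $\dag$-2-functor. This is not enough: $\Upsilon_2$ is a $\dag$-2-functor out of \emph{one} C*-2-category at a time, but to make the modified presheaf functorial in $B$ you must send a 1-morphism $X\colon B\to B'$ to a normal $\dag$-2-functor $\Upsilon_2(\cA^{\cubical}(A\to B))\to\Upsilon_2(\cA^{\cubical}(A\to B'))$, and similarly for higher cells. What you actually need is the extension of the 2-categorical GNS to a monic $\dag$-3-functor $\GNS''_2\colon\CstarTwoCat_{\Strict,\Small}\to\ThreeHilb$; this is the nontrivial construction (done by W*-completing and transporting along $\widetilde\Upsilon$ as in the 2-categorical case) and the paper spells it out explicitly.

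Second, step~4 conflates two different senses of ``concrete.'' A sub-C*-Gray-category of $\ThreeHilb$ has concrete W*-2-categories as \emph{objects}; the fact that the functor 3-category $\CstarThreeCat\big((\cA^{\cubical})^{\oneop}\to\ThreeHilb\big)$ has concrete \emph{hom}-2-categories does not exhibit it inside $\ThreeHilb$, since its objects are 3-functors, not W*-2-categories. The closure checks you flag as the ``main obstacle'' are therefore the wrong verifications. The paper bypasses the functor 3-category entirely by using the \emph{amalgamated} Yoneda $\yo^{\amalg}(B)=\coprod_A\cA^{\cubical}(A\to B)$, which is a monic $\dag$-3-functor landing directly in $\CstarTwoCat_{\Strict,\Small}$; composing with $\GNS''_2$ then gives $\cA^{\cubical}\hookrightarrow\ThreeHilb$ on the nose, and the image is the desired sub-C*-Gray-category. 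Your fallback of ``re-applying $\widetilde\Upsilon_2$ to the hom-2-categories of the functor 3-category'' still does not produce an embedding of the functor 3-category into $\ThreeHilb$; the amalgamation step is what is genuinely missing.
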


\begin{proof}
Notice there is a monic $\dag$-3-functor $\yo^{\amalg}\colon \cA \to \CstarTwoCat_{\small}$ given on objects by
$$
\yo^{\amalg}(B) = \coprod_{A \in \cA} \cA(A \to B).
$$
By extending the GNS construction for C*-2-categories into a monic $\dag$-3-functor $\GNS\colon \CstarTwoCat_{\small} \to \ThreeHilb$, the image of the composition
$$
\cA \xhookrightarrow{\yo^{\amalg}} \CstarTwoCat_{\small} \xhookrightarrow{\GNS} \ThreeHilb
$$
is equivalent to $\cA$.
\end{proof}
\appendix

\section{Background on operator categories} \label{1cats}
In this section, we provide the requisite background for operator 1-categories in the style of Section \S \ref{sec:2cats}.

\subsection{Operator categories}\label{subsec:operator1categories} 

\begin{definition}[Operator category]
    A C*-category is a $\bbC$-linear category $\cA$ equipped with:
\begin{itemize}
    \item[($\dag$)] a conjugate linear contravariant involution $\dag$ fixing objects, and 
    \item[(C*)] a sub-multiplicative norm $\|\cdot\|$ satisfying the C*-axiom. 
\end{itemize}
    As a technical condition, we require algebraically positive morphisms to be spectrally positive, i.e.
\begin{itemize}
    \item[($\geq 0$)] for $f \in \cA(A \to B)$, there exists $g \in \End(A)$ such that $f^\dag \circ f = g^\dag \circ g$.
\end{itemize}
The previous condition holds automatically when $\cA$ admits direct sums. We further say that a C*-category $\cA$ is W* if:
\begin{itemize}
    \item[(W*)] every hom-space $\cA(A \to B)$ admits a predual $\cA(A \to B)_*$, i.e. a Banach space with 
    $$\cA(A \to B)_*^* \cong \cA(A \to B).$$  
\end{itemize}
It automatically follows that $\circ$ is separately weak*-continuous.    
\end{definition}

\begin{example}[Concrete operator 1-categories]
The prototypical example of an operator category is $\Hilb$, the W*-category of Hilbert spaces and bounded linear transformations. Here, the norm of a morphism is given by the operator norm and the dagger of a morphism is given by its adjoint. The predual 
$$\Hilb(A \to B)_*$$
of the space of bounded linear transformations between the Hilbert spaces $A$ and $B$ is given by the following Banach space of trace-class operators
$$\cL_1(B \to A) \coloneqq \{x\colon B \to A\mid \Tr(|x|) \leq \infty\}$$
after quotienting out by the negligibles of $\|\cdot\| \coloneqq \Tr(|\cdot|)$.\footnote{Here $|x|$ denotes the square root $(x^\dag x)^{1/2}$ of the element $x^\dag \circ x$ in the C*-algebra $\End(A)$.}

Furthermore, every norm-closed linear subcategory of $\Hilb$ forms a C*-category, while every WOT-closed subcategory of $\Hilb$ forms a W*-category. We call such operator 1-categories \emph{concrete}.
\end{example}

The previous example motivates the following notion of an isomorphism compatible with an involution.
\begin{definition}[Unitaries]
We say that a morphism $x$ in an operator category is unitary when $x^\dag = x^{-1}$.
\end{definition}

\begin{definition}[(Normal) $\dag$-functor]
A $\dag$-functor $F\colon \cA \to \cB$ between C*-categories is a linear functor which is $\dag$-preserving, i.e.
$$F(x)^\dag = F(x^\dag) \qquad\text{for } x \in \cA(A \to B).$$ 
Furthermore, we say that a $\dag$-functor $F\colon \cA \to \cB$ between W*-categories is \emph{normal} when it is weak*-continuous on each hom-space.
\end{definition}

% \begin{example}[Normal $\dag$-functor]
% Given $A \in \Hilb$, there is a normal $\dag$-functor $\Hilb \to \Hilb$ given by tensoring with $A$.
% \end{example}

% \begin{fact}
% Let $F\colon \cA \to \cB$ be a $\dag$-functor between C*-categories. Show that $F$ is contractive, i.e. for every morphism $x \in \cA(A \to B)$ we have $\|Fx\| \leq \|x\|$.

% (\emph{Hint: Use the fact that $\cA$ admits direct sums together with the analogous statement for C*-algebras.})
% \end{fact}
\begin{definition}[Uniformly bounded natural transformations]
    We say that a natural transformation $\alpha \colon F \Rightarrow G$ between $\dag$-functors $F,G\colon \cA \to \cB$ is \emph{uniformly bounded} when 
    $$\|\alpha\| \coloneqq \sup_{A \in \cA} \|\alpha_A\| < \infty.$$
\end{definition}

Operator 1-categories \emph{are} 1-categories of operators on Hilbert spaces. This is made precise by the Gelfand-Naimark theorem  \cite{Gelfand_Neumark_1994} and, in particular, the Gelfand-Naimark-Segal (GNS) construction \cite{Segal_1947}. The following categorification of the usual concreteness result for algebras was shown by \cite{GLR1985}. We also note that the GNS construction involves the Yoneda embedding for an operator category.

\begin{theorem}[GNS]\label{lem:FunctionalsUniversalRep}
Every small C*-category $\cA$ admits an embedding\footnote{By embedding, we mean injective on objects and faithful} $\dag$-functor 
$$\Upsilon\colon \cA \to \Hilb.$$
% called the universal
% \footnote{Capital upsilon stands for $\Upsilon$niversal.}
% representation of $\cA$. 
This construction satisfies the following property:
\begin{itemize}
\item For objects $A, B \in \cA$ and a functional $\varphi \in \cA(A \to B)^*$, there exist $\xi \in \Upsilon(A)$ and $\eta \in \Upsilon(B)$
% with $\|\xi\| \|\eta\| \leq \|\varphi\|$ \nn{Might not need this?} 
such that
$$\varphi(x) = \langle \Upsilon(x) \xi, \eta \rangle, \quad
\text{ for all } x \in \cA(A \to B).$$
In this case we will write $\varphi = \langle \Upsilon \cdot \xi, \eta \rangle$.
\end{itemize}
Moreover, when $\cA$ is a small W*-category, there exists a normal embedding $\dag$-functor $\Upsilon^{\rmW^*}\colon \cA \to \Hilb$.
\end{theorem}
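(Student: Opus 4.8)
The plan is to run the classical GNS construction one categorical level up, building the representing Hilbert spaces out of the linear functionals on the hom-spaces of $\cA$. First I would fix, for each pair of objects $A, B \in \cA$, the Banach space $\cA(A \to B)^*$ of bounded linear functionals, and for each functional $\varphi \in \cA(A \to B)^*$ with polar decomposition data I would perform a GNS-type construction: on the vector space spanned by formal symbols $x \cdot \xi_\varphi$ for $x$ a morphism out of $A$ (suitably interpreted), define a (possibly degenerate) sesquilinear form using $\varphi$ together with the involution and composition of $\cA$, quotient by the null space, and complete. The technical condition $(\geq 0)$ — that algebraically positive morphisms are spectrally positive — is exactly what guarantees this form is positive semidefinite, so the completion is a genuine Hilbert space. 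Assembling these over all functionals (a direct sum over a sufficiently large separating family, or over all of $\bigsqcup_{A,B} \cA(A\to B)^*$ as the notation $\varphi = \langle \Upsilon \cdot \xi, \eta\rangle$ suggests) and over all objects gives, for each object $A$, a Hilbert space $\Upsilon(A)$.

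Next I would check that $\Upsilon$ is a $\dag$-functor: left composition by a morphism $x \in \cA(A \to B)$ should act as a bounded operator $\Upsilon(x) \colon \Upsilon(A) \to \Upsilon(B)$, linearity and functoriality are immediate from the construction, and the $\dag$-preservation $\Upsilon(x^\dag) = \Upsilon(x)^\dag$ follows from the symmetry built into the sesquilinear forms (using that $\varphi \mapsto \bar\varphi \circ \dag$ is available and that composition interacts with $\dag$ contravariantly). Faithfulness on each hom-space follows because the C*-identity plus the fact that injective $*$-homomorphisms between C*-algebras (here the endomorphism algebras) are isometric: if $\Upsilon(x) = 0$ then $\Upsilon(x^\dag \circ x) = 0$, and since $x^\dag \circ x$ lives in the endomorphism C*-algebra $\End(A)$ whose functionals are all realized, this forces $x^\dag \circ x = 0$, hence $x = 0$ by the C*-axiom. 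Injectivity on objects is arranged by, e.g., adjoining the identity functional data so that distinct objects get distinct (nonzero) Hilbert spaces. The displayed reproducing property $\varphi(x) = \langle \Upsilon(x)\xi, \eta\rangle$ is then simply the statement that $\xi, \eta$ are the canonical cyclic-type vectors associated to $\varphi$ in the relevant summand.

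For the W*-refinement, I would replace $\cA(A \to B)^*$ by the predual $\cA(A \to B)_*$ throughout, i.e. build $\Upsilon^{\rmW^*}$ out of only the normal functionals. The same GNS machinery applies verbatim, and the resulting $\dag$-functor is automatically normal because left-multiplication operators arising from normal functionals are weak*-continuous on each hom-space — one checks that $x \mapsto \langle \Upsilon^{\rmW^*}(x)\xi_\varphi, \xi_\varphi\rangle = \varphi(x)$ is weak*-continuous for each generating $\varphi \in \cA(A\to B)_*$, and these span a weak*-dense, hence by boundedness norming, set; separate weak*-continuity of $\circ$ (which holds automatically in a W*-category) is what makes the $\Upsilon^{\rmW^*}(x)$ well-defined bounded operators on the completed Hilbert spaces.

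The main obstacle I anticipate is purely the bookkeeping of the categorified GNS construction: one must be careful that the sesquilinear form attached to $\varphi \in \cA(A \to B)^*$ lives naturally on a space indexed so that a single morphism $x \in \cA(A \to B)$ induces a \emph{coherent} family of operators between the pieces of $\Upsilon(A)$ and $\Upsilon(B)$ across all functionals simultaneously, and that this is genuinely functorial rather than merely well-defined pair-by-pair. The positivity input $(\geq 0)$ is the one genuinely non-formal hypothesis and must be invoked precisely where one shows the GNS form is positive semidefinite; everything else is a routine, if somewhat lengthy, transcription of the algebra-level argument, so I would cite \cite{GLR1985} for the full details rather than reproduce them.
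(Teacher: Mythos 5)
Your proposal is correct and follows the same route the paper intends: the paper gives no proof of this statement, instead citing \cite{GLR1985}, whose argument is exactly the categorified GNS construction you sketch (GNS spaces built from positive functionals on the endomorphism algebras, summed over a separating family, with the positivity axiom $(\geq 0)$ supplying positive semidefiniteness, polar decomposition giving the vector-functional property, and normal functionals replacing $\cA(A \to B)^*$ in the W* case). Your deferral of the remaining bookkeeping to \cite{GLR1985} matches the paper's own treatment.
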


\begin{definition}
We define the C*-category $\GNS(\cA)$ to be the image of $\Upsilon$ in $\Hilb$, for which $\Upsilon\colon \cA \to \GNS(\cA)$ is an isomorphism of C*-categories. Moreover, we call operator subcategories of $\Hilb$ concrete.
\end{definition}
% \begin{exer}\label{ex:CorrespondenceBoundedSesquiForms}
% Note that each $x \in \Hilb(A \to B)$ induces a bounded sesquilinear form $B_x\colon A \times B \to \bbC$ given by
% $$B_x(\xi, \eta) \coloneqq \langle x\xi, \eta \rangle.$$
% Show that the map $x \mapsto B_x$ is an isometric bijective correspondence between operators in $\Hilb(A \to B)$ and bounded sesquilinear forms $A \times B \to \bbC$.

% (\emph{Hint: First show the map $x \mapsto B_x$ is isometric, then show surjectivity using the Riesz representation theorem.})
% \end{exer}

\begin{definition}[Bicommutant]
Given any C*-subcategory $\cA$ of $\Hilb$, we may take its WOT-closure in $\Hilb$, a W*-category which we will call the bicommutant $\cA''$ of $\cA$. 
\end{definition}

\begin{theorem}[Kaplansky Density Theorem]\label{thm:KDT}
% For a subset $A \subset \Hilb(\cH \to \cK)$ and $r \geq 0$, let 
% $$(A)_r \coloneqq \sset{x \in A}{\|x\| \leq r},$$
% and let $\overline{A}$ be the WOT-closure of $A$ in $\Hilb(\cH \to \cK)$. 
% Then
% $$\overline{(A)_r} = (\overline{A})_r.$$
For a subset $S \subset \Hilb(A \to B)$, if $x\colon A \to B$ is in the SOT-closure of $A$, then there exist $(x_\lambda) \subset S$ with $\|x_\lambda\| \leq \|x\|$ such that $x_\lambda \to x$ SOT.
\end{theorem}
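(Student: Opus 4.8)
The plan is to reduce the statement to the classical (single-algebra) Kaplansky density theorem by passing to an endomorphism C*-algebra. For the norm-control conclusion $\|x_\lambda\| \le \|x\|$ to be attainable, the set $S$ must carry the ambient $\dag$- and C*-structure, so I take $S = \mathcal{S}(A \to B)$ to be the $A$-$B$ hom-space of a C*-subcategory $\mathcal{S} \subseteq \Hilb$ containing $A$ and $B$, with SOT-closure formed inside $\Hilb(A \to B)$. First I would form the direct sum $A \oplus B$ in $\Hilb$ and set $\cE \coloneqq \End_{\mathcal{S}}(A \oplus B)$, a unital C*-algebra concretely represented on $H_A \oplus H_B$. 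The identity morphisms give orthogonal projections $p \coloneqq \id_A$ and $q \coloneqq \id_B$ in $\cE$ with $p + q = \id_{A \oplus B}$, and the corner $q\cE p$ is precisely $\mathcal{S}(A \to B) = S$, sitting as the $(2,1)$-block.

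Next I would identify the SOT-closure of $S$ with a corner of the bicommutant $\cE''$. Since $S$ is a linear subspace, its SOT- and WOT-closures coincide (both topologies have the same continuous functionals, by Hahn--Banach), so $\overline{S}^{\mathrm{SOT}} = \overline{S}^{\mathrm{WOT}}$. Because compression by the fixed projections $p, q$ is WOT-continuous and corners of von Neumann algebras are WOT-closed, a routine computation gives $\overline{S}^{\mathrm{WOT}} = \overline{q\cE p}^{\mathrm{WOT}} = q\cE'' p$. Thus the hypothesis $x \in \overline{S}^{\mathrm{SOT}}$ says exactly that $x \in \cE''$; as $\cE''$ is self-adjoint, also $x^\dag \in \cE''$.

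I would then set up the self-adjoint reduction. After rescaling we may assume $\|x\| \le 1$. Form the self-adjoint element
$$\hat{x} \coloneqq \begin{pmatrix} 0 & x^\dag \\ x & 0 \end{pmatrix} \in \cE'', \qquad \|\hat{x}\| = \|x\| \le 1.$$
The analytic heart is the self-adjoint case of classical Kaplansky: the self-adjoint unit ball of $\cE$ is SOT-dense in that of $\cE''$. One may cite this or reprove it via the transform $f(t) = \tfrac{2t}{1+t^2}$. Starting from any net $z_\lambda \in \cE_{sa}$ converging SOT to $f^{-1}(\hat x)$ — obtained from the WOT $=$ SOT density of $\cE_{sa}$ in $\cE''_{sa}$, using WOT-continuity of the adjoint — the elements $f(z_\lambda) \in \cE$ satisfy $\|f(z_\lambda)\| \le 1$ and $f(z_\lambda) \to \hat x$ SOT. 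The SOT-continuity of $f$ on all of $\cE''_{sa}$ follows from the identity
$$f(a) - f(b) = 2(1+a^2)^{-1}(a-b)(1+b^2)^{-1} - 2(1+a^2)^{-1}a\,(a-b)\,b(1+b^2)^{-1},$$
since the outer factors are uniformly norm-bounded, so applying the difference to a fixed vector reduces to $(a-b)$ acting on the two fixed vectors $(1+b^2)^{-1}\xi$ and $b(1+b^2)^{-1}\xi$.

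Finally I would compress back to the corner. This produces a net $\hat y_\lambda \in \cE_{sa}$ with $\|\hat y_\lambda\| \le \|\hat x\| = \|x\|$ and $\hat y_\lambda \to \hat x$ SOT. Setting $x_\lambda \coloneqq q\hat y_\lambda p \in q\cE p = S$, compression gives $\|x_\lambda\| \le \|\hat y_\lambda\| \le \|x\|$, while SOT-continuity of left and right multiplication by the fixed operators $q, p$ yields $x_\lambda = q\hat y_\lambda p \to q\hat x p = x$ SOT, which is the desired net. The main obstacle is the analytic core in the self-adjoint step, namely the strong continuity of the rational transform $f$: this is precisely the device that converts an \emph{unbounded} approximating net into a \emph{norm-bounded} one, and everything surrounding it — the direct-sum construction, the corner identification, and the final compression — is bookkeeping that carries the norm bound and SOT-convergence across unchanged.
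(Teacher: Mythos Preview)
The paper states this theorem as a background result in the appendix and does not supply a proof, so there is no paper argument to compare against. Your linking-algebra reduction to the classical single-algebra Kaplansky density theorem is the standard route and is correct: the corner identification $\overline{q\cE p}^{\mathrm{WOT}} = q\cE'' p$, the self-adjoint doubling trick, and the transform $f(t)=\tfrac{2t}{1+t^2}$ are exactly the expected ingredients. Your opening caveat is also well taken: as literally stated for an arbitrary subset $S$ the norm bound cannot hold, and reading $S$ as a hom-space of a C*-subcategory is the natural interpretation consistent with how the result is actually invoked later in the paper (namely with $S = \Im\Upsilon$ inside $\Hilb(\Upsilon A \to \Upsilon B)$).
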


% \begin{remark}
% The previous definition is not how one would normally define the bicommutant of a category. For the purposes of this talk, we use the von Neumann bicommutant theorem to define the bicommutant.
% \end{remark}

% We now introduce an incredibly useful result in operator algebras which is also helpful for operator categories.

% \begin{exer}\label{ex:bunchacontinuity}
% Check that addition and scalar action are weak*-continuous in a W*-category, whereas composition is only separately weak*-continuous.
% \end{exer}

% \begin{exer}
% Prove that we may identify $\cA(A \to B)_*$ with the functionals on $\cA(A \to B)$ which are weak*-continuous. Deduce that $F \colon \cA \to \cB$ is normal if and only if precomposition by $F$ restricts to a map
% $$F^*\colon \cB(FA \to FB)_* \to \cA(A \to B)_* \quad\text{for every } A,B \in \cA.$$
% \end{exer}

\subsection{Tensor products of C*-categories}\label{subsec:tensorproductsCstar1cats}

\begin{definition}
For operator 1-categories $\cA_1$ and $\cA_2$, the algebraic tensor product $\cA_1 \otimes \cA_2$ is a linear category equipped with a dagger $\dag$ consisting of:
\begin{itemize}
    \item[(0)] objects $(A_1,A_2)$ where $A_i \in \cA_i$, and
    \item[(1)] morphisms are given by $(\cA_1 \otimes \cA_2)((A_1,A_2) \to (B_1,B_2)) \coloneqq \cA_1(A_1 \to B_1) \otimes \cA_2(A_2 \to B_2)$.
\end{itemize}
Composition and $\dag$ are determined on each tensorand.
\end{definition}

\begin{definition}
For small C*-categories, the \emph{minimal tensor product} $\cA_1 \mintimes \cA_2$ is the completion of $\cA_1 \otimes \cA_2$ on each hom-space with respect to the spatial $\rmC^*$-norm $$\|t\|_\sigma \coloneqq \|(\Upsilon_1 \otimes \Upsilon_2)(t)\|$$
where $\Upsilon_i\colon \cA_i \to \Hilb$ is the universal embedding of $\cA_i$ as seen in Theorem \ref{lem:FunctionalsUniversalRep}.
\end{definition}

\begin{definition}
The \text{maximal tensor product} $\cA_1 \maxtimes \cA_2$ is the completion of $\cA_1 \otimes \cA_2$ on each hom-space with respect to the maximal C*-norm
$$\|t\|_{\mu} \coloneqq \sup \sset{\|Ft\|}{F\colon \cA_1 \otimes \cA_2 \to \Hilb \text{ is a } \dag\text{-functor}}.$$
\end{definition}

\begin{universalprop}\label{universalprop:maxtensor}
For C*-categories $\cA_1$ and $\cA_2$, if $H\colon \cA_1 \times \cA_2 \to \cB$ is a $*$-bilinear functor\footnote{By $\cA_1 \times \cA_2$ we mean the cartesian product of the underlying categories, and by $\dag$-bilinear functor we mean a functor which is linear and $\dag$-preserving in each component.} into a C*-category $\cB$, then there exists a unique $*$-functor $H\colon \cA_1 \maxtimes \cA_2 \to \cB$ such that the following diagram commutes:
\[\begin{tikzcd}
\cA_1 \times \cA_2 \arrow[r,"H"] \arrow[d] & \cB\\
\cA_1 \maxtimes \cA_2 \arrow[ur,dashed,"H"'] & 
\end{tikzcd}\]
\end{universalprop}
% \begin{verification}
% Since $H\colon \cA_1 \times \cA_2 \to \cB$ is bilinear, it uniquely extends to a $*$-functor on the algebraic tensor product $H\colon \cA_1 \otimes \cA_2 \to \cB$ which acts on morphisms by $(a_1,a_2) \mapsto a_1 \otimes a_2$.
% Now consider some faithful representation $F_\cB\colon \cB \to \Hilb$ of $\cB$. 
% Since $F_{\cB}$ is injective on each endomorphism C*-algebra $\End(\cB)$, and injective $*$-homomorphisms between C*-algebras are isometric, we have that $F_{\cB}$ is isometric on each hom-set by the C*-identity.
% Since $F_{\cB} \circ H\colon \cA_1 \otimes \cA_2 \to \Hilb$ is a $*$-functor,
% $$\|Ht\| = \| (F_\cB \circ H)(t)\| \leq \|t\|_\mu.$$
% Therefore, $H\colon \cA \otimes \cB \to \cC$ is bounded on each hom-set, and we may uniquely extend $H\colon \cA \maxtimes \cB \to \cC$ by continuity. 
% \end{verification}
\begin{theorem}
[Hom-Tensor Adjunction] For C*-categories $\cA$, $\cB$, and $\cC$, we have that the following C*-categories are unitarily naturally equivalent:
$$\CstarCats(\cA \maxtimes \cB \to \cC) \cong \CstarCats(\cA \to \CstarCats(\cB \to \cC)).$$
\end{theorem}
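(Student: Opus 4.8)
**Proof proposal for the Hom–Tensor Adjunction for C\*-categories.**

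The plan is to establish the equivalence of C\*-categories by exhibiting mutually inverse $\dag$-functors at the level of objects, 1-morphisms, and 2-morphisms, using the universal property of $\maxtimes$ (Universal Property \ref{universalprop:maxtensor}) as the central tool. An object on the left-hand side is a $\dag$-functor $F\colon \cA \maxtimes \cB \to \cC$. Precomposing with the canonical $\dag$-bilinear functor $\cA \times \cB \to \cA \maxtimes \cB$ gives a $\dag$-bilinear functor $\cA \times \cB \to \cC$, which by currying yields, for each $A \in \cA$, a $\dag$-functor $F_A \coloneqq F(A,-)\colon \cB \to \cC$, and for each morphism $a\colon A \to A'$ in $\cA$ a natural transformation $F_a\colon F_A \Rightarrow F_{A'}$ whose components are $F(a \otimes \id_{(-)})$. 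First I would check that $A \mapsto F_A$ is itself a $\dag$-functor $\cA \to \CstarCats(\cB \to \cC)$: linearity and $\dag$-preservation follow from $\dag$-bilinearity of $F$ restricted to $\cA \times \cB$, and uniform boundedness of $F_a$ holds since $\|F_a\| = \sup_{B}\|F(a \otimes \id_B)\| \le \|a\|$ because $\dag$-functors are contractive on morphisms between C\*-categories (the C\*-identity forces $\|F(x)\| \le \|x\|$). Conversely, given a $\dag$-functor $G\colon \cA \to \CstarCats(\cB \to \cC)$, I would build a $\dag$-bilinear functor $\cA \times \cB \to \cC$ by $(A,B) \mapsto G(A)(B)$ on objects and $(a,b) \mapsto G(a)_{B'} \circ G(A)(b)$ on morphisms, verify bilinearity and $\dag$-compatibility using that each $G(A)$ is a $\dag$-functor and each $G(a)$ is a natural transformation, and then invoke Universal Property \ref{universalprop:maxtensor} to extend uniquely to a $\dag$-functor $\cA \maxtimes \cB \to \cC$.

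Next I would verify that these two assignments are mutually inverse on objects (immediate from the uniqueness clause in the universal property and from the fact that a $\dag$-functor out of $\cA \maxtimes \cB$ is determined by its restriction to the dense subcategory $\cA \otimes \cB$) and then extend the correspondence to 1-morphisms (natural transformations, $\dag$-natural transformations, uniformly bounded) and 2-morphisms. On the $\CstarCats(\cA \maxtimes \cB \to \cC)$ side a 1-morphism is a uniformly bounded natural transformation $\mu\colon F \Rightarrow F'$; its component $\mu_{(A,B)}\colon F(A,B) \to F'(A,B)$ at $(A,B)$, for fixed $A$, assembles into a natural transformation $F_A \Rightarrow F'_A$, i.e.\ a morphism in $\CstarCats(\cB \to \cC)$, and naturality in $A$ of this family is exactly naturality of $\mu$ against morphisms of the form $a \otimes \id_B$. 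Uniform boundedness transfers directly since $\|\mu\| = \sup_{(A,B)} \|\mu_{(A,B)}\|$ computed either way. For 2-morphisms there is nothing extra: a modification between uniformly bounded natural transformations is componentwise, and the indexing by $(A,B)$ versus by $A$ then by $B$ gives the same data. I would also record naturality of the whole correspondence in $\cA$, $\cB$, $\cC$, which is routine once the object-level bijection is pinned down.

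Finally I would confirm that the equivalence is \emph{unitary}: the functors constructed are bijective on objects and on morphisms at every level, so they are isomorphisms of C\*-categories, and in particular the natural isomorphism witnessing equivalence can be taken to be the identity, hence unitary. The main obstacle I anticipate is purely bookkeeping rather than conceptual — namely checking that, when one curries a $\dag$-functor $F$ on $\cA \maxtimes \cB$, the resulting assignment really lands in natural transformations that are \emph{uniformly bounded} (not merely bounded pointwise), and conversely that uncurrying a uniformly bounded natural transformation of $\dag$-functors $\cB \to \cC$ produces a natural transformation on $\cA \maxtimes \cB$ that extends continuously from $\cA \otimes \cB$; both hinge on the uniform bound $\|F(a \otimes \id_B)\| \le \|a\|$ and the maximal-norm estimate $\|a \otimes b\|_\mu \le \|a\|\|b\|$, so I would isolate these two inequalities as a preliminary lemma before organizing the rest of the verification. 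The only genuinely C\*-flavored input, beyond the universal property of $\maxtimes$, is the automatic contractivity of $\dag$-functors and of compositors, which is what makes ``uniformly bounded'' the correct enrichment on all levels.
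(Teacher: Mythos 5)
Your argument is correct, and it is worth noting that the paper itself states this theorem without proof (it appears in the background appendix immediately after the universal property of $\maxtimes$); the closest the paper comes is the object-level sketch it gives for the 2-categorical analogue, Proposition \ref{prop:CstarCartesianClosed}, which proceeds by exactly the same curry/uncurry strategy you use. Your two isolated inequalities, $\|a \otimes b\|_\mu \le \|a\|\,\|b\|$ and automatic contractivity of $\dag$-functors, are precisely the C*-analytic inputs needed to make ``uniformly bounded'' close under the correspondence, and your appeal to the uniqueness clause of the universal property correctly yields that the two assignments are strictly inverse, hence the equivalence is an isomorphism and in particular unitary. One cosmetic point: both sides of the statement are C*-\emph{1}-categories (hom-categories of the C*-2-category $\CstarCats$), so there are no 2-morphisms or modifications to transport; that part of your verification is vacuous rather than wrong. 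The only step I would flesh out is the passage from naturality of a transformation against generator morphisms $a \otimes \id_B$ and $\id_A \otimes b$ to naturality against arbitrary morphisms of $\cA \maxtimes \cB$, which requires the density of $\cA \otimes \cB$ in each hom-space together with continuity of composition and the boundedness of the components --- routine, but it is the one place where the completion genuinely enters.
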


\begin{remark}
As was shown for C*-algebras by Takesaki and
Guichardet, there are many possible norms one can equip the algebraic tensor product $\cA_1 \otimes \cA_2$ of small C*-categories such that completion yields a C*-category. It turns out that for any such norm $\|\cdot\|$, we have
$$\|\cdot \|_\sigma \leq \|\cdot \| \leq \|\cdot \|_\mu.$$
We refer the reader to Appendix T of \cite{Wegge-Olsen_2004} for an exposition on this topic.
\end{remark}

\subsection{W*-completion of a C*-category}\label{subsec:Wstarcompletion1cats}

The following section is a categorification of the results in \cite{Arens1} and \cite{Arens2} for C*-algebras. We refer the reader to \cite{Pal74} for an exposition on the topic. As we will build on these facts to construct the W*-completion of a C*-2-category in Section \ref{sec:2}, we will provide sketches for some of our proofs concerning these results.

For a C*-category $\cA$, we wish to construct the enveloping W*-category $\rmW^*(\cA)$ together with an embedding $\dag$-functor $\cA \hookrightarrow \rmW^*(\cA)$.
% which satisfies the following universal property:
% \begin{itemize}
% \item For every $\dag$-functor $F\colon \cA \to \cB$ into a W*-category $\cB$, there exists a unique normal $\dag$-functor $\widetilde{F}\colon W^*(\cA) \to \cB$ making the following diagram commute:
% \end{itemize}
% \[
% \begin{tikzcd}
% 	\cA & \cB \\
% 	{\rmW^*(\cA)}
% 	\arrow[hook, from=1-1, to=2-1]
% 	\arrow["F", from=1-1, to=1-2]
% 	\arrow["{\exists!\widetilde{F}}"', dashed, from=2-1, to=1-2]
% \end{tikzcd}
% \]
Intuitively, we wish to find the ``smallest'' C*-category containing $\cA$ whose hom-spaces admit preduals.
Recall that, for a C*-algebra $\cA$, there exists an organic inclusion $\ev\colon \cA \hookrightarrow \cA^{**}$ given by
$$\ev_x(\varphi) \coloneqq \varphi(x) \quad\text{for } x \in \cA \text{ and } \varphi \in \cA^{*}.$$
Clearly $\cA^{**}$ has a predual, namely $\cA^*$, and the Goldstine theorem tells us that $\cA^{**}$ is ``small'' in the following sense:

\begin{theorem}[Goldstine]
For a C*-algebra\footnote{This theorem holds more generally for any Banach space.} $\cA$, $\ev(\cA)$ is weak*-dense in $\cA^{**}$.
\end{theorem}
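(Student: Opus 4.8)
The plan is to finish the Goldstine-type statement by reducing to the classical Goldstine theorem for Banach spaces applied hom-space by hom-space. First I would recall that a C*-category $\cA$ has, for each pair of objects $A,B$, a Banach space $\cA(A\to B)$, and that the canonical map $\ev\colon \cA\hookrightarrow \cA^{**}$ is defined object-wise (identity on objects) and on each hom-space is precisely the canonical isometric embedding $\cA(A\to B)\hookrightarrow \cA(A\to B)^{**}$. Since weak*-density in $\cA^{**}$ is tested hom-space by hom-space — the weak* topology on $\cA^{**}(A\to B)=\cA(A\to B)^{**}$ is the $\sigma(\cA(A\to B)^{**},\cA(A\to B)^*)$ topology — it suffices to invoke the classical Goldstine theorem on each Banach space $\cA(A\to B)$ to conclude that $\ev(\cA(A\to B))$ is weak*-dense in $\cA(A\to B)^{**}$.

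The main (really the only) subtlety is bookkeeping: one must check that the ambient C*-category-enriched-graph structure $\cA^{**}$ with edges $\cA^{**}(A\to B)=\cA(A\to B)^{**}$ carries exactly the weak* topology coming from the preduals $\cA(A\to B)^*$, and that the embedding $\ev$ is the one whose hom-space components are the classical canonical embeddings. Both of these are immediate from the construction given just above in the excerpt (Appendix \S\ref{subsec:Wstarcompletion1cats}), where $\cA^{**}$ is literally defined this way; no compatibility with composition is needed for the density statement since density is purely a linear-topological claim about each hom-space separately.

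Concretely I would write: fix $A,B\in\cA$ and a basic weak* neighbourhood of $\Phi\in\cA(A\to B)^{**}$ of the form $\{\Psi : |(\Psi-\Phi)(\varphi_i)|<\varepsilon,\ i=1,\dots,n\}$ for finitely many $\varphi_i\in\cA(A\to B)^*$; by Goldstine for the Banach space $\cA(A\to B)$ there is $x\in\cA(A\to B)$ with $\|x\|\le\|\Phi\|$ (or just $\|x\|\le\|\Phi\|+1$, the norm bound is not needed here) such that $|(\ev_x-\Phi)(\varphi_i)|<\varepsilon$ for all $i$, so $\ev_x$ lies in the neighbourhood. Since $A,B$ and the neighbourhood were arbitrary, $\ev(\cA)$ is weak*-dense in $\cA^{**}$. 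I expect no real obstacle: the work is entirely in recognizing that the categorified statement disassembles into the scalar Banach-space statement, and the hardest part is merely stating the weak* topology on $\cA^{**}$ precisely enough that the reduction is transparent.
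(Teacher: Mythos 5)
Your proposal is correct and coincides with the paper's treatment: the paper recalls this as the classical Goldstine theorem for Banach spaces (see its footnote) and offers no proof, and your argument is precisely the observation that the statement is an instance of that classical result applied to the relevant Banach space(s), with $\ev$ the canonical embedding and the weak* topology the one induced by the predual $\cA^*$. The only caveat is that the statement as written concerns a single C*-algebra $\cA$, i.e.\ one Banach space, so the hom-space-by-hom-space bookkeeping is not needed for the literal claim — though it is exactly the right reduction for the categorified density assertions ($\Im\ev$ weak*-dense in $\cA^{**}$ for C*-categories and C*-2-categories) that the paper uses later, and that part of your argument is also sound.
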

This leads us to the following construction.
\begin{definition}[Double dual of a C*-category]
When $\cA$ is a C*-category, we construct $\cA^{**}$ as follows: 
\begin{itemize}
    \item[(0)] objects are the same as in $\cA$,
    \item[(1)] morphisms are given by $\cA^{**}(A \to B) \coloneqq \cA(A \to B)^{**}$;
    \item[($\circ$)] we define two so-called Arens compositions on $\cA^{**}$, which equip $\cA^{**}$ with the structure of a (linear) category.
\end{itemize}
\end{definition}

\begin{remark}
    Notice each $\cA^{**}(A \to B)$ admits a vector space structure and a norm, namely,
    $$\|\Phi\| \coloneqq \sup_{\varphi \in \cA(A \to B)^*} \frac{|\Phi(\varphi)|}{\|\varphi\|}, \qquad \text{for } \Phi \in \cA^{**}(A \to B).$$
\end{remark}

\begin{definition}[Arens compositions]
For $\Phi \in \cA^{**}(A \to B)$ and $\Psi \in \cA^{**}(B \to C)$, we define the left and right Arens compositions $\circ_\ell$ and $\circ_r$ as follows:
\begin{itemize}
\item[($\ell$)] For $\varphi \in \cA^*(A \to C)$, we set $(\Psi \circ \Phi)(\varphi) \coloneqq \Psi(\Phi \triangleright \varphi)$ where $\Phi \triangleright \varphi \in \cA(B \to C)^*$ is given by:
\begin{itemize}
    \item[($\triangleright$)] For $b \in \cA(B \to C)$, we set $(\Phi \triangleright \varphi)(b) \coloneqq \Phi(\varphi \triangleleft b)$ where $\varphi \triangleleft b \in \cA(A \to B)^*$ is given by:
\begin{itemize}
    \item[($\triangleleft$)] For $a \in \cA(A \to B)$, we set $(\varphi \triangleleft b)(a) \coloneqq \varphi(b \circ a)$. 
\end{itemize}
\end{itemize}
% More succinctly, $\Psi \circ_\ell \Phi$ is given by the following formula:
% \begin{align*}
% \Psi \circ_\ell \Phi 
% &= \varphi \mapsto \Psi\Big(\Phi \triangleright \varphi \Big),\\
% &= \varphi \mapsto \Psi\Big(b \mapsto \Phi\big(\varphi \triangleleft b\big)\Big),\\
% &= \varphi \mapsto \Psi\Big(b \mapsto \Phi\big(a \mapsto \varphi(b \circ a)\big)\Big).
% \end{align*}
\item[($r$)] For $\varphi \in \cA^*(A \to C)$, we set $(\Psi \circ_r \Phi)(\varphi) \coloneqq \Phi(\varphi \triangleleft \Psi)$ where $\varphi \triangleleft \Psi \in \cA(A \to B)^*$ is given by:
\begin{itemize}
    \item[($\triangleleft$)] For $a \in \cA(A \to B)$, we set $(\varphi \triangleleft \Psi)(a) \coloneqq \Psi(a \triangleright \varphi)$ where $a \triangleright \varphi \in \cA(B \to C)^*$ is given by: 
\begin{itemize}
    \item[($\triangleright$)] For $b \in \cA(B \to C)$, we set $(a \triangleright \varphi)(b) \coloneqq \varphi(b \circ a)$. 
\end{itemize}
\end{itemize}
% More succinctly, $\Psi \circ_r \Phi$ is given by the following formula:
% \begin{align*}
% \Psi \circ_r \Phi 
% &= \varphi \mapsto \Phi\Big(\varphi \triangleleft \Psi\Big),\\
% &= \varphi \mapsto \Phi\Big(a \mapsto \Psi\big(a \triangleright \varphi\big)\Big),\\
% &= \varphi \mapsto \Phi\Big(a \mapsto \Psi\big(b \mapsto \varphi(b \circ a)\big)\Big).
% \end{align*}
\end{itemize}
\end{definition}

\begin{definition}
We define $\ev\colon \cA \hookrightarrow \cA^{**}$ as follows:
\begin{itemize}
\item[(0)] For an object $A \in \cA$, 
$\ev(A) \coloneqq A,$
\item[(1)] For $x \in \cA(A \to B)$, we define $\ev(x) = \ev_x \in \cA^{**}(A \to B)$ by
$$\ev_x(\varphi) \coloneqq \varphi(x) \qquad\text{ for } \varphi \in \cA(A \to \cB).$$
\end{itemize}
\end{definition}

\begin{lemma}\label{lem:dagpreserving}
$\ev\colon \cA \hookrightarrow \cA^{**}$ is $\dag$-preserving when we equip $\cA^{**}$ with either Arens composition.
\end{lemma}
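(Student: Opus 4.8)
The plan is to unwind the bidual involution on $\cA^{**}$ and verify the identity $\ev(x)^\dag = \ev(x^\dag)$ by testing both sides against an arbitrary functional; the point worth isolating is that this computation never touches the Arens product, which is why the statement holds for either choice.

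First I would make the involution on the ambient spaces explicit. For $\varphi \in \cA(A \to B)^*$, set $\varphi^\dag \in \cA(B \to A)^*$ by $\varphi^\dag(y) \coloneqq \overline{\varphi(y^\dag)}$ for $y \in \cA(B \to A)$ (so that $y^\dag \in \cA(A \to B)$ and the right-hand side makes sense). Because $\dag$ on $\cA$ is a conjugate-linear, isometric, contravariant involution, the assignment $\varphi \mapsto \varphi^\dag$ is a conjugate-linear isometric involution $\cA(B \to A)^* \to \cA(A \to B)^*$. Dualizing once more, for $\Phi \in \cA^{**}(A \to B) = \cA(A \to B)^{**}$ define $\Phi^\dag \in \cA^{**}(B \to A)$ by $\Phi^\dag(\varphi) \coloneqq \overline{\Phi(\varphi^\dag)}$ for $\varphi \in \cA(B \to A)^*$; boundedness of $\Phi^\dag$ is immediate from boundedness of $\Phi$ and the isometry just noted, and a one-line check gives conjugate-linearity $(\lambda \Phi)^\dag = \overline{\lambda}\,\Phi^\dag$ and involutivity $(\Phi^\dag)^\dag = \Phi$.

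With this in hand, the core of the proof is the computation: for $x \in \cA(A \to B)$ and $\varphi \in \cA(B \to A)^*$,
\[
\ev(x)^\dag(\varphi) = \overline{\ev(x)(\varphi^\dag)} = \overline{\varphi^\dag(x)} = \overline{\overline{\varphi(x^\dag)}} = \varphi(x^\dag) = \ev(x^\dag)(\varphi),
\]
so $\ev(x)^\dag = \ev(x^\dag)$ as elements of $\cA^{**}(B \to A)$; that is, $\ev$ is $\dag$-preserving. Since no Arens composition appeared — the involution on $\cA^{**}$ is purely the transpose of the conjugate-linear involution on $\cA^*$, independent of whether we use $\circ_\ell$ or $\circ_r$ to organize $\cA^{**}$ into a category — the conclusion is valid for either Arens composition.

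The main obstacle is essentially nonexistent: the only thing demanding care is bookkeeping of hom-spaces, since $\dag$ is contravariant and each of $\varphi$, $\varphi^\dag$, $\Phi$, $\Phi^\dag$, $x$, $x^\dag$ switches direction. The genuinely substantive fact in this circle of ideas — that $\dag$ is a contravariant involution of the category $\cA^{**}$ (equivalently, compatibility of $\dag$ with the Arens product), which ultimately relies on the two Arens products agreeing — is a separate, harder statement handled elsewhere; the present lemma is the routine half that does not need it.
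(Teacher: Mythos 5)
Your proof is correct and is essentially the argument the paper intends: with $\dag$ on $\cA^{**}$ defined as the double transpose of the conjugate-linear involution $\varphi \mapsto \varphi^\dag$ on the duals, the identity $\ev(x)^\dag = \ev(x^\dag)$ follows from the displayed one-line computation, and you are right that the Arens compositions play no role in it. Your closing remark correctly locates the harder content (compatibility of $\dag$ with the Arens product, which needs $\circ_\ell = \circ_r$) in Lemma \ref{lemma:arensanddags} and Theorem \ref{thm:ArensCoincide} rather than here.
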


% \begin{proof}
% We will show our claim for the left Arens composition. Indeed, for composable morphisms $x,y$ in $\cA$, observe
% \begin{align*}
% \ev_y \circ_\ell \ev_x 
% &= \varphi \mapsto \ev_y\Big(b \mapsto \ev_x\big(a \mapsto \varphi(b \circ a)\big)\Big) \\
% &= \varphi \mapsto \ev_y\Big(b \mapsto \varphi(b \circ x)\Big) \\
% &= \varphi \mapsto \varphi(y \circ x) \\
% &= \ev_{y \circ x}.
% \end{align*}
% Hence, $\ev$ is composition preseving. For a morphism $\Phi \in \cA^{**}(A \to B)$, we have
% \begin{align*}
% \ev_{\id_B} \circ_\ell\, \Phi
% &= \varphi \mapsto \ev_{\id_B}\Big(b \mapsto \Phi\big(a \mapsto \varphi(b \circ a)\big)\Big)\\
% &= \varphi \mapsto \Phi\big(a \mapsto \varphi(\id_B \circ a)\big)\\
% &= \varphi \mapsto \Phi\big(a \mapsto \varphi(a)\big)\\
% &= \varphi \mapsto \Phi(\varphi)\\
% &= \Phi.
% \end{align*}
% One similarly checks $\Phi \circ_\ell \ev_{\id_A} = \Phi$. Therefore $\ev_{\id_A} = \id_{\ev_A} = \id_A$ in $\cA^{**}$ for every $A \in \cA$.
% \end{proof}

% \begin{exer}
% Prove that $\ev\colon \cA \hookrightarrow \cA^{**}$ is a functor when we equip $\cA^{**}$ with the right Arens composition.
% \end{exer} 

% \begin{exer}
% Check that $\ev$ is linear on each hom-space.
% \end{exer}

% Just as the composition in $\cA$ induces structure on $\cA^{**}$, the dagger in $\cA$ induces the following mapping.

\begin{definition}[$\dag$]
We define a conjugate-linear contravariant map $\dag\colon \cA^{**}(A \to B) \to \cA^{**}(B \to A)$ as follows:
\begin{itemize}
    \item[($\dag$)] For $\Phi \in \cA^{**}(A \to B)$, we define $\Phi^\dag \in \cA^{**}(B \to A)$ by 
    $$\Phi^\dag(\varphi) \coloneqq \overline{\Phi(\varphi^\dag)} \quad\text{for } \varphi \in \cA(B \to A)^*,$$
    where $\varphi^\dag \in \cA(A \to B)^*$ is given by $\varphi^\dag(a) \coloneqq \overline{\varphi(a^\dag)}$ for $a \in \cA(A \to B)$.
    
%     More succinctly, $\Phi^\dag$ is given by the following formula:
% $$\Phi^\dag = \varphi \mapsto \overline{\Phi\big(a \mapsto \overline{\varphi(a^\dag)}\big)}.$$
\end{itemize} 
\end{definition}

% \begin{exer}
% Check that $\dag$ is conjugate-linear and $\Phi^{\dag \dag} = \Phi$ for every $\Phi \in \cA^{**}(A \to B)$.
% \end{exer}

% \begin{exer}\label{lem:DaggerIsWeakStarContinuousInDoubleDual}
% Prove that $\dag$ is weak*-continuous on each hom-space.
% \end{exer}
% \begin{proof}
% If $\Phi_\lambda \to \Phi$ weak* in $\cA^{**}(A \to B)$, then for $\varphi \in \cA(A \to B)^*$ we have $\Phi_\lambda(\varphi^\dag) \to \Phi(\varphi^\dag)$ and therefore
% \[
% \Phi_\lambda^\dag(\varphi)
% =
% \overline{\Phi_\lambda(\varphi^\dag)}
% \to 
% \overline{\Phi(\varphi^\dag)}
% =
% \Phi^\dag(\varphi).
% \qedhere
% \]
% \end{proof}

We now relate the Arens compositions via the following identity, which follows from a straightforward computation.
\begin{lemma}\label{lemma:arensanddags}
For $\Phi \in \cA^{**}(A \to B)$ and $\Psi \in \cA^{**}(B \to C)$,
$$(\Psi \circ_\ell \Phi)^\dag = \Phi^\dag \circ_r \Psi^\dag.$$
\end{lemma}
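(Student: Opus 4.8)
The plan is to prove the identity by directly unwinding the definitions. Both $(\Psi \circ_\ell \Phi)^\dag$ and $\Phi^\dag \circ_r \Psi^\dag$ are elements of $\cA^{**}(C \to A)$ — note that $\Psi^\dag \in \cA^{**}(C \to B)$ and $\Phi^\dag \in \cA^{**}(B \to A)$, so the right-hand composite is well-formed — hence it suffices to show that they agree on an arbitrary functional $\psi \in \cA(C \to A)^*$.

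First I would expand the left-hand side. By the definition of $\dag$ on $\cA^{**}$, $(\Psi \circ_\ell \Phi)^\dag(\psi) = \overline{(\Psi \circ_\ell \Phi)(\psi^\dag)}$, where $\psi^\dag \in \cA(A \to C)^*$ sends $x$ to $\overline{\psi(x^\dag)}$. Unwinding $\circ_\ell$ — which nests as $\Psi$ evaluated at $b \mapsto \Phi\big(a \mapsto \varphi(b \circ a)\big)$ — with $\varphi = \psi^\dag$, and using the anti-multiplicativity $(b \circ a)^\dag = a^\dag \circ b^\dag$ of the dagger on $\cA$, I obtain
$$(\Psi \circ_\ell \Phi)^\dag(\psi) = \overline{\Psi\Big(b \mapsto \Phi\big(a \mapsto \overline{\psi(a^\dag \circ b^\dag)}\big)\Big)}.$$
Next I would expand the right-hand side in the same fashion. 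Unwinding $\circ_r$ gives $(\Phi^\dag \circ_r \Psi^\dag)(\psi) = \Psi^\dag\big(a' \mapsto \Phi^\dag(b' \mapsto \psi(b' \circ a'))\big)$; then applying the definition of $\dag$ on $\cA^{**}$ to rewrite $\Psi^\dag$ in terms of $\Psi$ and $\Phi^\dag$ in terms of $\Phi$, and simplifying the resulting chain of conjugations, I arrive at the same expression
$$(\Phi^\dag \circ_r \Psi^\dag)(\psi) = \overline{\Psi\Big(b \mapsto \Phi\big(a \mapsto \overline{\psi(a^\dag \circ b^\dag)}\big)\Big)}$$
after renaming dummy variables, which completes the proof.

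There is no genuine obstacle; the computation is purely formal. The only care required is bookkeeping: at each step tracking which hom-space the intermediate functional lives in as one descends through the nested definitions of $\circ_\ell$, $\circ_r$, and $\dag$, and correctly accounting for the complex conjugations introduced by the nested daggers (two on the left-hand side, four on the right, with a double conjugation on $\Phi(\cdot)$ cancelling in the middle). Writing each side out as a fully explicit nested ``$\mapsto$'' expression before comparing, as above, makes the match transparent. One could alternatively verify the identity on the weak*-dense set of $\ev$-images, where it reduces to $(b \circ a)^\dag = a^\dag \circ b^\dag$ together with $\ev_b \circ_\ell \ev_a = \ev_{b \circ a} = \ev_b \circ_r \ev_a$, and then extend using the separate weak*-continuity of the Arens products and of $\dag$; but the direct computation is shorter and avoids that continuity argument.
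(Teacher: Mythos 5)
Your direct computation is correct and is precisely the ``straightforward computation'' the paper alludes to but omits: both sides evaluate on $\psi \in \cA(C \to A)^*$ to $\overline{\Psi\big(b \mapsto \Phi\big(a \mapsto \overline{\psi(a^\dag \circ b^\dag)}\big)\big)}$, exactly as you write, with the bookkeeping of hom-spaces and conjugations handled correctly. (Your alternative density argument would also work, but note it requires extending in the right order---first in $\Phi$ with $\Psi$ an $\ev$-image, then in $\Psi$---since each Arens product is weak*-continuous in only one variable; the direct computation you give is the cleaner route.)
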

% \begin{proof}
% Observe
% \begin{align*}
% (\Psi \circ_\ell \Phi)^\dag 
% &= \varphi \mapsto \overline{(\Psi \circ_\ell \Phi)\big(\varphi^\dag\big)}\\
% &= \varphi \mapsto \overline{\Psi\Big(b \mapsto \Phi\big(a \mapsto \varphi^\dag(b \circ a)\big)\Big)}\\
% &= \varphi \mapsto \overline{\Psi\Big(b \mapsto \Phi\big(a \mapsto \overline{\varphi(a^\dag \circ b^\dag)}\big)\Big)}\\
% &= \varphi \mapsto \overline{\Psi\Big(b \mapsto \Phi\big((b^\dag \triangleright \varphi )^\dag\big)\Big)}\\
% &= \varphi \mapsto \overline{\Psi\Big(b \mapsto \overline{\Phi^\dag(b^\dag \triangleright \varphi )}\Big)}\\
% &= \varphi \mapsto \Psi^\dag\Big(b \mapsto \Phi^\dag(b \triangleright \varphi )\Big)\\
% &= \Phi^\dag \circ_r \Psi^\dag. \qedhere
% \end{align*}
% On the other hand,
% \begin{align*}
% \Phi^* \circ_r \Psi^* 
% &= \varphi \mapsto \Psi^*\Big(a \mapsto \Phi^*\big(b \mapsto \varphi(b \circ a)\big)\Big)\\
% &= \varphi \mapsto \Psi^*\Big(a \mapsto \Phi^*\big(b \mapsto \varphi(b \circ a)\big)\Big)\\
% \end{align*}
% \end{proof}
From our previous result, we see that if the Arens compositions on $\cA^{**}$ agree then:
\begin{itemize}
    \item $(\Psi \circ \Phi)^\dag = \Phi^\dag \circ \Psi^\dag$ where $\circ = \circ_\ell = \circ_r$ and
    \item $\ev\colon \cA \hookrightarrow \cA^{**}$ is a $\dag$-functor by \ref{lem:dagpreserving}.
\end{itemize}
It turns out this is always the case for C*-categories, after which one proves that $\cA^{**}$ satisfies the universal property required of the W*-completion of $\cA$.

\begin{theorem}\label{thm:ArensCoincide}
For a C*-category $\cA$, the left and right Arens compositions on $\cA^{**}$ coincide. 
Furthermore, these serve to equip $\cA^{**}$ with the structure of a W*-category.
\end{theorem}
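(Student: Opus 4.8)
The plan is to run the categorified version of the classical argument that the bidual of a C*-algebra is its enveloping von Neumann algebra (Arens regularity / Sherman--Takeda), one triple of objects at a time. Since $1$-composition of $2$-morphisms in $\cA^{**}$ only ever involves three objects, and each hom-space is already a set, I may harmlessly assume $\cA$ is small throughout, replacing it by the full C*-subcategory on the objects at hand; this makes the GNS embedding of Theorem~\ref{lem:FunctionalsUniversalRep} available. Fix the universal embedding $\Upsilon\colon\cA\to\Hilb$ and let $M\coloneqq\Upsilon(\cA)''$ be the bicommutant (equivalently, the $\sigma$-weak closure) of $\Upsilon(\cA)$ in $\Hilb$. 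Since composition and $\dag$ are separately $\sigma$-weakly continuous on $\Hilb$, the subspace $M$ is closed under both, so $M$ is a W*-subcategory of $\Hilb$, its hom-spaces $M(\Upsilon A\to\Upsilon B)$ being $\sigma$-weakly closed subspaces of $\Hilb(\Upsilon A\to\Upsilon B)$ and hence admitting preduals.

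First I would produce an isometric, $\dag$-preserving bijection $\widetilde{\Upsilon}\colon\cA^{**}\to M$ extending $\Upsilon$. Restriction of normal functionals along $\Upsilon$ gives a bounded linear map $r\colon M(\Upsilon A\to\Upsilon B)_*\to\cA(A\to B)^*$; it is injective because a normal functional on $M$ vanishing on $\Upsilon(\cA)$ vanishes on its $\sigma$-weak closure, and it is surjective precisely because the universal property in Theorem~\ref{lem:FunctionalsUniversalRep} presents every $\varphi\in\cA(A\to B)^*$ as a vector functional $\langle\Upsilon(\cdot)\xi,\eta\rangle$, which is the restriction of a normal functional on $M$. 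Setting $\widetilde{\Upsilon}\coloneqq r^*\colon\cA^{**}=(\cA^*)^*\to(M_*)^*=M$, one gets a bounded bijection with $\widetilde{\Upsilon}\circ\ev=\Upsilon$, continuous from the weak-$*$ topology to the $\sigma$-weak topology. Lower semicontinuity of the norm gives $\lVert\widetilde{\Upsilon}\rVert\le 1$; since $\widetilde{\Upsilon}$ carries the weak-$*$-compact unit ball of $\cA^{**}$ (the weak-$*$ closure of the ball of $\cA$, by Goldstine) onto a $\sigma$-weakly compact set containing the $\sigma$-weakly dense ball of $\Upsilon(\cA)$ (Kaplansky density, Theorem~\ref{thm:KDT}), it actually carries it \emph{onto} the unit ball of $M$, so $\widetilde{\Upsilon}$ is an isometric bijection. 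A direct check against the formula for $\dag$ on $\cA^{**}$ together with $\langle T^\dag,\omega\rangle=\overline{\langle T,\omega^\dag\rangle}$ in $\Hilb$ shows $\widetilde{\Upsilon}$ is $\dag$-preserving.

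The heart of the argument is then a bookkeeping computation showing $\widetilde{\Upsilon}$ transports \emph{both} Arens compositions to composition in $\Hilb$. Fix $\Phi\in\cA^{**}(A\to B)$, $\Psi\in\cA^{**}(B\to C)$ and a normal functional $\omega$ on $M(\Upsilon A\to\Upsilon C)$, and unwind the definitions of $\triangleleft$, $\triangleright$, $\circ_\ell$, $\circ_r$ starting from $\varphi=\omega\circ\Upsilon$: at each stage the intermediate functional ($\varphi\triangleleft b$, then $\Phi\triangleright\varphi$, etc.) is again of the form (normal functional)$\,\circ\,\Upsilon$ --- this is exactly where one uses that $\Upsilon$ is a homomorphism and that the left- and right-composition operators are $\sigma$-weakly continuous on $\Hilb$ --- so that applying $\Phi$ or $\Psi$ to it is computed by $\widetilde{\Upsilon}$. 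Both unwindings collapse to the same value, so
\[
\widetilde{\Upsilon}(\Psi\circ_\ell\Phi)(\omega)=\omega\big(\widetilde{\Upsilon}(\Psi)\circ\widetilde{\Upsilon}(\Phi)\big)=\widetilde{\Upsilon}(\Psi\circ_r\Phi)(\omega).
\]
As this holds for all normal $\omega$, $\widetilde{\Upsilon}(\Psi\circ_\ell\Phi)=\widetilde{\Upsilon}(\Psi\circ_r\Phi)$, and injectivity of $\widetilde{\Upsilon}$ forces $\circ_\ell=\circ_r=:\cccircless$, writing $\circ$ for the common value.

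Finally I would assemble the conclusion: $\widetilde{\Upsilon}\colon(\cA^{**},\circ,\dag)\to M$ is an isometric, $\dag$-preserving, composition-preserving bijection onto a W*-category, so $\cA^{**}$ is a C*-category; since it also matches the preduals $\cA^*\leftrightarrow M_*$ and intertwines the weak-$*$ and $\sigma$-weak topologies, $\cA^{**}$ is a W*-category, with $\circ$ automatically separately normal, and $\ev\colon\cA\hookrightarrow\cA^{**}$ is the $\dag$-functor corresponding to $\Upsilon\colon\cA\hookrightarrow M$, consistently with Lemmas~\ref{lem:dagpreserving} and~\ref{lemma:arensanddags}. I expect the main obstacle to be the computation in the third paragraph: it is conceptually routine but error-prone, requiring one to track the four actions $\triangleleft,\triangleright$ through the nested definitions of $\circ_\ell$ and $\circ_r$ and to keep straight which object-indexed hom-space each intermediate functional inhabits; the conceptual input that makes it go through is the separate $\sigma$-weak continuity of composition in $\Hilb$, which $\cA^{**}$ does not obviously enjoy before the theorem is proved.
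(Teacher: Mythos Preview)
Your proposal is correct and follows essentially the same route as the paper: reduce to the small case, extend the universal representation $\Upsilon$ to a map $\widetilde{\Upsilon}\colon\cA^{**}\to\Hilb$ (the paper defines it pointwise via vector functionals, you via the dual $r^*$ of the restriction map, but these are the same map), check that $\widetilde{\Upsilon}$ intertwines both Arens compositions with composition in $\Hilb$, and use Kaplansky density to obtain isometry. The only differences are organizational---you identify the target as $\Upsilon(\cA)''$ and establish bijectivity and isometry before multiplicativity, and your isometry argument packages Kaplansky via a compactness/ball-surjectivity statement rather than the paper's $\varepsilon$-argument---but the substance is identical.
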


\begin{proof}
Without loss of generality, one assumes $\cA$ is small since compositions coincide if and only if they agree on each small subcategory. We first extend the universal representation $\Upsilon\colon \cA \to \Hilb$ along $\ev$
\[\begin{tikzcd}
	\cA & \Hilb \\
	{\cA^{**}}
	\arrow["\ev"', hook, from=1-1, to=2-1]
	\arrow["\Upsilon", from=1-1, to=1-2]
	\arrow["{\widetilde{\Upsilon}}"', dashed, from=2-1, to=1-2]
\end{tikzcd}\]
by declaring that for each $\Phi \in \cA^{**}(A \to B)$, we have
$$\langle \widetilde{\Upsilon}(\Phi) \xi,\eta \rangle = \Phi(\langle \Upsilon \cdot \xi,\eta \rangle ) \quad\text{for all } \xi \in \Upsilon(A) \text{ and } \eta \in \Upsilon(B).$$
In particular, this determines a bounded operator $\widetilde{\Upsilon}(\Phi) \colon \Upsilon(A) \to \Upsilon(B)$ with $\|\widetilde{\Upsilon}(\Phi)\| \leq \|\Phi\|$.
Since every functional $\varphi \in \cA(A \to B)^*$ is of the form $\varphi = \langle \Upsilon \,\cdot\,\xi,\eta \rangle$, it follows that  $\Phi = 0$ if and only if $\widetilde{\Upsilon}(\Phi) = 0$.
For $\Phi \in \cA^{**}(A \to B)$ and $\Psi \in \cA^{**}(B \to C)$, one computes
$$\langle \widetilde{\Upsilon}(\Psi \circ_\ell \Phi) \xi,\eta \rangle = \langle \widetilde{\Upsilon}(\Psi) \widetilde{\Upsilon}(\Phi) \xi,\eta \rangle = \langle \widetilde{\Upsilon}(\Psi \circ_r \Phi) \xi,\eta \rangle $$
which implies $\widetilde{\Upsilon}(\Psi \circ_\ell \Phi) = \widetilde{\Upsilon}(\Psi) \widetilde{\Upsilon}(\Phi) = \widetilde{\Upsilon}(\Psi \circ_r \Phi),$ and hence $\circ_\ell = \circ_r$.   

By Lemma \ref{lemma:arensanddags} we obtain that $\cA^{**}$ is a $\dag$-category. Since $\Hilb$ is an operator category, to show $\cA^{**}$ is a C*-category it suffices to show that $\widetilde{\Upsilon}$ is an isometric $\dag$-functor. We then compute
$$
\langle \widetilde{\Upsilon}(\Phi^\dag) \xi,\eta \rangle
=
\langle \widetilde{\Upsilon}(\Phi)^\dag \xi,\eta \rangle,
$$
from which we see that $\widetilde{\Upsilon}$ is $\dag$-preserving. 

We now verify that $\|\widetilde{\Upsilon}(\Phi)\| = \|\Phi\|$ for $\Phi \in \cA^{**}(A \to B)$.
By construction, it is clear that $\widetilde{\Upsilon}\colon \cA^{**} \to \Hilb$ is weak*-WOT continuous.
Let $\varepsilon > 0$. 
As a property of the \hyperref[lem:FunctionalsUniversalRep]{GNS construction} for $\cA$, we know there exist $\xi \in \Upsilon(A), \eta \in \Upsilon(B)$ with $\|\langle \Upsilon \cdot \xi, \eta \rangle\| = 1$ such that
$$|\langle \widetilde{\Upsilon}(\Phi) \xi, \eta \rangle | = | \Phi(\langle \Upsilon \cdot \xi, \eta \rangle) | \geq \|\Phi\| - \varepsilon.$$
Since $\widetilde{\Upsilon}$ is weak*-WOT continuous, $\widetilde{\Upsilon}(\Phi) \in \overline{\Im \Upsilon}^\mathrm{WOT} = \overline{\Im \Upsilon}^\mathrm{SOT} \subseteq \Hilb(\Upsilon A \to \Upsilon B)$. By the \hyperref[thm:KDT]{Kaplansky density theorem}, there exist $(x_\lambda) \subset \cA(A \to B)$ with $\|x_\lambda\| = \|\Upsilon(x_\lambda)\| \leq \|\widetilde{\Upsilon}(\Phi)\|$ such that $\Upsilon(x_\lambda) \to \widetilde{\Upsilon}(\Phi)$ SOT. Observe
$$\|\widetilde{\Upsilon}(\Phi)\| \geq \|x_\lambda\| \geq |\langle \Upsilon(x_\lambda) \xi, \eta \rangle| \to |\langle \widetilde{\Upsilon}(\Phi) \xi, \eta \rangle | \geq \|\Phi\| - \varepsilon.$$
Since $\varepsilon \geq 0$ was arbitrary, $\|\widetilde{\Upsilon}(\Phi)\| \geq \| \Phi\|$. We conclude that $\widetilde{\Upsilon}$ is isometry, and hence $\cA^{**}$ is a W*-category.
\end{proof}

\begin{universalprop}
For every $\dag$-functor $F\colon \cA \to \cB$ into a W*-category $\cB$, there exists a unique normal extension $\widetilde{F}\colon \cA^{**} \to \cB$ making the following diagram commute.
\[
\begin{tikzcd}
	\cA & \cB \\
	{\cA^{**}}
	\arrow[hook, from=1-1, to=2-1, "\ev"']
	\arrow["F", from=1-1, to=1-2]
	\arrow["{\exists!\widetilde{F}}"', dashed, from=2-1, to=1-2]
\end{tikzcd}
\]
\end{universalprop}

\begin{definition}\label{defn:F**}
For every $\dag$-functor $F \colon \cA \to \cB$ between C*-categories $\cA$ and $\cB$, there exists a unique normal extension $F^{**}\colon \cA^{**} \to \cB^{**}$ afforded by the universal property of $\cA^{**}$, which makes the following diagram commute.
\[
\begin{tikzcd}
	\cA & \cB \\
	{\cA^{**}} & {\cB^{**}}
	\arrow[hook, from=1-1, to=2-1, "\ev"']
    \arrow[hook, from=1-2, to=2-2, "\ev"]
	\arrow["F", from=1-1, to=1-2]
	\arrow["{\exists!F^{**}}"', dashed, from=2-1, to=2-2]
\end{tikzcd}
\]
\end{definition}
Using the facts shown in the proof of Theorem \ref{thm:ArensCoincide}, one proves the following result.
\begin{corollary}[Sherman-\!Takeda for C*-categories]\label{cor:shermantakeda}
For a small C*-category $\cA$, the $\dag$-functor 
$$\widetilde{\Upsilon}\colon \cA^{**} \to \GNS(\cA)''$$
constructed in Theorem \ref{thm:ArensCoincide} is an equivalence of W*-categories extending $\Upsilon\colon \cA \to \GNS(A)$ as follows:
\[\begin{tikzcd}
	\cA & {\GNS(\cA)} \\
	{\cA^{**}} & {\GNS(\cA)''}
	\arrow["\ev"', hook, from=1-1, to=2-1]
	\arrow["\Upsilon", from=1-1, to=1-2]
	\arrow["{\widetilde{\Upsilon}}"', from=2-1, to=2-2]
	\arrow[hook, from=1-2, to=2-2]
\end{tikzcd}\]
\end{corollary}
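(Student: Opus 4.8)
The plan is to harvest this from the proof of Theorem~\ref{thm:ArensCoincide}. There $\widetilde{\Upsilon}\colon\cA^{**}\to\Hilb$ was shown to be an isometric, $\dag$-preserving (hence faithful) $\dag$-functor which is weak*-WOT continuous and satisfies $\widetilde{\Upsilon}\circ\ev=\Upsilon$; being the normal extension of $\Upsilon$ along $\ev$, it is also normal. On objects $\widetilde{\Upsilon}$ acts as $A\mapsto\Upsilon(A)$, which is a bijection onto $\Ob\GNS(\cA)$, and the latter equals $\Ob\GNS(\cA)''$ since the bicommutant is formed by taking WOT-closures of hom-spaces without introducing new objects. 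Therefore the whole statement reduces to showing that, viewed as a functor $\cA^{**}\to\GNS(\cA)''$, the $\dag$-functor $\widetilde{\Upsilon}$ is \emph{full}, i.e.
$$\widetilde{\Upsilon}\bigl(\cA^{**}(A\to B)\bigr)=\GNS(\cA)''(\Upsilon A\to\Upsilon B)\qquad\text{for all }A,B\in\cA.$$

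To prove fullness I would fix $A,B$ and argue one hom-space at a time, exactly as in the classical Sherman--Takeda theorem for C*-algebras. Write $\mathbf{B}_0$ for the closed unit ball of $\cA(A\to B)$ and $\mathbf{B}$ for the closed unit ball of $\cA(A\to B)^{**}=\cA^{**}(A\to B)$. By Banach--Alaoglu $\mathbf{B}$ is weak*-compact, so $\widetilde{\Upsilon}(\mathbf{B})$ is WOT-compact, hence WOT-closed in $\Hilb(\Upsilon A\to\Upsilon B)$. By the Goldstine theorem $\ev(\mathbf{B}_0)$ is weak*-dense in $\mathbf{B}$, and since $\widetilde{\Upsilon}$ is weak*-WOT continuous with $\widetilde{\Upsilon}\circ\ev=\Upsilon$, we obtain
$$\widetilde{\Upsilon}(\mathbf{B})=\overline{\widetilde{\Upsilon}(\ev(\mathbf{B}_0))}^{\,\mathrm{WOT}}=\overline{\Upsilon(\mathbf{B}_0)}^{\,\mathrm{WOT}}.$$

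It then remains to identify $\overline{\Upsilon(\mathbf{B}_0)}^{\mathrm{WOT}}$ with the closed unit ball of $\GNS(\cA)''(\Upsilon A\to\Upsilon B)$. Since $\Upsilon$ is isometric and $\cA(A\to B)$ is complete, $\Upsilon(\cA(A\to B))$ is a norm-closed subspace of $\Hilb(\Upsilon A\to\Upsilon B)$ whose WOT-closure is, by definition of the bicommutant, $\GNS(\cA)''(\Upsilon A\to\Upsilon B)$. Applying the Kaplansky density theorem (Theorem~\ref{thm:KDT}) to the set $\Upsilon(\cA(A\to B))$ --- together with the elementary fact that the SOT- and WOT-closures of a convex set coincide --- shows that every element of norm at most $1$ in this WOT-closure is a WOT-limit of elements of $\Upsilon(\mathbf{B}_0)$; the reverse containment is immediate since the unit ball of $\Hilb(\Upsilon A\to\Upsilon B)$ is WOT-closed. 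Hence $\widetilde{\Upsilon}(\mathbf{B})$ is precisely the closed unit ball of $\GNS(\cA)''(\Upsilon A\to\Upsilon B)$, and rescaling gives surjectivity onto the whole hom-space. Thus $\widetilde{\Upsilon}\colon\cA^{**}\to\GNS(\cA)''$ is a faithful, full, bijective-on-objects normal $\dag$-functor --- an equivalence, indeed an isomorphism, of W*-categories --- and it extends $\Upsilon$ by construction.

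The only real point to watch is the bookkeeping at the level of unit balls: one must run Goldstine and Kaplansky in tandem and invoke the weak*-WOT continuity of $\widetilde{\Upsilon}$ already established in Theorem~\ref{thm:ArensCoincide} rather than re-prove it. Beyond that there is no difficulty not already present in the C*-algebra case, since everything is local in the hom-spaces; the categorical content is merely the routine verification that $\widetilde{\Upsilon}$ is a bijective-on-objects $\dag$-functor, which is in hand.
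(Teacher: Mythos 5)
Your proposal is correct and follows exactly the route the paper intends: the paper gives no separate argument here, merely deferring to the facts established in the proof of Theorem~\ref{thm:ArensCoincide} (isometry, faithfulness, weak*-WOT continuity, and that $\widetilde{\Upsilon}$ lands in $\overline{\Im\Upsilon}^{\mathrm{WOT}}$), and your Goldstine-plus-Kaplansky unit-ball argument is the standard way to extract fullness from those facts. No gaps.
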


\subsection{W*-tensor product of W*-categories}\label{subsec:Wstartensorproducts1cats}

The following result is a categorification of the results in \cite{Dauns_1972} for W*-algebras. As we will adapt all of these results for W*-2-categories in Section \ref{sec:W*Graytensor}, we will omit the details in this work. We however present the main result here for completeness.

\begin{universalprop}
For W*-categories $\cA_1$ and $\cA_2$, there is a W*-category $\cA_1 \Wmaxtimes \cA_2$ equipped with separately normal $*$-bilinear functor $\cA_1 \times \cA_2 \to \cA_1 \Wmaxtimes \cA_2$ satisfying the following universal property:

\begin{itemize}
    \item For every separately normal $*$-bilinear functor $H\colon \cA_1 \times \cA_2 \to \cB$ into a W*-category $\cB$, there exists a unique normal $*$-functor $\overline{H}\colon \cA_1 \Wmaxtimes \cA_2 \to \cB$ such that the following diagram commutes:
\[\begin{tikzcd}
\cA_1 \times \cA_2 \arrow[r,"H"] \arrow[d] & \cB\\
\cA_1 \Wmaxtimes \cA_2 \arrow[ur,dashed,"\overline{H}"'] & 
\end{tikzcd}\]
\end{itemize}
\end{universalprop}

\begin{remark}
    More specifically, we construct $\cA_1 \Wmaxtimes \cA_2$ as the quotient of $(\cA_1 \maxtimes \cA_2)^{**}$ by the polar of so-called separately normal functionals on $\cA_1 \maxtimes \cA_2$.
\end{remark}

% \todo{Say that (strict) $\dag$-2-functors between C*-2-categories are contractive} 
\section{Background on operator 2-categories}\label{sec:background2cats}
\subsection{Coherence and concreteness for operator algebraic bicategories}\label{subsec:coherencesforonecats}

\begin{definition}
For a C*-2-category $\cA$, the Yoneda embedding 
$$\yo: \cA^{\twoop} \to \CstarTwoCat(\cA \to \CstarCats)$$
is the $\dag$-2-functor given by:
\begin{enumerate}
\item[(0)] For an object $A \in \cA$, we define the $\dag$-2-functor $\yo^A: \cA \to \CstarCats$ by:
\begin{itemize}
\item 
For an object $B \in \cA$, we set the C*-category $\yo^A(B) \coloneqq \cA(A \to B)$,
\item
For a morphism $_{B}X_C$ in $\cA,$
we set the $\dag$-functor
$\yo^A(_{B}X_{C}) \coloneqq - \ccirc_B X_C$,
\item For a 2-morphism $x \colon \phantom{}_BX_C \to \phantom{}_BX'_C$,
we set the uniformly bounded natural transformation $\yo^A(x) \coloneqq - \ccirc_B x$. Notice $\|\yo^A(x)\| = \|x\|$.
\item We define the tensorator component $(\yo^A)^2_{X,X'}: \yo^A(X) \ccirc_{\yo^A(C)} \yo^A(X') \Rightarrow \yo^A(X \ccirc_C X')$ for two composable morphisms $_B X_C$ and $_C X' _D$ in $\cA$ to be the natural unitary $(\yo^A)^2_{X,X'} \coloneqq \alpha_{-,X,X'}^{\dag}$.
\item We define the unitor component $(\yo^A)^1: \id_{\yo^A(B)} \Rightarrow \yo^A(\id_B)$ for an object $B \in \cA$ to be the natural unitary $(\yo^A)^1_{B} \coloneqq \rho^\dag$.
\end{itemize}
\item[(1)] For a morphism $_{A'} Y _A$ in $\cA$, we define the $\dag$-2-natural transformation $\yo^Y\colon \yo^{A} \Rightarrow \yo^{A'}$ by:
\begin{itemize}
\item For an object $B \in \cA$, we set the $\dag$-functor $\yo^Y_B \coloneqq \phantom{}_{A'} Y \ccirc_A -$.
\item For a morphism $_B X _C$ in $\cA$, we set the uniformly bounded natural transformation $\yo^Y_{X} \coloneqq \alpha_{Y,-,X}^\dag$.
\end{itemize}
\item[(2)] For a 2-morphism $y\colon \phantom{}_{A'} Y _A \Rightarrow \phantom{}_{A'} Y' _A$, we define the uniformly bounded modification $\yo^y\colon \yo^Y \ttto \yo^{Y'}$ by:
\begin{itemize}
\item For an object $B \in \cA$, we set the uniformly bounded natural transformation $\yo^y_B \coloneqq y \ccirc_A -$.
\end{itemize}
\item[($\ccirc$)] We define the compositor component $\yo^2_{Y',Y}: \yo^{Y'} \ccirc \yo^Y \Rightarrow \yo^{Y' \ccirc Y}$ for two composable morphisms $_{A''}Y'_{A'}$ and $_{A'}Y_A$ in $\cA$ to be $\yo^2_{Y',Y} \coloneqq \alpha_{Y',Y,-}$,
\item[(I)] We define the unitor component $\yo^1_A: \id_{\yo_A} \Rightarrow \yo^{\id_A}$ for an object $A \in \cA$ to be $\yo^1_A \coloneqq \lambda^\dag$. 
\end{enumerate}
\end{definition}
\begin{remark}
When $\cA$ is a strict C*-2-category, it is clear from construction that the Yoneda embedding for $\cA$ is a strict $\dag$-2-functor which lands in $\CstarTwoCat_{\mathsf{st}}(\cA \to \CstarCats)$.
\end{remark}

\begin{theorem}
For a C*-2-category $\cA$, the Yoneda embedding $\yo: \cA^{\twoop} \to \CstarTwoCat(\cA \to \CstarCats)$ is fully faithful. Hence, every C*-2-category is equivalent to a strict one.
\end{theorem}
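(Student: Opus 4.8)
The plan is to establish the stronger statement that $\yo$ is \emph{locally a $\dag$-equivalence}: for every pair of objects $A,A' \in \cA$ the induced $\dag$-functor
$$\yo_{A,A'}\colon \cA^{\twoop}(A \to A') = \cA(A' \to A) \longrightarrow \CstarTwoCat(\cA \to \CstarCats)(\yo^A \to \yo^{A'})$$
is a $\dag$-equivalence of C*-categories. In particular $\yo$ is fully faithful, and the second assertion is then formal: $\CstarTwoCat(\cA \to \CstarCats)$ is a \emph{strict} C*-2-category because $\CstarCats$ is strict, and a $\dag$-2-functor that is surjective on objects and locally a $\dag$-equivalence exhibits its source as $\dag$-equivalent to the full (hence strict) C*-2-subcategory on its image; applying this to $\yo$ and then to $(-)^{\twoop}$ shows that $\cA^{\twoop}$, and therefore $\cA$, is $\dag$-equivalent to a strict C*-2-category.

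The heart of the argument is a dagger-enriched bicategorical Yoneda lemma, which I would prove for an arbitrary $\dag$-2-functor $F\colon \cA \to \CstarCats$ (only $F = \yo^{A'}$ is needed for the theorem, but this costs nothing extra): for each object $A$, evaluation at the identity
$$\mathrm{ev}_A\colon \CstarTwoCat(\cA \to \CstarCats)(\yo^A \to F) \longrightarrow F(A),\qquad \alpha \mapsto \alpha_A(\id_A),\quad m \mapsto m_A(\id_A),$$
is a $\dag$-equivalence of C*-categories. First I would exhibit the pseudo-inverse: to $\xi \in F(A)$ assign the $\dag$-2-natural transformation $\alpha^\xi$ with components $\alpha^\xi_B \coloneqq F(-)(\xi)\colon \cA(A \to B) \to F(B)$, and with naturator assembled from the compositor $F^2$, the unitor $F^0$, and the unitary associators and unitors of $\cA$; to a morphism $t\colon \xi \to \eta$ of $F(A)$ assign the modification whose $B$-component is the natural transformation with components $F(f)(t)$, $f \in \cA(A\to B)$. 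Verifying that $\alpha^\xi$ satisfies the $\dag$-2-naturality axioms, that its naturator is unitary, that $\alpha^{(-)}$ lands in uniformly bounded modifications, and that $\alpha^{(-)}$ and $\mathrm{ev}_A$ are mutually pseudo-inverse $\dag$-functors with unitary unit and counit, is a lengthy but entirely routine diagram chase using only that $F$ is a $\dag$-2-functor and that every constraint in sight is unitary; unitary essential surjectivity of $\mathrm{ev}_A$ is immediate since $\mathrm{ev}_A(\alpha^\xi) = F(\id_A)(\xi)$ is unitarily isomorphic to $\xi$ via the relevant component of $F^0$.

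The one step where the operator-algebraic hypotheses genuinely enter — and the one I expect to be the main obstacle to pin down precisely — is that $\mathrm{ev}_A$ is \emph{isometric} on hom-spaces, equivalently $\|m\| = \|m_A(\id_A)\|$ for every uniformly bounded modification $m\colon \alpha \ttto \beta$; this is exactly what makes the uniformly bounded $2$-morphisms of $\CstarTwoCat(\cA \to \CstarCats)$ match the morphisms of $F(A)$ and hence what forces $\mathrm{ev}_A$ to be full and faithful. Writing $t \coloneqq m_A(\id_A)\colon \alpha_A(\id_A) \to \beta_A(\id_A)$ in the C*-category $F(A)$, the modification axiom together with the identification $\yo^A(f)(\id_A) = \id_A \ccirc_A f \cong f$ (via the left unitor of $\cA$) shows that the component $m_{B,f}\colon \alpha_B(f) \to \beta_B(f)$ of $m_B$ equals $F(f)(t)$ conjugated by unitary naturators. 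Since a $\dag$-functor between C*-categories is automatically norm-nonincreasing on morphisms — by the C*-identity, $\|G(x)\|^2 = \|G(x^\dag \circ x)\| \le \|x^\dag \circ x\| = \|x\|^2$, as $x^\dag \circ x$ lies in an endomorphism C*-algebra on which $G$ restricts to a $*$-homomorphism — and unitaries preserve norms, we obtain $\|m_{B,f}\|_{F(B)} \le \|t\|_{F(A)}$ for all $B$ and $f$, hence $\|m\| = \sup_{B,f}\|m_{B,f}\| \le \|t\|$, while $\|t\| \le \|m\|$ is trivial. Isometry yields faithfulness; the same component formula, run in reverse, produces for any $t\colon \mathrm{ev}_A(\alpha) \to \mathrm{ev}_A(\beta)$ a modification mapping to $t$, giving fullness; together with unitary essential surjectivity this shows $\mathrm{ev}_A$ is a $\dag$-equivalence.

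Finally I would specialize to $F = \yo^{A'}$, where $F(A) = \yo^{A'}(A) = \cA(A' \to A) = \cA^{\twoop}(A \to A')$, and check that $\mathrm{ev}_A \circ \yo_{A,A'}$ is unitarily naturally isomorphic to the identity: for $Y \in \cA(A' \to A)$ one has $\mathrm{ev}_A(\yo^Y) = \yo^Y_A(\id_A) = Y \ccirc_A \id_A$, which the right unitor $\rho_Y$ identifies unitarily and naturally with $Y$. Since $\mathrm{ev}_A$ is a $\dag$-equivalence and $\mathrm{ev}_A \circ \yo_{A,A'} \cong \mathrm{id}$ through unitaries, it follows that $\yo_{A,A'}$ is itself a $\dag$-equivalence of C*-categories, which completes the proof.
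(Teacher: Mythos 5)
Your proof is correct and follows the same route as the paper, which simply invokes the bicategorical Yoneda embedding theorem in one line; your argument is that theorem, upgraded to the $\dag$/C* setting. The extra content you supply --- chiefly that evaluation at $\id_A$ is isometric on modifications because $\dag$-functors between C*-categories are automatically norm-nonincreasing and all constraints in sight are unitary --- is exactly the operator-algebraic verification the paper's citation leaves implicit, and it is carried out correctly.
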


\begin{proof}
This follows by the Yoneda embedding theorem for ordinary 2-categories.
\end{proof}

\begin{theorem}
For a C*-2-category $\cA$, the universal representation $\yo^\amalg \in \CstarTwoCat( \cA \to \CstarCats)$ given by
$$\yo^{\amalg} := \coprod_{A \in \cA} \yo^{A}$$
is monic. Thus, every C*-2-category can be realized as a norm closed $\dag$-2-subcategory of $\CstarCats$. Moreover
\begin{itemize}
\item when $\cA$ is a strict, $\yo^\amalg \in \CstarTwoCat( \cA \to \CstarCats)$ is a strict $\dag$-2-functor; and
\item when $\cA$ is small, we have that $\yo^\amalg \in \CstarTwoCat( \cA \to \CstarCats_{\Small})$.
\end{itemize}
\end{theorem}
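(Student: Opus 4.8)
The plan is to build $\yo^\amalg$ as the coproduct of the $\dag$-$2$-functors $\yo^A$ inside $\CstarCats$, and then verify the three asserted properties---monic, strict when $\cA$ is, small when $\cA$ is---one at a time. First I would observe that $\CstarCats$ admits $\Ob\cA$-indexed coproducts: the disjoint union of a family of C*-categories has object collection the disjoint union of the object collections, and zero hom-spaces between objects of distinct components (this is the $\coprod$ recalled in the excerpt). Since each $\yo^A\colon\cA\to\CstarCats$ is a $\dag$-$2$-functor on the common source $\cA$, the assignments
$$B\mapsto\coprod_{A\in\cA}\cA(A\to B),\qquad X\mapsto\coprod_A\bigl(-\ccirc_B X\bigr),\qquad x\mapsto\coprod_A\bigl(-\ccirc_B x\bigr)$$
assemble into a $\dag$-$2$-functor $\yo^\amalg\colon\cA\to\CstarCats$: its compositor and unitor are the coproducts of those of the $\yo^A$, hence unitary because each $(\yo^A)^2=\alpha^\dag$ and $(\yo^A)^1=\rho^\dag$ is, and $\dag$-preservation is checked componentwise; moreover $\|\yo^\amalg(x)\|=\sup_A\|\yo^A(x)\|=\|x\|$, so $\yo^\amalg(x)$ is uniformly bounded.

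Next I would check monic-ness one categorical level at a time. On objects: the object collection of $\yo^\amalg(B)=\coprod_A\cA(A\to B)$ is exactly the collection of all $1$-morphisms of $\cA$ whose target is $B$; since a $1$-morphism has a unique target and $\id_B$ is such a $1$-morphism, the object collections of $\yo^\amalg(B)$ and $\yo^\amalg(B')$ are disjoint and nonempty for $B\neq B'$, so $\yo^\amalg$ is injective on objects. On $2$-morphisms: if $\yo^\amalg(x)=\yo^\amalg(x')$ for parallel $2$-morphisms $x,x'\colon X\Rightarrow X'$ with $X,X'\colon B\to C$, then restricting this equality of natural transformations to the $A=B$ component and reading off the component at the object $\id_B$ gives the whisker $\id_B\ccirc_B x=\id_B\ccirc_B x'$; conjugating by the naturality square of the (unitary) left unitor $\lambda$ then yields $x=x'$. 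On $1$-morphisms: the analogous restriction-and-evaluation for $\yo^\amalg(X)=\yo^\amalg(X')$ produces $\id_B\ccirc_B X=\id_B\ccirc_B X'$, which is literally $X=X'$ when $\cA$ is strict; in the weak case one instead uses $\lambda$ to canonically identify $X$ with $\id_B\ccirc_B X$, which---together with injectivity on $2$-morphisms---is what makes the image of $\cA$ a strict C*-$2$-subcategory of $\CstarCats$ equivalent to $\cA$.

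The two addenda are brief. If $\cA$ is strict then each $\yo^A$ is a strict $\dag$-$2$-functor (as noted after the definition of the Yoneda embedding), so every compositor and unitor of $\yo^\amalg$ is an identity and $\yo^\amalg$ is strict. If $\cA$ is small then $\Ob\cA$ is a set and each $\cA(A\to B)$ is a small C*-category, so each $\yo^\amalg(B)=\coprod_A\cA(A\to B)$ is again a small C*-category; thus $\yo^\amalg$ factors through the full sub-$2$-category $\CstarCats_{\Small}$, i.e.\ $\yo^\amalg\in\CstarTwoCat(\cA\to\CstarCats_{\Small})$.

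I expect the injectivity-on-$1$-morphisms clause to be the point that needs the most care: unlike the object and $2$-morphism clauses it is true on the nose only when $\cA$ is strict, and in the general weak case the real content is that the image of $\cA$ is nonetheless a \emph{strict} C*-$2$-subcategory of $\CstarCats$ equivalent to $\cA$---so the argument must track the unitary unitors $\lambda,\rho$, and it is here that the already-established local full faithfulness of $\yo$ can be invoked to pin down the hom-categories of the image.
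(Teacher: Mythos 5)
Your proof is correct and takes essentially the same route as the paper's: monic-ness at every level is extracted by evaluating the representable $\dag$-functors at identity $1$-morphisms $\id_A$, and the two addenda are handled identically. Your extra care at the $1$-morphism level is warranted --- the paper's displayed equality $\yo^\amalg({}_AX_B)(\id_A) = {}_AX_B$ silently identifies $\id_A \ccirc_A X$ with $X$, which is literal only when $\cA$ is strict, and your invocation of the unitor $\lambda$ (together with local full faithfulness to control the image) is exactly the repair needed in the weak case.
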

\begin{proof}
To see that $\yo^\amalg$ is injective, suppose $\yo^\amalg(A) = \yo^\amalg(B)$ for $A,B \in \cA$. Then $\id_{A} \in \yo^\amalg(B)$, which occurs only when $A = B$. To see $F$ is injective on 1-morphisms, suppose $\yo^\amalg(_A X_B) = \yo^\amalg(_A X' _B)$ and observe
$$
_A X_B
=
\yo^\amalg(_A X_B) (\id_A) 
=
\yo^\amalg(_A X'_B) (\id_A) 
=
\phantom{}_A Y_B.
$$
An identical argument yields that $\yo^\amalg$ is injective at level of 2-morphisms. Furthermore, when $\cA$ is strict, $\yo$ is a strict $\dag$-2-functor, hence $\yo^\amalg$ is strict as well. Finally, when $\cA$ is small, the objects $B \in \cA$ form a set and each hom C*-category $\cA(A \to B)$ is small, so
$$\yo^\amalg(B) = \coprod_{A \in \cA} \cA(A \to B)$$
is a small C*-category for every $A \in \cA$. 
\end{proof}

\begin{definition}
Recall that, for a small C*-category $\cB$, there exists a universal representation
$$
\cB \xrightarrow[\sim]{\Upsilon} \GNS(\cB) \hookrightarrow \GNS(\cB)''$$
where $\widetilde{\Upsilon}: \cB^{**} \xrightarrow{\sim}  \GNS(\cB)''$ is an isomorphism by Corollary \ref{cor:shermantakeda}. We may upgrade this construction to a $\dag$-2-functor 
$$\GNS''\colon \CstarCats_{\Small} \to \TwoHilb$$
as follows:
\begin{enumerate}
\item[(0)] 
On a small C*-category $\cB$, $\GNS''(\cB) = \GNS(\cB)''$.
\item[(1)]
For a $\dag$-functor $F\colon \cB_1 \to \cB_2$ between small C*-categories,
we define the normal $\dag$-functor 
$$\GNS''(F)\colon \GNS(\cB_1)'' \to \GNS(\cB_2)''$$
as follows:
\begin{enumerate}
\item[(F0)] 
On a Hilbert space $\Upsilon B \in \GNS(\cB_1)''$ where $B \in \cB_1$, we set
$$\GNS''(F)(\Upsilon B) \coloneqq \Upsilon (FB) \in \GNS(\cB_2)''.$$
\item[(F1)]
On an operator $S \in \GNS(\cB_1)''( \Upsilon B \to \Upsilon B')$, consider $\widetilde{\Upsilon}^{-1} S \in \cB_1^{**}(B \to B')$. As seen in Definition \ref{defn:F**}, consider the morphism $F^{**} \widetilde{\Upsilon}^{-1} S \in \cB_2^{**}(FB \to FB')$. We then define the morphism $\GNS''(F)(S) \in \GNS(\cB_2)''(\Upsilon FB \to \Upsilon FB')$ by
$$\GNS''(F)(S) \coloneqq \widetilde{\Upsilon} F^{**} \widetilde{\Upsilon}^{-1} S.$$
\end{enumerate}
Notice $\GNS''(F)$ is a normal $\dag$-functor since $\widetilde{\Upsilon}$ and $F^{**}$ are normal $\dag$-functors. Moreover, $\GNS''$ is strictly 1-composition-preserving since $(-)^{**}$ is strict.

\item[(2)] For a uniformly bounded natural transformation $\alpha \in \CstarCats_{\Small}(\morph{\cB_1}{F}{\cB_2} \tto \morph{\cB_1}{G}{\cB_2})$, we define the uniformly bounded natural transformation 
$$\GNS''(\alpha)\colon \GNS''(F) \Rightarrow \GNS''(G)$$
as follows:
\begin{enumerate}
    \item[($\alpha0$)] On a Hilbert space $\Upsilon B \in \GNS(\cB_1)''$ where $B \in \cB_1$, we define the component 
    $$\GNS''(\alpha)_{\Upsilon B}\colon \Upsilon(FB) \to \Upsilon(GB)$$
    in $\GNS(\cB_2)''$ by
$$\GNS''(\alpha)_{\Upsilon B} \coloneqq \Upsilon(\alpha_B) = \widetilde{\Upsilon}(\ev_{\alpha_B}).$$
\end{enumerate}
% To verify naturality, consider some $S \in \GNS(\cB_1)''(\Upsilon B \to \Upsilon B')$. We wish to show the following diagram commutes.
% \[\begin{tikzcd}[column sep = 40pt]
% 	{\GNS''(F)(\Upsilon B)} & {\GNS''(F)(\Upsilon B')} \\
% 	{\GNS''(G)(\Upsilon B)} & {\GNS''(G)(\Upsilon B')}
% 	\arrow["{\GNS''(F)(S)}", from=1-1, to=1-2]
% 	\arrow["{\GNS''(\alpha)_{\Upsilon B}}"', from=1-1, to=2-1]
% 	\arrow["{\GNS''(G)(S)}"', from=2-1, to=2-2]
% 	\arrow["{\GNS''(\alpha)_{\Upsilon B'}}", from=1-2, to=2-2]
% \end{tikzcd}\]
% Applying definitions in the previous diagram, we obtain the following square.
% \[\begin{tikzcd}[column sep = 40pt]
% 	{\widetilde{\Upsilon}(FB)} & {\widetilde{\Upsilon}(FB')} \\
% 	{\widetilde{\Upsilon}(GB)} & {\widetilde{\Upsilon}(GB')}
% 	\arrow["{\widetilde{\Upsilon}F^{**}\widetilde{\Upsilon}^{-1} S}", from=1-1, to=1-2]
% 	\arrow["{\widetilde{\Upsilon}(\ev_{\alpha_B})}"', from=1-1, to=2-1]
% 	\arrow["{\widetilde{\Upsilon}G^{**}\widetilde{\Upsilon}^{-1} S}"', from=2-1, to=2-2]
% 	\arrow["{\widetilde{\Upsilon}(\ev_{\alpha_{B'}})}", from=1-2, to=2-2]
% \end{tikzcd}\]
% Thus, it suffices to show the following square commutes
% \[\begin{tikzcd}[column sep = 40pt]
% 	FB & {FB'} \\
% 	GB & {GB'}
% 	\arrow["{F^{**}\widetilde{\Upsilon}^{-1} S}", from=1-1, to=1-2]
% 	\arrow["{\ev_{\alpha_B}}"', from=1-1, to=2-1]
% 	\arrow["{G^{**}\widetilde{\Upsilon}^{-1} S}"', from=2-1, to=2-2]
% 	\arrow["{\ev_{\alpha_{B'}}}", from=1-2, to=2-2]
% \end{tikzcd}\]
% \nn{This is same calculation as showing naturality of $\alpha^{**}$ on $\widetilde{\Upsilon}^{-1} S$}
\end{enumerate}

\end{definition}

\begin{lemma}\label{lem:GNS''ismonic}
$\GNS''\colon \CstarCats_{\Small} \to \TwoHilb$ is monic.
\end{lemma}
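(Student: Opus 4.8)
The plan is to verify that $\GNS''$ is injective on objects, on $1$-morphisms, and on $2$-morphisms separately, in each case reducing the claim to the injectivity properties of the underlying constructions $\Upsilon$, $\widetilde{\Upsilon}$, and $(-)^{**}$ which were established earlier (in particular Theorem \ref{lem:FunctionalsUniversalRep}, Theorem \ref{thm:ArensCoincide}, and Corollary \ref{cor:shermantakeda}). First I would handle objects: if $\GNS''(\cB_1) = \GNS''(\cB_2)$ as W*-categories, then $\GNS(\cB_1)'' = \GNS(\cB_2)''$, and since $\GNS(\cB_i) \xrightarrow{\Upsilon} \cB_i$ is an isomorphism of C*-categories while $\cB_i$ determines $\GNS(\cB_i)$ up to the canonical isomorphism, one recovers $\cB_1 = \cB_2$; more carefully, the objects of $\GNS(\cB_i)''$ are exactly the Hilbert spaces $\Upsilon B$ for $B \in \cB_i$, so equality of the object sets together with faithfulness of $\Upsilon$ on objects forces $\Ob \cB_1 = \Ob \cB_2$, and a parallel argument on hom-spaces recovers all the data.

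Next I would treat $1$-morphisms. Suppose $\GNS''(F) = \GNS''(G)$ for $\dag$-functors $F,G\colon \cB_1 \to \cB_2$. On objects this immediately gives $FB = GB$ for all $B$, since $\GNS''(F)(\Upsilon B) = \Upsilon(FB)$ and $\Upsilon$ is injective on objects. On morphisms, for $x \in \cB_1(B \to B')$ we have $\GNS''(F)(\Upsilon x) = \widetilde{\Upsilon}\, F^{**}\, \widetilde{\Upsilon}^{-1}(\Upsilon x) = \widetilde{\Upsilon}\, F^{**}(\ev_x) = \widetilde{\Upsilon}(\ev_{Fx}) = \Upsilon(Fx)$, and likewise for $G$; since $\GNS''(F)$ and $\GNS''(G)$ agree and $\Upsilon$ is faithful, we get $Fx = Gx$. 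Hence $F = G$. The $2$-morphism case is analogous: if $\GNS''(\alpha) = \GNS''(\beta)$ for uniformly bounded natural transformations $\alpha,\beta\colon F \tto G$, then $\GNS''(\alpha)_{\Upsilon B} = \Upsilon(\alpha_B)$ and $\GNS''(\beta)_{\Upsilon B} = \Upsilon(\beta_B)$ agree for every $B \in \cB_1$, so $\alpha_B = \beta_B$ by faithfulness of $\Upsilon$, whence $\alpha = \beta$.

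I do not anticipate a genuine obstacle here; the only point requiring a little care is making precise the object-level injectivity claim, since ``$\GNS''(\cB_1) = \GNS''(\cB_2)$'' should be read as equality of W*-categories (not merely isomorphism), and one must observe that the assignment $\cB \mapsto \GNS(\cB)''$ together with its canonical labelling of objects by $\Upsilon B$ lets one read off $\cB$ back. Given that, the statement is essentially a bookkeeping consequence of the fact that each of $\Upsilon$, $\widetilde{\Upsilon}$, and $(-)^{**}$ is faithful (indeed isometric) on the relevant hom-spaces, together with the strict functoriality noted in the definition of $\GNS''$. The proof will therefore be short.

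\begin{proof}
We check that $\GNS''$ is injective on objects, $1$-morphisms, and $2$-morphisms.

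For objects, suppose $\GNS''(\cB_1) = \GNS(\cB_1)'' = \GNS(\cB_2)'' = \GNS''(\cB_2)$. The objects of $\GNS(\cB_i)''$ are precisely the Hilbert spaces $\Upsilon B$ for $B \in \cB_i$, and $\Upsilon$ is injective on objects, so $\Ob \cB_1 = \Ob \cB_2$; moreover for $B,B' \in \cB_1 = \cB_2$ the hom-space $\GNS(\cB_i)''(\Upsilon B \to \Upsilon B')$ contains $\widetilde\Upsilon(\ev_{\cB_i(B\to B')})$ as the norm-dense image of $\cB_i(B \to B')$ under the isometry $\Upsilon$, so $\cB_1$ and $\cB_2$ agree as C*-categories. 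Hence $\cB_1 = \cB_2$.

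For $1$-morphisms, let $F,G\colon \cB_1 \to \cB_2$ be $\dag$-functors with $\GNS''(F) = \GNS''(G)$. For $B \in \cB_1$ we have $\Upsilon(FB) = \GNS''(F)(\Upsilon B) = \GNS''(G)(\Upsilon B) = \Upsilon(GB)$, so $FB = GB$ since $\Upsilon$ is injective on objects. For $x \in \cB_1(B \to B')$, using $\widetilde\Upsilon^{-1}(\Upsilon x) = \ev_x$ and $F^{**}(\ev_x) = \ev_{Fx}$ (Definition \ref{defn:F**}),
\[
\Upsilon(Fx) = \widetilde\Upsilon\, F^{**}\, \widetilde\Upsilon^{-1}(\Upsilon x) = \GNS''(F)(\Upsilon x) = \GNS''(G)(\Upsilon x) = \Upsilon(Gx),
\]
and since $\Upsilon$ is faithful we conclude $Fx = Gx$. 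Thus $F = G$.

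For $2$-morphisms, let $\alpha,\beta\colon F \tto G$ be uniformly bounded natural transformations in $\CstarCats_{\Small}$ with $\GNS''(\alpha) = \GNS''(\beta)$. Then for every $B \in \cB_1$,
\[
\Upsilon(\alpha_B) = \GNS''(\alpha)_{\Upsilon B} = \GNS''(\beta)_{\Upsilon B} = \Upsilon(\beta_B),
\]
so $\alpha_B = \beta_B$ by faithfulness of $\Upsilon$, and hence $\alpha = \beta$. Therefore $\GNS''$ is monic.
\end{proof}
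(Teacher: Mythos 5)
Your proof is correct and follows essentially the same route as the paper: injectivity on objects is treated as a bookkeeping matter, injectivity on $1$-morphisms is deduced from $\Upsilon(Fx)=\widetilde\Upsilon F^{**}\ev_x$ together with faithfulness of $\Upsilon$, and injectivity on $2$-morphisms from the componentwise formula $\GNS''(\alpha)_{\Upsilon B}=\Upsilon(\alpha_B)$. The only cosmetic difference is that the paper uses linearity to reduce the $2$-morphism case to $\GNS''(\alpha)=0\Rightarrow\alpha=0$, which is equivalent to your two-transformation formulation.
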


\begin{proof}
Quite pedantically, it is clear that $\GNS''$ is injective on objects. Now consider $\dag$-functors $F,G\colon \cB_1 \to \cB_2$ between small C*-categories such that $\GNS''(F) = \GNS''(G)$. Then, for every $B \in \cB_1$, we have $\Upsilon(FB) = \Upsilon(GB)$, which implies $FB = GB$ since $\Upsilon\colon \cB_2 \to \Hilb$ is a monic $\dag$-functor. For $b \in \cB_1(B \to B')$, consider $\widetilde{\Upsilon}(\ev_b) \in \GNS(\cB_1)''(\Upsilon B \to \Upsilon B')$ and observe 
$$\widetilde{\Upsilon} F(b) = \widetilde{\Upsilon} F^{**} \ev_b =
\GNS''(F)(\widetilde{\Upsilon}(\ev_b)) = \GNS''(G)(\widetilde{\Upsilon}(\ev_b)) 
= \widetilde{\Upsilon} G(b).$$
Thus $F = G$ and we conclude $\GNS''$ is injective on 1-morphisms. Finally, consider $\alpha \in \CstarCats_{\small}(F \Rightarrow G)$ between arbitrary $\dag$-functors $F,G$ such that $\GNS''(\alpha) = 0$. For $B \in \cB_1$, observe
$$\Upsilon(\alpha_B) = \GNS''(\alpha)_{\Upsilon B} = 0,$$
hence $\alpha_B = 0$ since $\Upsilon: \cB_2 \to \Hilb$ is a monic $\dag$-functor. Therefore $\alpha = 0$ and we conclude that $\GNS''$ is injective on 2-morphisms. 
\end{proof}

\begin{theorem}[Gelfand-Naimark for C*-2-categories]
For a small C*-2-category $\cA$, the universal representation 
$$\Upsilon_2\colon \cA \to \TwoHilb$$
given by $\Upsilon_2 \coloneqq \GNS'' \circ \yo^{\amalg}$ is monic. Thus, every C*-2-category $\cA$ can be realized as a norm-closed $\dag$-2-category $\GNS_2(\cA) \coloneqq \Im \Upsilon_2$ of weakly closed $\dagger$-categories of Hilbert spaces and operators. Moreover, if $\cA$ is strict, then $\Upsilon_2$ is a strict $\dag$-2-functor.
\end{theorem}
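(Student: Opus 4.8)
The plan is to exhibit $\Upsilon_2$ as the composite of two $\dag$-2-functors, each of which has already been shown to be monic, and then to check that monicity and strictness both pass to composites. First I would note that, since $\cA$ is small, the universal representation $\yo^\amalg\colon \cA \to \CstarCats$ in fact factors through the full sub-2-category $\CstarCats_{\Small}$: for each $B \in \cA$ the C*-category $\yo^\amalg(B) = \coprod_{A \in \cA}\cA(A \to B)$ has a set of objects and small hom-spaces, as recorded in the earlier theorem on the universal representation for C*-2-categories. That same theorem gives that $\yo^\amalg\colon \cA \to \CstarCats_{\Small}$ is monic, and that it is a \emph{strict} $\dag$-2-functor whenever $\cA$ is strict. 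Combining this with Lemma \ref{lem:GNS''ismonic}, which says $\GNS''\colon \CstarCats_{\Small} \to \TwoHilb$ is monic, the composite $\Upsilon_2 = \GNS'' \circ \yo^\amalg$ is injective on objects, on 1-morphisms, and on 2-morphisms, since a composite of maps each injective on $i$-cells is again injective on $i$-cells.

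Next I would identify $\GNS_2(\cA)$ with the image. As observed in the footnote to Theorem \ref{thm:coherence2cats}, the image of a C*-2-category under a monic $\dag$-2-functor into a strict C*-2-category is a strict $\dag$-2-subcategory isomorphic to the source; applied to the strict W*-2-category $\TwoHilb$ this produces the strict $\dag$-2-category $\GNS_2(\cA) \coloneqq \Im\Upsilon_2$, whose objects are W*-subcategories of $\Hilb$, together with an isomorphism $\cA \xrightarrow{\sim} \GNS_2(\cA)$ of $\dag$-2-categories. That the hom-spaces of $\GNS_2(\cA)$ are norm-closed follows because $\Upsilon_2$ is isometric on 2-morphisms: being injective it is isometric on each endomorphism C*-algebra, hence isometric on every hom-space by the C*-identity, so the completeness of the hom-C*-categories of $\cA$ is transported into $\TwoHilb$.

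For the strictness claim, when $\cA$ is strict I would invoke that $\yo^\amalg$ is a strict $\dag$-2-functor, and that $\GNS''$ is strictly unit- and $1$-composition-preserving since it is built from the strict endofunctor $(-)^{**}$ and its compositor and unitor are identities; hence $\Upsilon_2 = \GNS'' \circ \yo^\amalg$ is a strict $\dag$-2-functor.

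I do not anticipate a serious obstacle, as all of the functional-analytic content has already been absorbed into the monicity of $\yo^\amalg$ and of $\GNS''$. The only point demanding a little care is the claim that the image of a monic $\dag$-2-functor is a genuine $\dag$-2-subcategory — that is, that the weak associator and unitor data of $\cA$ can be reconstructed inside $\TwoHilb$ — but this is precisely the content of the footnote to Theorem \ref{thm:coherence2cats} and uses nothing beyond injectivity on 2-morphisms together with the strictness of $\TwoHilb$.
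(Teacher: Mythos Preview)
Your proposal is correct and follows essentially the same approach as the paper: both argue that $\Upsilon_2$ is monic because it is the composite of the monic $\dag$-2-functors $\yo^\amalg$ and $\GNS''$, and that strictness follows since $\GNS''$ is always strict while $\yo^\amalg$ is strict when $\cA$ is. Your version spells out some details the paper leaves implicit (why $\yo^\amalg$ lands in $\CstarCats_{\Small}$, why the image is norm-closed via the isometry argument); the only minor slip is calling the map $\cA \to \GNS_2(\cA)$ an \emph{isomorphism} rather than an \emph{equivalence}, since when $\cA$ is non-strict the image inherits strict coherence data from $\TwoHilb$.
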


\begin{proof}
Both $\dag$-functors $\yo^\amalg\colon \cA \to \CstarCats_{\small}$ and $\GNS''\colon \CstarCats_{\small} \to \TwoHilb$ are monic, so $\Upsilon_2$ is monic. Recall that $\GNS''$ is always strict, whereas $\yo^{\amalg}$ is strict whenever $\cA$ is. Therefore $\Upsilon_2$ is a strict $\dag$-2-functor if $\cA$ is a strict C*-2-category.
\end{proof}

\subsection{Cofibrant replacement for operator 2-categories}\label{subsec:CofibrantReplacement}

\begin{proposition}
For every C*-2-category $\cA$, there exists a strict C*-2-category $\widehat{\cA}$ together with an epic 2-equivalence $\ev_{\cA}\colon \widehat{\cA} \to \cA$. Moreover, when $\cA$ is a W*-2-category we have that $\widehat{\cA}$ is a W*-2-category and hence $\ev_{\cA}$ is automatically normal.
\end{proposition}

\begin{proof}
We provide the construction in \cite[\S 2.2.3]{gurski_2013}, noting that all the relevant data is compatible with dagger structures.

Let $\cA$ be a C*-2-category. 
We define the C*-2-category $\widehat{\cA}$ to have the same objects as $\cA$, and free paths of 1-morphisms in $\cA$ as 1-morphisms. 
More specifically, $X \in \widehat{\cA}(A \to A')$ is some finite tuple 
$$(A \xrightarrow{X_0} A_1,\, A_1 \xrightarrow{X_1} A_2,\, \hdots,\, A_n \xrightarrow{X_n} A')$$
of composable 1-morphisms in $\cA$ starting on $A$ and ending on $A'$.
We note that for every object $A \in \cA$ there is an empty string $\varnothing_A \in \widehat{\cA}(A \to A)$. 
Composition $\widehat{\ccirc}$ of 1-morphisms in $\widehat{\cA}$ is given by concatenation, which is strictly associative and unital with $\varnothing_A$ acting as the identity on $\cA$.

Before we define what 2-morphisms are in $\widehat{\cA}$, we define $\ev_\cA$ on objects to act as the identity. 
For $X \in \widetilde{\cA}(A \to A')$ we define
$$\ev_\cA(X) = (\cdots ((X_0 \ccirc X_1) \ccirc X_2) \cdots ) \ccirc X_n,$$
that is, $\ev_\cA$ acts on a free path by evaluating its left-most parenthesization in $\cA$.
In particular, when $X = \varnothing_A$, we have that $\ev_\cA(\varnothing_A) = \id_A$. 
We may thus choose the unitor $\ev^1_A: \ev_\cA(\varnothing_A) \tto \id_{\ev_\cA(A)}$ to be $\id_{\id_\cA}$. 
Note that $\ev_\cA$ is surjective on 1-morphisms since every 1-morphism $X_0 \in \cA(A \to A')$ forms a path $(X_0)$ of length 1.

For $X,Y \in \widehat{\cA}(A \to A')$, we now define the space of 2-morphisms from $X$ to $Y$ to be
$$\widehat{\cA}(X \tto Y) \coloneqq \cA(\ev_\cA(X) \tto \ev_\cA(Y)).$$

Notice that $\widehat{\cA}$ inherits compositions, linear and dagger structures, and norms from $\cA$, equipping each $\cA(A \to A')$ with the structure of a C*-category.
When $\cA$ is a W*-2-category, we further have that each $\cA(A \to A')$ is a W*-category.

By setting $\ev_\cA$ to act as the identity on 2-morphisms, we immediately have that $\ev_{\cA}$ is locally a fully-faithful $\dagger$-functor. 
For composable 1-morphisms $X = (X_0,\hdots,X_n)$ and $Y = (Y_0,\hdots,Y_m)$ in $\widehat{\cA}$, we set the tensorator $\ev^2_{X,Y}: \ev_\cA(X) \ccirc \ev_\cA(Y) \tto \ev_\cA(X \ccirc Y)$ to be the unique coherence unitary
{\small
$$\big((\cdots (X_0 \ccirc X_1) \cdots ) \ccirc X_n \big) \ccirc \big((\cdots (Y_0 \ccirc Y_1) \cdots ) \ccirc Y_m \big) \;\tto\; (\cdots((((\cdots (X_0 \ccirc X_1) \cdots ) \ccirc X_n ) \ccirc Y_0 ) \ccirc Y_1 ) \cdots )\ccirc Y_m,$$}
given by coherence for 2-categories. 
The 1-composition $\widehat{\ccirc}$ of composable 2-morphisms $a \in \widehat{\cA}(X \tto Y)$ and $a' \in \widehat{\cA}(X' \tto Y')$ is given by the unique 2-morphisms in $a \,\widehat{\ccirc}\, a' \in \widehat{\cA}((X \ccirc X') \tto (Y \circ Y'))$ such that the following square commutes
\[\begin{tikzcd}
	{\ev_{\cA}(X)\ccirc \ev_{\cA}(Y)} & {\ev_{\cA}(X \ccirc Y)} \\
	{\ev_{\cA}(X')\ccirc \ev_{\cA}(Y')} & {\ev_{\cA}(X' \ccirc Y')}
	\arrow["{a \ccirc a'}"', from=1-1, to=2-1]
	\arrow["{\ev^2_{X,Y}}", from=1-1, to=1-2]
	\arrow["{\ev^2_{X',Y'}}"', from=2-1, to=2-2]
	\arrow["{a \,\widehat{\ccirc}\, a'}", dashed, from=1-2, to=2-2]
\end{tikzcd}\]
This automatically implies that the tensorator $\ev^2_{X,Y}$ is natural in $X$ and $Y$. 
By coherence for 2-categories it follows that $\ev_\cA$ satisfies all coherence axioms for $\dag$-2-functors and $\widehat{\cA}$ satisfies all coherence axioms for C*-2-categories.
Finally, when $\cA$ is a W*-2-category, we see that $\widehat{\ccirc}$ is separately normal since both $\circ$ and $\ccirc$ are separately normal in $\cA$. 
In this case we conclude that $\widehat{\cA}$ is also a W*-2-category and that $\ev_\cA: \widehat{\cA} \to \cA$ is automatically normal as an equivalence between W*-2-categories. 
\end{proof}

\begin{proposition}
For each $\dag$-2-functor $F\colon \cA \to \cB$ between C*-2-categories, there exists a strict $\dag$-2-functor $\widehat{F}\colon \widehat{\cA} \to \widehat{\cB}$ and a unitary icon $u^F$ as follows: 
\[\begin{tikzcd}[row sep = 30pt, column sep = 30pt]
	{\widehat{\cA}} & {\widehat{\cB}} \\
	{\cA_1} & {\cA_2}
	\arrow["{\widehat{F}}", from=1-1, to=1-2]
	\arrow["{\ev_\cA}"', from=1-1, to=2-1]
	\arrow["{\ev_{\cB}}", from=1-2, to=2-2]
	\arrow["F"', from=2-1, to=2-2]
	\arrow["{u^F}"{description}, shorten <=4pt, shorten >=4pt, Rightarrow, from=1-2, to=2-1]
\end{tikzcd}\]
Moreover, when $F$ is a normal $\dag$-2-functor between W*-2-categories, then $\widehat{F}$ is also normal.
\end{proposition}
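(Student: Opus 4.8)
The plan is to follow the construction in \cite[\S 2.2]{gurski_2013} of the cofibrant replacement of a weak $2$-functor, checking at each stage that the data is compatible with the dagger structures and, in the W*-case, with normality. On objects I would put $\widehat F(A) := F(A)$, and on a free path $X = (X_0,\dots,X_n) \in \widehat{\cA}(A \to A')$ put $\widehat F(X) := (FX_0,\dots,FX_n) \in \widehat{\cB}(FA \to FA')$; this sends the empty path $\varnothing_A$ to $\varnothing_{FA}$ and strictly intertwines concatenation of paths, so the compositor $\widehat F^2$ and unitor $\widehat F^0$ are forced to be identities and $\widehat F$ will automatically be a \emph{strict} $\dag$-$2$-functor once it has been defined on $2$-morphisms. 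All the content is therefore in the $2$-morphism assignment and the attendant coherence checks.

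On $2$-morphisms, recall $\widehat{\cA}(X \to Y) = \cA(\ev_\cA X \to \ev_\cA Y)$, and similarly for $\widehat{\cB}$. Let $\kappa_X \colon F(\ev_\cA X) \to \ev_\cB(\widehat F X)$ be the canonical comparison $2$-cell in $\cB$ obtained by iterating the inverse compositor $(F^2)^{-1}$ along the left-most parenthesization of $X$ (and, when $X = \varnothing_A$, taking $\kappa_{\varnothing_A} := (F^0_A)^{-1}$); since each $F^2$ and $F^0$ is unitary, every $\kappa_X$ is unitary. I would then set $\widehat F(a) := \kappa_Y \circ F(a) \circ \kappa_X^{-1}$ for a $2$-morphism $a \colon X \to Y$. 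This is $\bbC$-linear and strictly compatible with vertical composition $\circ$ and with identity $2$-cells (being conjugation by a fixed family of unitaries), and it is $\dag$-preserving because $F$ is and the $\kappa_X$ are unitary: $\widehat F(a)^\dag = \kappa_X F(a^\dag)\kappa_Y^{-1} = \widehat F(a^\dag)$. The one substantive point is strict compatibility with horizontal composition $\widehat\ccirc$ of $2$-morphisms, which in $\widehat{\cA}$ and $\widehat{\cB}$ is defined through the bicategorical coherence unitaries $\ev^2$; this reduces to the identity
\[
\kappa_{X \ccirc X'} \circ F(\ev^2_{X,X'}) \;=\; \ev^2_{\widehat F X,\,\widehat F X'} \circ (\kappa_X \ccirc \kappa_{X'}) \circ F^2_{\ev_\cA X,\,\ev_\cA X'},
\]
which follows from the associativity (``hexagon'') axiom for the compositor $F^2$ together with Mac Lane coherence for bicategories, plus the analogous unit identity relating $\ev^1$ and $F^0$. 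I expect this coherence bookkeeping to be the main obstacle, though it is entirely formal — it is precisely Gurski's verification, carried out verbatim, with the additional (automatic) observation that every $2$-cell occurring is unitary. The unitary icon $u^F$ then has as its only data the family $u^F_X := \kappa_X^{-1} \colon \ev_\cB(\widehat F X) \to F(\ev_\cA X)$, so that $u^F \colon \ev_\cB \circ \widehat F \Rightarrow F \circ \ev_\cA$ as in the displayed diagram; each component is unitary, and its naturality square and the two icon axioms are again instances of the same coherence identities used above.

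Finally, for the W*-case: if $\cA$ and $\cB$ are W*-$2$-categories and $F$ is normal, then each hom W*-category of $\widehat{\cA}$ is literally a hom W*-category of $\cA$, and under this identification $\widehat F$ acts as $a \mapsto \kappa_Y \circ F(a) \circ \kappa_X^{-1}$, i.e.\ as $F$ followed by pre- and post-composition with the fixed $2$-cells $\kappa_X^{-1}$ and $\kappa_Y$. Since $2$-composition $\circ$ is separately weak*-continuous in a W*-$2$-category and $F$ is normal, this map is weak*-continuous on each hom-space, so $\widehat F$ is normal, as claimed.
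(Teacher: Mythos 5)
Your proposal is correct and follows essentially the same route as the paper: the paper also sends a path to the path of images (forcing strictness), defines $\widehat F$ on a $2$-cell $a$ as the unique $2$-cell conjugate to $F(a)$ by the canonical coherence unitaries (your $\kappa_X$), takes $u^F_X$ to be exactly that constraint unitary $\ev_\cB(\widehat FX)\to F(\ev_\cA X)$, and discharges all coherence and icon axioms by uniqueness/naturality of constraints, with $\dag$-preservation from unitarity of the constraints and normality from separate normality of $\circ$. No substantive differences.
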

\begin{proof}
We provide the construction in \cite[\S 2.3.3]{gurski_2013}, noting that all the relevant data is compatible with dagger structures.
Let $F: \cA \to \cB$ be a $\dag$-2-functor between C*-2-categories.
We define $\widehat{F}$ to act as $F$ on objects, and on 1-morphisms by
$$\widehat{F}(A \xrightarrow{X_0} A_1,\, A_1 \xrightarrow{X_1} A_2,\, \hdots,\, A_n \xrightarrow{X_n} A')
=
(FA \xrightarrow{FX_0} FA_1,\, FA_1 \xrightarrow{FX_1} FA_2,\, \hdots,\, FA_n \xrightarrow{FX_n} FA').
$$
In particular $\widehat{F}(\varnothing_A) = \varnothing_{FA}$, so $F$ is strictly unital.
From construction it is clear that $\widehat{F}$ is also strictly $\ccirc$-preserving. 
For a 2-morphism $a \in \widehat{A}(X \tto Y)$ we define $\widehat{F}(a) \in \widehat{\cB}(\widehat{F}X \tto \widehat{F}Y)$ to be the unique 2-morphism in $\cA$ such that the following diagram commutes.
\[\begin{tikzcd}
	{F((\cdots(X_0 \ccirc X_1) \cdots ) \ccirc X_n)} & {(\cdots(FX_0 \ccirc FX_1) \cdots ) \ccirc FX_n} \\
	{F((\cdots(Y_0 \ccirc Y_1) \cdots ) \ccirc Y_m)} & {(\cdots(FY_0 \ccirc FY_1) \cdots ) \ccirc FY_m}
	\arrow["Fa"', from=1-1, to=2-1]
	\arrow["\thicksim", from=1-1, to=1-2]
	\arrow["\thicksim"', from=2-1, to=2-2]
	\arrow["{\widehat{F}a}", dashed, from=1-2, to=2-2]
\end{tikzcd}\]
Here the horizontal morphisms are the unique constraint unitaries provided by the coherence theorem for 2-functors \cite[\S 2.3]{gurski_2013}.
By this uniqueness it follows that $\widehat{F}$ satisfies the coherence axioms for a strict 2-functor.
Moreover, $\widehat{F}$ is $\dag$-preserving since $F$ is $\dag$-preserving and the constraints for $F$ are unitary. 
From our definition of $\widehat{F}$ on 2-morphisms, it is also clear that $\widehat{F}$ is linear. 
When $F$ is a normal $\dag$-2-functor between W*-2-categories, it follows that $\widehat{F}$ is normal since $F$ is and 2-composition $\circ$ is separately normal.

Notice that $\ev_\cB \circ \widehat{F}$ and $\ev_\cA \circ F$ both act like $F$ on objects, so they agree at the level of objects. 
For $X = (X_0,\hdots,X_n) \in \widehat{\cA}(A \to A')$, we define the unitary
$$u^F_X \in \cA_2\big(\ev(FX_0,\hdots,FX_n),F(\ev(X_0,\hdots,X_n))\big)$$
to be the unique constraint unitary provided by the coherence theorem for 2-functors. 
Note that $u^F_X$ is natural in $X$ since constraints are natural, and $u^F$ satisfies the axioms of a unitary icon due to uniqueness of constraints. 
\end{proof}

\begin{proposition}
For C*-2-categories $\cA_1$ and $\cA_2$, there exists a cubical $\dag$-2-functor
$$C\colon \widehat{\cA}_1 \maxblacktimes \widehat{\cA}_2 \to \widehat{\cA_1 \maxblacktimes \cA_2}$$
which is the identity on objects, and a unitary icon $u$ as follows:
\[\begin{tikzcd}[row sep = 30pt, column sep = 10pt]
	& {\widehat{\cA_1 \maxblacktimes \cA_2}} \\
	{\widehat{\cA}_1 \maxblacktimes \widehat{\cA}_2} && {\cA_1 \maxblacktimes \cA_2}
	\arrow["C", from=2-1, to=1-2]
	\arrow["\ev", from=1-2, to=2-3]
	\arrow[""{name=0, anchor=center, inner sep=0}, "{\ev \maxblacktimes \ev}"', from=2-1, to=2-3]
	\arrow["u^{\ev}"{description}, shorten >=3pt, Rightarrow, from=1-2, to=0]
\end{tikzcd}\]
Moreover, when $\cA_1$ and $\cA_2$ are W*-2-categories, we may upgrade $C$ to a separately normal cubical $\dag$-2-functor
$$\overline{C}\colon \widehat{\cA}_1 \Wblacktimes \widehat{\cA}_2 \to \widehat{\cA_1 \Wblacktimes \cA_2}$$
and $u$ to a unitary icon $\overline{u}$ as follows:
\[\begin{tikzcd}[row sep = 30pt, column sep = 10pt]
	& {\widehat{\cA_1 \Wblacktimes \cA_2}} \\
	{\widehat{\cA}_1 \Wblacktimes \widehat{\cA}_2} && {\cA_1 \Wblacktimes \cA_2}
	\arrow["\overline{C}", from=2-1, to=1-2]
	\arrow["\ev", from=1-2, to=2-3]
	\arrow[""{name=0, anchor=center, inner sep=0}, "{\ev \Wblacktimes \ev}"', from=2-1, to=2-3]
	\arrow["\overline{u}^{\ev}"{description}, shorten >=3pt, Rightarrow, from=1-2, to=0]
\end{tikzcd}\]
\end{proposition}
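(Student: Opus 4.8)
The plan is to adapt the construction of the comparison cubical functor from \cite[\S 2.2]{gurski_2013}, carrying the dagger (and, in the W* case, the weak\textsuperscript{$*$}-topology) along at each step. Write a $1$-morphism of $\widehat{\cA}_1$ as a composable path $X_1 = (f_1,\dots,f_n)$ in $\cA_1$, and likewise $X_2 = (g_1,\dots,g_m)$ in $\widehat{\cA}_2$, so that a $1$-morphism of $\widehat{\cA}_1 \maxblacktimes \widehat{\cA}_2$ from $(A_1,A_2)$ to $(B_1,B_2)$ is a pair $(X_1,X_2)$; note that $\widehat{\cA}_1 \maxblacktimes \widehat{\cA}_2$ and $\widehat{\cA_1 \maxblacktimes \cA_2}$ are both strict, so ``cubical $\dag$-$2$-functor'' makes sense. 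I take $C$ to be the identity on objects and, on $1$-morphisms, I set $C(X_1,X_2)$ to be the path in $\cA_1 \maxblacktimes \cA_2$ obtained by first performing every step of $X_2$ (each tensored with an identity) and then every step of $X_1$:
$$C(X_1,X_2) := \big((\id,g_1),\dots,(\id,g_m),(f_1,\id),\dots,(f_n,\id)\big),$$
with the identities taken at the evident objects; this is strictly unit-preserving. The ``$X_2$ then $X_1$'' ordering is dictated by the cubicality axiom: when $X_1$ (resp.\ $Y_2$) is an identity path, a direct inspection shows that $C(X_1,X_2)\ccirc C(Y_1,Y_2)$ and $C\big((X_1,X_2)\ccirc(Y_1,Y_2)\big)$ are the \emph{same} path, which is what forces the compositor to be an identity there.

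For the action on $2$-morphisms I use the identifications built into $\widehat{(-)}$ and the local tensor product: the hom-space of $\widehat{\cA}_1 \maxblacktimes \widehat{\cA}_2$ from $(X_1,X_2)$ to $(Y_1,Y_2)$ is literally $\cA_1(\ev X_1 \tto \ev Y_1) \maxtimes \cA_2(\ev X_2 \tto \ev Y_2)$, which is also the $2$-hom-space of $\cA_1 \maxblacktimes \cA_2$ from the $1$-morphism $(\ev X_1,\ev X_2)$ to $(\ev Y_1,\ev Y_2)$. Since $1$-composition in $\cA_1 \maxblacktimes \cA_2$ is pointwise, $\ev C(X_1,X_2)$ is a pair $(P_1,P_2)$ in which $P_1$ is a composite of $\ev X_1$ with a string of identities and $P_2$ a composite of $\ev X_2$ with a string of identities; coherence for $2$-categories, applied componentwise in $\cA_1$ and $\cA_2$, yields a canonical unitary $\kappa_{(X_1,X_2)}\colon (P_1,P_2) \to (\ev X_1,\ev X_2)$ in $\cA_1 \maxblacktimes \cA_2$. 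I then define $C$ on $2$-morphisms by conjugation,
$$C(t) := \kappa_{(Y_1,Y_2)}^{\dag}\circ t \circ \kappa_{(X_1,X_2)},$$
which is an isometric $\dag$-functor on each hom-C*-category (and separately weak\textsuperscript{$*$}-continuous in the W* setting, conjugation by fixed unitaries being normal), and I take the compositor $C^2$ to be the unique coherence unitary relating $\ev$ of the (generally distinct) paths $C(X_1,X_2)\ccirc C(Y_1,Y_2)$ and $C((X_1,X_2)\ccirc(Y_1,Y_2))$, with unitor $C^0 = \id$.

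It then remains to check that $(C,C^2,C^0)$ is a cubical $\dag$-$2$-functor and to produce the icon. Naturality of $C^2$ and the $2$-functor associativity and unit axioms follow from coherence for $2$-categories, used (exactly as in the proof that $\ev_\cA$ is a $\dag$-$2$-functor) separately in each factor since $\cA_1 \maxblacktimes \cA_2$ has pointwise structure; unitarity of $C^2$, hence compatibility with $\dag$, is automatic because associators and unitors of a C*-$2$-category are unitary; and cubicality is the ``literal equality'' observation above. The unitary icon $u^{\ev}\colon \ev\circ C \Rightarrow \ev\maxblacktimes\ev$ — whose two sides indeed agree on objects — is given at the $1$-morphism $(X_1,X_2)$ by the component $\kappa_{(X_1,X_2)}$, and its icon axioms are precisely the naturality squares already encoded in the definition of $C$ on $2$-morphisms, together with coherence. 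For the W* statement, $\widehat{\cA}_i$, $\cA_1 \Wblacktimes \cA_2$, and $\widehat{\cA_1 \Wblacktimes \cA_2}$ are W*-$2$-categories by the W* parts of the cofibrant-replacement proposition and Proposition \ref{prop:cofibfunctors}; rerunning the construction with $\Wmaxtimes$ in place of $\maxtimes$ gives $\overline{C}$ and $\overline{u}^{\ev}$, and $\overline{C}$ is a separately normal cubical functor by Proposition \ref{prop:dataofcubicals}, since on simple tensors it is a conjugate of $\otimes$ and hence separately normal.

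The one genuinely delicate point — the part I would nail down first — is the interleaving convention for $C$ on $1$-morphisms: it must be arranged so that strict cubicality (the compositor being an identity, not merely a coherence isomorphism) holds in both degeneracies of the cubicality axiom simultaneously, which singles out the ``$X_2$ then $X_1$'' ordering above. Once that is fixed, everything else is routine $2$-categorical coherence bookkeeping, with unitarity and normality coming along for free because every coherence cell in sight (associators, unitors, and the $\kappa$'s built from them) is unitary and composition is separately normal.
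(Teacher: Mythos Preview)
Your proof is correct and follows essentially the same strategy as the paper: staircase paths on $1$-morphisms, conjugation by coherence unitaries on $2$-morphisms, and an appeal to coherence for $2$-categories (componentwise in $\cA_1$ and $\cA_2$) to verify the $\dag$-$2$-functor axioms and the icon axioms, with the W* upgrade coming from separate normality of composition. The only presentational difference is that the paper packages the construction through Proposition~\ref{prop:dataofcubicals} --- defining the strict $\dag$-$2$-functors $C_{A_1}$, $C_{A_2}$ and the interchanger $\Sigma_{X,Y}$ separately, each as the unique coherence unitary --- whereas you build $C$ directly on all of $\widehat{\cA}_1 \maxblacktimes \widehat{\cA}_2$ and verify cubicality by hand; the resulting functor and icon are the same, and your emphasis on the ``$X_2$ then $X_1$'' ordering is exactly the content of choosing $\Sigma$ to be trivial on identity inputs.
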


\begin{proof}
We adapt \cite[\S 8.1, Prop. 8.5]{gurski_2013}. 
To define $C$, we will provide the equivalent data seen in Proposition \ref{prop:dataofcubicals}. 
For objects $A_1 \in \cA_1$ and $A_2 \in \cA_2$ we define
$$C_{A_1}(A_2) = (A_1,A_2) = C_{A_2}(A_1).$$
Recall that we denote the identity of $A$ in $\cA_1$ by $\id_A$, and its identity in $\widehat{\cA_1}$ by $\varnothing_{A_1}$. 
For $X = (X_0,\hdots,X_n) \in \widehat{\cA}_2(A_2 \to A_2')$, we define
$$C_{A_1}(X) \coloneqq \big((\id_{A_1}, X_0), \hdots, (\id_{A_1}, X_n)\big).$$
In particular $C_{A_1}(\varnothing_{A_2}) = \varnothing_{(A_1,A_2)}$, so $C_{A_1}$ is strictly unital.
It is clear from definition that $C_{A_1}$ is also strictly $\ccirc$-preserving.
For $k \geq 0$, let $E^k_1$ be the 1-morphism in $\cA_1$ given by
$$E^k_1 \coloneqq \ev_{\cA_1}(\underbrace{\id_{A_1},\hdots,\id_{\cA_1}}_{k \text{ times}}).$$
For $a \in \widehat{\cA}_2(X \Rightarrow Y)$ where $X = (X_0,\hdots,X_n)$ and $Y = (Y_0,\hdots,Y_m)$, we define the 2-morphism 
$$C_{A_1}(a) \in \widehat{\cA_1 \maxblacktimes \cA_2}\big(C_{A_1}(X) \tto C_{A_1}(Y)\big)$$
as follows.
First note that $C_{A_1}(a)$ must be a 2-morphism in $\cA_1 \maxblacktimes \cA_2$ with source $(E^{n+1}_1,\ev_{\cA_1}(X))$ and target $(E^{m+1}_1,\ev_{\cA_1}(Y))$. 
We may then define
$$C_{A_1}(a) \coloneqq \gamma_{n+1,m+1} \otimes a$$
where $\gamma_{n+1,m+1} \in \cA_1(E^{n+1}_1 \tto E^{m+1}_1)$ is the unique constraint unitary given by coherence for 2-categories.
From this definition, it is immediate that $C_{A_1}$ is linear.
Since $\upsilon_{n+1,m+1}^\dag = \upsilon_{n+1,m+1}^{-1} = \upsilon_{m+1,n+1}$ by uniqueness of constraints, we conclude that $C_{A_1}$ is a strict $\dag$-2-functor. 
When $\cA_1,\cA_2,\cB$ are W*-2-categories and $C$ is normal, we may identically define $\overline{C}_{A_1}\colon \cA_2 \to \widehat{\cA_1 \Wblacktimes \cA_2}$, which is normal since $\otimes$ is separately normal in $\cA_1 \Wblacktimes \cA_2$. 

One defines the (normal) strict $\dag$-2-functor $F_{A_2}$ in a similar fashion.
We now define the unitary $\Sigma_{X,Y}$ for 1-morphisms $X = (X_0,\hdots,X_n) \in \widehat{\cA}_1(A_1 \to A_1')$ and $Y = (Y_0,\hdots,Y_m) \in \widehat{\cA}_2(A_2 \to A_2')$ as follows.
\[\begin{tikzcd}[column sep = 40pt, row sep = 40pt]
	{(A_1,A_2)} & {(A_1,A_2')} \\
	{(A_1',A_2)} & {(A_1',A_2')}
	\arrow["{C_{A_1}(Y)}", from=1-1, to=1-2]
	\arrow["{C_{A_2}(X)}"', from=1-1, to=2-1]
	\arrow["{C_{A_2'}(X)}", from=1-2, to=2-2]
	\arrow["{C_{A_1'}(Y)}"', from=2-1, to=2-2]
	\arrow["{\Sigma_{X,Y}}"{description}, shorten <=5pt, shorten >=5pt, Rightarrow, from=1-2, to=2-1]
\end{tikzcd}\]
First note that $\Sigma_{X,Y}$ must be a 2-morphism in $\cA_1 \maxblacktimes \cA_2$ with source 
$$\ev_{\cA_1 \maxblacktimes \cA_2}\big((\id_{A_1},Y_1),\hdots,(\id_{A_1},Y_m),(X_1,\id_{A_2'}),\hdots,(X_n,\id_{A_2'})\big)$$
and target
$$\ev_{\cA_1 \maxblacktimes \cA_2}\big((X_1,\id_{A_2}),\hdots,(X_n,\id_{A_2}),(\id_{A_1'},Y_1),\hdots,(\id_{A_1'},Y_m)\big).$$
We may then define $\Sigma_{X,Y}$ to be the unique constraint unitary in $\cA_1 \maxblacktimes \cA_2$ given by coherence for 2-categories.
Note that $(\Sigma_1)$ follows by naturality of constraints, while $(\Sigma_2)$ and $(\Sigma_3)$ follows from their uniqueness. Hence, this data serves to determine a cubical $\dag$-2-functor 
$$C\colon \widehat{\cA}_1 \maxblacktimes \widehat{\cA}_2 \to \widehat{\cA_1 \maxblacktimes \cA_2},$$
and in the W* case, a separately normal cubical $\dag$-2-functor
$$\overline{C}\colon \widehat{\cA}_1 \Wblacktimes \widehat{\cA}_2 \to \widehat{\cA_1 \Wblacktimes \cA_2}.$$
Note that $\ev \circ\, C$ and $\ev \maxblacktimes \ev$ act as the identity on objects, so they agree at the level of objects. 
For a 1-morphism $(X,Y)$ in $\widehat{A}_1 \maxblacktimes \widehat{A}_2$ where $X = (X_0,\hdots,X_n)$ and $Y = (Y_0,\hdots,Y_m)$ we define the unitary 
$$u^{\ev}_{X,Y} \in (\cA_1 \maxblacktimes \cA_2)\Big(\ev\big(F_{A_1}(Y) \ccirc F_{A_2}(X)\big) \tto \big(\ev(X),\ev(Y)\big)\Big)$$
as follows. 
Notice
$\ev\big(F_{A_1}(Y) \ccirc F_{A_2}(X)\big) = (E^{n+1}_1 \ccirc \ev(X), \ev(Y) \ccirc E^{m+1}_2)$
We may then define $u^{\ev}_{X,Y} \coloneqq v_1 \otimes v_2$ where $v_1 \in \cA_1( E^{n+1}_1 \ccirc \ev(X) \tto \ev(X))$ and $v_2 \in \cA_2(\ev(Y) \ccirc E^{m+1}_2 \tto \ev(X))$ to be the unique constraint unitaries given by coherence for 2-categories.
We see that $u^{\ev}_{X,Y}$ is natural in $X$ and $Y$ since constraints are natural, and $u$ satisfies the axioms for unitary icons by uniqueness of constraints.
In the W* case, we define $\overline{u}^{\ev}$ identically to $u^{\ev}$.
\end{proof}

\section{Examples of operator algebraic tricategories: $\CHaus$ and $\Meas$} \label{sec:examples}

\begin{example}
We define $\CHaus$ to be the following C*-3-category:
\begin{enumerate}
\item Objects are compact Hausdorff spaces;
\item A morphism $A\colon X \to Y$ is a C*-algebra equipped with $*$-homomorphisms $C(X) \to Z(A)$ and $C(Y) \to Z(A)$;
\item A 2-morphism $H\colon A \tto B$ is an $A$-$B$ C*-correspondence, i.e. a right B-Hilbert module $H$ together with a left $A$-action $A \to \cL_B(H)$, such that 
$$ay \cdot h \cdot b = a \cdot h \cdot yb \quad\text{for } a \in A, y \in C(Y), h \in H, b \in B,$$
and a similar compatibility axiom for the $C(X)$-action.
Here $\cL_B(H)$ is the C*-algebra of adjointable $B$-intertwiner
\item A 3-morphism $\varphi\colon H \to K$ is an adjointable intertwiner, and define $\varphi^\dag$ to be its adjoint.
\end{enumerate}

\begin{itemize}
\item[($\circ$)] The composition $\circ$ of 3-morphisms is given by composition of maps, which is clearly linear and $\dag$-preserving.

\item[($\ccirc$)] The composition $\ccirc$ of 2-morphisms $H\colon A \tto B$ and $K\colon B \tto C$ is given by the right C-Hilbert module 
$$H \otimes_{B} K \coloneqq H \otimes K / \overline{\Span}(hb \otimes k - h \otimes bk : h \in H, b \in B, k \in K)$$
with $C$-valued inner product determined by
$$\langle h \otimes k, h' \otimes k' \rangle \coloneqq \langle k, \langle h, h' \rangle \cdot k' \rangle \quad\text{for } h,h' \in H \text{ and } k,k' \in K.$$
We determine the left action of $A$ on $H \otimes_B K$ by
$$a \cdot (h \otimes k) \coloneqq (a \cdot h) \otimes k,$$
which is well-defined since the action of $A$ on $H$ is given by right $B$-module homomorphisms. We define the right $C$-action on $H \otimes_B K$ similarly. Furthermore, notice
\begin{align*}
ay \cdot (h \otimes k) \cdot c
&=
(ay \cdot h) \otimes (k \cdot c)\\
&=
(a \cdot h \cdot y) \otimes (k \cdot c)\\
&=
(a \cdot h) \otimes (y \cdot k \cdot c)\\
&=
(a \cdot h) \otimes (k \cdot yc)\\
&=
a \cdot (h \otimes k) \cdot yc,
\end{align*}
and a similar argument shows the compatibility axiom for the $C(X)$-action.
Hence $H \ccirc_B K$ satisfies the property required of 2-morphisms. The composition $\ccirc$ of 3-morphisms $\varphi\colon \morph{A}{H}{B} \ttto \morph{A}{H'}{B}$ and $\psi\colon \morph{B}{K}{C} \ttto \morph{B}{K'}{B}$ is determined by
$$(\varphi \ccirc_B \psi)(h \otimes k) = \varphi(h) \otimes \psi(k) \quad\text{for } h \in H, k \in K.$$
This is well-defined on $H \otimes_B K$ since
$$(\varphi \ccirc_B \psi)(hb \otimes k) =
\varphi(h)b \otimes \psi(k)
=
\varphi(h) \otimes b\psi(k)
=
(\varphi \ccirc_B \psi)(h \otimes bk).
$$
One similarly shows that $\varphi \ccirc_B \psi$ is intertwiner. To see that it is also adjointable, observe
\begin{align*}
\langle (\varphi \ccirc_B \psi)(h \otimes k), h' \otimes k' \rangle &=
\langle \psi(k), \langle \varphi(h),h' \rangle \cdot k' \rangle \\
&=
\langle k , 
\langle h, \varphi^\dag(h') \rangle  \cdot \psi^\dag(k') \rangle\\
&=
\langle h \otimes k, (\varphi^\dag \otimes_B \psi^\dag)(h' \otimes k') \rangle
\end{align*}
Hence $(\varphi \ccirc_B \psi)^\dag = \varphi^\dag \ccirc_B \psi^\dag$. It is clear that $\ccirc$ is bilinear from this definition.

\item[($\cccirc$)] We define the composition $\cccirc$ of 1-morphisms $A\colon X \to Y$ and $B \colon Y \to Z$ to be
$$A \cccirc_Y B \coloneqq (A \maxtimes B) / \overline{\Span}(ay \otimes b - a \otimes yb: a \in A, y \in C(Y), b \in B),$$
where we note that $\overline{\Span}(ay \otimes b - a \otimes yb: a \in A, y \in C(Y), b \in B)$ is a (norm-closed) two-sided ideal in $A \maxtimes B$ since
\begin{align*}
(a' \otimes b') (ay \otimes b - a \otimes yb) &=
a'ay \otimes b'b - a'a \otimes b' y b = 
a'ay \otimes b'b - a'a \otimes y b' b\\
(ay \otimes b - a \otimes yb) (a' \otimes b') 
&=
aya' \otimes bb' - aa' \otimes ybb'
=
aa' y \otimes bb' - aa' \otimes ybb'.
\end{align*}
In fact, $A \cccirc_Y B$ is the coequalizer of $C(Y) \to A \to A \maxtimes B$ and $C(Y) \to B \to A \maxtimes B$ given by
$$
A \cccirc_Y B = (A \maxtimes B)/ \langle y \otimes 1_B - 1_A \otimes y : y \in C(Y)\rangle.
$$
We determine the maps $C(X) \to Z(A \ccirc_Y B)$ and $C(Y) \to Z(A \ccirc_Y B)$ by $x \mapsto x \otimes 1_B$ and $y \mapsto 1_A \otimes y$ respectively, which are clearly $*$-homomorphisms.

We define the composition $\cccirc$ of 2-morphisms $H\colon \morph{X}{A}{Y} \tto \morph{X}{A'}{Y}$ and $K\colon \morph{Y}{B}{Z} \tto \morph{Y}{B'}{Z}$ to be the $(A \cccirc_Y B)$-$(A' \cccirc_Y B')$ C*-correspondence with right $(A' \cccirc_Y B')$-Hilbert module
$$H \cccirc_Y K 
\coloneqq
H \otimes K / \overline{\Span}(hy \otimes k - h \otimes yk : h \in H, y \in C(Y), k \in K).$$
Indeed, we equip $H \cccirc_Y K$ with the $(A' \cccirc_Y B')$-valued inner product determined by
$$
\langle h \otimes k, h' \otimes k' \rangle 
=
\langle h, h' \rangle \otimes \langle k, k' \rangle.
$$
Notice this is well-defined since
\begin{align*}
\langle hy \otimes k, h' \otimes k'\rangle 
&=
\langle hy, h' \rangle \otimes \langle k, k' \rangle
=
\langle h,h' \rangle y^* \otimes \langle k,k' \rangle\\
&=
\langle h,h' \rangle \otimes y^* \langle k,k' \rangle
=
\langle h,h' \rangle \otimes  \langle k,k' \rangle y^*\\
&=
\langle h,h' \rangle \otimes  \langle ky,k' \rangle 
=
\langle h,h' \rangle \otimes  \langle yk,k' \rangle \\
&= \langle h \otimes  yk, h' \otimes k' \rangle.
\end{align*}
We then determine the right $(A' \cccirc_Y B')$-action on $H \cccirc_Y K$ by
$$(h \otimes k) \cdot (a' \otimes b') = ha' \otimes kb',$$
which is well-defined since
\begin{align*}
(h \otimes k) \cdot (a'y \otimes b')
&=
ha'y \otimes kb'
=
ha' \otimes y kb'
=
ha' \otimes kyb'
=
(h \otimes k) \cdot (a' \otimes y b')\\
(hy \otimes k) \cdot (a' \otimes b')
&=
hya' \otimes kb'
=
ha'y \otimes kb'
=
ha' \otimes ykb'
=
(h \otimes yk) \cdot (a' \otimes b').
\end{align*}
One determines the left $(A \cccirc_Y B)$-action similarly.

Finally, one determines the composition $\cccirc$ of 3-morphisms akin to $\ccirc$ and it is easy to show $\cccirc$ is $\dag$-preserving and bilinear.

We now define the tensorator (or interchanger) for $\cccirc$.
In particular, for $A,A',A'' \colon X \to Y$, $B,B',B''\colon Y \to Z$ and $A \xrightarrow{H} A' \xrightarrow{H'} A''$, $B \xrightarrow{K} B' \xrightarrow{K'} B''$, we define a unitary
$$
\Sigma\colon
(H \cccirc_{Y} K) \ccirc_{A' \cccirc_Y B'} (H' \cccirc_Y K') 
\;\ttto\;
(H \ccirc_{A'} H') \cccirc_Y (K \ccirc_{B'} K'),
$$
determined by 
$$(h \otimes k) \otimes (h' \otimes k') \mapsto (h \otimes h') \otimes (k \otimes k').$$
A simple computation reveals that $\Sigma$ is a well-defined intertwiner, clearly admitting an inverse intertwiner. Hence, to show $\Sigma$ is unitary, it suffices to show it is isometric. Observe
\begin{align*}
\langle (h \otimes k) \otimes (h' \otimes k'), (\widetilde{h} \otimes \widetilde{k}) \otimes (\widetilde{h}' \otimes \widetilde{k}') \rangle 
&=
\langle h' \otimes k', \langle h \otimes k, \widetilde{h} \otimes \widetilde{k} \rangle \cdot \widetilde{h}' \otimes \widetilde{k}' \rangle\\
&=
\langle h' \otimes k', (\langle h,\widetilde{h} \rangle \otimes  \langle k, \widetilde{k} \rangle ) \cdot \widetilde{h}' \otimes \widetilde{k}' \rangle\\
&=
\langle h' \otimes k', (\langle h,\widetilde{h} \rangle \cdot \widetilde{h}' )\otimes  (\langle k, \widetilde{k} \rangle \cdot \widetilde{k}') \rangle\\
&=
\langle h', \langle h, \widetilde{h} \rangle \cdot \widetilde{h}' \rangle \otimes 
\langle k', \langle k, \widetilde{k} \rangle \cdot \widetilde{k}' \rangle \\
&=
\langle h \otimes h', \widetilde{h} \otimes \widetilde{h}' \rangle
\otimes
\langle k \otimes k', \widetilde{k} \otimes \widetilde{k}' \rangle\\
&=
\langle (h\otimes h') \otimes (k \otimes k'), (\widetilde{h} \otimes \widetilde{h}') \otimes (\widetilde{k} \otimes \widetilde{k}')\rangle.
\end{align*}
On the other hand, the unitor for $\ccirc$ is defined on components $\id_{A \cccirc_Y B} \tto \id_A \cccirc_Y \id_B$ to be the identity unitary interchanger on $A \cccirc_Y B$ viewed as a C*-correspondence over itself. Since the tensorator (interchanger) is morally a swap between the middle tensorands, it is quite easy to see that these satisfy the associativity and unitalily axioms for a $\dag$-2-functor.
\end{itemize}
It is well-known that C*-algebras, C*-correspondences, and adjointable intertwiners form a C*-2-category, which we will denote by $\CstarAlg$. \cite{CHJP2022} \cite{Blecher_LeMerdy_2004} \cite{Paschke_1973} \cite{Rieffel_1974} One can slightly modify this argument to show that $\CHaus(X,Y)$ is a C*-2-category for every $X,Y \in \CHaus$.

\begin{itemize}
    \item[(I)] For an object $X \in \CHaus$, we set $I_A \coloneqq C(X)$ where the maps $C(X) \to Z(C(X))$ are given by the identity map on $C(X)$. We then extend $I_A \colon \bbC \to \CHaus(X \to X)$ trivially into a $\dag$-2-functor.
\end{itemize}
Recall that any (non-degenerate) $*$-homomorphism $\varphi\colon A \to B$ between C*-algebras induces an $A-B$ C*-correspondence $H_\varphi$ given by:
    \begin{itemize}
        \item The underlying vector space is $B$ with $\langle b, b' \rangle = b^* b'$ for $b,b' \in B$. 
        \item The left $A$-action is given by 
        $$a \cdot b \coloneqq \varphi(a)b,$$
        for $a \in A$ and $b \in B$.
        \item The right $B$-action is given by right multiplication in $B$.
    \end{itemize}
When $\varphi$ is unitary, there exists an induced unitary adjoint equivalence $(H_\varphi,H^{\sqdot}_\varphi,\eta,\epsilon)$ given by:
\begin{itemize}
    \item the $B$-$A$ C*-correspondence $H^{\sqdot}_\varphi \coloneqq H_{\varphi^*}$,
    \item a unitary intertwiner $\eta\colon \id_A \to H_{\varphi} \ccirc_B H^{\sqdot}_{\varphi}$ defined as follows. Note that 
    $$b \otimes a = 1 \cdot \varphi(\varphi^*(b)) \otimes a = 1 \otimes \varphi^*(b) \cdot a$$
    in $H_\varphi \ccirc_B H^{\sqdot}_\varphi$. Hence every element $\sum b_i \otimes a_i$ in $H_\varphi \ccirc_B H^{\sqdot}_\varphi$ is of the form $1 \otimes a$, and it is easy to show there is a unique such $a \in A$. We thus define $\eta$ to be the bijection $a \mapsto 1 \otimes a$. Notice $\eta$ is an intertwiner
    \begin{align*}
    \eta(a) \cdot \widetilde{a} &= (1 \otimes a) \cdot \widetilde{a} = 1 \otimes (a\widetilde{a}) = \eta(a\widetilde{a})\\
    \widetilde{a} \cdot \eta(a) &= \widetilde{a} (1 \otimes a) = \varphi(\widetilde{a}) \otimes a = 1 \otimes \varphi^*(\varphi(\widetilde{a})) a = 1 \otimes \widetilde{a}a = \eta(\widetilde{a}a),
    \end{align*}
    and an isometry
    \begin{align*}
        \langle 1 \otimes a, 1 \otimes \widetilde{a} \rangle =
        \langle a , \langle 1, 1\rangle \cdot \widetilde{a} \rangle = 
        \langle a, (1^* 1) \cdot \widetilde{a} \rangle 
        =
        \langle a, \widetilde{a} \rangle.
    \end{align*}
    Therefore $\eta$ is indeed a unitary intertwiner.
    \item One defines the unitary intertwiner $\epsilon\colon H^{\sqdot}_\varphi \ccirc_B H_\varphi \to \id_B$ similarly by exchanging the roles of $A,B$ and of $\varphi,\varphi^{\sqdot}$.
\end{itemize}
A formal calculation reveals that $\epsilon$ and $\eta$ satisfies the zig-zag equations.
\begin{itemize}
    \item[(a)] For composable 1-morphisms $\morph{X}{A}{Y}$ and $\morph{Y}{B}{Z}$, The universal property of coequalizers yields an isomorphism $\varphi_{a} \colon A \cccirc_Y (B \cccirc_Z C) \to (A \cccirc_Y B) \cccirc_Z C$. This map is determined by $a \otimes (b \otimes c) \mapsto (a \otimes b) \otimes c$, and on easy verifies $\varphi_a$ is unitary. We thus define the $A \cccirc ( B \cccirc C) - (A \cccirc B) \cccirc C$-correspondence $a_{A,B,C}$ to be $H_{\varphi_a}$ and extend it to a unitary adjoint equivalence as mentioned previously.

    We now define the naturality constraint for $a$. In particular, for $X \xrightarrow{A} Y \xrightarrow{B} Z \xrightarrow{C} W$, $X \xrightarrow{A'} Y \xrightarrow{B'} Z \xrightarrow{C'} W$ , and C*-correspondences $\morph{A}{H}{A'}$, $\morph{B}{K}{B'}$, $\morph{C}{L}{C'}$, we define a unitary
    $$a_{HKL} \colon (H \cccirc (K \cccirc L)) \ccirc a_{A' B' C'} \;\ttto\; a_{ABC} \ccirc ((H \cccirc K) \cccirc L).$$
    We note that that every element in $(H \cccirc (K \cccirc L)) \ccirc a_{A' B' C'}$ can be written as a sum of elements of the form
    $$(h \otimes (k \otimes l)) \otimes ((1_{A'} \otimes 1_{B'}) \otimes 1_{C'}).$$
    We then determine $a_{HKL}$ to be the map
    $$(h \otimes (k \otimes l)) \otimes ((1_{A'} \otimes 1_{B'}) \otimes 1_{C'}) \mapsto ((1_A \otimes 1_B) \otimes 1_C) \otimes ((h \otimes k) \otimes l).$$
    A simple computation reveals that $a_{HKL}$ is a well-defined intertwiner, clearly admitting an inverse intertwiner. 
    To see that $a_{HKL}$ is unitary, we will show it is isometric. Observe,
    \begin{align*}
    &\langle (h \otimes (k \otimes l)) \otimes ((1 \otimes 1) \otimes 1), (\widetilde{h} \otimes (\widetilde{k} \otimes \widetilde{l} )) \otimes ((1 \otimes 1) \otimes 1) \rangle\\
    =\;
    &\langle 
    (1 \otimes 1) \otimes 1, 
    \langle h \otimes (k \otimes l), \widetilde{h} \otimes (\widetilde{k} \otimes \widetilde{l}) \rangle \cdot ((1 \otimes 1) \otimes 1) \rangle\\
    =\;
    &\langle
    (1 \otimes 1) \otimes 1,
    (\langle h, \widetilde{h} \rangle \otimes \langle k, \widetilde{k} \rangle) \otimes \langle l, \widetilde{l} \rangle \rangle\\
    =\;
    &(\langle h, \widetilde{h} \rangle \otimes \langle k, \widetilde{k} \rangle) \otimes \langle l, \widetilde{l} \rangle\\
    =\;
    &\langle (h \otimes k) \otimes l , \langle (1 \otimes 1) \otimes 1, (1 \otimes 1) \otimes 1 \rangle ((\widetilde{h} \otimes \widetilde{k}) \otimes \widetilde{l}) \rangle\\
    =\;
    &\langle ((1 \otimes 1) \otimes 1) \otimes ((h \otimes k) \otimes l), ((1 \otimes 1) \otimes 1) \otimes ((\widetilde{h} \otimes \widetilde{k}) \otimes \widetilde{l}) \rangle.
    \end{align*}
    It is also clear from construction that $a_{HKL}$ is natural in $H$, $K$, and $L$.
    
    \item[(u)] Notice $I_X \cccirc_X A = C(X) \cccirc_X A \cong A$ for a 1-morphism $\morph{X}{A}{Y}$.
    Denoting this unitary by $\varphi_\ell$, we define the $(I_X \cccirc A)-A$ C*-correspondence $\ell_A$ to be $H_{\varphi_\ell}$, as extend it to a unitary adjoint equivalence as mentioned previously.
    One defines the C*-correspondence $r_A = H_{\varphi_r}$ similarly through a map $\varphi_r\colon A \cccirc_{Y} I_Y \to A$. For a C*-correspondence $\morph{A}{H}{B}$ between C*-algebras $A,B\colon X \to Y$, one determines the naturality constraint unitary interchangers 
    \begin{align*}
    \ell_X\colon (I_X \cccirc H) \ccirc \ell_{B} \,&\ttto\, \ell_A \ccirc H\\
    r_X\colon (H \cccirc I_Y ) \ccirc r_{B} \,&\ttto\, r_A \ccirc H
    \end{align*}
    respectively by
    \begin{align*}
        (1_{C(X)} \otimes h) \otimes 1_B &\mapsto 1_A \otimes h,\\
        (h \otimes 1_{C(Y)}) \otimes 1_B &\mapsto 1_A \otimes h. 
    \end{align*}
    
    \item[($\pi$)] In what follows, we will supress the associators for $\ccirc$. For composable C*-algebras $A,B,C,D$, we determine the natural unitary interchanger 
    $$\pi_{ABCD}\colon (\id_A \cccirc a_{BCD}) \ccirc a_{A,B \cccirc C,D} \ccirc (a_{ABC} \cccirc \id_D) \;\ttto\; a_{A,B,C \cccirc D} \ccirc a_{A \cccirc B,C,D},$$
    to be the map which sends
    $$(1_A \otimes ((1_B \otimes 1_C )\otimes 1_D)) \otimes ((1_A \otimes (1_B \otimes 1_C)) \otimes 1_D) \otimes (((a \otimes b) \otimes c) \otimes d),$$
    to
    $$((1_A \otimes 1_B) \otimes (1_C \otimes 1_D)) \otimes (((a \otimes b) \otimes c) \otimes d).$$
    
    \item[(coh)] For composable C*-algebras $\morph{X}{A}{Y}$ and $\morph{Y}{B}{Z}$, we define the middle, left, and right unity coheretor  unitary intertwiners
    \begin{align*}
        \mu_{AB}&\colon (\id_a \cccirc \ell^{\sqdot}_B) \ccirc a_{A,I_Y,B} \ccirc (r_A \cccirc \id_B) \;\ttto\; \id_{A \cccirc B} \\
        \lambda_{AB}&\colon \ell^{\sqdot}_A \cccirc \id_B \;\ttto\; \ell^{\sqdot}_{A \cccirc B} \ccirc a_{I_X,A,B} \\
        \rho_{AB}&\colon \id_A \cccirc r_B \;\ttto a_{A,B,I_Z} \ccirc r_{A \cccirc B}
    \end{align*}
    to be the maps determined by
    \begin{align*}
        (1_A \otimes ( 1_{C(Y)} \otimes 1_B)) \otimes ((1_A \otimes 1_{C(Y)}) \otimes 1_B) \otimes (a \otimes b) &\mapsto a \otimes b,\\
        (1_{C(X)} \otimes a) \otimes b &\mapsto (1_{C(X)} \otimes (1_A \otimes 1_B)) \otimes ((1_{C(X)} \otimes a) \otimes b),\\
        a \otimes b &\mapsto ((1_A \otimes 1_B) \otimes 1_{C(Z)}) \otimes (a \otimes b).
    \end{align*}
\end{itemize}
The associativity condition for $\cccirc$ compares two 3-morphisms with source
{\footnotesize$$
(\id_A \cccirc \id_B \cccirc a_{CDE}) \ccirc (\id_A \cccirc a_{B,CD,E}) \ccirc (\id_A \cccirc (a_{BCD} \cccirc \id_E)) \ccirc a_{A,(BC)D,E} \ccirc (a_{A,BC,D} \cccirc \id_E) \ccirc ((a_{ABC} \cccirc \id_D) \cccirc \id_E),
$$}
and target
$$
a_{A,B,C(DE)} \ccirc a_{AB,C,DE} \ccirc a_{(AB)C,D,E}.
$$
A formal calculation reveals that both 3-morphisms are determined by mapping 
$$
1_{A(B((CD)E))} \otimes 1_{A((B(CD))E)} \otimes 1_{A(((BC)D)E} \otimes 1_{(A((BC)D))E} \otimes 1_{(((AB)C)D)E} \otimes ((((a \otimes b) \otimes c) \otimes d) \otimes e)
$$
to
$$
1_{(AB)(C(DE))} \otimes 1_{((AB)C)(DE)} \otimes ((((a \otimes b) \otimes c) \otimes d) \otimes e)
$$
where we denote $1_A \otimes (1_B \otimes (( 1_C \otimes 1_D) \otimes 1_E))$ by $1_{A(B((CD)E))}$ and so on.

The two axioms relating the unity coheretors with the associators and pentagonator compare two 3-morphisms with sources
\begin{align*}
(\id_A \cccirc (\id_B \cccirc \ell^{\sqdot}_C)) \ccirc (\id_A \cccirc a_{B,I_Z,C}) \ccirc (\id_A \cccirc (r_B \cccirc B)) \ccirc a_{ABC},\\
a_{ABC} \ccirc ((\id_A \cccirc \ell^{\sqdot}_B) \cccirc \id_C) \ccirc (a_{A,I_Y,B} \cccirc \id_C) \ccirc ((r_A \cccirc \id_B) \cccirc \id_C), 
\end{align*}
and targets
\begin{align*}
a_{ABC} \ccirc \id_{(AB)C},\\
\id_{A(BC)} \ccirc a_{ABC}
\end{align*}
respectively. Another formal calculation reveals that both 3-morphisms are determined by mapping
\begin{align*}
1_{A(B(I_ZC)} \otimes 1_{A((BI_Z)C)} \otimes 1_{A(BC)} \otimes ((a \otimes b) \otimes c),\\
1_{(AB)C} \otimes 1_{(A(I_Y B))C} \otimes 1_{((AI_Y)B)C} \otimes ((a \otimes b) \otimes c),
\end{align*}
into
\begin{align*}
1_{(AB)C} \otimes ((a \otimes b) \otimes c),\\
1_{A(BC)} \otimes ((a \otimes b) \otimes c),
\end{align*}
respectively. 
\end{example}

\begin{example}
One defines the analogous W*-3-category $\Meas$ whose:
\begin{itemize}
    \item objects are $\sigma$-finite measure spaces $(X,\mu$);
    \item 1-morphisms $A\colon (X,\mu) \to (Y,\nu)$ are W*-algebras $A$ equipped with normal $*$-homomorphisms $L^\infty(X,\mu) \to Z(A)$ and $L^\infty(Y,\nu) \to Z(A)$;
    \item 2-morphisms $H\colon A \tto B$ are $A$-$B$ W*-correspondences compatible with the $L^\infty(X,\mu)$ amd $L^\infty(Y,\nu)$ actions,
    \item 3-morphisms are adjointable normal intertwiners.
\end{itemize}    
\end{example}

% \begin{proof}
%     \nn{Mention some facts about why compositions are separately continuous and the top C*-categories are W*-categories. Can reference Dave+Quan} 
% \end{proof}

\section{Yoneda for operator algebraic tricategories} \label{sec:yoneda}

We will now construct the Yoneda embedding for a \emph{cubical} C*-3-category $\cB$
$$\yo\colon \cB \to \CstarThreeCat(\cB^{\oneop} \to \CstarGray),$$
and a corresponding normal version 
$$\yo\colon \cB \to \WstarThreeCat(\cB^{\oneop} \to \WstarGray),$$
when $\cB$ is a W*-3-category. We begin by constructing the targets of these $\dag$-3-functors. 

\begin{lemma}\label{lem:hombetween3cats}
For C*-3-categories $\cA$ and $\cB$, and $\dag$-3-functors $F,G\colon \cA \to \cB$, there exists an organic C*-2-category structure on $\CstarThreeCat(\morph{\cA}{F}{\cB} \tto \morph{\cA}{G}{\cB})$, the 2-category of 
\begin{enumerate}
\item[(0)] $\dag$-3-natural transformations from $F$ to $G$,
\item[(1)] $\dag$-3-modifications, and
\item[(2)] uniformly bounded perturbations.
\end{enumerate}
which is strict whenever $\cB$ is locally strict.

Moreover, when $\cA$ and $\cB$ are W*-3-categories and $F,G$ are normal $\dag$-3-functors, we have that $\WstarThreeCat(F \tto G)$ forms a W*-3-category.
\end{lemma}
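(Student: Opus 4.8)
Proof sketch and strategy.

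The plan is to separate the purely algebraic content from the analytic content. On the algebraic side, by Gurski's theory of tricategories \cite{gurski_2013} the $\dag$-$3$-natural transformations $F \tto G$, $\dag$-$3$-modifications, and (all, not just uniformly bounded) perturbations assemble into a bicategory, whose composition of modifications and of perturbations is performed componentwise using the $1$-composition $\ccirc$ of the hom-$2$-categories $\cB(FA \to F'A)$, and whose associator and unitor cells have components given by the associators and unitors of those hom-$2$-categories; in particular this bicategory is a strict $2$-category exactly when every $\cB(X \to Y)$ is strict, i.e.\ when $\cB$ is locally strict. Thus what remains is to equip this bicategory with a compatible C*-structure, which reduces to three checks: (i) each hom-category of $\CstarThreeCat(F \tto G)$ is a C*-category; (ii) horizontal composition $\ccirc$ of modifications and perturbations is $\dag$-preserving, bilinear, and bounded; and (iii) the associator and unitor cells are unitary.

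For (i), fix $\dag$-$3$-natural transformations $\theta,\theta'\colon F \tto G$ and $\dag$-$3$-modifications $m,n\colon \theta \ttto \theta'$. A uniformly bounded perturbation $\sigma$ from $m$ to $n$ has components $\sigma_A$ lying in the hom-space $\cB(FA \to F'A)(m_A \tto n_A)$ of a C*-category, and addition, scalar multiplication, vertical composition $\circ$, and $\dag$ of perturbations are all defined componentwise; the compatibility axiom of \cite[Definition 4.21]{gurski_2013} is preserved by $\dag$ because it is built from composites and sums of the components together with the unitary coherence data of $F$, $G$, $\theta$, $\theta'$. The sup-norm $\|\sigma\| = \sup_A \|\sigma_A\|$ is a C*-norm by the same one-line computation ($\|\sigma\| = \|\sigma\sigma^\dag\|^{1/2}$, etc.) used in the proof that the C*-Gray tensor product is a C*-$2$-category, and completeness follows since a uniformly Cauchy family of perturbations converges componentwise in the complete hom-spaces of $\cB$ and the limit still satisfies the (separately continuous) compatibility equations. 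The endomorphism algebras of objects of these hom-categories are therefore C*-algebras. For the positivity condition $(\geq 0)$, given $\sigma$ from $m$ to $n$ one sets $g_A$ to be the positive square root of $\sigma_A^\dag \circ \sigma_A$ in the endomorphism C*-algebra of $m_A$; then $\|g_A\| = \|\sigma_A\| \leq \|\sigma\|$, so the family is uniformly bounded, and $(g_A)_A$ again satisfies the perturbation axiom because that axiom equates $\dag$-homomorphic (whiskering, composition, unitary conjugation) images of $g_A$ and $g_{A'}$ and continuous functional calculus commutes with $\dag$-homomorphisms — alternatively one transports positivity along a faithful representation of the relevant hom-$2$-category of $\cB$, exactly as in the cited proposition. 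Hence each hom-category is a C*-category.

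For (ii), horizontal composition of modifications and of perturbations is componentwise via the composition $\ccirc$ of $1$-morphisms in the C*-$2$-categories $\cB(FA \to F'A)$, so $\dag$-preservation and bilinearity are inherited, and $\|\sigma' \ccirc \sigma\| = \sup_A \|\sigma'_A \ccirc \sigma_A\| \leq \|\sigma'\|\,\|\sigma\|$ by boundedness of $\ccirc$ in $\cB$. For (iii), the associator and unitor cells of $\CstarThreeCat(F \tto G)$ have components the associators and unitors of the $\cB(FA \to F'A)$, which are unitary, so they are unitary; when $\cB$ is locally strict these are identities and the bicategory is a strict $2$-category. This gives the C*-case. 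In the W*-case, when each $\cB(X \to Y)$ is a W*-$2$-category and all $1$-compositions in $\cB$ are separately normal, the space of uniformly bounded perturbations from $m$ to $n$ sits inside the $\ell^\infty$-product $\prod_A \cB(FA \to F'A)(m_A \tto n_A)$, which is a dual Banach space with predual the $\ell^1$-sum of the preduals; the compatibility equations cut out a subspace that is weak*-closed, since composition in $\cB$ is separately normal, hence this subspace is itself a dual space, and $\circ$ and $\ccirc$ are separately normal because they are componentwise. Therefore $\WstarThreeCat(F \tto G)$ is a W*-$2$-category.

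The step I expect to be the main obstacle is the positivity condition $(\geq 0)$ in (i): unlike every other item it is neither a pointwise property of the $\cB$-components nor a formal consequence of Gurski's axioms, and one must either track carefully that the perturbation axiom is stable under componentwise continuous functional calculus or fall back on the faithful-representation argument used for the C*-Gray tensor product. In the W*-case the analogous subtlety is the construction of the predual, where the key point is that the defining conditions for perturbations, being built from separately normal composites, carve out a weak*-closed subspace of a product of W*-hom-spaces.
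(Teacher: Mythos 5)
Your proof is correct and follows the same overall skeleton as the paper: both arguments start from Gurski's Theorem 9.1 to obtain the underlying $2$-category of $3$-transformations, $3$-modifications, and perturbations, observe that the $\dag$-versions form a sub-$2$-category because unitary constraints compose to unitary constraints, and then note that the linear, $\dag$-, and norm structures are all componentwise. Where you diverge is in the level of rigor on the analytic side, and this is to your credit. The paper's proof does not address the positivity axiom $(\geq 0)$ at all; your argument via componentwise positive square roots is the right one, and the key point you gesture at --- that the perturbation axiom for $g_A = (\sigma_A^\dag \circ \sigma_A)^{1/2}$ follows because whiskering and conjugation by the unitary modification components are $*$-homomorphisms commuting with continuous functional calculus --- does close that gap. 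In the W*-case the paper simply cites the Kaplansky density theorem together with \cite[Lemma 2.13]{CP22}, whereas you construct the predual directly as a weak*-closed subspace of the $\ell^\infty$-product $\prod_A \cB(FA \to GA)(m_A \tto n_A)$ (predual the $\ell^1$-sum of preduals), with weak*-closedness of the locus of the compatibility equations following from separate normality of $\ccirc$ and $\circ$ in $\cB$; this is more self-contained and makes the separate normality of the compositions on $\WstarThreeCat(F \tto G)$ essentially automatic rather than an appeal to an external lemma. Both routes are valid; yours buys transparency at the cost of length.
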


\begin{proof}
By \cite[\S 9.1, Theorem 9.1]{gurski_2013} we know that all 3-natural transformations from $F$ to $G$, together with all 3-modifications and perturbations between them form a 2-category $\ThreeCat(F \tto G)$ where the components for the associators and unitors are given by the associators and unitors of the components in $\cB$ respectively. 
First note that composites of unitary constraints for 3-natural transformations and 3-modifications are also unitary, so that $\dag$-3-natural transformations, $\dag$-3-modifications, and all perturbations form a 2-subcategory of $\ThreeCat(F \tto G)$. 
Since linear combinations and compositions occur componentwise, it is clear that $\CstarGray(F \tto G)$ (resp. $\WstarGray(F \tto G)$) forms a (linear) 2-subcategory of $\ThreeCat(F \tto G)$.
By defining $\dag$ to act componentwise on uniformly bounded perturbations, this yields a C*-2-category structure on $\CstarGray(F \tto G)$ (resp. $\WstarGray(F \tto G)$). 

In the W* case, one then uses the Kaplansky density theorem together with \cite[Lemma 2.13]{CP22} to show that $\WstarGray(F \tto G)$ is locally a C*-category and that $\ccirc$ is separately normal. 
\end{proof}

\begin{lemma}\label{lem:compositionin3Cats}
Let $\cA$ be a C*-3-category, $\cB$ a C*-Gray-category, and $F,G,H\colon \cA \to \cB$ $\dag$-3-functors. Then 1-composition forms cubical $\dag$-2-functor
$$\cccirc\colon \CstarThreeCat(F \tto G) \maxblacktimes \CstarThreeCat(G \tto H) \to \CstarThreeCat(F \tto H).$$
Moreover, when $\cA$ is a W*-3-category, $\cB$ a W*-Gray-category, and $F,G,H$ are normal $\dag$-3-functors, we have that $\cccirc$ is separately normal.
\end{lemma}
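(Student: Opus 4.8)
The plan is to build the cubical $\dag$-2-functor $\cccirc$ directly from the data of $\cB$ using Proposition \ref{prop:dataofcubicals}, which reduces the task to producing two families of strict $\dag$-2-functors together with an interchanger $\Sigma$ satisfying the three axioms $(\Sigma 1)$--$(\Sigma 3)$. First I would fix a $\dag$-3-natural transformation $\alpha \colon F \tto G$ and define the strict $\dag$-2-functor
$$
\cccirc_\alpha \colon \CstarThreeCat(G \tto H) \to \CstarThreeCat(F \tto H)
$$
by ``precomposing with $\alpha$'' — that is, sending a $\dag$-3-natural transformation $\beta \colon G \tto H$ to the composite $\alpha \cccirc \beta$ whose components are the 1-composites $\alpha_A \cccirc \beta_A$ in $\cB(FA \to HA)$ (well-defined since $\cB$ is a $\CstarGray$-category, so 1-composition is a cubical, hence genuine, $\dag$-2-functor), and similarly on $\dag$-3-modifications and perturbations. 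That this is \emph{strict} and $\dag$-preserving follows because 1-composition in $\cB$ is strict (Gray-enriched) and componentwise operations preserve daggers. Symmetrically, fixing $\beta \colon G \tto H$ yields a strict $\dag$-2-functor $\cccirc_\beta \colon \CstarThreeCat(F \tto G) \to \CstarThreeCat(F \tto H)$ by postcomposition. The compatibility $\cccirc_\alpha(\beta) = \cccirc_\beta(\alpha) = \alpha \cccirc \beta$ is immediate.

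Next I would produce the interchanger. Given $\dag$-3-modifications $m \colon \alpha \ttto \alpha'$ in $\CstarThreeCat(F \tto G)$ and $n \colon \beta \ttto \beta'$ in $\CstarThreeCat(G \tto H)$, the required unitary 2-morphism
$$
\Sigma_{m,n} \colon (\alpha \cccirc \beta') \cccirc_{\text{via } \cccirc_\alpha(n),\,\cccirc_{\beta'}(m)} \cdots \Longrightarrow \cdots
$$
(in the notation of Proposition \ref{prop:dataofcubicals}) should be assembled pointwise from the interchangers already present in the Gray-category $\cB$: at each object $A$, the interchanger of the cubical 1-composition $\cccirc \colon \cB(FA \to GA) \maxblacktimes \cB(GA \to HA) \to \cB(FA \to HA)$ supplies a unitary 3-cell of $\cB$, and I would check that the family of these unitaries is a $\dag$-3-modification, i.e. it satisfies the modification axiom. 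This verification is where the $(\Sigma 1)$ naturality of $\cB$'s own interchanger, together with the definition of the hom-2-categories in Lemma \ref{lem:hombetween3cats}, does the work; crucially $\Sigma_{m,n}$ is an \emph{identity} whenever $m$ or $n$ is an identity, since the Gray interchanger in $\cB$ is. Then axioms $(\Sigma 1)$, $(\Sigma 2)$, $(\Sigma 3)$ for $\Sigma$ reduce, componentwise, to the corresponding axioms $(\Sigma 1)$, $(\Sigma 2)$, $(\Sigma 3)$ for the interchangers of $\cB$'s 1-compositions, which hold because $\cB$ is $\CstarGray$-enriched. By Proposition \ref{prop:dataofcubicals} this data assembles into the desired cubical $\dag$-2-functor $\cccirc$.

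For the W* statement, I would additionally check that $\cccirc_\alpha$ and $\cccirc_\beta$ are normal, and then invoke the characterization of separately normal cubical functors from the definition following Proposition \ref{prop:dataofcubicals}: it suffices to show that if $m_\lambda \to m$ weak* in $\WstarThreeCat(F \tto G)$ and $n$ is fixed (and symmetrically), then $\cccirc_\alpha(n)$-type composites converge weak*. Since the weak* topology on the W*-3-category $\WstarThreeCat(F \tto G)$ is, by Lemma \ref{lem:hombetween3cats} and its proof (via \cite[Lemma 2.13]{CP22}), the topology of componentwise weak* convergence with uniformly bounded nets, and since 1-composition in the W*-Gray-category $\cB$ is separately normal by hypothesis, this convergence passes through componentwise. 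The main obstacle I anticipate is not any single step but the bookkeeping in the middle paragraph: verifying that the pointwise-from-$\cB$ family $\Sigma_{m,n}$ genuinely satisfies the $\dag$-3-modification axiom (the monoidality/composability and unitality conditions referenced in the definition of $\dag$-3-modification), since this requires unwinding the coherence data $F^2, F^a, \theta^2$, etc., of the various $\dag$-3-functors and $\dag$-3-natural transformations involved and matching them against the axioms a Gray-category's interchanger satisfies — a purely diagrammatic but lengthy check, which in the paper's style would be relegated to "a formal calculation" or cited from \cite[\S 9]{gurski_2013} after noting dagger-compatibility.
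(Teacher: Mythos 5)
Your proposal is correct and takes essentially the same route as the paper: the paper cites \cite[Thm.~9.3]{gurski_2013} for the underlying cubical 2-functor (whose proof there is exactly the decomposition via Proposition~\ref{prop:dataofcubicals} that you unfold, with pre-/post-composition functors and a componentwise interchanger inherited from $\cB$), and then, as you do, reduces the dagger-compatibility, unitarity of $\Sigma$, and (in the W* case, via the Kaplansky density theorem and \cite[Lemma 2.13]{CP22}) separate normality to componentwise statements in $\cB$. The only quibble is terminological: your $\Sigma_{m,n}$ is a uniformly bounded perturbation, i.e.\ a 2-morphism of the hom-2-category of Lemma~\ref{lem:hombetween3cats}, not itself a $\dag$-3-modification.
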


\begin{proof}
One easily verifies that the underlying composite of $\dag$-3-natural transformations, $\dag$-3-modifications, and uniformly bounded perturbations is again of the respective type. By \cite[Thm. 9.3]{gurski_2013}, we then know that $\cccirc$ assembles into a cubical 2-functor of the underlying 2-categories. It is also easy to see that $\cccirc$ is $\dag$-bilinear since linear combinations of perturbations are obtained componentwise, and that the interchanger $\Sigma$ for $\cccirc$ is unitary as it arises from the C*-Gray-category $\cB$ on each component of the relevant $\dag$-3-modifications. Therefore $\cccirc$ forms a cubical $\dag$-2-functor.

In the W* case, since $\cccirc$ is separately normal in $\cB$, the Kaplansky density theorem together with \cite[Lemma 2.13]{CP22} yields that $\cccirc$ is separately normal.
\end{proof}

\begin{corollary}
For $\cA$ a C*-3-category and $\cB$ a C*-Gray-category, $\CstarThreeCat(\cA \to \cB)$ forms a C*-Gray-category of
\begin{itemize}
\item[(0)] $\dag$-3-functors from $\cA$ to $\cB$,
\item[(hom)] Hom-C*-2-categories $\CstarThreeCat(\morph{\cA}{F}{\cB} \tto \morph{\cA}{G}{\cB})$ as in Lemma \ref{lem:hombetween3cats},
\item[($\cccirc$)] Cubical 1-composition $\cccirc$ as in Lemma \ref{lem:compositionin3Cats}
\end{itemize}
Moreover, when $\cA$ is a W*-3-category and $\cB$ is a W*-Gray-category, we have that $\WstarThreeCat(\cA \to \cB)$ forms a W*-Gray-category of normal $\dag$-3-functors from $\cA$ to $\cB$.
\end{corollary}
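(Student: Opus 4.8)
The plan is to assemble the two preceding lemmas into the data of a $\CstarGray$-category and then obtain the enrichment axioms by transporting the corresponding algebraic statement from \cite[\S 9]{gurski_2013}. First, for each pair of $\dag$-3-functors $F,G\colon \cA \to \cB$, Lemma \ref{lem:hombetween3cats} supplies the hom-object $\CstarThreeCat(F \tto G)$ as a C*-2-category; since $\cB$ is a C*-Gray-category it is locally strict, so this hom-object is a \emph{strict} C*-2-category, i.e.\ an object of $\CstarTwoCatst$. Next, for a triple $F,G,H$, Lemma \ref{lem:compositionin3Cats} provides a cubical $\dag$-2-functor $\cccirc\colon \CstarThreeCat(F \tto G) \maxblacktimes \CstarThreeCat(G \tto H) \to \CstarThreeCat(F \tto H)$; by the universal property of the C*-Gray tensor product, this factors uniquely through a strict $\dag$-2-functor $\CstarThreeCat(F \tto G) \maxboxtimes \CstarThreeCat(G \tto H) \to \CstarThreeCat(F \tto H)$, which is the candidate composition morphism in $\CstarGray$. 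For the unit at $F$ one takes the identity $\dag$-3-natural transformation $\id_F$, which assembles into a strict $\dag$-2-functor $\bbC \to \CstarThreeCat(F \tto F)$ in the evident way.

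It then remains to check the associativity and unit axioms of a category enriched in $(\CstarTwoCatst, \maxboxtimes)$, i.e.\ certain equalities of strict $\dag$-2-functors out of threefold C*-Gray tensor products. Here we use that composition of $\dag$-3-natural transformations, $\dag$-3-modifications, and uniformly bounded perturbations is computed componentwise in $\cB$, and that $\cB$, being a C*-Gray-category, satisfies the strict Gray-category axioms on the nose. By the universal property of $\maxboxtimes$, two strict $\dag$-2-functors out of a C*-Gray tensor product are equal as soon as they agree after precomposition with the canonical cubical functor from the corresponding $\maxblacktimes$-product; since the two sides of each enrichment axiom agree on that image by the componentwise computation, they agree. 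In effect the verification in \cite[\S 9]{gurski_2013} that the functor tricategory $\mathrm{Tricat}(\cA,\cB)$ forms a Gray-category carries over, the only additional points being that every constraint cell appearing is built from unitary data and that $\dag$ is respected — both already recorded in Lemmas \ref{lem:hombetween3cats} and \ref{lem:compositionin3Cats}.

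The W* case is handled identically: when $\cA$ is a W*-3-category, $\cB$ a W*-Gray-category, and $F,G,H$ normal $\dag$-3-functors, Lemma \ref{lem:hombetween3cats} gives that each $\WstarThreeCat(F \tto G)$ is a strict W*-2-category and Lemma \ref{lem:compositionin3Cats} gives that $\cccirc$ is a separately normal cubical $\dag$-2-functor, so it factors through the W*-Gray tensor product $\Wboxtimes$ by its universal property; the enrichment axioms over $(\WstarTwoCatst, \Wboxtimes)$ follow from the C* case together with separate normality, exactly as in the proof of the W* hom-tensor adjunction. I expect the only real point requiring care — and it is bookkeeping rather than genuine difficulty — to be confirming that the strict Gray-category identities inherited from \cite[\S 9]{gurski_2013} are compatible with the passage to the operator-algebraic tensor products $\maxboxtimes$ and $\Wboxtimes$; this is precisely where the uniqueness clauses in their universal properties do the work.
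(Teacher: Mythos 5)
Your proposal is correct and matches the paper's (implicit) argument: the corollary is stated as an immediate consequence of Lemmas \ref{lem:hombetween3cats} and \ref{lem:compositionin3Cats} together with the corresponding algebraic fact from \cite[\S 9]{gurski_2013}, and your factoring of the cubical composition through $\maxboxtimes$ (resp.\ $\Wboxtimes$) via the universal property, with the enrichment axioms checked on the image of the canonical cubical functor, is exactly the intended mechanism. The paper itself supplies no further detail, so your write-up is, if anything, more explicit than the source.
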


We now begin constructing the actual embedding.

\begin{lemma}
For a cubical C*-3-category $\cB$ and an object $B \in \cB$, there is an organic contravariant hom-$\dag$-3-functor
$$\yo_B\colon \cB^{\oneop} \to \CstarTwoCat_{\Strict}.$$
When $\cB$ is a cubical W*-3-category, this construction yields a normal $\dag$-3-functor
$$\yo_B\colon \cB^{\oneop} \to \WstarTwoCat_{\Strict}.$$
\end{lemma}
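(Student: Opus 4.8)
The plan is to construct the $\dag$-3-functor $\yo_B\colon \cB^{\oneop} \to \CstarTwoCat_{\Strict}$ component by component, mimicking the one-categorical-dimension-down Yoneda embedding of Appendix \S\ref{subsec:coherencesforonecats} but now carrying along one extra layer of coherence data, and leaning on the cubicality of $\cB$ to ensure everything lands in the \emph{strict} target $\CstarTwoCat_{\Strict}$. First I would define $\yo_B$ on objects: for $A \in \cB$, set $\yo_B(A) \coloneqq \cB(A \to B)$, which is a C*-2-category and is strict precisely because $\cB$ is cubical (its hom-2-categories are strict). On a 1-morphism $X\colon A' \to A$ in $\cB$ (so a $\oneop$-morphism $A \to A'$), I would set $\yo_B(X) \coloneqq X \cccirc_{A'} -\colon \cB(A \to B) \to \cB(A' \to B)$, using that $\cccirc$ restricted with $X$ fixed in the first slot is a strict $\dag$-2-functor — this is exactly where cubicality of $\cccirc$ is used, via Proposition~\ref{prop:dataofcubicals}, since the cubical $\dag$-2-functor $\cccirc$ restricts to a strict $\dag$-2-functor in each variable. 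On 2-cells and 3-cells of $\cB$ one postcomposes with $\cccirc$ similarly, obtaining $\dag$-2-natural transformations and unitary modifications with uniformly bounded norm.

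Next I would supply the higher coherence data of a $\dag$-3-functor. The compositor $\yo_B^2\colon \yo_B(X) \cccirc \yo_B(X') \tto \yo_B(X \cccirc X')$ should be built from the associator $\mathbf{a}$ of $\cB$ (its $\dag$-2-natural transformation component $a$), exactly as in the 2-categorical Yoneda embedding where $(\yo^A)^2 = \alpha^\dag_{-,X,X'}$; here it becomes a unitary adjoint equivalence $\mathbf{F^2}$ whose underlying $\dag$-2-natural transformation is (a dagger/inverse of) a whiskering of $a$. Likewise $\yo_B^0$ comes from the right unitor $\mathbf{r}$ (since $X \cccirc I_{A'} \cong X$), and the three modifications $\yo_B^a$, $\yo_B^l$, $\yo_B^r$ come from the pentagonator $\pi$ and the unity coheretors $\mu,\lambda,\rho$ of $\cB$. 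Each of these is unitary because the corresponding datum in $\cB$ is a unitary (adjoint equivalence or modification), and whiskering / dagger preserves unitarity; the uniform boundedness needed to stay inside $\CstarTwoCat_{\Strict}$ is automatic since unitaries have norm $1$. I would then verify the $\dag$-3-functor axioms — the associativity axiom relating the pentagonators and the unitality axiom relating the middle unity coheretors — but these reduce verbatim to the corresponding axioms satisfied by $\cB$ as an algebraic tricategory \cite[Definition 4.10]{gurski_2013}, after applying the contravariant hom; I would phrase this as "follows from the tricategory axioms for $\cB$" rather than expand it. For the W* statement, I would observe that when $\cB$ is a cubical W*-3-category, $\cB(A \to B)$ is a strict W*-2-category, $\cccirc$ is separately normal, so $X \cccirc -$ is a \emph{normal} strict $\dag$-2-functor, and hence each component of $\yo_B$ is normal; the remaining coherence data is unchanged, giving $\yo_B\colon \cB^{\oneop} \to \WstarTwoCat_{\Strict}$.

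I expect the main obstacle to be bookkeeping of variance and of which constraint cell plays which role: in $\cB^{\oneop}$ the order of 1-composition is reversed, so the associator of $\cB^{\oneop}$ is built from $a^{\sqdot}$ of $\cB$ with reindexed arguments, and one must be careful that the compositor $\yo_B^2$ is assembled so that the $\dag$-3-functor axioms come out with the correct orientation — this is the point where a naive dimension-shift of the 2-categorical argument can silently introduce a sign/inverse error. A secondary subtlety is confirming that the unitary \emph{adjoint equivalence} structure (not merely a unitary 2-natural transformation) is correctly inherited: one needs the biadjoint data $(a, a^{\sqdot}, \epsilon^a, \eta^a)$ of $\cB$ to produce the biadjoint data for $\mathbf{F^2}$ after whiskering, and to check the triangle/zig-zag identities survive. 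Both obstacles are genuinely routine given the explicit constraint data of a C*-3-category laid out in the unpacking remark, so I would present the construction and assert the axioms follow from the underlying-tricategory axioms of \cite{gurski_2013}, mirroring how the 2-categorical Yoneda proof defers to "the Yoneda embedding theorem for ordinary 2-categories."
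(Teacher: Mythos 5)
Your proposal matches the paper's proof essentially step for step: $\yo_B$ is defined on objects as the strict hom-C*-2-categories, on 1-cells by pre-composition (a strict $\dag$-2-functor by cubicality of $\cccirc$), on 2- and 3-cells by whiskering (with the naturator supplied by the interchanger $\Sigma$ of the cubical composition), with $\yo_B^2$, $\yo_B^0$ built from the unitary adjoint equivalences $\mathbf{a}$, $\mathbf{r}$, the coherence modifications obtained as mates of $\pi$, $\lambda$, $\rho$, $\mu$, the 3-functor axioms deferred to \cite{gurski_2013}, and normality in the W* case from separate normality of $\cccirc$. The only discrepancies are the bookkeeping slips you yourself flag (e.g.\ the subscript of $\cccirc$ and which side the unit sits on in the unitor), which do not affect the argument.
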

\begin{proof}
Let $\cB$ be a C*-3-category (resp. W*-3-category).
We will present the definition for the underlying Yoneda embedding for 3-categories seen in \cite{gurski_2013}, noting that all the relevant data is compatible with dagger structures.
\begin{enumerate}
\item[(0)] On an object $B' \in \cB$, we set
$$\yo_B(B') \coloneqq \cB(B' \to B),$$
which is a strict C*-2-category (resp. W*-2-category) since $\cB$ is cubical.
\item[(1)] On a 1-morphism $Y \in \cB(B'' \to B')$, we define the strict (normal) $\dag$-2-functor
$$\yo_B(Y)\colon \yo_B(B') \to \yo_B(B'')$$
as follows:
\begin{enumerate}
\item[(1.0)] For an object $X \in \yo_B(B') = \cB(B' \to B)$, which is a 1-morphism in $\cB$, we set
$$\yo_B(Y)(X) \coloneqq \morph{B''}{Y}{} \cccirc_{B'} \morph{}{X}{B}.$$
\item[(1.1)] For a 1-morphism $b$ in $\yo_B(B')$, which is a 2-morphism in $\cB$, we set
$$\yo_B(Y)(b) \coloneqq \id_Y \cccirc_{B'} b.$$
\item[(1.2)] Similarly, for a 2-morphism $\beta$ in $\yo_B(B')$, which is a 3-morphism in $\cB$, we set
$$\yo_B(Y)(\beta) \coloneqq \id_{\id_{Y}} \cccirc_{B'} \beta.$$
\end{enumerate}
Notice $\yo_B(Y)$ is linear, $\dag$-preserving, and preserves 3-composition $\circ$ and identities at the level of 2-morphisms in $\yo_B(B')$ (which are 3-morphisms in $\cB$). Moreover, $\yo_B(Y)$ is strict since $\cccirc$ is cubical. 

In the W* case, since $\cccirc$ is separately normal in $\cB$, the Kaplansky density theorem together with \cite[Lemma 2.13]{CP22} yields that $\yo_B(Y)$ is normal.

\item[(2)] For a 2-morphism $b \in \cB(\morph{B''}{Y}{B'} \tto \morph{B''}{Y'\!}{B'})$, we define the $\dag$-2-natural transformation
$$\yo_B(b)\colon \yo_B(Y) \tto \yo_B(Y').$$
as follows:
\begin{enumerate}
\item[(2.0)] For an object $X \in \yo_B(B')$, we define the component
$$\yo_B(b)_X\colon \underbrace{\yo_B(Y)(X)}_{Y \cccirc X} \to \underbrace{\yo_B(Y')(X)}_{Y' \cccirc X}$$
to be $\yo_B(b)_X \coloneqq b \cccirc \id_X$.
\item[(2.1)] For a 1-morphism $a$ in $\yo_B(B')$, we define the component unitary $\yo_B(b)_a$ to be $\Sigma^\dag_{a,b}$, which comes from the unitary interchanger for the cubical 1-composition $\cccirc$ in $\cB$.
\end{enumerate}
\item[(3)] On a 3-morphism $\beta \in \cB(b \ttto b')$, we define the uniformly bounded modification
$$\yo_B(\beta)\colon \yo_B(b) \ttto \yo_B(b')$$
as follows:
\begin{enumerate}
\item[(3.0)] For an object $X \in \yo_B(B')$, we define the component
$$\yo_B(\beta)_X \colon \underbrace{\yo_B(b)_X}_{b \cccirc \id_X} \ttto \underbrace{\yo_B(b')_X}_{b' \cccirc \id_X}$$
to be $\yo_B(\beta)_X \coloneqq \beta \cccirc \id_{\id_X}$. 
\end{enumerate}
Notice $\|\yo_B(\beta)\| = \sup_X \|\beta \cccirc \id_{\id_X}\| \leq \|\beta\|$, so $\yo_B(\beta)$ is indeed uniformly bounded.
\end{enumerate}
From this definition it is clear that $\yo_B$ is linear, $\dag$-preserving, and preserves 3-composition $\circ$ and identities at the level of 3-morphisms. 

In the W* case, since $\cccirc$ is separately normal in $\cB$, the Kaplansky density theorem together with \cite[Lemma 2.13]{CP22} yields that $\yo_B$ is normal.

Using cubicality, one can further check that $\yo_B$ is locally a strict (normal) $\dag$-2-functor between strict C*-2-categories (resp.\! W*-2-categories). We now recall the construction for the constraint unitary adjoint equivalences $\yo^2_B$ and $\yo^0_B$ for the $\dag$-3-functor $\yo_B$:
\begin{itemize}
\item[($\yo_B^2$)] For composable 1-morphisms $\morph{B'''}{Y'\!}{B''},\morph{B''}{Y}{B'}$ in $\cB$, we provide a tensorator $\dag$-2-natural transformation
$$(\yo_B^2)_{Y',Y}\colon \yo_B(Y') \cccirc \yo_B(Y) \tto \yo_B(Y' \cccirc_{B''} Y').$$
as follows:
\begin{itemize}
\item[($\yo_B^2.0$)] On an object $X \in \yo_B(B')$, we define the component
$$((\yo_B^2)_{Y',Y})_X \colon \underbrace{\big(\yo_B(Y') \cccirc \yo_B(Y)\big)(X)}_{Y' \cccirc_{B''} (Y \cccirc_{B'} X)} \tto \underbrace{\big(\yo_B(Y' \cccirc_{B''} Y')\big)(X)}_{(Y' \cccirc_{B''} Y) \cccirc_{B'} X},$$
to be $((\yo_B^2)_{Y'\!,Y})_X \coloneqq a_{Y'\!,Y,X}$. 
\item[($\yo_B^2.1$)] For a 1-morphism $b$ in $\yo_B(B')$, we define the unitary
$$((\yo_B^2)_{Y'\!,Y})_b \coloneqq a_{\id_{Y'},\id_{Y},b}.$$
\end{itemize}
One continues by defining the unitary adjoint equivalence for $\yo_B^2$ to be the unitary adjoint equivalence for $a$ with the first two variables held constant.
\item[($\yo_B^0$)] For an object $B' \in \cB$, we provide a unitor $\dag$-2-natural transformation
$$(\yo^0_B)_{B'} \colon \id_{\yo_B(B')} \tto \yo_B(\id_{B'}),$$
as follows:
\begin{itemize}
\item[($\yo_B^0.0$)] On an object $\morph{B'}{X}{B} \in \yo_B(B')$, we define the component
$$((\yo_B^0)_{B'})_X \colon X \tto \id_{B'} \cccirc X,$$
to be $((\yo_B^0)_{B'})_X \coloneqq r^{\sqdot}_X$. One continues defining the unitary adjoint equivalence for $\yo^0_B$ similarly to $\yo^2_B$.
\end{itemize}
\end{itemize}
We now provide the associativity and unitality constraint unitaries $\yo_B^{\alpha}$, $\yo^\lambda_B$, and $\yo^\rho_B$ for the $\dag$-3-functor $\yo_B$:
\begin{itemize}
\item[($\yo_B^{\alpha}$)] We define the associator unitary $\yo_B^{\alpha}$
to be the following mate of $\pi$, the pentagonator for $\cB$.

$$
\tikzmath{
\draw (0,.3) arc (180:0:.6cm) -- (1.2,-1);
\draw (0.3,.3) arc (180:0:.3cm) -- (.9,-1);
\draw (-.3,-.3) arc (0:-180:.3cm) -- (-.9,1);
\draw (0,-.3) arc (0:-180:.6cm) -- (-1.2,1);
\draw (.3,-.3) -- (.3,-1);
\roundNbox{fill=white}{(0,0)}{.3}{.2}{.2}{$\pi$};
\node at (.3,-1.22) {\scriptsize{$(1 \cccirc a)$}};
\node at (.9,-1.2) {\scriptsize{$a^{\sqdot}$}};
\node at (1.2,-1.2) {\scriptsize{$a^{\sqdot}$}};
\node at (-.6,1.2) {\scriptsize{$(a^{\sqdot} \cccirc 1)$}};
\node at (-1.2,1.22) {\scriptsize{$a^{\sqdot}$}};
}
$$

Indeed, $\yo_B^{\alpha}$ is unitary as $\pi$ is unitary, and the unit and counit for the unitary adjoint equivalence $\mathbf{\alpha}$ are unitaries.
\item[($\yo_B^{\lambda}$)] The left unitor unitary $\yo^\lambda_B$ is given by a mate of $\rho$. 
\item[($\yo_B^{\rho}$)] The right unitor unitary $\yo^\lambda_B$ is given by a mate of $\mu$. 
\end{itemize}
We refer the interested reader to \cite{gurski_2013} for the proof that $\yo_B$ satisfies both constraint axioms for a 3-functor.
\end{proof}

\begin{lemma}
For a cubical C*-3-category $\cB$ and a 1-morphism $X \in \cB(B \to B')$, there is an organic $\dag$-3-natural transformation
$$\yo_X \colon \yo_B \tto \yo_{B'}.$$
\end{lemma}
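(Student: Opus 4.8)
The plan is to read off the data of $\yo_X$ from the Yoneda 3-transformation for tricategories constructed in \cite{gurski_2013}, checking at each stage that all coherence cells produced are unitary, so that the output lands in the $\dag$-world. First I would fix the family of 1-cells: for an object $B'' \in \cB^{\oneop}$ the hom-2-categories $\yo_B(B'') = \cB(B'' \to B)$ and $\yo_{B'}(B'') = \cB(B'' \to B')$ are strict C*-2-categories by cubicality, and I set
$$(\yo_X)_{B''} \coloneqq -\cccirc_B X \colon \cB(B'' \to B) \to \cB(B'' \to B').$$
This is precisely the functor obtained by fixing the second slot of the cubical 1-composition $\cccirc$ at the object $X$, so by Proposition \ref{prop:dataofcubicals} it is a strict $\dag$-2-functor, and a normal one when $\cB$ is W*.

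Next I would produce the remaining data. The naturality constraint of $\yo_X$ --- a unitary adjoint equivalence whose underlying 1-cell is a $\dag$-2-natural transformation comparing, for a 1-morphism $Y$ of $\cB^{\oneop}$, the two composites of $\yo_B$, $\yo_{B'}$ with pre- and post-composition by $X$ --- is given on an object $W$ by an associator cell of $\cB$ of the form $a_{Y,W,X}$, since after unwinding the two sides are $Y \cccirc (W \cccirc X)$ and $(Y \cccirc W) \cccirc X$; more generally it is the corresponding component of the unitary adjoint equivalence $\mathbf{a}$ of $\cB$ with its last argument held constant at $X$. Its $\dag$-2-naturality structure, its naturator unitaries, and the unit and counit of the adjoint equivalence are all inherited from $a$, $a^{\sqdot}$, $\eta^a$, $\epsilon^a$, which are unitary by hypothesis. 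The modifications $\yo_X^2$ and $\yo_X^0$ are then the mates of the pentagonator $\pi$ and of the middle, left, and right unity coheretors of $\cB$ under these adjoint equivalences, exactly as the compositor and unitor of the non-linear Yoneda 3-transformation are defined in \cite{gurski_2013}. Since a mate of a unitary modification along unitary adjoint equivalences is again unitary (it is built from vertical and horizontal composites of unitary 2-morphisms, and $\ccirc$ is $\dag$-preserving), $\yo_X^2$ and $\yo_X^0$ are unitary modifications, as required of a $\dag$-3-natural transformation.

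Finally, the three axioms of a $\dag$-3-natural transformation --- the associativity axiom relating $\yo_B^{\alpha}$ and $\yo_{B'}^{\alpha}$, and the two unitality axioms relating $\yo_B^{\lambda}, \yo_{B'}^{\lambda}$ and $\yo_B^{\rho}, \yo_{B'}^{\rho}$ --- are literally the axioms of the underlying 3-natural transformation, which are verified in \cite[\S 9]{gurski_2013} from the non-abelian 4-cocycle and normalized cocycle conditions satisfied by $\cB$; no additional verification is needed, since forgetting the $\dag$-structure reflects these equations. The main obstacle, as with the previous lemma, is bookkeeping: one must track the mate calculus carefully through the strict hom-2-categories and confirm that every cell introduced along the way --- components, naturators, units and counits, and all mates --- is unitary and $\dag$-compatible. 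This is routine but lengthy, so I would defer the detailed coherence checks to the corresponding statements in \cite{gurski_2013}. The same construction goes through in the W* case, with normality of $(\yo_X)_{B''}$ and of the various components following from separate normality of $\cccirc$ together with the Kaplansky density theorem and \cite[Lemma 2.13]{CP22}.
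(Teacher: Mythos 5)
Your proposal is correct and follows essentially the same route as the paper: the component at an object is post-composition $-\cccirc_B X$ (strict by cubicality, via the restriction of the cubical $\cccirc$ to one slot), the naturality constraints are components of the associator adjoint equivalence $\mathbf{a}$ with outer variables fixed, the tensorator and unitor modifications $\yo_X^2$, $\yo_X^0$ are mates of the pentagonator and of the unity coheretor $\rho$, and the coherence axioms are deferred to Gurski's Lemma 9.8 after observing that all constraint cells involved are unitary. The only cosmetic differences are the choice of $a$ versus $a^{\sqdot}$ for the naturality component and your slightly vague attribution of $\yo_X^0$ to all three unity coheretors rather than to $\rho$ alone.
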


\begin{proof}
We will present the definition found in \cite[Lemma 9.8]{gurski_2013}, noting that all the relevant data is compatible with dagger structures.
\begin{itemize}
\item[(0)]
For an object $A \in \cB$, we define the component
$$(\yo_X)_A\colon \underbrace{\yo_B(A)}_{\cB(A \to B)} \to \underbrace{\yo_{B'}(A)}_{\cB(A \to B')},$$
to be the (normal) strict $\dag$-2-functor given by:
\begin{itemize}
\item[(0.0)] For an object $\morph{A}{Y}{B} \in \yo_B(A)$,
$$(\yo_X)_A(Y) \coloneqq \morph{A}{Y}{} \cccirc_B \morph{}{X}{B'}.$$
\item[(0.1)] For a 1-morphism $b$ in $\yo_B(A)$,
$$(\yo_X)_A(b) \coloneqq b \cccirc \id_X.$$
\item[(0.2)] For a 2-morphism $\beta$ in $\yo_B(A)$,
$$(\yo_X)_A(b) \coloneqq \beta \cccirc \id_{\id_X}.$$
\end{itemize}
\item[(1)] For a 1-morphism $Y \in \cB(A \to A')$, we provide a unitary adjoint equivalence for $(\yo_X)_Y$, where
$(\yo_X)_Y$
is a $\dag$-2-natural transformation given by:
\begin{itemize}
\item [(1.0)] On an object $\morph{A}{Z}{B} \in \yo_B(A)$, we define the component
$$(((\yo_X)_Y)_Z)\colon \underbrace{(\yo_X)_{A'} \cccirc \yo_B(Y) (Z)}_{(\morph{A'}{Y}{} \cccirc_A Z) \cccirc_B \morph{}{X}{B'}}\tto \underbrace{\yo_{B'}(Y) \cccirc (\yo_X)_A (Z)}_{\morph{A'}{Y}{} \cccirc_A ( Z \cccirc_B \morph{}{X}{B'})}$$
to be $(((\yo_X)_Y)_Z) \coloneqq a_{YZX}^{\sqdot}$.
\item [(1.1)] On a 1-morphism $c$ in $\yo_B(A)$, we set $(((\yo_X)_Y)_c \coloneqq a^{\sqdot}_{\id_Y,c,\id_{X}}$.
\end{itemize}
One continues by defining the unitary adjoint equivalence for $(\yo_X)_Y$ using the unitary adjoint equivalence for $a$ with the first and last variables held constant. We now provide the tensorator and unitality constraint unitaries $\yo_X^2$ and $\yo_X^0$.
\begin{itemize}
\item[($\yo_X^2$)] We define the unitary tensorator modification $\yo_X^2$
to be the following mate of $\pi^\dag$, coming from the pentagonator of $\cB$.

$$
\tikzmath{
\draw (-0,-.3) -- (-0,-1);
\draw (-0.3,-.3) arc (0:-180:.3cm) -- (-.9,1);
\draw (.3,.3) arc (180:0:.3cm) -- (.9,-1);
\draw (-0,.3) arc (180:0:.6cm) -- (1.2,-1);
\draw (-.3,.3) -- (-.3,1);
\roundNbox{fill=white}{(0,0)}{.3}{.2}{.2}{$\pi^\dag$};
\node at (-.3,1.18) {\scriptsize{$(a \cccirc 1)$}};
\node at (-.9,1.2) {\scriptsize{$a^{\sqdot}$}};
\node at (.65,-1.2) {\scriptsize{$(1 \cccirc a^{\sqdot})$}};
\node at (1.25,-1.18) {\scriptsize{$a^{\sqdot}$}};
\node at (0,-1.18) {\scriptsize{$a^{\sqdot}$}};
}
$$

Indeed, $\yo_X^2$ is unitary since $\pi$ is unitary and the unit and counit for the adjoint equivalence $a$ are unitaries.
\item[($\yo_X^0$)] We define the unitary unitor modification $\yo_X^0$
to be the following mate of $\rho$.

$$
\tikzmath{
\draw (-.2,.3) arc (0:180:.3cm) -- (-.8,-1);
\draw (0.2,.3) -- (.2,1);
\draw (0,-.3) -- (0,-1);
\roundNbox{fill=white}{(0,0)}{.3}{.2}{.2}{$\rho$};
\node at (0,-1.22) {\scriptsize{$(r^{\sqdot} \cccirc 1)$}};
\node at (-.8,-1.2) {\scriptsize{$a^{\sqdot}$}};
\node at (.2,1.22) {\scriptsize{$r^{\sqdot}$}};
}
$$

Indeed, $\yo_X^2$ is unitary since $\rho$ is unitary and the unit and counit for the adjoint equivalence $r$ are unitaries
\end{itemize}
\end{itemize}
We refer the interested reader to \cite{gurski_2013} for the proof that $\yo_X$ satisfies the three constraint axioms for a $3$-natural transformation.
\end{proof}

\begin{lemma}
For a cubical C*-3-category $\cB$ and a 2-morphism $b \in \cB(X \tto X')$, there is an organic $\dag$-3-modification
$$\yo_b \colon \yo_X \ttto \yo_{X'}.$$
\end{lemma}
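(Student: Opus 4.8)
The plan is to transcribe the construction of the Yoneda $3$-modification from \cite[\S 9.1]{gurski_2013}, checking at each step that every piece of data is assembled out of unitaries and hence descends to the dagger setting. Recall that a $\dag$-$3$-modification $\yo_b\colon \yo_X \ttto \yo_{X'}$ must provide, for each object $A \in \cB$, a $\dag$-$2$-natural transformation $(\yo_b)_A\colon (\yo_X)_A \tto (\yo_{X'})_A$ between the strict $\dag$-$2$-functors $(\yo_X)_A,(\yo_{X'})_A\colon \yo_B(A) \to \yo_{B'}(A)$, together with a unitary modification comparing the $(\yo_b)_A$ across $1$-morphisms $\morph{A}{Y}{A'}$ of $\cB$, subject to a composability axiom and a unitality axiom.

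For an object $\morph{A}{Z}{B} \in \yo_B(A)$ I would take the component $1$-morphism of $(\yo_b)_A$ to be the $2$-cell
$$((\yo_b)_A)_Z \coloneqq \id_Z \cccirc_B b\colon Z \cccirc_B X \tto Z \cccirc_B X'$$
of $\cB$, i.e.\ $\cccirc$ applied to the $2$-morphism $(\id_Z, b)$ of $\yo_B(A)\maxblacktimes\cB(B \to B')$. For a $1$-morphism $c$ of $\yo_B(A)$, the naturator of $(\yo_b)_A$ at $c$ is then built from the unitary interchanger $\Sigma$ of the cubical $\dag$-$2$-functor $\cccirc$ evaluated at $c$ and $b$; it is unitary by definition of a C*-Gray-category, and the $\dag$-$2$-naturality axioms for $(\yo_b)_A$ reduce to the interchanger axioms $(\Sigma 1)$--$(\Sigma 3)$ and cubicality, exactly as in the strict underlying case. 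Linearity and $\dag$-preservation of $(\yo_b)_A$ are immediate from the corresponding properties of $\cccirc$.

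Next, the modification datum of $\yo_b$ must compare, for a $1$-morphism $\morph{A}{Y}{A'}$ of $\cB$, the $\dag$-$2$-natural transformations built from the naturator components $(\yo_X)_Y$ and $(\yo_{X'})_Y$; since these have components the associator cells $a^{\sqdot}_{Y,-,X}$ and $a^{\sqdot}_{Y,-,X'}$, I would take the component of this modification at $\morph{A}{Z}{B}$ to be the naturality $3$-cell of the unitary adjoint equivalence $\mathbf{a}$ evaluated at $b$ (with the first two slots held at identities). This is unitary because $\mathbf{a}$ is a unitary adjoint equivalence whose naturator is a unitary modification, and it assembles into a modification precisely by the $\dag$-$2$-naturality of $a$.

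It remains to verify the composability and unitality axioms for $\yo_b$. Unwinding the descriptions of $\yo_X^2$ and $\yo_{X'}^2$ as mates of the pentagonator $\pi^{\dag}$, and of $\yo_X^0$ and $\yo_{X'}^0$ as mates of the right unitor $\rho$, the two axioms reduce --- via the mate calculus, cubicality, and the zig-zag identities for the adjoint equivalences $\mathbf{a}$ and $\mathbf{r}$ --- to the statements that $\pi$ and $\rho$ are \emph{natural} invertible modifications, i.e.\ to their naturality $3$-cell identities evaluated at the $2$-cell $b$; these hold by the axioms defining the C*-$3$-category $\cB$. I expect the main obstacle to be exactly this reduction: $\yo_X^2$ and $\yo_X^0$ are multiply nested mates, so the pasting diagrams that must be matched are large, and one must reorganize them carefully so that the pentagonator and right-unitor naturality cells land in the right position. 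Everything else is a routine dagger-decorated transcription of \cite[Lemma 9.9]{gurski_2013}, and at no point is a non-unitary constraint introduced.
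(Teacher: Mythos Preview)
Your proposal is correct and follows essentially the same route as the paper: you define $((\yo_b)_A)_Z = \id_Z \cccirc b$, take the naturator to be the interchanger $\Sigma$ of the cubical $\cccirc$ (the paper writes $\Sigma^\dag_{b,a}$), and build the modification datum for a $1$-morphism $Y$ from the naturator of the associator $a^{\sqdot}$ with two slots held at identities, exactly as the paper's $a^{\sqdot}_{\id_Z,\id_Y,b}$. The paper simply defers the two constraint axioms to \cite[Lemma 9.9]{gurski_2013}, whereas you go a step further and sketch why they reduce to the naturality of $\pi$ and $\rho$; this is a correct reading of Gurski's argument and not a genuine departure.
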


\begin{proof}
We will present the definition found in \cite[Lemma 9.9]{gurski_2013}, noting that all the relevant data is compatible with dagger structures.
\begin{itemize}
\item[(0)] For an object $A \in \cB$, we define the component
$$(\yo_b)_A\colon (\yo_X)_A \tto (\yo_{X'})_A$$
to be the $\dag$-2-natural transformation given by:
\begin{itemize}
\item[(0.0)] On an object $\morph{A}{Y}{B}$, we define the component
$$(((\yo_b)_A)_Y\colon \underbrace{(\yo_X)_A(Y)}_{Y \cccirc X} \tto \underbrace{(\yo_{X'})_A(Y)}_{Y \cccirc X'}$$
to be $(((\yo_b)_A)_Y \coloneqq \id_Y \cccirc b$.
\item[(0.1)] On a 1-morphism $a$, we define the naturality unitary constraint $((\yo_b)_A)_a$ 
to be $((\yo_b)_A)_a \coloneqq \Sigma^\dag_{b,a}$, which comes from the unitary interchanger for the cubical 1-composition $\cccirc$ in $\cB$.
\end{itemize}
\item[(1)] For a 1-morphism $Z \in \cB(A \to A')$, we define the naturality constraint unitary $(\yo_b)_Z$
to be the unitary modification given by:
\begin{itemize}
    \item[(1.0)] On an object $Y \in \cB$, we define the component $((\yo_b)_Z)_Y \coloneqq a^{\sqdot}_{\id_Z,\id_Y,b}$.
\end{itemize}
\end{itemize}
We refer the interested reader to \cite{gurski_2013} for the proof that $\yo_b$ satisfies the two constraint axioms for a $3$-modification.
\end{proof}

\begin{lemma}
For a cubical C*-3-category $\cB$ and a 3-morphism $\beta \in \cB(b \tto b')$, there is an organic uniformly bounded perturbation
$$\yo_\beta \colon \yo_b \tttto \yo_{b'}.$$
This assignment is linear and $\dag$-preserving in $\beta$. Furthermore, when $\cB$ is a cubical W*-3-category, we have that this assignment is normal.
\end{lemma}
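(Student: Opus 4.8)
The plan is to transport the construction of the underlying uniformly bounded perturbation from the Yoneda embedding for tricategories in \cite{gurski_2013}, observe that every piece of constraint data involved is automatically compatible with the dagger structures, and then verify the uniform bound $\|\yo_\beta\| < \infty$, which is the only genuinely new analytic content. First I would define, for each object $A \in \cB$, the component $(\yo_\beta)_A$ to be the uniformly bounded modification $(\yo_b)_A \tttto (\yo_{b'})_A$ in $\CstarTwoCat_{\Strict}$ whose component at an object $\morph{A}{Y}{B} \in \yo_B(A)$ is
\[
((\yo_\beta)_A)_Y \coloneqq \id_{\id_Y} \cccirc \beta \;\in\; \cB\big(\id_Y \cccirc b \ttto \id_Y \cccirc b'\big).
\]
That this assembles into a genuine modification --- i.e.\ that it intertwines the naturality unitaries $((\yo_b)_A)_a = \Sigma^\dag_{b,a}$ and $((\yo_{b'})_A)_a = \Sigma^\dag_{b',a}$ as $a$ ranges over $2$-morphisms of $\cB$ --- follows from the naturality of the interchanger $\Sigma$ of the cubical $\dag$-$2$-functor $\cccirc$ in its first slot (equivalently, axiom $(\Sigma1)$).

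Next I would check that the family $\{(\yo_\beta)_A\}_{A \in \cB}$ satisfies the perturbation axiom relating it to the naturality constraint modifications $(\yo_b)_Z$ and $(\yo_{b'})_Z$, whose components at an object $Y$ are $a^\sqdot_{\id_Z, \id_Y, b}$ and $a^\sqdot_{\id_Z, \id_Y, b'}$; this reduces to the naturality of the associator $a$ in its last variable with respect to $\beta$, together with the separate naturality of $2$- and $3$-composition in $\cB$. Linearity and $\dag$-preservation of $\beta \mapsto \yo_\beta$ then hold componentwise, since $\id_{\id_Y} \cccirc (-)$ is linear and $\dag$-preserving as $\cccirc$ is a $\dag$-$2$-functor. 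None of these steps requires new coherence beyond what \cite{gurski_2013} already establishes in the non-linear setting, so I would carry them out by citing that construction and checking dagger-compatibility of the data one generator at a time.

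For uniform boundedness I would write $\id_{\id_Y} \cccirc \beta = \cccirc\big(\id_{\id_Y} \otimes \beta\big)$ and use the sub-cross property of the maximal tensor product $\maxtimes$ (Appendix \ref{subsec:tensorproductsCstar1cats}) together with the fact that $\cccirc$ is contractive on hom-spaces (being locally a $\dag$-functor between C*-categories, so $\|\cccirc(t)\|^2 = \|\cccirc(t^\dag t)\| \leq \|t^\dag t\|$) to obtain $\|\id_{\id_Y} \cccirc \beta\| \leq \|\id_{\id_Y}\|\,\|\beta\| = \|\beta\|$. Hence $\|(\yo_\beta)_A\| = \sup_{Y} \|\id_{\id_Y} \cccirc \beta\| \leq \|\beta\|$ for every $A$, so $\|\yo_\beta\| \leq \|\beta\| < \infty$. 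In the W*-case, separate normality of $\cccirc$ makes $\beta \mapsto \id_{\id_Y} \cccirc \beta$ normal on each hom-space, and normality of $\beta \mapsto \yo_\beta$ then follows componentwise, invoking --- as in the preceding lemmas --- the Kaplansky density theorem and \cite[Lemma 2.13]{CP22} to pass from the generating $3$-morphisms to all of them. The main obstacle is purely bookkeeping: correctly unwinding the perturbation axiom and matching it against the naturality of $a$ and $\Sigma$; the only analytic input is the elementary norm estimate above, which is immediate.
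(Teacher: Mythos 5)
Your proposal is correct and follows essentially the same route as the paper: you define $((\yo_\beta)_A)_Y \coloneqq \id_{\id_Y} \cccirc \beta$, defer the modification and perturbation coherence axioms to the corresponding construction in \cite{gurski_2013}, and verify linearity, $\dag$-preservation, the norm bound $\|\yo_\beta\| \leq \|\beta\|$, and normality componentwise via separate normality of $\cccirc$ together with the Kaplansky density argument of \cite[Lemma 2.13]{CP22}. The only difference is that you spell out the norm estimate and the reduction of the coherence axioms to naturality of $\Sigma$ and $a$ in slightly more detail than the paper, which simply cites Gurski for those checks.
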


\begin{proof}
We will present the definition found in \cite[Lemma 9.10]{gurski_2013}, noting that all the relevant data is compatible with dagger structures.
\begin{itemize}
\item[(0)] For an object $A \in \cB$, we define the component
$$(\yo_\beta)_A \colon (\yo_b)_A \ttto (\yo_{b'})_A$$
to be the uniformly bounded modification given by:
\begin{itemize}
\item[(0.0)] For an object $\morph{A}{Y}{B}$, we define the component
$$((\yo_\beta)_A)_Y \colon \underbrace{((\yo_b)_A)_Y}_{\id_Y \cccirc b} \ttto \underbrace{((\yo_{b'})_A)_Y}_{\id_Y \cccirc b'}$$
to be $((\yo_\beta)_A)_Y \coloneqq \id_{\id_Y} \cccirc \beta$.
\end{itemize}
\end{itemize}
From this definition it is clear that the assignment $\beta \mapsto \yo_\beta$ is linear, $\dag$-preserving, and preserves 3-composition $\circ$ and identities at the level of 3-morphisms. 
Furthermore, this assignment is normal when $\cB$ is W* by our usual argument using \cite[Lemma 2.13]{CP22}. 
We refer the interested reader to \cite{gurski_2013} for the proof that $\yo_\beta$ satisfies the constraint axiom for perturbations.
\end{proof}

\begin{theorem}
We may upgrade the previous data to a monic $\dag$-3-functor
$$\yo\colon \cB \to \CstarThreeCat(\cB^{\oneop} \to \CstarTwoCat_{\Strict}),$$
which is locally a 2-equivalence.
When $\cB$ is a W*-3-category, we obtain a monic normal $\dag$-3-functor
$$\yo\colon \cB \to \WstarThreeCat(\cB^{\oneop} \to \WstarTwoCat_{\Strict}).$$
\end{theorem}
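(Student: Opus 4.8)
The plan is to assemble the pointwise assignments $B \mapsto \yo_B$, $X \mapsto \yo_X$, $b \mapsto \yo_b$, $\beta \mapsto \yo_\beta$ furnished by the preceding lemmas into the constraint data of a $\dag$-$3$-functor --- following the construction of the tricategorical Yoneda embedding in \cite[\S 9]{gurski_2013} --- and then to read off monicity, the local $2$-equivalence, and, in the W* case, normality. The guiding observation is that the dagger contributes nothing beyond the requirement that all coherence data be unitary, and unitarity is automatic here because it is inherited from $\cB$.

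First I would supply the compositor and unitor adjoint equivalences of $\yo$. The compositor $\mathbf{\yo^2}$ has underlying $\dag$-$3$-natural transformation $\yo^2_{X',X}\colon \yo_{X'} \cccirc \yo_X \tto \yo_{X' \cccirc X}$ built componentwise from the associator $\mathbf{a}$ of $\cB$ with two of its three arguments held fixed; the unitor $\mathbf{\yo^0}$ has $\yo^0_B\colon \id_{\yo_B} \tto \yo_{I_B}$ built from $\mathbf{l}$ and $\mathbf{r}$; and the higher coherence modifications $\yo^a$, $\yo^l$, $\yo^r$ arise as mates of the pentagonator $\pi$ and of the unity coheretors $\mu, \lambda, \rho$, exactly as flagged in the lemma statements above. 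Since $\mathbf{a}, \mathbf{l}, \mathbf{r}$ are unitary adjoint equivalences and $\pi, \mu, \lambda, \rho$ are unitary modifications in $\cB$, and since passing to mates, whiskering, and componentwise composition all preserve unitarity, every component of this data is unitary; hence $\yo$ is a $\dag$-$3$-functor as soon as the two $3$-functor axioms (the associativity axiom comparing pentagonators, and the unitality axiom comparing middle unity coheretors) are verified. These reduce componentwise to the corresponding axioms in $\cB$, so I would cite \cite[\S 9]{gurski_2013} for them rather than grind through the mate diagrams.

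Next I would check monicity dimension by dimension, by evaluating the Yoneda data at identity cells exactly as in the C*-$2$-category case of Appendix \S\ref{subsec:coherencesforonecats}: injectivity on objects is immediate since $\id_B$ lies in $\yo_B(B) = \cB(B \to B)$, and injectivity on $1$-, $2$-, and $3$-cells follows by feeding the relevant identity cell of $\cB$ into a component of $\yo_X$, $\yo_b$, or $\yo_\beta$ and recovering $X$, $b$, or $\beta$. For the local $2$-equivalence I would fix $B, B' \in \cB$ and exhibit a weak inverse to the $\dag$-$2$-functor $\yo\colon \cB(B \to B') \to \CstarThreeCat(\cB^{\oneop} \to \CstarTwoCat_{\Strict})(\yo_B \tto \yo_{B'})$, namely ``evaluation at $\id_B$'': a $\dag$-$3$-natural transformation $\alpha\colon \yo_B \tto \yo_{B'}$ maps to $\alpha_B(\id_B) \in \cB(B \to B')$, a $\dag$-$3$-modification to its value at $\id_B$, and a uniformly bounded perturbation likewise. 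The tricategorical Yoneda lemma of \cite[\S 9]{gurski_2013} shows these are quasi-inverse equivalences of the underlying $2$-categories, with all comparison $2$- and $3$-cells assembled from the unitary data $\mathbf{a}, \mathbf{l}, \mathbf{r}$ of $\cB$; so they are unitary in the C*-$2$-categories at hand, whence $\yo$ is unitarily essentially surjective on objects and $1$-morphisms and fully faithful --- hence isometric --- on $2$-morphisms, i.e.\ a $\dag$-$2$-equivalence. Finally, in the W* case each $\yo_B$ is normal by the corresponding lemma, $\WstarThreeCat(\cB^{\oneop} \to \WstarTwoCat_{\Strict})$ is a W*-Gray-category by Lemmas \ref{lem:hombetween3cats} and \ref{lem:compositionin3Cats}, and normality of $\yo$ on the relevant hom-spaces follows from separate normality of $\cccirc$ in $\cB$ via Kaplansky density together with \cite[Lemma 2.13]{CP22}, as in the preceding lemmas.

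The hard part will not be the coherence bookkeeping --- which is genuinely identical to \cite{gurski_2013} once one notes that the dagger only adds unitarity --- but the $\dag$-upgrade of the tricategorical Yoneda lemma underlying the local $2$-equivalence: one must confirm that the quasi-inverse together with its unit and counit comparisons, as produced by the ordinary Yoneda argument, can be taken \emph{unitary}, so that an equivalence of underlying $2$-categories becomes a $\dag$-equivalence. This hinges on the fact that every associativity and unit constraint of $\cB$ is a unitary adjoint equivalence, and on the standard fact that a $\dag$-$2$-functor which is an equivalence of underlying $2$-categories is automatically unitarily essentially surjective and fully faithful.
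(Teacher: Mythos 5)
Your proposal is correct and follows essentially the same route as the paper: the constraint data $\yo^2,\yo^0,\yo^\alpha,\yo^\lambda,\yo^\rho$ is assembled from the unitary adjoint equivalences $\mathbf{a},\mathbf{l},\mathbf{r}$ and mates of $\pi,\lambda,\mu$, the coherence axioms and the local biequivalence are deferred to \cite{gurski_2013}, monicity is checked by evaluating at identities, and normality in the W* case follows from separate normality of $\cccirc$ via Kaplansky density and \cite[Lemma 2.13]{CP22}. Your explicit description of the quasi-inverse (evaluation at $\id_B$) and the identity-cell argument for monicity is slightly more detailed than the paper's, but it is the same argument.
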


\begin{proof}
We provide the constraint unitary adjoint equivalences $\yo^2$ and $\yo^0$ for the $\dag$-3-functor $\yo_B$ found in  \cite[Theorem 9.12]{gurski_2013}.
\begin{itemize}
\item[($\yo^2$)] For composable 1-morphisms $\morph{B}{X}{B'}$ and $\morph{B'}{Y}{B''}$ in $cB$, we define the component
$$\yo^2_{X,Y} \colon \yo_X \cccirc \yo_Y \ttto \yo_{X \cccirc Y}$$
to be the $\dag$-3-modification given by:
\begin{itemize}
\item[($\yo^2.0$)] On an object $A \in \cA$, we define the component
$$(\yo^2_{X,Y})_A \colon (\yo_X \cccirc \yo_Y)_A \tto (\yo_{X \cccirc Y})_A$$
to be the $\dag$-2-natural transformation given by:
\begin{itemize}
    \item[($\yo^2.0.0$)] For an object $\morph{A}{Z}{B} \in \yo_B(A)$, 
    $$((\yo^2_{X,Y})_A)_Z \colon \underbrace{(\yo_X \cccirc \yo_Y)_A(Z)}_{(Z \cccirc X ) \cccirc Y} \tto \underbrace{(\yo_{X \cccirc Y})_A(Z)}_{Z \cccirc (X \cccirc Y)},$$
    to be $((\yo^2_{X,Y})_A)_Z \coloneqq a^{\sqdot}_{ZXY}$.
    \item[($\yo^2.0.1$)] For a 1-morphism $z$ in $\yo_B(A)$, we then define
    $$((\yo^2_{X,Y})_A)_Z \coloneqq a^{\sqdot}_{z,\id_X,\id_Y}.$$
\end{itemize}
\item[($\yo^2.1$)] On a 1-morphism $\morph{A}{V}{A'}$, we define the naturator $(\yo^2_{X,Y})_V$
to be the unitary modification given by:
\begin{itemize}
\item[($\yo^2.1.0$)] On an object $\morph{A'}{W}{B}$, we define the component $((\yo^2_{X,Y})_V)_W$
to be the following mate of $\pi$, the unitary pentagonator of $\cB$.

$$
\tikzmath{
\draw (0,.3) arc (180:0:.6cm) -- (1.2,-1.5);
\draw (0.3,.3) arc (180:0:.3cm) -- (.9,-1.5);
\draw (-.3,-.3) arc (0:-180:.3cm) -- (-.9,1);
\draw (0,-.3) arc (0:-180:.6cm) -- (-1.2,1);
\draw (.3,-.3) arc (0:-180:.9cm) -- (-1.5,1);
\roundNbox{fill=white}{(0,0)}{.3}{.2}{.2}{$\pi$};
\node at (-1.8,1.2) {\scriptsize{$(1 \cccirc a^{\sqdot})$}};
\node at (.9,-1.7) {\scriptsize{$a^{\sqdot}$}};
\node at (1.2,-1.7) {\scriptsize{$a^{\sqdot}$}};
\node at (-.6,1.2) {\scriptsize{$(a^{\sqdot} \cccirc 1)$}};
\node at (-1.2,1.22) {\scriptsize{$a^{\sqdot}$}};
}
$$

\end{itemize}
One continues by defining the unitary adjoint equivalence for $\yo^2_{X,Y}$ using the unitary adjoint equivalence for $a$ with the last two entries fixed, and a mate of $\pi^\dag$.
\end{itemize}
Then one defines the naturality constraints for $\yo^2$ using the unitary adjoint equivalence for $a$, with the first entry fixed.
\item[($\yo^0$)] We define the unitor adjoint equivalence $\yo^0$ similarly to $\yo^2$, using $\ell^{\sqdot}$ instead of $a^{\sqdot}$, and the following mate of $\lambda^{\dag}$.
$$
\tikzmath{
\draw (-0,.3) arc (0:180:.4cm) -- (-.8,-1.3);
\draw (0.2,-.3) arc (-180:0:.3cm) -- (.8,1);
\draw (-0.2,-.3) arc (-180:0:.7cm) -- (1.2,1);
\roundNbox{fill=white}{(0,0)}{.3}{.2}{.2}{$\lambda^\dag$};
\node at (-.8,-1.52) {\scriptsize{$(1 \cccirc \ell^{\sqdot})$}};
\node at (.8,1.2) {\scriptsize{$a^{\sqdot}$}};
\node at (1.2,1.22) {\scriptsize{$\ell^{\sqdot}$}};
}
$$
We also define unitary perturbation $(\yo^0)_{\bullet}$ to be the identity.
\end{itemize}
We now provide the constraint unitaries $\yo^\alpha$, $\yo^\lambda$, and $\yo^\rho$ for the $\dag$-3-functor $\yo_B$.
\begin{itemize}
\item[($\yo^\alpha$)]
For composable 1-morphisms $\morph{B}{X}{B'}$, $\morph{B'}{Y}{B''}$, $\morph{B''}{Z}{B'''}$ in $\cB$, we define the components $((\yo^\alpha_{XYZ})_{A})_{\morph{A}{W}{B}}$ of the unitary modification $(\yo^\alpha_{XYZ})_{A}$ for the unitary perturbation $\yo^\alpha_{XYZ}$ using the following mate of the pentagonator $\pi_{WXYZ}$.
$$
\tikzmath{
\draw (0,.3) arc (180:0:.6cm) -- (1.2,-1);
\draw (0.3,.3) arc (180:0:.3cm) -- (.9,-1);
\draw (-.3,-.3) arc (0:-180:.3cm) -- (-.9,1);
\draw (0,-.3) arc (0:-180:.6cm) -- (-1.2,1);
\draw (.3,-.3) -- (.3,-1);
\roundNbox{fill=white}{(0,0)}{.3}{.2}{.2}{$\pi$};
\node at (.3,-1.22) {\scriptsize{$(1 \cccirc a)$}};
\node at (.9,-1.2) {\scriptsize{$a^{\sqdot}$}};
\node at (1.2,-1.2) {\scriptsize{$a^{\sqdot}$}};
\node at (-.6,1.2) {\scriptsize{$(a^{\sqdot} \cccirc 1)$}};
\node at (-1.2,1.22) {\scriptsize{$a^{\sqdot}$}};
}
$$

\item[($\yo^\lambda$)]
Similarly, for a 1-morphism $X$ in $\cB$ we use the following mate of the left unitor $\lambda$ to define the unitary perturbation $(\yo^\lambda)_X$.

$$
\tikzmath{
\draw (0,-.3) -- (0,-1);
\draw (0.2,.3) arc (180:0:.3cm) -- (.8,-1);
\draw (-0.2,.3) arc (180:0:.7cm) -- (1.2,-1);
\roundNbox{fill=white}{(0,0)}{.3}{.2}{.2}{$\lambda$};
\node at (0,-1.22) {\scriptsize{$(1 \cccirc \ell)$}};
\node at (.8,-1.2) {\scriptsize{$a^{\sqdot}$}};
\node at (1.2,-1.22) {\scriptsize{$\ell^{\sqdot}$}};
}
$$

\item[($\yo^\rho$)]
Finally, for a 1-morphism $X$ in $\cB$ we use the following mate of the middle unitor $\mu$ to define the unitary perturbation $(\yo^\rho)_X$.

$$
\tikzmath{
\draw (-.3,-.3) arc (0:-180:.3cm) -- (-.9,1);
\draw (0,-.3) arc (0:-180:.6cm) -- (-1.2,1);
\draw (.3,-.3) -- (.3,-1);
\roundNbox{fill=white}{(0,0)}{.3}{.2}{.2}{$\mu$};
\node at (.3,-1.22) {\scriptsize{$(1 \cccirc r^{\sqdot})$}};
\node at (-.6,1.2) {\scriptsize{$(\ell^{\sqdot} \cccirc 1)$}};
\node at (-1.2,1.22) {\scriptsize{$a^{\sqdot}$}};
}
$$
\end{itemize}
We refer the interested reader to \cite{gurski_2013} for the proof that $\yo$ satisfies both constraint axioms for 3-functors and locally a biequivalence. Quite pedantically, it is also clear that $\yo$ is injective on every level. 
\end{proof}

\begin{theorem}[Gelfand-Naimark for operator 3-categories]
Every small C*-3-category $\cB$ is 3-equivalent to a sub-C*-Gray-category of $\ThreeHilb$.
\end{theorem}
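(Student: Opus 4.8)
The plan is to promote, one categorical dimension up, the ``coproduct Yoneda followed by iterated GNS'' strategy used for operator algebraic bicategories in Appendix \S\ref{subsec:coherencesforonecats}. Throughout we may assume, by Theorem \ref{thm:everyoneisequivtocubical} and transitivity of $3$-equivalence, that $\cB$ is a (small) \emph{cubical} C*-$3$-category; this is exactly what makes the Yoneda machinery of Appendix \S\ref{sec:yoneda} available.

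First I would assemble the representables into a monic ``collapsed Yoneda'' embedding. For each object $A \in \cB$ the hom-$\dag$-$3$-functors of Appendix \S\ref{sec:yoneda} (applied to $\cB^{\oneop}$ and read covariantly) give a $\dag$-$3$-functor $\yo^{A}\colon \cB \to \CstarTwoCat_{\small}$ with $\yo^{A}(B') = \cB(A \to B')$, and taking the coproduct over the (set of) objects produces a $\dag$-$3$-functor
$$\yo^{\amalg}\colon \cB \to \CstarTwoCat_{\small}, \qquad \yo^{\amalg}(B') = \coprod_{A \in \cB} \cB(A \to B'),$$
the exact analogue of the $\yo^{\amalg}$ of Theorem \ref{thm:coherence2cats} one dimension below; smallness of $\cB$ guarantees that each $\yo^{\amalg}(B')$ is a small C*-$2$-category. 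Monicity is the identity-evaluation argument of Appendix \S\ref{sec:background2cats}: $\id_{A}$ witnesses injectivity on objects, and evaluating $\yo^{\amalg}$ of a higher cell on the appropriate identity lower cell --- using that composition with units is an equivalence, hence faithful --- recovers the cell, giving injectivity on $1$-, $2$-, and $3$-cells.

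Next I would upgrade the Gelfand--Naimark construction for C*-$2$-categories to a monic $\dag$-$3$-functor $\GNS\colon \CstarTwoCat_{\small} \to \ThreeHilb$, sending a small C*-$2$-category $\cC$ to the bicommutant $\GNS_{2}(\cC)''$ of its universal representation $\Upsilon_{2}$, and extending $\Upsilon_{2}$ functorially to $1$-, $2$-, and $3$-cells by composing the double-dual (W*-completion) construction of \S\ref{sec:2} with the functorial passage into $\ThreeHilb$, exactly as $\GNS''$ was built from $\Upsilon$ and $(-)^{**}$ in Appendix \S\ref{subsec:coherencesforonecats}; the $3$-functor constraint axioms for $\GNS$ are forced by functoriality and normality of $(-)^{**}$ together with separate normality of $\ccirc$ and $\circ$, and monicity is the verbatim dimension-shifted version of Lemma \ref{lem:GNS''ismonic}, using faithfulness of $\Upsilon_{2}$ on every level. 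Composing,
$$\cB \xhookrightarrow{\ \yo^{\amalg}\ } \CstarTwoCat_{\small} \xhookrightarrow{\ \GNS\ } \ThreeHilb,$$
the composite is monic, so its corestriction onto the image is bijective on all cells, hence a $3$-equivalence (locally a $2$-isomorphism, surjective on objects); and since $\ThreeHilb$ is a (W*-, hence C*-) Gray-category, any sub-$3$-category inherits strict hom-$2$-categories, cubical $1$-composition, and strict units, so the image is a sub-C*-Gray-category which is $3$-equivalent to $\cB$.

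The main obstacle is the construction in the third step: assembling $\GNS\colon \CstarTwoCat_{\small} \to \ThreeHilb$ as a genuine $\dag$-$3$-functor requires threading the $\Upsilon_{2}$ and $(-)^{**}$ constructions coherently through all four layers of cells and checking the $3$-functor constraint axioms (associativity against the pentagonators, unitality against the unity coheretors) together with monicity --- in effect iterating the appendix arguments at one higher level, where the bookkeeping of adjoint equivalences is heaviest. A lighter secondary point is recording that $\ThreeHilb$ is a C*-Gray-category, so that the final image genuinely inherits that structure rather than merely being a C*-$3$-category.
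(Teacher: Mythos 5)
Your proposal is correct and follows essentially the same route as the paper: reduce to the cubical case, embed via the coproduct Yoneda $\dag$-3-functor $\yo^{\amalg}(B') = \coprod_{A} \cB(A \to B')$ into small C*-2-categories, and then compose with a monic $\dag$-3-functor extension of the 2-categorical GNS construction into $\ThreeHilb$, taking the image. The paper likewise identifies the extension of $\GNS$ to a $\dag$-3-functor as the step requiring the real work (it only sketches the action on $\dag$-2-functors and leaves the rest to the reader), so your assessment of where the difficulty lies matches its treatment.
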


\begin{proof}
When $\cB$ is a small $\CstarGray$-category, notice
\begin{align*}
\yo^{\amalg} \colon \cB &\to \CstarTwoCat_{\Strict,\Small}\\
B &\mapsto \coprod_{B' \in \cB} \cB(B' \to B)
\end{align*}
is a monic $\dag$-3-functor. Hence, we need only extend the Gelfand-Naimark-Segal construction for C*-2-categories into a monic $\dag$-3-functor
\begin{align*}
\GNS''_2 \colon \CstarTwoCat_{\Strict,\Small} &\to \ThreeHilb.\\
\cC &\mapsto \Upsilon_2(\cC). 
\end{align*}
This is done analogously to how one extends the Gelfand-Naimark-Segal construction for C*-1-categories into a monic $\dag$-2-functor
\begin{align*}
\GNS'' \colon \CstarCats_{\Small} &\to \TwoHilb.\\
\cD &\mapsto \Upsilon(\cD). 
\end{align*}
We will provide the details for how to define $\GNS''_2(F)\colon \Upsilon_2(\cC) \to \Upsilon_2(\cC')$ for a strict $\dag$-2-functor $F \colon \cC \to \cC'$ between strict, small C*-2-categories, and leave the remaining details to the reader. The strict $\dag$-2-functor $\GNS''_2(F)$ maps an object in $\Upsilon_2(\cC)$ given by
$$\GNS\left(\coprod_{C' \in \cC} \cC(C' \to C)\right)'' \cong \coprod_{C' \in \cC} \GNS(\cC(C' \to C))''$$
to the following object in $\Upsilon_2(\cC')$
$$
\GNS\left(\coprod_{C' \in \cC'} \cC'(C' \to FC)\right)''
\cong 
\coprod_{C' \in \cC'} \GNS(\cC'(C' \to FC))''.$$
On 1-morphisms and 2-morphisms in $\Upsilon_2(\cC)$, the action of $\GNS''_2(F)$ is then given by the universal completion of post-composition in each coproduct component. Notice this $\dag$-2-functor is indeed strict since $\cC'$ is a strict C*-2-category and $F$ is a strict $\dag$-2-functor.
\end{proof}

\pagebreak 
\bibliographystyle{amsalpha}
{\footnotesize{
%\bibliography{../bibliography}
%\bibliography{../../../../../Documents/bibliography/bibliography}
\bibliography{refs}
}}
\end{document}